\def\namedlabel#1#2{\begingroup
	#2%
	\def\@currentlabel{#2}%
	\phantomsection\label{#1}\endgroup
}
\numberwithin{equation}{section}
\newtheorem{thm}{Theorem}[section]
\newtheorem{prop}[thm]{Proposition}
\newtheorem{lem}[thm]{Lemma}
\newtheorem{cor}[thm]{Corollary}
\theoremstyle{remark}
\newtheorem{rem}[thm]{Remark}
\theoremstyle{definition}
\newtheorem{definition}[thm]{Definition}
\newtheorem{example}[thm]{Example}
\title{Limit Shapes and Local Statistics for the Stochastic Six-Vertex Model}
\author{Amol Aggarwal}  
\begin{document}

\begin{abstract} 
	
	In this paper we consider the stochastic six-vertex model on a cylinder with arbitrary initial data. First, we show that it exhibits a limit shape in the thermodynamic limit, whose density profile is given by the entropy solution to an explicit, non-linear conservation law that was predicted by Gwa-Spohn in 1992 and by Reshetikhin-Sridhar in 2018. Then, we show that the local statistics of this model around any continuity point of its limit shape are given by an infinite-volume, translation-invariant Gibbs measure of the appropriate slope. 
\end{abstract} 

\maketitle 

\tableofcontents

\section{Introduction} 

\label{Model}

\subsection{Preface}

Over the past several decades, a substantial amount of effort has been directed towards the understanding of limit shapes and local statistics for statistical mechanical models. In this paper we analyze these two phenomena for the six-vertex model with stochastic weights.

In the context of dimer models, one of the earlier results along this direction concerned the \emph{limit shape phenomenon} exhibited by domino tilings, stating that the height function of a uniformly random tiling of a large domain is likely to concentrate (after suitable normalization) around a global limit. This was first established in the case when the domain is an Aztec diamond by Cohn-Elkies-Propp \cite{LSRT}, who provided an exact form for the limit shape. Later, this result was substantially generalized by Cohn-Kenyon-Propp \cite{VPT} to domino tilings of essentially arbitrary domains. The latter work expressed the limit shape through a variational principle, namely, as the maximizer of a certain (explicit) concave functional, which was later rewritten by Kenyon-Okounkov \cite{LSCE} as the solution to a complex wave equation that in many cases can be solved through the method of characteristics. 

A salient feature of these limit shapes is that they can be inhomogeneous for certain choices of the boundary, that is, the density of tiles can (asymptotically non-negligibly) differ in different regions of the rescaled domain $D$. Thus a question of interest is to understand how the boundary data affect the \emph{local statistics}, the joint law of nearly neighboring tiles, of a random tiling.

The physical prediction (see, for example, Conjecture 13.5 of \cite{VPT} and Section 1.5 of \cite{D}) in this context is that these local statistics should be determined by (the gradient of) the global limiting height function $H	: D \rightarrow \mathbb{R}$ for the tiling model. More specifically, around some point $(u, v) \in D$, they should be given by the unique \cite{RS} infinite-volume, translation-invariant, ergodic Gibbs measure with slope equal to $\nabla H (u, v)$; these infinite-volume Gibbs measures are explicit, as they can be expressed as a determinantal point process with a known kernel \cite{LSLD,DA}. For many choices of boundary data, this prediction has been shown to hold true \cite{DOA,ASD,URB,NPOE,CP,CI,LLSBHP,CPLGR,AR,DPD,DT}. 

The proofs of these results on limit shapes and local statistics were largely based on the determinantal (free-fermionic) structure underlying the tiling models \cite{SDL}. So, they were also applicable to some classes of more general dimer models \cite{ST,TPMVOP,DA}, where such structure persists. 

Yet, the solvability of the six-vertex model is of a substantially different nature and can be attributed to a one-parameter family of mutually commuting transfer operators, which can be diagonalized through the quantum inverse scattering method (algebraic Bethe ansatz); see the books of Baxter \cite{ESMSM} and Korepin-Bogoliubov-Izergin \cite{QISMCF}. This framework expresses the eigenvalues of these operators in terms of solutions to an intricate system of non-linear equations called the Bethe equations. These equations are known to simplify considerably in the five-vertex degeneration of the six-vertex model, in which one of the six weights of the model is set to $0$. Using this fact, de Gier-Kenyon-Watson \cite{LSAFM} recently established the variational principle for the limit shape of the five-vertex model with general boundary conditions. 

Based on earlier free energy predictions due to Lieb \cite{RESI} and Sutherland-Yang-Yang \cite{ESMTFAEEF} (which were in turn based on a heuristic analysis of the Bethe equations), variational principles have also been conjectured for the general six-vertex model under arbitrary boundary conditions \cite{BCSVM,SVMFBC,ILSSVM}. However, for most values of the six-vertex weights, it is unknown how to analyze the Bethe equations. Partly for this reason, the limit shape phenomenon had until now remained without understanding for any other type of six-vertex model (except under certain families of boundary data). Moreover, we are not aware of any previous results on the local statistics for a non-free-fermionic six-vertex model on a finite domain.

In this paper we consider the \emph{stochastic six-vertex model}, which was introduced by Gwa-Spohn \cite{SVMRSASH} in 1992 as an instance of the six-vertex model whose weights are stochastic and therefore give rise to a Markovian transfer matrix. Its weights are given on the left side of \Cref{vertexdomain}; we will describe the model in more detail in \Cref{StochasticModel} below. 

Over the past several years, this model has received considerable interest \cite{CSSVMP,SSVMQHL,SHSVMM,SSVM,STESVM,TFAEPSSVM,SLSVM,ELHSP} in the probability and mathematical physics communities, due to the fact that it is one of the simplest models that lies in the intersection of two-dimensional statistical mechanics and stochastic integrability; as such, it exhibits a number of interesting physical phenomena. For instance, under suitable limit degenerations, it converges to the asymmetric simple exclusion process (ASEP) \cite{CSSVMP,SSVM}; the Kardar-Parisi-Zhang (KPZ) equation \cite{SLSVM,ELHSP}; and a stochastic variant of the telegraph equation \cite{STESVM}. Furthermore, it is a member of the KPZ universality class \cite{SVMRSASH}, exhibiting fluctuations of order $N^{1 / 3}$ \cite{CFSAEPSSVMCL,PTAEPSSVM,SSVMQHL,SHSVMM,SSVM} and correlations on spatial scales of order $N^{2 / 3}$ \cite{TFAEPSSVM} on a domain of size $N$. 

The limit shape phenomenon for the stochastic six-vertex model, run under arbitrary initial (boundary) data, was predicted in the original physics work \cite{SVMRSASH}. In particular, it was predicted there that the limiting height function $H$ of this model should satisfy a Hamilton-Jacobi equation of the form $\partial_y H = \varphi (\partial_x H)$, for a suitable function $\varphi$ (given by \eqref{kappafunction} below). An alternative heuristic derivation of the same predicted limit shape, assuming the conjectural variational principle for the general six-vertex model, was provided more recently by Reshetikhin-Sridhar in \cite{LSSSVM}.  

Under certain classes of initial data, this prediction has been proven. Indeed, in \cite{SSVM}, Borodin-Corwin-Gorin address the limit shape (and fluctuations) for this model when run under domain-wall boundary data; this was later extended in \cite{PTAEPSSVM} to the case of (generalized) Bernoulli initial data. Although both of these results imply precise asymptotic statements about the stochastic six-vertex model in their respective settings, the algebraic analysis implemented in their proofs requires that the boundary data be of a special type, and so does not apply to the case of general initial data. 

As mentioned above, there seem to have been no previous results on local statistics for the stochastic six-vertex model when run under any type of (deterministic) initial data. Still, assuming the equation for the limit shape proposed in \cite{SVMRSASH,LSSSVM}, one can formulate a prediction for how they should behave in analogy with what was explained above in the case of tiling models. Specifically, around some point $(u, v)$ in our rescaled domain, the local statistics around $(u, v)$ should be given by an infinite-volume, translation-invariant, ergodic Gibbs measure for the stochastic six-vertex model whose slope is equal to $\nabla H (u, v)$. Denoting $\rho = G (u, v) = \partial_x H (u, v)$ and using the predicted identity $\partial_y H = \varphi (G)$, the slope of this Gibbs measure can be alternatively expressed as $\big( \rho, \varphi (\rho) \big)$. In particular, any such slope must lie on the curve $\big\{ (s, t)	: t = \varphi (s) \big\}$. In the physics literature, this curve is sometimes referred to as the \emph{conical singularity} (also known as the \emph{tricritical point} \cite{TECS,SVMFBC} or \emph{KPZ point} \cite{CSFODDGC}) of the ferroelectric six-vertex model, as it is where three phases of the six-vertex model are predicted to coexist and the free energy to be singular \cite{TCPFSVM,TECS,CPSVMCSFM}. 

In Appendix A.2 of \cite{CFSAEPSSVMCL}, an infinite-volume, translation-invariant Gibbs measure for the stochastic six-vertex model with slope $\big( \rho, \varphi (\rho) \big)$ was introduced for each slope on this conical singularity. We will denote this measure by $\mu (\rho)$ and will recall its precise definition in \Cref{Translation} below. The analog of the local statistics prediction for the stochastic six-vertex model then states that, around $(u, v)$, the local statistics should be given by this measure $\mu (\rho)$, where $\rho = \partial_x H (u, v)$. 

Although the measure $\mu (\rho)$ is not known to be given by a determinantal point process, \cite{CFSAEPSSVMCL} establishes both qualitative and quantitative properties about it, which are considerably different from those for infinite-volume, translation-invariant Gibbs measures for tiling models. For instance, it was shown by Kenyon that the local statistics for random tilings are conformally invariant \cite{CI} with Gaussian free field fluctuations \cite{F,DA}. In contrast, the six-vertex measures $\mu (\rho)$ are quite anisotropic, exhibiting Baik-Rains fluctuations of exponent $\frac{1}{3}$ along a single direction and Gaussian fluctuations of exponent $\frac{1}{2}$ elsewhere \cite{CFSAEPSSVMCL}. The latter point can again be viewed as a manifestation of the fact that the stochastic six-vertex model is in the KPZ universality class. 

In this paper we establish both the limit shape (see \Cref{sixvertexcylinderglobal} below) and local statistics (see \Cref{sixvertexlocalcylinder} below) predictions for the stochastic six-vertex model under arbitrary initial data. In particular, the latter provides the first local statistics result for a non-free-fermionic six-vertex model. 

Since the stochastic six-vertex model is not known to be determinantal, and since we remain unable to analyze the general Bethe equations, our methods will be different from the ones used previously to analyze dimer models and the five-vertex model. In particular, we first (as in \cite{SVMRSASH} or Section 2.2 of \cite{SSVM}) reinterpret the stochastic six-vertex model as a discrete-time interacting particle system. Then, we analyze this system by suitably adapting hydrodynamical limit methods developed for continuous-time, attractive interacting particle systems to the discrete-time setting. 

To that end, we first show that the stochastic six-vertex model is \emph{monotone} and \emph{attractive}. The first refers to the fact that the stochastic six-vertex model preserves ordering with respect to particle locations, and the second refers to the existence of a coupling between stochastic six-vertex models such that coupled particles will almost surely always remain coupled. The proof of attractivity will be based on an explicit coupling that arises from the \emph{multi-class} (or \emph{higher rank}) stochastic six-vertex model. The weights for that model were introduced in the independent works of Bazhanov \cite{TSEA} and Jimbo \cite{QGM}, although they were studied from the perspective of stochastic dynamics more recently \cite{CSVMST,ADFSVM,SM}; a more general framework for producing integrable systems with stochastic weights was provided in \cite{SSE}. In our particular setting, it will be useful to adopt a mild modification of this model by allowing the classes of particles to decrease when they couple. 

Next we establish the limit shape, or \emph{hydrodynamical limit}, of the stochastic six-vertex model in the special case when run under double-sided Bernoulli initial data. If the resulting limiting profile exhibits a rarefaction fan, then this result can be accessed using the algebraic framework of \cite{CFSAEPSSVMCL,PTAEPSSVM}. However, in the alternative setting when the limiting profile exhibits a shock, we are not aware of exact identities for the stochastic six-vertex model that are amenable to asymptotic analysis. 

So, we instead modify the framework that was introduced by Andjel-Vares \cite{HEAS} to study attractive interacting particle systems with double-sided Bernoulli initial data to apply in our setting. In addition to requiring the attractivity of the stochastic six-vertex model, these methods also require a classification of the translation-invariant stationary measures for the model. Therefore, we provide a classification of such measures through a suitable adaptation of the work of Liggett \cite{CEP}, who established the analogous result for the ASEP. 

Then, we apply the framework of Bahadoran-Guiol-Ravishankar-Saada \cite{HAPAE} (see also the survey \cite{HOAPS} and references therein) that establishes hydrodynamical limit for monotone, attractive interacting particle systems under arbitrary initial data, assuming that one understands these limits in the case of double-sided Bernoulli initial data. This will lead to the proof of the limit shape \Cref{sixvertexcylinderglobal}. 

Before proceeding, let us mention that there also exist other developed methods \cite{HLAPS,REHM,EHTP} for establishing the hydrodynamical limit for continuous-time, asymmetric particle systems. To our understanding, in order to access the limit with an arbitrary initial profile, they all require attractivity of the underlying particle system (with the exception of the relative entropy method of Yau \cite{REHM}, which instead requires differentiability of the limit shape, an assumption that does not hold in our setting). The first was due to Rezakhanlou \cite{HLAPS} and was based on a microscopic entropy estimate. Later, an alternative proof (that applied to a different class of models) was provided by Sepp\"{a}l\"{a}inen, obtained by interpreting the particle system as a random growth model. 

Unfortunately, we were unable to implement any of these methods for the stochastic six-vertex model, due to the fact that its dynamics are given by discrete-time, sequential updates. For instance, the microscopic entropy estimate from \cite{HLAPS} is proven using the generator for the interacting particle system; however, for the stochastic six-vertex model, this generator is a (sort of intricate) matrix that we were not able to analyze. Furthermore, the stochastic six-vertex model can be viewed as a random growth model, but with elaborate sequential update dynamics that we were also unable to analyze. Since the methods of \cite{HAPAE} were instead largely based on the monotonicity and attractivity of the model, and on properties of the limit shape, they were more amenable to establish the hydrodynamical limit in our specific setting. 

To establish the local statistics result \Cref{sixvertexlocalcylinder}, we follow the framework introduced by Bahadoran-Mountford \cite{CLEP}, who established the corresponding result for the ASEP through an analysis of the multi-species ASEP. The analog here will be to analyze the dynamics of the multi-species stochastic six-vertex model. As in \cite{CLEP}, this will require an ``approximate coupling'' statement (given by \Cref{rhotksum2} below) that essentially states that a stochastic six-vertex model with an approximately constant density profile can be coupled with a stationary stochastic six-vertex model such that the two models nearly couple after a sufficiently long time. 

This statement can be viewed as a generalization of the two-block estimate (see, for instance, Lemma 3.2 in Chapter 5 of the book \cite{SLIS} by Kipnis) for interacting particle systems. Indeed, in the case of the ASEP, \cite{CLEP} essentially reduced it to a two-block estimate that had earlier been proven by Kosygina \cite{BEHSL}. In our setting of the stochastic six-vertex model, we will establish the approximate coupling statement directly, through a suitable adaptation of the method implemented in \cite{BEHSL} of equating two different approximations for the total current of the model. Although these expressions for the total current from \cite{BEHSL} were obtained using the generator of the ASEP (which we recall is a bit intricate in our six-vertex situation), it will still be possible to establish analogs of them in our discrete-time setting, using the monotonicity and attractivity of the stochastic six-vertex model, as well as properties of its stationary measures.

The remainder of this section is organized as follows. In \Cref{StochasticModel} we define the model of interest to us, which is the stochastic six-vertex model on the discrete cylinder. In \Cref{Height} we state our results, which constitute a limit shape phenomenon and local statistics for this model with generic boundary data; the latter result will require a certain class of infinite-volume translation-invariant Gibbs measures, which we recall in \Cref{Translation}.

Throughout this article, we fix real numbers $0 < b_1 < b_2 < 1$ and $\lambda > 0$. Furthermore, for any probability measure $\mu$, we let $\mathbb{P}_{\mu}$ and $\mathbb{E}_{\mu}$ denote the probability measure and expectation with respect to $\mu$, respectively. Additionally, we denote the complement of any event $E$ by $E^c$.

\subsection{The Model} 

\label{StochasticModel}

Let $N \ge 1$ be a positive integer and set $L = \lfloor \lambda N \rfloor$. Define the discrete torus $\mathfrak{T} = \mathfrak{T}_N = \mathbb{Z} / N \mathbb{Z}$ and the discrete cylinder $\mathfrak{C} = \mathfrak{C}_{N; L} = \mathfrak{T} \times \{ 1, 2, \ldots , L \}$. We view the cylinder $\mathfrak{C}$ as a graph by connecting vertices $(x_1, y_1), (x_2, y_2) \in \mathfrak{C}$ if $(x_1 - x_2, y_1 - y_2) \in \big\{ (-1, 0), (1, 0), (0, -1), (0, 1) \big\}$; the torus $\mathfrak{T}$ can be viewed as a graph in a similar way. 

Now let us define six-vertex ensembles on $\mathfrak{C}$. To that end, an \emph{arrow configuration} is a quadruple $(i_1, j_1; i_2, j_2)$ such that $i_1, j_1, i_2, j_2 \in \{ 0, 1 \}$ and $i_1 + j_1 = i_2 + j_2$. We view such a quadruple as an assignment of arrows to a vertex $v \in \mathfrak{C}$. Specifically, $i_1$ and $j_1$ denote the numbers of vertical and horizontal arrows entering $v$, respectively; similarly, $i_2$ and $j_2$ denote the numbers of vertical and horizontal arrows exiting $v$, respectively. The fact that $i_1 + j_1 = i_2 + j_2$ means that the numbers of incoming and outgoing arrows at $v$ are equal; this is sometimes referred to as \emph{arrow conservation}. There are six possible arrow configurations, which are depicted on the left side of \Cref{vertexdomain}. 

A \emph{six-vertex ensemble} on $\mathfrak{C}$ is defined to be an assignment of an arrow configuration to each vertex of $\mathfrak{C}$ in such a way that neighboring arrow configurations are \emph{consistent}; this means that, if $v_1, v_2 \in \mathfrak{C}$ are two adjacent vertices, then there is an arrow to $v_2$ in the configuration at $v_1$ if and only if there is an arrow from $v_1$ in the configuration at $v_2$. Observe in particular that the arrows in a six-vertex ensemble form up-right directed paths connecting vertices of $\mathfrak{C}$, which emanate vertically from a vertex in $\mathfrak{T} \times \{ 0 \}$ and exit vertically through a vertex in $\mathfrak{T} \times \{ L + 1 \}$; see the right side of \Cref{vertexdomain} for a depiction.  

\emph{Boundary data} for a six-vertex ensemble on $\mathfrak{C}$ is prescribed by dictating which elements of $\mathfrak{T} \times \{ 0 \}$ are entrance sites for a path; this can be defined through a sequence $\psi = \big( \psi (x) \big)_{x \in \mathfrak{T}} \in \{ 0, 1 \}^{\mathfrak{T}}$, where $\psi (x)$ denotes the indicator for the event that a path vertically enters $\mathfrak{C}$ through the vertex $(i, 1)$. For example, on the right side of \Cref{vertexdomain}, we have $\psi = (1, 1, 0, 0, 1)$. One could also prescribe which sites on $\mathfrak{T} \times \{ L + 1 \}$ are exit sites for a path (which would be defined through a sequence $\chi = \big( \chi (x) \big) \in \{ 0, 1 \}^{\mathfrak{T}}$, similar to the one above) but, following \cite{LSSSVM}, we will not do this here and will instead sum over all possible such exiting locations; this corresponds to a \emph{free boundary condition} on the upper end of the cylinder $\mathfrak{C}$. 

Now, fix some $N$-tuple $\psi = \big( \psi (x) \big)$ and, following \cite{SVMRSASH}, let us define a probability measure $\mathcal{P} = \mathcal{P} (b_1, b_2) = \mathcal{P}_N (b_1, b_2; \lambda; \psi)$ on the set $\mathfrak{E} = \mathfrak{E}_{N; L; \psi}$ of six-vertex ensembles on $\mathfrak{C}$ with boundary condition given by $\psi$. To that end, we first define a \emph{vertex weight} $w (i_1, j_1; i_2, j_2)$ to each arrow configuration $(i_1, j_1; i_2, j_2)$, given explicitly by 
\begin{flalign} 
\label{wi1j1i2j2}
\begin{aligned} 
w (1, 0; 1, 0) = b_1; \qquad w (0, 1; & 0, 1) = b_2; \qquad w (1, 0; 0, 1) = 1 - b_1; \qquad w(0, 1; 1, 0) = 1 - b_2; \\
& w (0, 0; 0, 0) = 1 = w (1, 1; 1, 1),
\end{aligned}
\end{flalign}

\noindent and $w (i_1, j_1; i_2, j_2) = 0$ for any $(i_1, j_1; i_2, j_2)$ not of the above form; see the left side of \Cref{vertexdomain} for a depiction. These weights are \emph{stochastic}, in that $\sum_{i_2, j_2} w (i_1, j_1; i_2, j_2) = 1$ for any fixed $(i_1, j_1) \in \{ 0, 1 \} \times \{ 0, 1 \}$. 

\begin{figure}[t]
	
	\begin{center}
		
		\begin{tikzpicture}[
		>=stealth,
		scale = .75 	
		]
		
		\draw[] (-11.5, 3.25) -- (-4, 3.25) -- (-4, 7) -- (-11.5, 7) -- (-11.5, 3.25);
		
		\draw[] (-11.5, -.75) -- (-4, -.75) -- (-4, 3) -- (-11.5, 3) -- (-11.5, -.75);

		\draw[] (-11.5, 4) -- (-4, 4);
		\draw[] (-9, 3.25) -- (-9, 7);
		\draw[] (-6.5, 3.25) -- (-6.5, 7);
		\draw[] (-11.5, 4.75) -- (-4, 4.75);
		
		\draw[] (-11.5, 3) -- (-4, 3);
		\draw[] (-9, -.75) -- (-9, 3);
		\draw[] (-6.5, -.75) -- (-6.5, 3);
		\draw[] (-11.5, .75) -- (-4, .75);
		\draw[] (-11.5, 0) -- (-4, 0);
		
		\draw[] (-10.25, 4.375) circle [radius = 0] node[scale = .85]{$(0, 0; 0, 0)$};
		\draw[] (-7.75, 4.375) circle [radius = 0] node[scale = .85]{$(0, 1; 0, 1)$};
		\draw[] (-5.25, 4.375) circle [radius = 0] node[scale = .85]{$(1, 0; 1, 0)$};
		
		\draw[] (-10.25, .375) circle [radius = 0] node[scale = .85]{$(1, 0; 0, 1)$};
		\draw[] (-7.75, .375) circle [radius = 0] node[scale = .85]{$(0, 1; 1, 0)$};
		\draw[] (-5.25, .375) circle [radius = 0] node[scale = .85]{$(1, 1; 1, 1)$};

		\draw[] (-10.25, 3.625) circle [radius = 0] node[scale = .85]{$1$};
		\draw[] (-7.75, 3.625) circle [radius = 0] node[scale = .85]{$b_2$};
		\draw[] (-5.25, 3.625) circle [radius = 0] node[scale = .85]{$b_1$};
		
		\draw[] (-10.25, -.375) circle [radius = 0] node[scale = .85]{$1 - b_1$};
		\draw[] (-7.75, -.375) circle [radius = 0] node[scale = .85]{$1 - b_2$};
		\draw[] (-5.25, -.375) circle [radius = 0] node[scale = .85]{$1$};

		\draw[->, black,  thick] (-8.65, 5.875) -- (-7.85, 5.875);
		\draw[->, black,  thick] (-7.65, 5.875) -- (-6.85, 5.875);
		
		\draw[->, black,  thick] (-5.25, 4.975) -- (-5.25, 5.775);
		\draw[->, black,  thick] (-5.25, 5.975) -- (-5.25, 6.775);

		\draw[->, black,  thick] (-10.15, 1.875) -- (-9.25, 1.875);
		\draw[->, black,  thick] (-10.25, .975) -- (-10.25, 1.775);

		\draw[->, black,  thick] (-8.65, 1.875) -- (-7.85, 1.875);
		\draw[->, black,  thick] (-7.75, 1.975) -- (-7.75, 2.775);
		
		\draw[->, black,  thick] (-6.15, 1.875) -- (-5.35, 1.875);
		\draw[->, black,  thick] (-5.15, 1.875) -- (-4.25, 1.875);
		\draw[->, black,  thick] (-5.25, .975) -- (-5.25, 1.775);
		\draw[->, black,  thick] (-5.25, 1.975) -- (-5.25, 2.775);
		
		\filldraw[fill=gray!50!white, draw=black] (-10.25, 1.875) circle [radius=.1];
		\filldraw[fill=gray!50!white, draw=black] (-10.25, 5.875) circle [radius=.1];
		\filldraw[fill=gray!50!white, draw=black] (-7.75, 1.875) circle [radius=.1];
		\filldraw[fill=gray!50!white, draw=black] (-7.75, 5.875) circle [radius=.1];
		\filldraw[fill=gray!50!white, draw=black] (-5.25, 1.875) circle [radius=.1];
		\filldraw[fill=gray!50!white, draw=black] (-5.25, 5.875) circle [radius=.1];

		\draw[-, dashed] (0, 0) -- (6, 0) node[right = 0]{$1$};
		\draw[-, dashed] (0, 1) -- (6, 1) node[right = 0]{$2$};
		\draw[-, dashed] (0, 2) -- (6, 2) node[right = 0]{$3$};
		\draw[-, dashed] (0, 3) -- (6, 3) node[right = 0]{$4$};
		\draw[-, dashed] (0, 4) -- (6, 4) node[right = 0]{$5$};
		\draw[-, dashed] (0, 5) -- (6, 5) node[right = 0]{$6$};
		\draw[-, dashed] (0, 6) -- (6, 6) node[right = 0]{$7$};
		
		\draw[-, dashed] (1, -1) -- (1, 7) node[above = 0]{$0$}; 
		\draw[-, dashed] (2, -1) -- (2, 7) node[above = 0]{$1$};  
		\draw[-, dashed] (3, -1) -- (3, 7) node[above = 0]{$2$}; 
		\draw[-, dashed] (4, -1) -- (4, 7) node[above = 0]{$3$}; 
		\draw[-, dashed] (5, -1) -- (5, 7) node[above = 0]{$4$}; 
		
		\draw[-, black] (-.25, 0) -- (-.5, 0) -- (-.5, 6) -- (-.25, 6); 
		\draw[] (-.5, 3) circle[radius = 0] node[left = 0]{$L$}; 
		
		\draw[-, black] (1, -1.25) -- (1, -1.5) -- (5, -1.5) -- (5, -1.25); 
		\draw[] (3, -1.5) circle[radius = 0] node[below = 0]{$N$}; 
		
		\draw[->, black, thick] (1, -1) -- (1, -.1); 
		\draw[->, black, thick] (2, -1) -- (2, -.1);
		\draw[->, black, thick] (5, -1) -- (5, -.1);
		
		\draw[->, black, thick] (1.1, 0) -- (1.9, 0);
		\draw[->, black, thick] (2.1, 0) -- (2.9, 0);
		\draw[->, black, thick] (3.1, 0) -- (3.9, 0);
		
		\draw[->, black, thick] (2, .1) -- (2, .9);
		\draw[->, black, thick] (4, .1) -- (4, .9);
		\draw[->, black, thick] (5, .1) -- (5, .9);
		
		\draw[->, black, thick] (0, 1) -- (.9, 1);
		\draw[->, black, thick] (2.1, 1) -- (2.9, 1);
		\draw[->, black, thick] (4.1, 1) -- (4.9, 1);
		\draw[->, black, thick] (5.1, 1) -- (6, 1);
		
		\draw[->, black, thick] (1, 1.1) -- (1, 1.9); 
		\draw[->, black, thick] (3, 1.1) -- (3, 1.9);
		\draw[->, black, thick] (5, 1.1) -- (5, 1.9);

		\draw[->, black, thick] (1.1, 2) -- (1.9, 2);
		
		\draw[->, black, thick] (2, 2.1) -- (2, 2.9); 
		\draw[->, black, thick] (3, 2.1) -- (3, 2.9);
		\draw[->, black, thick] (5, 2.1) -- (5, 2.9);

		\draw[->, black, thick] (0, 3) -- (.9, 3);
		\draw[->, black, thick] (2.1, 3) -- (2.9, 3);
		\draw[->, black, thick] (3.1, 3) -- (3.9, 3);
		\draw[->, black, thick] (5.1, 3) -- (6, 3);
		
		\draw[->, black, thick] (1, 3.1) -- (1, 3.9); 
		\draw[->, black, thick] (3, 3.1) -- (3, 3.9);
		\draw[->, black, thick] (4, 3.1) -- (4, 3.9);
		
		\draw[->, black, thick] (3.1, 4) -- (3.9, 4);
		\draw[->, black, thick] (4.1, 4) -- (4.9, 4);
		
		\draw[->, black, thick] (1, 4.1) -- (1, 4.9); 
		\draw[->, black, thick] (4, 4.1) -- (4, 4.9);
		\draw[->, black, thick] (5, 4.1) -- (5, 4.9);

		\draw[->, black, thick] (0, 5) -- (.9, 5);
		\draw[->, black, thick] (1.1, 5) -- (1.9, 5);
		\draw[->, black, thick] (4.1, 5) -- (4.9, 5);
		\draw[->, black, thick] (5.1, 5) -- (6, 5);

		\draw[->, black, thick] (1, 5.1) -- (1, 5.9); 
		\draw[->, black, thick] (2, 5.1) -- (2, 5.9);
		\draw[->, black, thick] (5, 5.1) -- (5, 5.9);

		\draw[->, black, thick] (1.1, 6) -- (1.9, 6);
		\draw[->, black, thick] (2.1, 6) -- (2.9, 6);
		\draw[->, black, thick] (3.1, 6) -- (3.9, 6);
		
		\draw[->, black, thick] (2, 6.1) -- (2, 7); 
		\draw[->, black, thick] (4, 6.1) -- (4, 7);
		\draw[->, black, thick] (5, 6.1) -- (5, 7);

		\filldraw[fill=gray!50!white, draw=black] (1, 0) circle [radius=.1];
		\filldraw[fill=gray!50!white, draw=black] (1, 1) circle [radius=.1];
		\filldraw[fill=gray!50!white, draw=black] (1, 2) circle [radius=.1];
		\filldraw[fill=gray!50!white, draw=black] (1, 3) circle [radius=.1];
		\filldraw[fill=gray!50!white, draw=black] (1, 4) circle [radius=.1];
		\filldraw[fill=gray!50!white, draw=black] (1, 5) circle [radius=.1];
		\filldraw[fill=gray!50!white, draw=black] (1, 6) circle [radius=.1];
		
		\filldraw[fill=gray!50!white, draw=black] (2, 0) circle [radius=.1];
		\filldraw[fill=gray!50!white, draw=black] (2, 1) circle [radius=.1];
		\filldraw[fill=gray!50!white, draw=black] (2, 2) circle [radius=.1];
		\filldraw[fill=gray!50!white, draw=black] (2, 3) circle [radius=.1];
		\filldraw[fill=gray!50!white, draw=black] (2, 4) circle [radius=.1];
		\filldraw[fill=gray!50!white, draw=black] (2, 5) circle [radius=.1];
		\filldraw[fill=gray!50!white, draw=black] (2, 6) circle [radius=.1];
		
		\filldraw[fill=gray!50!white, draw=black] (3, 0) circle [radius=.1];
		\filldraw[fill=gray!50!white, draw=black] (3, 1) circle [radius=.1];
		\filldraw[fill=gray!50!white, draw=black] (3, 2) circle [radius=.1];
		\filldraw[fill=gray!50!white, draw=black] (3, 3) circle [radius=.1];
		\filldraw[fill=gray!50!white, draw=black] (3, 4) circle [radius=.1];
		\filldraw[fill=gray!50!white, draw=black] (3, 5) circle [radius=.1];
		\filldraw[fill=gray!50!white, draw=black] (3, 6) circle [radius=.1];
		
		\filldraw[fill=gray!50!white, draw=black] (4, 0) circle [radius=.1];
		\filldraw[fill=gray!50!white, draw=black] (4, 1) circle [radius=.1];
		\filldraw[fill=gray!50!white, draw=black] (4, 2) circle [radius=.1];
		\filldraw[fill=gray!50!white, draw=black] (4, 3) circle [radius=.1];
		\filldraw[fill=gray!50!white, draw=black] (4, 4) circle [radius=.1];
		\filldraw[fill=gray!50!white, draw=black] (4, 5) circle [radius=.1];
		\filldraw[fill=gray!50!white, draw=black] (4, 6) circle [radius=.1];
		
		\filldraw[fill=gray!50!white, draw=black] (5, 0) circle [radius=.1];
		\filldraw[fill=gray!50!white, draw=black] (5, 1) circle [radius=.1];
		\filldraw[fill=gray!50!white, draw=black] (5, 2) circle [radius=.1];
		\filldraw[fill=gray!50!white, draw=black] (5, 3) circle [radius=.1];
		\filldraw[fill=gray!50!white, draw=black] (5, 4) circle [radius=.1];
		\filldraw[fill=gray!50!white, draw=black] (5, 5) circle [radius=.1];
		\filldraw[fill=gray!50!white, draw=black] (5, 6) circle [radius=.1];

		\end{tikzpicture}
		
	\end{center}	
	
	\caption{\label{vertexdomain} The chart to the left shows all six possible arrow configurations, along with the associated vertex weights. An example of a six-vertex ensemble on $\mathfrak{C}_{5; 7}$ is shown to the right.}
\end{figure}

We view $w (i_1, j_1; i_2, j_2)$ as the weight of a vertex in a six-vertex ensemble whose arrow configuration is $(i_1, j_1; i_2, j_2)$. Then, we define the \emph{weight} $w (\mathcal{E})$ of a six-vertex ensemble $\mathcal{E} \in \mathfrak{E}$ as equal to the product of the weights of all vertices in the ensemble. By induction on $L$, the stochasticity of the weights \eqref{wi1j1i2j2} can be used to deduce that $\sum_{\mathcal{E} \in \mathfrak{E}} w (\mathcal{E}) = 1$. 

Now let $\mathcal{P}$ denote the probability measure on $\mathfrak{E}$ that assigns probability $w (\mathcal{E})$ to any ensemble $\mathcal{E} \in \mathfrak{E}$. This probability measure is called the \emph{stochastic six-vertex model} on $\mathfrak{C}$ and was introduced by Gwa-Spohn \cite{SVMRSASH} in 1992.

\subsection{Infinite-Volume Translation-Invariant Gibbs Measures} 

\label{Translation}

Although our results in this paper will primarily concern the stochastic six-vertex model on the (finite) cylinder $\mathfrak{C}$, certain six-vertex measures in infinite-volume will arise as limit points of local statistics for the model in our setting. Therefore, in this section we recall a class of infinite-volume, translation-invariant Gibbs measures for the stochastic six-vertex model that was introduced in Section A.2 of \cite{CFSAEPSSVMCL}. 

These models will be defined on all of $\mathbb{Z}^2$, but let us first define the stochastic six-vertex model on the nonnegative quadrant. To that end, we fix (possibly random) boundary conditions on the nonnegative quadrant, that is, vertices on the positive $x$-axis and positive $y$-axis that are entrance sites for a directed path. For $\rho_1, \rho_2 \in [0, 1]$, a boundary condition that will be of particular interest to us is \emph{double-sided $(\rho_1, \rho_2)$-Bernoulli boundary data}, in which sites on the $y$-axis are independently entrance sites with probability $\rho_1$, and sites on the $x$-axis are independently entrance sites with probability $\rho_2$. 

Following \cite{SSVM,HSVMSRF,SHSVML}, the stochastic six-vertex model on the quadrant is defined to be a probability measure $\mathcal{P} = \mathcal{P} (b_1, b_2)$ that is the limit of a family of probability measures $\mathcal{P}_n = \mathcal{P}_n (b_1, b_2)$ defined on the set of six-vertex ensembles whose vertices are all contained in triangles of the form $\mathcal{T}_n = \{ (x, y) \in \mathbb{Z}_{\ge 0}^2: x + y \le n \}$. The first such probability measure $\mathcal{P}_1$ is supported on the unique six-vertex ensemble on $\mathcal{T}_1$ with no arrows. 

For each positive integer $n$, we define $\mathcal{P}_{n + 1}$ from $\mathcal{P}_n$ through the following Markovian update rules. Use $\mathcal{P}_n$ to sample a six-vertex ensemble $\mathcal{E}_n$ on $\mathcal{T}_n$. This gives arrow configurations (of the type shown on the left side of \Cref{vertexdomain}) to all vertices in the positive quadrant strictly below the diagonal $\mathcal{D}_n = \{ (x, y) \in \mathbb{Z}_{> 0}^2: x + y = n \}$. Each vertex on $\mathcal{D}_n$ is also given ``half'' of an arrow configuration, in the sense that it is given the directions of all entering paths but no direction of any exiting path. 

To extend $\mathcal{E}_n$ to an ensemble on $\mathbb{T}_{n + 1}$, we must ``complete'' the configurations (specify the exiting paths) at all vertices $(x, y) \in \mathcal{D}_n$. Any half-configuration can be completed in at most two ways; selecting between these completions is done randomly, according to the probabilities \eqref{wi1j1i2j2}. All choices are mutually independent. 

In this way, we obtain a random ensemble $\mathcal{E}_{n + 1}$ on $\mathcal{T}_{n + 1}$; the resulting probability measure on path ensembles with vertices in $\mathcal{T}_{n + 1}$ is denoted $\mathcal{P}_{n + 1}$. Define the limit $\mathcal{P} = \lim_{n \rightarrow \infty} \mathcal{P}_n$. 

Let us next explain how to use this six-vertex model on the quadrant to define a certain class of translation-invariant six-vertex models on all of $\mathbb{Z}^2$. To that end, we first define $\kappa > 1$ and $\varphi: [0, 1] \rightarrow [0, 1]$ by 
\begin{flalign}
\label{kappafunction}
\kappa = \displaystyle\frac{1 - b_1}{1 - b_2} > 1; \qquad \varphi (z) = \displaystyle\frac{\kappa z}{(\kappa - 1) z + 1}, \quad \text{for any $z \in [0, 1]$.}
\end{flalign} 

Now, fix $\rho \in [0, 1]$, and consider the stochastic six-vertex model on the nonnegative quadrant with double-sided $\big( \varphi (\rho), \rho \big)$-Bernoulli initial data; denote the associated measure on the set of six-vertex ensembles on $\mathbb{Z}_{> 0}^2$ by $\mu_0 = \mu_0 (\rho)$. It was shown as Lemma A.2 of \cite{CFSAEPSSVMCL} that this measure is \emph{translation-invariant} in the following sense. 

Sample a six-vertex ensemble $\mathcal{E}$ with respect to $\mu_0$. For any $(x, y) \in \mathbb{Z}_{\ge 0}^2$, let $\chi^{(v)} (x, y)$ denote the indicator for the event that an arrow in $\mathcal{E}$ vertically exits from $(x, y)$; that is, letting $\big( i_1 (x, y), j_1 (x, y); i_2 (x, y), j_2 (x, y) \big)$ denote the arrow configuration at $(x, y)$, we set $\chi^{(v)} (x, y) = i_1 (x, y + 1) = i_2 (x, y)$. Similarly, $\chi^{(h)} (x, y) = j_1 (x + 1, y) = j_2 (x, y)$ denotes the indicator for the event that an arrow in $\mathcal{E}$ horizontally exits through $(x, y)$. Then, for any $(x, y) \in \mathbb{Z}_{\ge 0}^2$, the random variables $\big\{ \chi^{(h)} (x, y + 1), \chi^{(h)} (x, y + 2), \ldots \big\} \cup \big\{ \chi^{(v)} (x + 1, y), \chi^{(v)} (x + 2, y), \ldots \big\}$ are mutually independent. Furthermore, each $\chi^{(h)} (x, y)$ and $\chi^{(v)} (x, y)$ is a $0-1$ Bernoulli random variable with mean $\varphi (\rho)$ and $\rho$, respectively. 

Observe in particular that, if $(x, y) = (0, 0)$, then this is the definition of double-sided $\big( \varphi (\rho), \rho \big)$-Bernoulli initial data for $\mu_0$. The fact that it is also true for any $(x, y) \in \mathbb{Z}_{\ge 0}^2$ allows us to define a family of measures $\mu_N = \mu_N (\rho)$ as follows. For each integer $N \ge 1$, let $\mu_N = \mu_N (\rho)$ denote the measure on $\mathbb{Z}_{\ge -N}^2$ formed by translating $\mu_0$ by $(-N, -N)$ (that is, $N$ spaces down and to the left). Due to the translation-invariance of $\mu_0$ mentioned above, these measures are compatible in the sense that $\mu_M$ is the restriction of $\mu_N$ to $\mathbb{Z}_{> -M}^2$, for any integers $N \ge M \ge 0$. 

Therefore, we can define the limit $\mu = \mu (\rho) = \lim_{N \rightarrow \infty} \mu_N (\rho)$ on all of $\mathbb{Z}^2$. By the translation-invariance of $\mu_0$, this limit is quickly seen to be invariant with respect to any vertical or horizontal shift. Thus, $\mu (\rho)$ is an \emph{infinite-volume, translation-invariant Gibbs measure} for the stochastic six-vertex model. 

Such measures are typically classified by their \emph{slope}, which under the above notation is defined to be the pair $\big( \mathbb{E} \big[ \chi^{(v)} (0, 0) \big], \mathbb{E} \big[ \chi^{(h)} (0, 0) \big] \big)$. In particular, the slope of $\mu (\rho)$ is equal to $\big( \rho, \varphi (\rho) \big)$. Thus, the above procedure produces a one-parameter family of infinite-volume, translation-invariant Gibbs measures for the stochastic six-vertex model.

\subsection{Results} 

\label{Height} 

In this paper we will be interested in the stochastic six-vertex model in the \emph{thermodynamic limit}, that is, in properties of the measure $\mathcal{P}$ as $N$ tends to $\infty$ (while $0 < b_1 < b_2 < 1$ and $\lambda > 0$ remain fixed). We will in particular analyze two properties, namely, a limit shape phenomenon and the local statistics, of this model. We will see that the former determines the latter, so we begin by explaining the limit shape phenomenon in more detail. 

To that end, we require some additional notation. With any six-vertex ensemble $\mathcal{E}$ on $\mathfrak{C} = \mathfrak{C}_{N; L}$ under boundary condition $\psi = \big( \psi (x) \big) \in \{ 0, 1 \}^N$, we associate a \emph{particle configuration} $\eta = \big( \eta_y (x) \big)$, where $x$ and $y$ range over $\mathfrak{T}_N$ and $\{ 0, 1, \ldots , L \}$, respectively. Specifically, we set $\eta_y (x)$ to be the indicator for the event that there exists a (vertical) arrow in $\mathcal{E}$ directed from $(x, y)$ to $(x, y + 1)$; this coincides with the quantity $\chi^{(v)} (x, y)$ from \Cref{Translation}.\footnote{Our results to be described below can also be formulated and established in terms of the horizontal edge indicators $\big\{ \chi^{(h)} (x, y) \big\}$ but for brevity we will not pursue this here.} For instance, $\eta_0 (x) = \psi (x)$ for each $x \in \mathfrak{T}_N$. The measure $\mathcal{P}$ from \Cref{StochasticModel} induces a probability measure on the set of particle configurations $\eta$ with $\eta_0 = \psi$; we also denote this measure on particle configurations by $\mathcal{P}$. 

We would first like to understand the global limit of a randomly chosen particle configuration $\eta$ (or path ensemble $\mathcal{E}$) sampled from the measure $\mathcal{P}$ (under some given boundary conditions), as $N$ tends to $\infty$. More specifically, we will show that the normalized random function $\eta_{\lfloor y N \rfloor} \big( \lfloor x N \rfloor \big)$ weakly converges in probability to a deterministic function $G_y (x)$ on $(x, y) \in \mathbb{T} \times [0, \lambda]$, where $\mathbb{T} = \mathbb{R} / \mathbb{Z}$ denotes a torus; this is known as a \emph{limit shape phenomenon}. 

Such a result is given by the following theorem, which was originally predicted by Gwa-Spohn as equation (5) of \cite{SVMRSASH} and then later by Reshetikhin-Sridhar as Proposition 3 of \cite{LSSSVM}. 

\begin{thm}

\label{sixvertexcylinderglobal} 

Fix real numbers $0 < b_1 < b_2 < 1$ and $\lambda, \varepsilon > 0$, and a measurable function $\Psi: \mathbb{T} \rightarrow [0, 1]$. Recalling $\varphi$ from \eqref{kappafunction}, let $G_y (x) = G (x, y)$ denote the entropy solution to the conservation law  
\begin{flalign}
\label{gxydefinition} 
\displaystyle\frac{\partial}{\partial y} G (x, y) + \displaystyle\frac{\partial}{\partial x} \Big( \varphi \big( G(x, y) \big) \Big) = 0,
\end{flalign}

\noindent on $\mathbb{T} \times [0, \lambda]$, with initial data given by $G_0 (x) = G(x, 0) = \Psi (x)$ for each $x \in \mathbb{T}$. 

For each integer $N \ge 1$, let $\psi = \psi^{(N)} = \big( \psi (x) \big) = \big( \psi^{(N)} (x) \big)_{x \in \mathfrak{T}_N} \in \{ 0, 1 \}^N$ denote a boundary condition, and assume that
\begin{flalign}
\label{limitpsi}
\displaystyle\lim_{N \rightarrow \infty} \displaystyle\sup_{0 \le x_1 \le x_2 \le 1} \left| \displaystyle\frac{1}{N} \displaystyle\sum_{j = \lfloor x_1 N \rfloor }^{\lfloor x_2 N \rfloor} \psi^{(N)} (j) - \displaystyle\int_{x_1}^{x_2} \Psi (x) dx \right| = 0.
\end{flalign}

Furthermore, for each $N \ge 1$, let $\eta = \eta^{(N)} = \big( \eta_y (x) \big) = \big( \eta_y^{(N)} (x) \big)$ denote a particle configuration sampled with respect to the measure $\mathcal{P}_N \big( b_1, b_2; \lambda; \psi^{(N)} \big)$ from \Cref{StochasticModel}. Then, 
\begin{flalign}
\label{etasumestimaten}
\displaystyle\lim_{N \rightarrow \infty} \mathbb{P} \left[ \displaystyle\max_{\substack{0 \le X_1 \le X_2 < N \\ 0 \le Y_1 \le Y_2 \le L}} \bigg| \displaystyle\frac{1}{N^2} \displaystyle\sum_{y = Y_1}^{Y_2} \displaystyle\sum_{x = X_1}^{X_2} \eta_y (x) - \displaystyle\int_{Y_1 / N}^{Y_2 / N} \displaystyle\int_{X_1 / N}^{X_2 / N} G_y (x) dx dy \bigg| > \varepsilon \right] = 0. 
\end{flalign} 

\end{thm}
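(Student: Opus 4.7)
The strategy is to reinterpret the stochastic six-vertex model as a discrete-time interacting particle system on the torus $\mathfrak{T}_N$, viewing the rows $\eta_0, \eta_1, \ldots, \eta_L$ as successive snapshots and the row-to-row transition as one sequential sweep governed by the weights \eqref{wi1j1i2j2}. After passing to the associated height function, \eqref{etasumestimaten} asserts uniform (in rectangles) weak convergence of that height function to the entropy solution of \eqref{gxydefinition}, which is the standard formulation of a hydrodynamic limit. Since the stochastic six-vertex model is not known to be determinantal, and since the Bethe equations resist direct analysis for generic initial data, the plan is to transplant the attractive-particle-system machinery of Bahadoran--Guiol--Ravishankar--Saada from continuous to discrete time.

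The first step is to establish the two structural inputs on which everything else rests: monotonicity and attractivity. Monotonicity says the dynamics preserve the coordinate-wise partial order on $\{0,1\}^{\mathfrak{T}_N}$, while attractivity produces an explicit coupling of two six-vertex models started from ordered initial data under which the ordering is preserved for all $y$. Both will be obtained by exhibiting the basic coupling as a marginal of the multi-class (higher-rank) stochastic six-vertex model of Bazhanov and Jimbo: treating the ``extra'' arrows in the larger configuration as a second, lower-class species, the stochasticity of the multi-class $R$-matrix forces the classes to stay ordered along each vertical line, provided one allows classes to decrease when two particles meet.

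The second step is to classify the extremal translation-invariant stationary measures of the associated infinite-volume particle system; adapting Liggett's classification for ASEP, and using the attractivity and coupling from Step 1, these turn out to be exactly the one-parameter family $\mu(\rho)$ from \Cref{Translation}, indexed by density $\rho \in [0,1]$. The third step is to prove the hydrodynamic limit in the special case of double-sided Bernoulli initial data. When the limit profile is a rarefaction fan, it can be read off from the algebraic asymptotics of Borodin--Corwin--Gorin; when it is a shock, no such integrable formula is available, so I would instead adapt the Andjel--Vares framework, using attractivity to trap the Riemann-type initial condition between two stationary initial conditions at the two shock densities and invoking Step 2 to identify the limit as the unique shock solution of \eqref{gxydefinition}. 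The fourth step is to invoke a discrete-time version of the Bahadoran--Guiol--Ravishankar--Saada scheme, which, given Steps 1--3, upgrades the result to arbitrary measurable $\Psi$ satisfying \eqref{limitpsi} with limit equal to the entropy solution of \eqref{gxydefinition}; the uniformity over all rectangles $[X_1,X_2] \times [Y_1,Y_2]$ in \eqref{etasumestimaten} then follows from approximating by step functions, a dyadic covering, and a union bound.

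The main obstacle is the combination of the shock case of Step 3 and the adaptation of the Andjel--Vares and Bahadoran--Guiol--Ravishankar--Saada frameworks to the sequential, discrete-time setting. In the continuous-time ASEP literature, these schemes lean on the generator, but the generator of the stochastic six-vertex model is too intricate to manipulate directly; every such generator-based step must therefore be replaced by a purely probabilistic argument driven by monotonicity, attractivity, and properties of the stationary measures $\mu(\rho)$ from Step 2. A secondary difficulty is that the limit $G$ is generically not continuous (shocks can form even from smooth initial data), which rules out smoothness-based approaches such as Yau's relative entropy method and forces one to work throughout with the entropy-solution formulation of \eqref{gxydefinition}.
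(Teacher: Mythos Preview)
Your plan is essentially the paper's own: monotonicity and attractivity via the multi-class coupling (Section 2), Liggett-style classification of translation-invariant stationary measures (Section 3), double-sided Bernoulli hydrodynamics handled by prior integrable asymptotics in the rarefaction case and by an Andjel--Vares adaptation in the shock case (Section 4), and then the Bahadoran--Guiol--Ravishankar--Saada scheme to reach arbitrary initial data (Section 5.1). The one step you omit is that the paper carries out Steps 2--4 for the particle system on the line $\mathbb{Z}$, not on the torus, obtaining first a half-plane hydrodynamic limit (Theorem 5.1), and only then transfers this to the cylinder $\mathfrak{C}_{N;L}$ via a finite-propagation-speed coupling between the cylinder and half-plane models (Proposition 5.6 and Section 5.2); this detour is needed because the stationary-measure classification and the Riemann-problem analysis are naturally posed on $\mathbb{Z}$, not on $\mathfrak{T}_N$.
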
 

\begin{rem}
	
The equation \eqref{gxydefinition} cannot be interpreted in the strong sense, since its solutions can develop singularities (called \emph{shocks}) in finite time. However, weak solutions to \eqref{gxydefinition} are not unique, and so one requires a way of specifying one ``physically relevant'' solution to the equation. This is known as the \emph{entropy solution} and can be defined as the unique weak solution to \eqref{gxydefinition} satisfying the \emph{entropy inequality}, which in this case states that 
\begin{flalign*}
\displaystyle\int_0^{\lambda} \displaystyle\int_{\mathbb{T}} \bigg( \partial_y f (s, t) \big| G (s, t) - c \big| + \partial_x f (s, t) \Big| \varphi \big( & G (s, t) \big) - \varphi (c) \Big|  \bigg) ds dt + \displaystyle\int_{\mathbb{T}} \big| G (s, 0) - c \big| f(s, 0) ds \ge 0,
\end{flalign*}  

\noindent for any nonnegative, smooth function $f: \mathbb{T} \times [0, \lambda] \rightarrow \mathbb{R}$ and constant $c \in \mathbb{R}$; see equation (2.6) of the book \cite{SCL}. For bounded, measurable initial data, the existence and uniqueness of such a solution (measurable and bounded on $\mathbb{T} \times [0, \lambda]$, and also continuous in $t$) is due to Kru\v{z}kov \cite{QSIV}; see also Theorem 2.3.5 of \cite{SCL}.

\end{rem}

Next, let us describe our results on \emph{local statistics} for the stochastic six-vertex model; these concern the large $N$ limit of the joint distribution (in a random six-vertex ensemble sampled from $\mathcal{P}$) of the arrow configurations assigned to vertices that are in a finite neighborhood of some $(X, Y) \in \mathfrak{C}_{N; L}$. The following theorem states that these local statistics are given by the infinite-volume translation-invariant Gibbs measures $\mu (\rho)$ (with slope $\big( \rho, \varphi (\rho) \big)$) described in \Cref{Translation}, where $\rho$ is determined by the global limit shape \Cref{sixvertexcylinderglobal}. In what follows, for any six-vertex ensemble $\mathcal{E}$ on a domain $\mathcal{D}$, we denote by $\mathcal{E} |_{\Lambda}$ the restriction of $\mathcal{E}$ to some subset $\Lambda \subseteq \mathcal{D}$.

\begin{thm}
	
\label{sixvertexlocalcylinder}

Adopt the notation and assumptions of \Cref{sixvertexcylinderglobal}. Fix an integer $k > 0$ and some pair $(u, v) \in \mathbb{T} \times (0, \lambda)$ such that that $(u, v)$ is a continuity point of $G$; denote $G (u, v) = \rho$. For each integer $N \ge 1$, let $U_N \in \mathfrak{T}_N$ and $V_N \in [k + 1, L]$ be integers such that $\lim_{N \rightarrow \infty} \frac{U_N}{N} = u$ and $\lim_{N \rightarrow \infty} \frac{V_N}{N} = v$. 

Sample six-vertex ensembles $\mathcal{E} = \mathcal{E}^{(N)}$ on $\mathfrak{C}$ with respect to the measure $\mathcal{P}_N \big( b_1, b_2; \lambda; \psi^{(N)} \big)$ from \Cref{StochasticModel}, and $\mathcal{F} = \mathcal{F}_{\rho}$ on $\mathbb{Z}^2$ with respect to the measure $\mu (\rho)$ from \Cref{Translation}. Then, the law of $\mathcal{E} |_{[U_N - k, U_N + k] \times [V_N - k, V_N + k]}$ converges to that of $\mathcal{F} |_{[-k, k] \times [-k, k]}$, as $N$ tends to $\infty$.

\end{thm}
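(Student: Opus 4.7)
The plan is to follow the framework of Bahadoran-Mountford \cite{CLEP} signposted in the introduction: compare $\mathcal{E}^{(N)}$ to a copy of the six-vertex dynamics drawn from the stationary measure $\mu(\rho)$ and invoke the approximate coupling \Cref{rhotksum2} to force the two configurations to agree on a bounded window around $(U_N,V_N)$ with probability tending to one. Concretely, fix $\delta>0$ small and set $Y_N=V_N-\lfloor\delta N\rfloor$. Construct a second ensemble $\tilde{\mathcal{E}}^{(N)}$ on $\mathfrak{C}$ whose row of vertical-arrow indicators at height $Y_N$ is sampled from the one-row marginal of $\mu(\rho)$ and whose subsequent rows are generated by the same Markovian update rules as in \Cref{StochasticModel}. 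Because $\mu(\rho)$ is translation-invariant and stationary under one vertical step of the dynamics, the restriction $\tilde{\mathcal{E}}^{(N)}|_{[U_N-k,U_N+k]\times[V_N-k,V_N+k]}$ has exactly the target law $\mathcal{F}|_{[-k,k]\times[-k,k]}$. It therefore suffices to couple $\mathcal{E}^{(N)}$ and $\tilde{\mathcal{E}}^{(N)}$ so that the two window-restrictions coincide with probability tending to one.

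The hypothesis that $(u,v)$ is a continuity point of $G$ is used to prepare $\mathcal{E}^{(N)}$ for this coupling. Given $\varepsilon>0$, continuity yields $\delta,\delta_0>0$ with $|G(x,v-\delta)-\rho|<\varepsilon$ for all $|x-u|\le\delta_0$. Applying \Cref{sixvertexcylinderglobal} to compare the empirical density of $\eta^{(N)}$ at row $Y_N$ on the interval $[U_N-\delta_0 N,U_N+\delta_0 N]$ with $\int G(x,v-\delta)\,dx$, the density of $\mathcal{E}^{(N)}$ along this row is uniformly $\rho+O(\varepsilon)$ on a macroscopic spatial neighborhood of $U_N$ with probability tending to one. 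This approximate flatness is exactly the input required by \Cref{rhotksum2}.

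Next I would couple $\mathcal{E}^{(N)}$ and $\tilde{\mathcal{E}}^{(N)}$ from row $Y_N$ onwards using the attractive coupling produced by the multi-class stochastic six-vertex model described in the introduction. The approximate coupling statement \Cref{rhotksum2} then asserts that after the $\lfloor\delta N\rfloor$ vertical update steps from row $Y_N$ to row $V_N$, the expected number of sites in any fixed bounded window near $(U_N,V_N)$ where the two configurations disagree is $o(1)$. Combined with the deterministic six-vertex consistency rules, this yields agreement of every arrow configuration inside $[U_N-k,U_N+k]\times[V_N-k,V_N+k]$ with probability tending to one, and hence the desired convergence in law of $\mathcal{E}^{(N)}|_{[U_N-k,U_N+k]\times[V_N-k,V_N+k]}$ to $\mathcal{F}|_{[-k,k]\times[-k,k]}$.

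The principal obstacle, as the introduction stresses, is producing the approximate coupling \Cref{rhotksum2} itself. For the ASEP, Bahadoran-Mountford reduced it to Kosygina's two-block estimate \cite{BEHSL}, which in turn rests on equating two generator-based expressions for the total current. In the stochastic six-vertex setting the discrete-time, sequentially-updated generator is too unwieldy for that approach, so the analogous current identities must be re-derived using the monotonicity and attractivity of the model together with the explicit description and classification of the translation-invariant stationary measures $\mu(\rho)$. A secondary subtlety is that $\mu(\rho)$ lives on $\mathbb{Z}^2$ while the model of interest lives on $\mathfrak{C}$; this is handled by taking $\delta$ small enough that propagation during $\lfloor\delta N\rfloor$ vertical steps cannot wrap the torus into a bounded window around $U_N$, so the stationary comparison process may be viewed as living on a sufficiently large subinterval of $\mathfrak{T}_N$.
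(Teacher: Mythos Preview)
Your outline correctly identifies the overall architecture (restrict to a row at macroscopic distance below $V_N$, use \Cref{sixvertexcylinderglobal} to ensure approximate density $\rho$ there, then couple to a stationary process and transfer the cylinder-to-plane comparison), and this is indeed the route the paper takes. However, there is a genuine gap in how you invoke \Cref{rhotksum2}.

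You write that \Cref{rhotksum2} ``asserts that after the $\lfloor\delta N\rfloor$ vertical update steps \ldots\ the expected number of sites in any fixed bounded window near $(U_N,V_N)$ where the two configurations disagree is $o(1)$.'' That is not what \Cref{rhotksum2} says. Its conclusion is that the \emph{total} number of discrepancies $\sum_{x}|\eta_Y(x)-\xi_Y^{(\rho)}(x)|$ over an interval of length $\sim N$ is at most $\varsigma N$ with high probability, for any fixed $\varsigma>0$. This is a macroscopic $o(N)$ bound, not a microscopic $o(1)$ bound in a window of size $2k+1$; in particular, $\varsigma N$ discrepancies spread over $N$ sites is perfectly consistent with a positive probability of seeing a discrepancy in any given bounded window. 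There is no translation-invariance of the coupled pair to average this out, so you cannot pass from the global bound to a local one by a density argument.

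The paper bridges this gap with an additional, nontrivial layer: \Cref{couplexiaeta}. Rather than coupling $\eta$ directly to $\xi^{(\rho)}$, one sandwiches $\eta$ between two stationary processes $\xi^{(\rho-\varepsilon)}\le\eta\le\xi^{(\rho+\varepsilon)}$ and shows that \emph{every} uncoupled particle in $\eta\setminus\xi^{(\rho+\varepsilon)}$ (and symmetrically for the lower bound) disappears from the window $[-N,N]$ with high probability. The mechanism for this (Lemmas \ref{abfprobability} and \ref{abfprobability1}) is a second-class-particle argument: using the auxiliary rarefaction-fan processes $\omega^{(X)}$ and $\zeta^{(X)}$ from \Cref{etaomegazeta}, one shows that any surviving uncoupled particle is crossed by order-$N$ many fourth-class particles, each of which gives an independent chance (bounded away from zero) to couple, so survival has exponentially small probability. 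Only after this sandwich is established does one let $\varepsilon\downarrow 0$ along a sequence to conclude that $\eta$ agrees with $\xi^{(\rho)}$ on the bounded window. Your proposal omits this entire step; \Cref{rhotksum2} enters only as the input $B_N(\varsigma)^c$ guaranteeing that there are $O(\varsigma N)$ discrepancies to begin with, which is what makes the second-class-particle count work.
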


\begin{rem} 
	
	Both \Cref{sixvertexcylinderglobal} and \Cref{sixvertexlocalcylinder} admit analogs when the cylinder $\mathfrak{C}_{N; L}$ is replaced by an $N \times L$ rectangle, after fixing boundary data along its left and bottom boundaries and allowing free boundary conditions along its top and right boundaries. In this case, the global limit is again given by the entropy solution to the equation \eqref{gxydefinition} (now on $[0, 1] \times [0, \lambda]$) with specified boundary conditions along the $x$-axis and $y$-axis. Moreover, the local statistics are given by the infinite-volume translation-invariant Gibbs measures from \Cref{Translation} whose slopes are again prescribed by the global law. It is plausible that these results can be established through similar methods as used to show \Cref{sixvertexcylinderglobal} and \Cref{sixvertexlocalcylinder}, but we will not pursue this here for the sake of brevity.

\end{rem}

The remainder of this paper is organized as follows. In \Cref{Coupling} we first explain how to reinterpret the stochastic six-vertex model as an interacting particle system, and then provide monotone and attractive couplings between several such systems. 

We next establish the limit shape result given by \Cref{sixvertexcylinderglobal} for the stochastic six-vertex model. This will require understanding the limit shape of the model in the case of double-sided Bernoulli initial data, which will in turn require a classification of the extremal, translation-invariant, stationary measures for the stochastic six-vertex model. Thus, in \Cref{StationaryTranslation} we will classify all such measures. Then, we will establish the hydrodynamical limit for the stochastic six-vertex model with double-sided Bernoulli initial data in \Cref{HeightLimit}, which will be used in \Cref{LimitGeneral} to prove the limit shape \Cref{sixvertexcylinderglobal} for the stochastic six-vertex model with arbitrary initial data. 

After this, we will establish the local statistics result \Cref{sixvertexlocalcylinder}. In particular, we will prove this statement in \Cref{StationaryConverge}, conditional on a certain ``approximate coupling'' result, which will then be shown in \Cref{ProofApproximate}.

\subsection*{Acknowledgments}

The author heartily thanks Alexei Borodin, Ivan Corwin, and Jeffrey Kuan for enlightening discussions. The author is also grateful to the anonymous referee for helpful suggestions on an earlier draft of this manuscript. This work was partially supported by the NSF Graduate Research Fellowship under grant number DGE1144152 and NSF grant DMS-1664619.

\section{Monotonicity and Couplings} 

\label{Coupling}

The proofs of \Cref{sixvertexcylinderglobal} and \Cref{sixvertexlocalcylinder} will largely use certain monotonicity results and couplings that we provide in this section. However, before describing them, we first in \Cref{StochasticVertexLine} explain a way of sampling the stochastic six-vertex model on the discrete upper half-plane through an interacting particle system. We then establish a monotonicity result for the stochastic six-vertex model in \Cref{Monotonicity}. Next, in \Cref{HigherRank}, we recall the definition of the multi-class stochastic six-vertex model from \cite{SM}, which will be used to introduce the coupling of interest to us in \Cref{CouplingHigherRank}.

\subsection{An Associated Particle System} 

\label{StochasticVertexLine}

In this section we provide an alternative (equivalent) definition of the stochastic six-vertex model as an interacting particle system on $\mathbb{Z}$, partially following Section 2.2 of \cite{SSVM}. 

Here, we will consider the stochastic six-vertex model on the discrete upper half-plane $\mathfrak{H} = \mathbb{Z} \times \mathbb{Z}_{\ge 0}$ with some boundary condition $\psi = \big( \psi (x) \big)_{x \in \mathbb{Z}} \in \{ 0, 1 \}^{\mathbb{Z}}$; let the set of six-vertex ensembles on $\mathfrak{H}$ with boundary data $\psi$ be denoted by $\mathfrak{E}_{\psi}$. As explained in \Cref{Height}, any six-vertex ensemble $\mathcal{E} \in \mathfrak{E}_{\psi}$ can be expressed through a particle configuration $\eta = \big\{ \eta_t (x) \big\}$, where $(x, t)$ ranges over $\mathfrak{H}$; under this notation, $\eta_t (x)$ is the indicator for the existence of a vertical arrow in $\mathcal{E}$ directed from $(x, t)$ to $(x, t + 1)$. As in \cite{SSVM}, we will view these vertical arrows as particles; the parameters $x \in \mathbb{Z}$ and $t \in \mathbb{Z}_{\ge 0}$ will index space and time, respectively. Thus, $\eta_t (x)$ denotes the indicator for the existence of a particle at location $x$ at time $t$; arrow conservation then implies that the total number of particles is conserved over time. 

\emph{Initial data} at time $t = 0$ for this particle system is then prescribed by the boundary data; set $\eta_0 (x) = \psi (x)$ for each $x \in \mathbb{Z}$. We assume for the moment that the system has finitely many particles (this assumption will later be removed), which is equivalent to the condition that $\sum_{x = -\infty}^{\infty} \psi (x) < \infty$. These particles will then jump according to certain stochastic dynamics, defined below. 

In what follows, it will be useful to \emph{tag} the particles of the model, meaning that we track their evolution over time by indexing them based on initial position. Specifically, let the initial positions of the particles be denoted by $\textbf{p}_0 = \big( p_0 (-M), p_0 (1 - M), \ldots , p_0 (N) \big)$, so that there are $M + N + 1$ particles in the system; then, $\eta_0 (x) = \psi (x) = \textbf{1}_{x \in \textbf{p}_0}$ for each $x \in \mathbb{Z}$. We order the particles of $\textbf{p}_0$ such that $p_0 (-M) < p_0 (1 - M) < \cdots < p_0 (N)$. The particle initially at site $p_0 (k)$ will be referred to as \emph{particle $k$} for each $k \in [-M, N]$. For each $k \in \mathbb{Z}$ and $t > 0$, $p_t (k)$ will denote the position of particle $k$ at time $t$. For any $t \in \mathbb{Z}_{\ge 0}$, we set $p_t (i) = -\infty$ for each $i < -M$ and $p_t (i) = \infty$ for each $i > N$.  

\begin{definition} 
	
\label{sixvertexparticles1}

Given the locations $\textbf{p}_{t - 1} = \big( p_{t - 1} (-M), p_{t - 1} (1 - M), \ldots , p_{t - 1} (N) \big)$ of all particles at some time $t - 1 \ge 0$, their locations $\textbf{p}_t = \big( p_t (-M), p_t (1 - M), \ldots , p_t (N) \big)$ at time $t$ are defined according to the following stochastic procedure.

\begin{enumerate} 
	
\item Let $\big\{ \chi_t (x) \big\}_{x \in \mathbb{Z}}$ and $\big\{ j_t (x) \big\}_{x \in \mathbb{Z}}$ denote a sequence of mutually independent random variables, with each $\chi_t (x)$ a $b_1$-Bernoulli $0-1$ random variable and each $j_t (x)$ chosen according to the $b_2$-geometric distribution. Specifically, for each $x \in \mathbb{Z}$, we set $\mathbb{P} \big[ \chi_t (x) = 1] = b_1 = 1 - \mathbb{P} \big[ \chi_t (x) = 0 \big]$ and $\mathbb{P} \big[ j_t (x) = r \big] = (1 - b_2) b_2^{r - 1}$ for any integer $r \ge 1$. 

\item Let $k \in [-M, N]$ be an integer, and assume that $p_t (k - 1)$ has been set; denote $x = p_{t - 1} (k)$. 

\begin{enumerate}

\item If $p_t (k - 1) < x$, then set $p_t (k) = \min \big\{ x + j_t (x), p_{t - 1} (k + 1) \big\}$ if $\chi_t (x) = 0$ and $p_t (k) = x$ if $\chi_t (x) = 1$.

\item If $p_t (k - 1) = x$, then set $p_t (k) = \min \big\{ x + j_t (x), p_{t - 1} (k + 1) \big\}$. 

\end{enumerate}
	
\end{enumerate}

\end{definition}

Stated alternatively, for each $k \in \{ -M, 1 - M, \ldots , N \}$ (in that order), particle $k$ jumps some nonnegative number of spaces to the right, as follows. It will first decide whether to move or stay. If particle $k - 1$ has not jumped onto particle $k$'s original position $p_{t - 1} (k) = x$, then particle $k$ will choose to stay or move with probabilities $b_1$ and $1 - b_1$, respectively; this is defined by the random variable $\chi_t (x)$. If instead particle $k - 1$ jumped onto particle $k$'s original position, then particle $k$ cannot stay and must move. If particle $k$ decides to move, then it will jump to the right according to a $b_2$-geometric distribution (given by $j_t (x)$), ending at the time $t - 1$ location of particle $k + 1$ if it attempts to jump to the right of it.

This provides a way of sampling a random \emph{particle position sequence} $\textbf{p} = \textbf{p}_t = (\textbf{p}_t) = \big( \textbf{p}_t (k) \big)$ under some initial data $\psi = \big( \psi (x) \big)$. By setting $\eta_t (x) = \textbf{1}_{x \in \textbf{p}_t}$ for any $(x, t) \in \mathfrak{H}$, this determines a random particle configuration $\eta = \eta_t = (\eta_t) = \big( \eta_t (k) \big)$, which in turn determines a random six-vertex ensemble $\mathcal{E} \in \mathfrak{E}_{\psi}$. Denote the probability of selecting such an $\mathcal{E}$ under this procedure by $p(\mathcal{E})$. 

It is quickly observed that $-\infty < p_t (k) < \infty$ almost surely for each $k \in [-M, N]$ and $t \in \mathbb{Z}_{\ge 0}$, meaning that the associated vertex ensemble $\mathcal{E}$ only contains finitely vertices that are not assigned arrow configuration $(0, 0; 0, 0)$. Thus, the weight $w (\mathcal{E})$ (recall \Cref{StochasticModel}) of any such ensemble is almost surely well-defined. The following proposition, which was originally observed in Section 2.2 of \cite{SSVM} and can quickly be verified from the above definitions, indicates that above induced measure on $\mathfrak{E}_{\psi}$ coincides with the six-vertex measure introduced in \Cref{StochasticModel}.

\begin{lem}[{\cite[Section 2.2]{SSVM}}]
	
\label{pw} 

Under the above notation, we have that $p (\mathcal{E}) = w (\mathcal{E})$ for any six-vertex ensemble $\mathcal{E} \in \mathfrak{E}_{\psi}$. 	

\end{lem}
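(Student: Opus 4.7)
The plan is to decompose the sampling by time step and show, by induction on $t \geq 0$, that the conditional probability under \Cref{sixvertexparticles1} of the arrow configurations in row $t$, given the particle positions $\textbf{p}_{t-1}$, equals the product of the vertex weights $w$ over all vertices in row $t$. Multiplying across rows then yields $p(\mathcal{E}) = w(\mathcal{E})$, with the base case $t = 0$ being trivial since $\eta_0 = \psi$ deterministically.

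For the per-row analysis, I would observe that the configurations in row $t$ are determined by $\textbf{p}_{t-1}$ (which prescribes the incoming vertical arrows from below) together with, for each particle $k$, its ``journey'' in row $t$---the horizontal path segment from $(p_{t-1}(k), t)$ to $(p_t(k), t)$. Processing the particles in the order dictated by \Cref{sixvertexparticles1}, I would read off the vertex configurations along each journey and match them to the independent random choices made by the algorithm. Concretely: if particle $k$ at position $x$ stays (case 2(a) with $\chi_t(x) = 1$), the probability $b_1$ equals $w(1, 0; 1, 0)$ at the unique vertex it affects; if it turns in case 2(a), the probability $1 - b_1$ equals $w(1, 0; 0, 1)$; and in case 2(b) the particle is forced to turn through a $(1, 1; 1, 1)$ vertex of weight $1$, without invoking $\chi_t(x)$. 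The subsequent horizontal travel of $r \geq 1$ steps passes through $r - 1$ intermediate vertices of configuration $(0, 1; 0, 1)$ and weight $b_2$, followed by a terminal vertex; the geometric probabilities $\mathbb{P}[j_t(x) = r] = (1 - b_2) b_2^{r - 1}$ and $\mathbb{P}[j_t(x) \geq r] = b_2^{r - 1}$ match the resulting product of $b_2$'s with the terminal weight $w(0, 1; 1, 0) = 1 - b_2$ (if the cap $p_{t-1}(k+1)$ is not hit) or $w(1, 1; 1, 1) = 1$ (if it is hit).

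Having matched the probability of each particle's random choices to the product of vertex weights along its journey, I would verify that the journeys of distinct particles visit disjoint sets of vertices, and that every vertex of row $t$ carrying a non-$(0, 0; 0, 0)$ configuration lies in exactly one journey; the remaining vertices contribute weight $1$ without affecting either side of the identity. Multiplying through then yields the per-row equality, and the induction closes.

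The main subtlety lies in the ``cap'' case: when particle $k$'s journey ends at $p_{t-1}(k+1)$, the terminal vertex carries configuration $(1, 1; 1, 1)$, and particle $k + 1$ must subsequently be processed in case 2(b), which suppresses its own stay-or-move choice. Verifying that the shared $(1,1;1,1)$ vertex is not counted twice, and that its weight of $1$ is consistent with the absence of a $\chi_t$-contribution from particle $k+1$, is the principal bookkeeping step. Once that accounting is pinned down, the matching of weights and probabilities along each particle's journey is routine.
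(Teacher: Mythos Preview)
Your proposal is correct and is precisely the intended verification. The paper does not actually supply a proof of this lemma: it cites \cite[Section 2.2]{SSVM} and remarks that the statement ``can quickly be verified from the above definitions,'' so your row-by-row matching of the Bernoulli and geometric probabilities to the vertex weights \eqref{wi1j1i2j2}, together with the bookkeeping at the $(1,1;1,1)$ vertices where one particle's journey ends at the next particle's starting site, is exactly the direct check the paper has in mind.
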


Based on the sampling procedure given by \Cref{sixvertexparticles1}, we can define the stochastic six-vertex model with infinitely many particles through a method similar to the one implemented by Harris \cite{ASMPGM} in the context of the ASEP on $\mathbb{Z}$. More specifically, let $\textbf{p}_0 = \big( p_0 (k) \big)_{k \in \mathbb{Z}}$ denote a possibly infinite particle position sequence on $\mathbb{Z}$. For a fixed $N \in \mathbb{Z}_{\ge 1}$, the stochastic six-vertex model $\textbf{p}_t = \big( p_t (k) \big)$ for $t \in [0, N]$ is defined as follows. 

First sample mutually independent random variables $\big\{ \chi_t (x) \big\}$ and $\big\{ j_t (x) \big\}$ for $(x, t) \in \mathbb{Z} \times \{ 1, 2, \ldots , N \}$ as in \Cref{sixvertexparticles1}. We call an integer $k$ \emph{separating} if 
\begin{flalign}
\label{separatingk} 
\chi_t (k + t) = 0 \quad \text{and} \quad  \displaystyle\max_{m < k + t} \big( j_t (m) + m \big) \le k + t, \quad \text{for each $t \in [1, N]$.}
\end{flalign} 

We then have the following lemma. 

\begin{lem}
	
\label{riseparating} 

There almost surely exists a (random) doubly-infinite sequence of integers $ \cdots < R_{-1} < R_0 < R_1 < \cdots $ such that $R_i$ is separating for each $i \in \mathbb{Z}$. 
\end{lem}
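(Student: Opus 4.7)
The plan is to first show that the probability of a fixed integer being separating is a positive constant $q > 0$, independent of the integer, and then invoke the ergodic theorem for the spatial shift to conclude that the set of separating integers is almost surely unbounded in both directions.

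First I would compute $q := \mathbb{P}[k \text{ is separating}]$. For fixed $k$ and $t \in [1, N]$, the event $\{\chi_t(k+t) = 0\}$ has probability $1 - b_1$, and the event $\{\max_{m < k+t}(j_t(m) + m) \le k+t\}$ is, after the change of variable $r = k+t-m$, the intersection over positive integers $r$ of the independent events $\{j_t(k+t-r) \le r\}$, each of probability $1 - b_2^r$. Since $b_2 < 1$, the series $\sum_{r \ge 1} b_2^r$ is finite, so the infinite product $C := \prod_{r \ge 1}(1 - b_2^r)$ is strictly positive. By the mutual independence of all the $\chi_t(x)$ and $j_t(x)$ across $x$ and $t$, this gives $q = \bigl((1-b_1)C\bigr)^N > 0$, and by the translation invariance of the law of the environment in $x$, this quantity does not depend on $k$.

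Next I would invoke ergodicity. The environment $\omega = \{\chi_t(x), j_t(x)\}_{x \in \mathbb{Z},\, t \in [1, N]}$ is i.i.d.\ along the spatial coordinate $x$, so the shift $T\colon x \mapsto x+1$ is measure-preserving and ergodic (in fact Bernoulli). A direct check shows that the indicator $S_k$ of the event $\{k \text{ is separating}\}$ satisfies $S_k = S_0 \circ T^{-k}$. Birkhoff's ergodic theorem applied to $T$ and to $T^{-1}$ then yields
\[
\frac{1}{K}\sum_{k=0}^{K-1} S_k \longrightarrow q \quad \text{and} \quad \frac{1}{K}\sum_{k=1}^{K} S_{-k} \longrightarrow q \quad \text{almost surely as } K \to \infty.
\]
Since $q > 0$, both sums diverge almost surely, so the random set $\{k \in \mathbb{Z} : S_k = 1\}$ of separating integers is a.s.\ unbounded both above and below. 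Enumerating this set in increasing order then produces the desired doubly infinite sequence $\cdots < R_{-1} < R_0 < R_1 < \cdots$.

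The only non-routine input is the positivity of the infinite product $C$, which is immediate from the summability of $b_2^r$; after that, the argument is a standard application of ergodicity, so I do not anticipate a significant obstacle.
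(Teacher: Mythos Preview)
Your proof is correct and takes a genuinely different route from the paper's. You compute directly that $q = \mathbb{P}[k\text{ is separating}] = \bigl((1-b_1)\prod_{r\ge 1}(1-b_2^r)\bigr)^N > 0$, observe that the indicator $S_k$ is a shift of $S_0$ under the spatial translation of the i.i.d.\ environment $\{(\chi_t(x),j_t(x))_t\}_{x\in\mathbb{Z}}$, and then apply Birkhoff's ergodic theorem to conclude that separating integers have positive density and hence are unbounded in both directions. The paper instead obtains a quantitative estimate: it shows that the probability that an interval of length $v^2$ contains no separating integer decays like $c^{-1}(1-c)^v$, by splitting the obstruction to separating into a ``local'' part $F_1$ (involving only the nearest $v$ sites, so that the $F_1$ events become independent over disjoint blocks of width $v$) and a ``tail'' part $F_2$ (controlled by the geometric tail of $j_t$), and then concludes via Borel--Cantelli along the intervals $[\pm 5^k, \pm 5^k + 4^k]$.

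Your argument is shorter and more conceptual, and entirely adequate for the statement as written. The paper's argument is more hands-on but delivers an explicit exponential bound on the gap between consecutive separating integers, which is not needed for the lemma itself but could be useful if one later wanted quantitative control (e.g., tail bounds on $R_{i+1}-R_i$). Both approaches exploit the same underlying fact---that the infinite product $\prod_{r\ge 1}(1-b_2^r)$ converges to a positive number---but the paper needs this only after truncation at level $v$, whereas you use it in one shot to get $q>0$.
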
 

\begin{proof}
	
For each $k \in \mathbb{Z}$, let $F(k)$ denote the event that $k$ is not separating. Furthermore, for any $u \in \mathbb{Z}$ and $v \in \mathbb{Z}_{\ge 1}$, define the event $E(u; v) = \bigcap_{k = u + 1}^{u + v^2} F(k)$ that no separating integers exist in the interval $[u + 1, u + v^2]$. We claim that
\begin{flalign} 
\label{euvestimate} 
\mathbb{P} \big[ E (u; v) \big] \le c^{-1} (1 - c)^v 
\end{flalign} 

\noindent for some sufficiently small constant $c = c(b_2, N) > 0$. 

To that end, first define the events 
\begin{flalign*}
& F_1 (k; t) = \big\{ \chi_t (k + t) = 1 \big\} \cup \left\{ \displaystyle\max_{k + t - v \le m < k + t} \big( j_t (m) + m \big) > k + t \right\}; \\
& F_2 (k; t) = \left\{ \displaystyle\max_{ m < k + t - v} \big( j_t (m) + m \big) > k + t \right\}, 
\end{flalign*}

\noindent for any integers $k \in [u + 1, u + v^2]$ and $t \in [1, N]$. Then, $F(k) = \bigcup_{t = 1}^N \big( F_1 (k; t) \cup F_2 (k; t) \big)$, so that 
\begin{flalign}
\label{euvf1f2} 
E (u; v) \subseteq \bigcap_{i = 1}^v \left( \bigcup_{t = 1}^N F_1 (u + iv; t) \right) \cup \bigcup_{i = 1}^v \bigcup_{t = 1}^N F_2 (u + iv; t).
\end{flalign}

Observe that the events $F_1 (u + iv; t)$ are mutually independent as $(i, v)$ ranges over $[1, v] \times [1, N]$, since $F_1 (u + iv; t)$ depends only on the random variables $\big\{ \chi_t (u + iv + t) \big\} \cup \{ j_t (u + iv - v + t), j_t (u + iv - v + t + 1), \ldots , j_t (u + iv + t - 1) \}$. Furthermore, for fixed $i \in [1, v]$, the events $\big\{ F_2 (u + iv; t) \big\}$ are mutually independent over $t \in [1, N]$.

Using these facts, let us bound the probability of the first event on the right side of \eqref{euvf1f2}. To do this, observe for any $(k, t) \in [u + 1, u + v^2] \times [1, N]$ that $\mathbb{P} \big[ \chi_t (k + t) = 1 \big] = b_1$ and that $\mathbb{P} \big[ j_t (k + t - m) \le m \big] = 1 - b_2^m$, for any $m > 0$. The independence of these events therefore implies that $\mathbb{P} \big[ F_1 (u + iv; t) \big] =  1 - (1 - b_1) \prod_{m = 1}^{\infty} (1 - b_2^m) < 1 - c_1$, for some constant $c_1 = c_1 (b_2) > 0$ (since $b_1 < b_2$). Hence, 
\begin{flalign}
\label{probabilityf1uivt}
\mathbb{P} \left[ \bigcap_{i = 1}^v \left( \bigcup_{t = 1}^N F_1 (u + iv; t) \right) \right] \le (1 - c_1^N)^v.
\end{flalign}

\noindent To bound the probability of the second event on the right side of \eqref{euvf1f2}, observe that 
\begin{flalign}
\label{probabilityf2uivt} 
\mathbb{P} \big[ F_2 (u + iv; t) \big] = 1 - \displaystyle\prod_{m = v + 1}^{\infty} \Big( 1 - \mathbb{P} \big[ j_t (u + iv + t - m) > m \big] \Big) = 1 - \displaystyle\prod_{m = v + 1}^{\infty} (1 - b_2^{m - 1}) \le c_2^v,
\end{flalign}

\noindent for some constant $c_2 = c_2 (b_2) < 1$ and any $i \in [1, v]$. Thus, combining \eqref{euvf1f2}, \eqref{probabilityf1uivt}, \eqref{probabilityf2uivt}, and a union bound yields 
\begin{flalign*}
\mathbb{P} \big[ E (u; v) \big] \le (1 - c_1^N)^v  + v \big( 1 - ( 1 - c_2^v)^N \big),
\end{flalign*}

\noindent from which \eqref{euvestimate} follows. Thus, $\sum_{i = 1}^{\infty} \mathbb{P} \big[ E (5^k; 2^k) \big] + \sum_{i = 1}^{\infty} \mathbb{P} \big[ E (-5^k; 2^k) \big] < \infty$, from which we deduce the lemma from the Borel-Cantelli lemma. 
\end{proof} 

Letting $\cdots < R_{-1} < R_0 < R_1 < \cdots $ be as in \Cref{riseparating}, we can sample the stochastic six-vertex model on each finite domain 
\begin{flalign*} 
\mathfrak{D}_i = \big\{ (x, t) \in \mathbb{Z} \times \{ 0, 1, \ldots , N \}: R_i + t < x \le R_{i + 1} + t \big\}
\end{flalign*} 

\noindent through \Cref{sixvertexparticles1}. The stochastic six-vertex models on these domains $\mathfrak{D}_i$ are mutually disjoint, since \eqref{separatingk} and the dynamics described in \Cref{sixvertexparticles1} together imply that any particle in $[R_i + t, R_{i + 1} + t - 1]$ at time $t - 1$ must be in the interval $[R_i + t + 1, R_{i + 1} + t]$ at time $t$. Since $\bigcup_{i \in \mathbb{Z}} \mathfrak{D}_i = \mathbb{Z} \times \{ 0, 1, \ldots , N \}$, the union of these finite stochastic six-vertex models on the $\mathfrak{D}_i$ yields an infinite stochastic six-vertex model on $\mathbb{Z} \times \{ 0, 1, \ldots , N \}$. The particle configuration $\eta = \eta_t$ is obtained by setting $\eta_t (x) = \textbf{1}_{x \in \textbf{p}_t}$ for each $x \in \mathbb{Z}$ and $t \in \{ 0, 1, \ldots , N \}$, from which one obtains the associated six-vertex ensemble $\mathcal{E}$.

\subsection{Monotonicity} 

\label{Monotonicity} 

In this section we establish \Cref{lambdaximonotone} below, which is a monotonicity result for the stochastic six-vertex model with respect to a certain ordering on particle position sequences. We begin by defining this ordering on both particle configurations and position sequences. 

\begin{definition}
	
	\label{ordering}
	
	Let $\eta = \big( \eta (i) \big) \in \{ 0, 1 \}^{\mathbb{Z}}$ and $\xi = \big( \xi (i) \big)_{i \in \mathbb{Z}} \in \{ 0, 1 \}^{\mathbb{Z}}$ denote particle configurations on $\mathbb{Z}$. We say that $\eta \ge \xi$ (or equivalently that $\xi \le \eta$) if $\eta (i) \ge \xi (i)$ for each $i \in \mathbb{Z}$. Furthermore, if $\textbf{p} = \big( p (k) \big)$ and $\textbf{q} = \big( q(k) \big)$ denote two sequences of particle positions on $\mathbb{Z}$, then we say that $\textbf{p} \ge \textbf{q}$ (or equivalently that $\textbf{q} \le \textbf{p}$) if $p (k) \ge q(k)$ for each $k$. 
	
\end{definition}

\begin{rem} 
	
	\label{orderingetap}
	
	If $\eta$ and $\xi$ are two particle configurations associated with two sequences $\textbf{p}$ and $\textbf{q}$ of particle positions, respectively, then $\eta \ge \xi$ has no direct implication on whether $\textbf{p} \ge \textbf{q}$. 
	
\end{rem} 

The following proposition now states that the stochastic six-vertex model preserves ordering with respect to particle position sequences, in a certain sense. We refer to this as \emph{monotonicity}.

\begin{prop}
	
	\label{lambdaximonotone} 
	
	Let $\textbf{\emph{p}}_0 = \big( p_0 (k) \big)$ and $\textbf{\emph{q}}_0 = \big( q_0 (k) \big)$ denote two finite particle position sequences such that $\textbf{\emph{p}}_0 \ge \textbf{\emph{q}}_0$. Also let $\textbf{\emph{p}}_t = (\textbf{\emph{p}}_t)_{t \ge 0}$ and $\textbf{\emph{q}}_t = (\textbf{\emph{q}}_t)_{t \ge 0}$ denote the stochastic six-vertex models run with initial data $\textbf{\emph{p}}_0$ and $\textbf{\emph{q}}_0$, respectively. Then it is possible to couple the laws of $\textbf{\emph{p}}_t$ and $\textbf{\emph{q}}_t$ so that $\textbf{\emph{p}}_t \ge \textbf{\emph{q}}_t$ almost surely for each integer $t \ge 0$. 
\end{prop}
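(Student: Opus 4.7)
The plan is to prove \Cref{lambdaximonotone} by induction on $t$, iterating a monotone one-step coupling. Because the SSVM dynamics are Markov, the proposition reduces to the following one-step statement: whenever $\textbf{p}_{t-1} \ge \textbf{q}_{t-1}$, one can construct a joint law for $(\textbf{p}_t, \textbf{q}_t)$ whose marginals are the respective one-step SSVM updates from $\textbf{p}_{t-1}$ and $\textbf{q}_{t-1}$ prescribed by \Cref{sixvertexparticles1}, and which satisfies $\textbf{p}_t \ge \textbf{q}_t$ almost surely. Iterating this coupling across $t$ and using the tower property of conditional expectation then yields the desired trajectory coupling.

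To obtain the one-step coupling I would appeal to the multivariate form of Strassen's theorem (Kamae--Krengel--O'Brien): such a coupling exists if and only if the law of $\textbf{p}_t$ stochastically dominates that of $\textbf{q}_t$ on up-sets of the coordinatewise partial order on $\mathbb{Z}^{\{-M,\ldots,N\}}$, that is, $\mathbb{P}[\textbf{p}_t \in U] \ge \mathbb{P}[\textbf{q}_t \in U]$ for every set $U$ closed under componentwise $\ge$. This reformulation is convenient because, unlike the naive ``shared per-position $\chi_t(x), j_t(x)$'' coupling, which I can quickly verify fails (e.g.\ when $p_{t-1}(k) > q_{t-1}(k)$ and the $\textbf{q}$-particle independently makes a long geometric jump while the $\textbf{p}$-particle stays), the existential statement about stochastic dominance can be checked without committing to an explicit joint construction.

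To verify the required stochastic dominance I would reduce the general case $\textbf{p}_0 \ge \textbf{q}_0$ to the base case where $\textbf{p}_0$ is obtained from $\textbf{q}_0$ by shifting a single particle one step to the right, subject to the shift producing a valid strictly increasing configuration. Any $\textbf{p}_0 \ge \textbf{q}_0$ is reachable from $\textbf{q}_0$ via a finite chain of such unit shifts, and stochastic dominance is transitive, so this base case suffices. In the base case, the one-step dynamics agree on all particles indexed strictly below the shifted one (under the shared per-position Bernoulli--geometric randomness of \Cref{sixvertexparticles1}), and only the shifted particle and its immediate successor can behave differently in the two processes; one then verifies the up-set inequality by a direct case enumeration over the pushed/free status of each affected particle.

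The hardest step will be a ``push asymmetry'' subcase of the base case: when the shifted particle's predecessor makes a geometric jump $j_t$ just large enough to push the $\textbf{q}$-particle (whose position is one step to the left of the $\textbf{p}$-particle's) but exactly one step short of pushing the $\textbf{p}$-particle. Conditional on this event the $\textbf{p}$-particle has a mixed ``stay with probability $b_1$ or jump geometrically'' distribution, while the $\textbf{q}$-particle is forced into a pure pushed geometric jump starting one step further left, and the former does not pointwise stochastically dominate the latter. The multivariate dominance on up-sets is nevertheless preserved once one sums over all joint push/free scenarios, exploiting the correlations generated by the shared geometric randomness of the preceding particle; carrying out this bookkeeping carefully is the technical core of the argument.
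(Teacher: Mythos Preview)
Your high-level reduction matches the paper: induct on $t$, reduce to a one-step coupling, and reduce the one-step statement to the base case where $\textbf{p}_0$ and $\textbf{q}_0$ differ by a single unit shift of one particle, then chain. The paper does exactly this.

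Where you diverge is in the handling of the base case. You propose to invoke Strassen's theorem and verify stochastic dominance on up-sets, deferring the push-asymmetry subcase as ``bookkeeping.'' The paper instead constructs the coupling explicitly, and the key idea you are missing is a \emph{re-parameterization of the randomness}: rather than using the per-site variables $\chi_t(x), j_t(x)$ of \Cref{sixvertexparticles1}, the paper assigns independent Bernoulli/geometric pairs $\chi(i), j(i)$ indexed by \emph{particle label} $i$, with a special joint rule for particles $k-1$ and $k$ (the shifted particle and its left neighbour). Under this per-particle randomness, particles with index $i < k-1$ evolve identically in both systems, and a short deterministic case check shows $p_1(i) \ge q_1(i)$ for $i \in \{k-1, k\}$ and then inductively for $i > k$. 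This sidesteps the push-asymmetry difficulty you flagged: the problematic scenario (the predecessor's geometric jump pushing $q$ but not $p$) does not arise in the same form once the geometric variable is attached to the particle rather than the site.

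Your Strassen route is not wrong---since the explicit coupling exists, the up-set inequalities must hold---but verifying those inequalities directly, without the per-particle reparameterization, is likely harder than constructing the coupling, and you have not actually carried out the push-asymmetry case. The paper's explicit construction is both shorter and more informative; I would recommend looking for a direct coupling (hint: index the noise by particle, not by site) rather than routing through Strassen.
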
 

\begin{proof}
	
	It suffices to establish this proposition when $t = 1$, for then the result would follow from induction on $t$ and the Markov property of the stochastic six-vertex model. Thus, let us assume that $t = 0$ and first exhibit this coupling when $\textbf{p}_0$ and $\textbf{q}_0$ \emph{differ in one site}. Letting $\textbf{q}_0 = \big( q_0 (-M), q_0 (1 - M), \ldots , q_0 (N) \big)$ and $\textbf{p}_0 = \big( p_0 (-M), p_0 (1 - M), \ldots , p_0 (N) \big)$, this means that there exists some integer $k \in [-M, N]$ such that $p_0 (i) = q_0 (i)$ for all $i \in [-M, N] \setminus \{ k \}$ and $p_0 (k) = q_0 (k) + 1$.
	
	To that end, let $\big\{ \chi (k) \big\}_{k \in \mathbb{Z}}$ and $\big\{ j (k) \big\}_{k \in \mathbb{Z}}$ denote sequences of mutually independent random variables, with each $\chi (k)$ a $b_1$-Bernoulli $0-1$ random variable and each $j (k)$ chosen according to the $b_2$-geometric distribution (as in \Cref{sixvertexparticles1}). Using these random variables, we can sample $p_1 (i)$ and $q_1 (i)$ for each $i < k - 1$ through a procedure similar to the one described by \Cref{sixvertexparticles1}, except where we index the randomness by particle number instead of by site. 
	
	More specifically, for any $i < k - 1$, the following explains how to sample $p_1 (i)$ assuming that $p_1 (i - 1)$ has been set; here, we let $x = p_0 (i)$.

	\begin{enumerate}
		
		\item If $p_1 (i - 1) < x$, then set $p_1 (i) = \min \big\{ x + j (i), p_0 (i + 1) \big\}$ if $\chi (i) = 0$ and $p_1 (i) = x$ if $\chi (i) = 1$.
		
		\item If $p_0 (i - 1) = x$, then set $p_1 (i) = \min \big\{ x + j (x), p_0 (i + 1) \big\}$. 
		
	\end{enumerate}

	We use the same procedure to sample $q_1 (i)$ when $i < k - 1$. In particular, since the same randomness (given by the $\chi (i)$ and $j (i)$) is used to sample $p_1 (i)$ and $q_1 (i)$, and since $p_0 (m) = q_0 (m)$ for each $m \le k - 1$, we have that $p_1 (i) = q_1 (i)$ for each $i < k - 1$ under this coupling. 
	
	The locations $p_1 (k - 1)$, $q_1 (k - 1)$, $p_1 (k)$, and $q_1 (k)$ will be sampled slightly differently. Once again, the procedure below is described only for $\textbf{p}$, but we use the same one for $\textbf{q}$. In what follows, we let $x = p_0 (k - 1)$. 
	
	\begin{enumerate}

		\item If $\chi (k - 1) = 1$ and $p_0 (k - 2) < x$, then set $p_1 (k - 1) = x$. In this case, further set $p_1 (k) = p_0 (k)$ if $\chi (k) = 1$ and set $p_1 (k) = \max \big\{ p_0 (k) + j(k), p_0 (k + 1) \big\}$ if $\chi (k) = 0$.
			
		\item Assume instead that $\chi (k - 1) = 0$ or $p_0 (k - 2) = x$. 
		
		\begin{enumerate}
			
			\item If $x + j (k - 1) < p_0 (k)$, then set $p_1 (k - 1) = x + j (k - 1)$. Further set $p_1 (k) = p_0 (k)$ if $\chi (k) = 1$, and set $p_1 (k) = \min \big\{ p_0 (k) + j(k), p_0 (k + 1)\big\}$ if $\chi (k) = 0$. 
				
			\item  If $p_0 (k) \le x + j (k - 1) < p_0 (k + 1)$, then set $p_1 (k - 1) = p_0 (k)$ and $p_1 (k) = x + j (k - 1) + 1$.
		
			\item  If $x + j (k - 1) \ge p_0 (k + 1)$, then set $p_1 (k - 1) = p_0 (k)$ and $p_1 (k) = p_0 (k + 1)$.
			
		\end{enumerate}

	\end{enumerate}

	Thus, if particle $k - 1$ does not attempt to jump to or past $p_0 (k)$, then particles $k - 1$ and $k$ are sampled individually, as before. Otherwise, the new locations of the particles are sampled simultaneously, according to the randomness given by $j (k - 1)$. 
	
	Next, for $i > k$, we sample the locations $p_1 (i)$ and $q_1 (i)$ as described above in the case $i < k - 1$. Although this procedure is different from the one given by \Cref{sixvertexparticles1}, one can quickly verify that it samples a particle position sequence according to the stochastic six-vertex measure from \Cref{StochasticModel} (as in \Cref{pw}). 
	
	Thus, this provides a coupling between $\textbf{p}_1$ and $\textbf{q}_1$. We claim under this coupling that $\textbf{p}_1 \ge \textbf{q}_1$ almost surely, to which end we must show that $p_1 (i) \ge q_1 (i)$ for each $i \in \mathbb{Z}$. It was observed above that $p_1 (i) = q_1 (i)$ when $i < k - 1$, that is, $\textbf{p}_1$ and $\textbf{q}_1$ coincide to the left of their $(k - 1)$-th particles.
	
	Let us now verify the claim when $i = k - 1$. To that end, first assume that $\chi (k - 1) = 1$ and $q_1 (k - 2) =  p_1 (k - 2) < p_0 (k - 1) = q_0 (k - 1)$. Then, the $(k - 1)$th particles of $\textbf{p}_0$ and $\textbf{q}_0$ do not move, and so $q_1 (k - 1) = q_0 (k - 1) = p_0 (k - 1) = p_1 (k - 1)$. If instead either $\chi (k - 1) = 0$ or $q_1 (k - 2) =  p_1 (k - 2) = p_0 (k - 1) = q_0 (k - 1)$, then the $(k - 1)$-th particles of $\textbf{p}_0$ and $\textbf{q}_0$ jump, and we have that $q_1 (k - 1) = \max \big\{ q_0 (k - 1) + j (k - 1), q_0 (k) \big\} \le \max \big\{ p_0 (k - 1) + j (k - 1), p_0 (k) \big\} = p_1 (k - 1)$. Thus, in either case, $p_1 (k - 1) \ge q_1 (k - 1)$. 
	
	Next we analyze when $i = k$. First, suppose that $q_0 (k - 1) + j (k - 1) < q_0 (k)$, so that the $(k - 1)$-th particle of $\textbf{q}_0$ does not attempt to jump at or to the right of $q_0 (k)$. Then either $\chi (k) = 1$, in which case the $k$-th particles of $\textbf{p}_0$ and $\textbf{q}_0$ do not jump and $q_1 (k) = q_0 (k) < p_0 (k) = p_1 (k)$, or $\chi (k) = 0$, in which case the $k$-th particles of $\textbf{q}_0$ and $\textbf{p}_0$ both jump and part 2(a) of the above coupling yields $q_1 (k) = \max \big\{ q_0 (k) + j (k), q_0 (k + 1) \big\} \le \max \big\{ p_0 (k) + j (k), p_0 (k + 1) \big\} = p_1 (k)$. 
	
	Next suppose that $q_0 (k - 1) + j (k - 1) \ge q_0 (k + 1)$, that is, the $(k - 1)$-th particle of $\textbf{q}_0$ attempts to jump at or to the right of $q_0 (k + 1)$. Then, the same holds for $\textbf{p}_0$ (namely, $p_0 (k - 1) + j(k - 1) \ge p_0 (k + 1)$), and so part 2(c) of the above coupling applied to both $\textbf{p}_0$ and $\textbf{q}_0$ implies $q_1 (k) = q_0 (k + 1) = p_0 (k + 1) = p_1 (k)$. 
	
	Now, assume that $q_0 (k) \le q_0 (k - 1) + j (k - 1) < q_0 (k + 1)$. Then, there are two cases to consider. The first is if $q_0 (k - 1) + j (k - 1) > q_0 (k)$, that is, the $(k - 1)$-th particle of $\textbf{q}_0$ attempts to jump to the right of $q_0 (k)$. Then, the same holds for $\textbf{p}_0$ (namely, $p_0 (k - 1) + j (k - 1) \ge p_0 (k)$), and so part 2(b) of the above coupling applied to both $\textbf{p}_0$ and $\textbf{q}_0$ yields $q_1 (k) = q_0 (k - 1) + j (k - 1) + 1 = p_0 (k - 1) +j (k - 1) + 1 = p_1 (k)$. The second is if $q_0 (k - 1) + j(k - 1) = q_0 (k)$, in which case part 2(b) of the above coupling (applied to $\textbf{q}_0$) implies $q_1 (k) = q_0 (k - 1) + j (k - 1) + 1 = q_0 (k) + 1 = p_0 (k)$. Since $p_0 (k) \le p_1 (k)$, this implies that $q_1 (k) \le p_1 (k)$, thereby confirming the claim when $i = k$.  
	
	Next suppose that $i > k$. If $\chi (i) = 1$ and $q_1 (i - 1) < q_0 (i)$, then the $i$-th particle of $\textbf{q}_0$ does not jump, and so $q_1 (i) = q_0 (i) = p_0 (i) \le p_1 (i)$. If either $\chi (i) = 0$ or $q_1 (i - 1) = q_0 (i)$, then either $\chi (i) = 0$ or $p_1 (i - 1) = p_0 (i)$ (since $p_0 (i) = q_0 (i)$ and $p_1 (i - 1) \ge q_1 (i - 1)$). Thus, the $i$-th particles of $\textbf{p}_0$ and $\textbf{q}_0$ both jump, and so $q_1 (i) = \max \big\{ q_0 (i) + j (i), q_0 (i + 1) \big\} = \max \big\{ p_0 (i) + j (i), p_0 (i + 1) \big\} = p_1 (i)$. This confirms the existence of the claimed coupling when $\textbf{p}_0$ and $\textbf{q}_0$ differ in one site. 
	
	If $\textbf{p}_0$ and $\textbf{q}_0$ do not differ in one site, then let $\textbf{p}_0 = \big( p_0 (-M), p_0 (1 - M), \ldots , p_0 (N) \big)$ and $\textbf{q}_0 = \big( q_0 (-U), q_1 (1 - U), \ldots , q_0 (V) \big)$. Further set $p_0 (i) = -\infty$ if $i < -M$; $p_0 (i) = \infty$ if $i > N$; $q_0 (i) = -\infty$ if $i < -U$; and $q_0 (i) = \infty$ if $i < V$. There then exists a doubly infinite sequence sequence $\cdots \le \textbf{r}_0^{(-1)} \le \textbf{r}_0^{(0)} \le \textbf{r}_0^{(1)}, \ldots $ of particle position sequences on $\mathbb{Z} \cup \{ -\infty, \infty \}$ such that the following two statements hold. 
	
	\begin{enumerate} 
	
	\item For each $i \in \mathbb{Z}$, $\textbf{r}_0^{(i)}$ and $\textbf{r}_0^{(i + 1)}$ are either equal or differ in one site. 
	
	\item We have that $\lim_{k \rightarrow \infty} \textbf{r}_0^{(-k)} = \textbf{q}_0$ and $\lim_{k \rightarrow \infty} \textbf{r}_0^{(k)} = \textbf{p}_0$. 
		
	\end{enumerate}

	For each $i \in \mathbb{Z}$, let $\textbf{r}_1^{(i)}$ denote the stochastic six-vertex model with initial data $\textbf{r}_0^{(i)}$, run for one time step. The above yields a coupling between $\big( \textbf{r}_1^{(i)}, \textbf{r}_1^{(i + 1)} \big)$ such that $\textbf{r}_1^{(i)} \le \textbf{r}^{(i + 1)}$ almost surely for each $i$; these induce couplings between $\big( \textbf{r}_1^{(-k)}, \textbf{r}_1^{(k)} \big)$ for each $k > 0$ such that $\textbf{r}_1^{(-k)} \le \textbf{r}_1^{(k)}$ almost surely. By letting $k$ tend to $\infty$, the compactness of the set of particle position sequences on $\mathbb{Z} \cup \{ -\infty, \infty \}$ yields a coupling between $\textbf{p}_1$ and $\textbf{q}_1$ such that $\textbf{p}_1 \ge \textbf{q}_1$ almost surely. 
\end{proof}

\subsection{Multi-Class Stochastic Six-Vertex Model} 

\label{HigherRank}

In this section we recall the definition of a multi-class variant of the stochastic six-vertex model, as it will be useful later to introduce couplings in \Cref{CouplingHigherRank}. This model arises as the spin $\frac{1}{2}$ case of the stochastic $U_q \big( \widehat{\mathfrak{sl}}_{n + 1} \big)$ vertex models, which were originally introduced in \cite{TSEA,QGM} and also studied in a number of recent works \cite{SM,CSVMST,ADFSVM,SSE}. Throughout this section, we fix integers $t \ge 0$ and $n \ge 1$. 

For simplicity, we will only define this model on a single row $\mathfrak{R} = \mathfrak{R}_t = \mathbb{Z} \times \{ t \} \subset \mathfrak{H}$, but its definition on other domains is entirely analogous. We first require the notion of a multi-class arrow configuration. Analogous to arrow configurations from \Cref{StochasticModel}, a \emph{multi-class arrow configuration} is a quadruple $(i_1, j_1; i_2, j_2)$ with $i_1, j_1, i_2, j_2 \in \{ 1, 2, \ldots , n \} \cup \{ \infty \}$ such that we have the equality of (unordered) sets $\{ i_1, j_1 \} = \{ i_2, j_2 \}$. 

We view the labels $\{ 1, 2, \ldots , n \} \cup \{ \infty \}$ as \emph{classes}, and we view the quadruple $(i_1, j_1; i_2, j_2)$ as an assignment of labeled arrows to a vertex $v \in \mathfrak{R}$. More specifically, $i_1$ and $j_1$ denote the classes of the incoming vertical and horizontal arrows at $v$, respectively, and $i_2$ and $j_2$ denote the classes of the outgoing vertical and horizontal arrows at $v$; see \Cref{sixvertexfigureclass}. The equality $\{ i_1, j_1 \} = \{ i_2, j_2 \}$ is again a form of arrow conservation. Thus, each edge along, entering, or exiting $\mathfrak{R}$ is assigned some class; it will be useful to view the class $\infty$ as denoting the absence of an arrow.

\begin{figure}[t]
	
	\begin{center}
		
		\begin{tikzpicture}[
		>=stealth,
		scale = .7
		]

		\draw[-, black] (-7.5, -1.6) -- (7.5, -1.6);
		\draw[-, black] (-7.5, -.8) -- (7.5, -.8);
		\draw[-, black] (-7.5, 0) -- (7.5, 0);
		\draw[-, black] (-7.5, 2) -- (7.5, 2);
		\draw[-, black] (-7.5, -1.6) -- (-7.5, 2);
		\draw[-, black] (7.5, -1.6) -- (7.5, 2);
		\draw[-, black] (-5, -1.6) -- (-5, 2);
		\draw[-, black] (5, -1.6) -- (5, 2);
		\draw[-, black] (-2.5, -1.6) -- (-2.5, 2);
		\draw[-, black] (2.5, -1.6) -- (2.5, 2);
		\draw[-, black] (0, -1.6) -- (0, 2);

		\draw[->, black, dashed] (-7.15, 1) -- (-6.35, 1); 
		\draw[->, black, dashed] (-6.15, 1) -- (-5.35, 1); 
		\draw[->, black, dashed] (-6.25, .1) -- (-6.25, .9); 
		\draw[->, black, dashed] (-6.25, 1.1) -- (-6.25, 1.9);

		\draw[->, black,  thick] (3.85, 1) -- (4.65, 1);
		\draw[->, black,  thick] (3.75, .1) -- (3.75, .9);
		\draw[->, black, dashed] (2.85, 1) -- (3.65, 1); 
		\draw[->, black, dashed] (3.75, 1.1) -- (3.75, 1.9);

		\draw[->, black,  thick] (-1.25, .1) -- (-1.25, .9);
		\draw[->, black,  thick] (-1.25, 1.1) -- (-1.25, 1.9);
		\draw[->, black,  dashed] (-2.15, 1) -- (-1.35, 1);
		\draw[->, black,  dashed] (-1.15, 1) -- (-.35, 1);

		\draw[->, black,  thick] (1.35, 1) -- (2.15, 1);
		\draw[->, black,  thick] (.35, 1) -- (1.15, 1);
		\draw[->, black,  dashed] (1.25, .1) -- (1.25, .9);
		\draw[->, black,  dashed] (1.25, 1.1) -- (1.25, 1.9);
		
		\draw[->, black,  thick] (6.25, 1.1) -- (6.25, 1.9);
		\draw[->, black,  thick] (5.35, 1) -- (6.15, 1);
		\draw[->, black, dashed] (6.35, 1) -- (7.15, 1);
		\draw[->, black, dashed] (6.25, .1) -- (6.25, .9); 
		
		\draw[->, black,  thick] (-3.75, 1.1) -- (-3.75, 1.9);
		\draw[->, black,  thick] (-3.75, .1) -- (-3.75, .9);
		\draw[->, black,  thick] (-3.65, 1) -- (-2.85, 1);
		\draw[->, black,  thick] (-4.65, 1) -- (-3.85, 1);
		
		\filldraw[fill=gray!50!white, draw=black] (-6.25, 1) circle [radius=.1] node [black,below=21] {$(j, j; j, j)$};
		\filldraw[fill=gray!50!white, draw=black] (-3.75, 1) circle [radius=.1] node [black,below=21] {$(i, i; i, i)$};
		\filldraw[fill=gray!50!white, draw=black] (-1.25, 1) circle [radius=.1] node [black,below=21] {$(i, j; i, j)$};
		\filldraw[fill=gray!50!white, draw=black] (1.25, 1) circle [radius=.1] node [black,below=21] {$(j, i; j, i)$};
		\filldraw[fill=gray!50!white, draw=black] (3.75, 1) circle [radius=.1] node [black,below=21] {$(i, j; j, i)$};
		\filldraw[fill=gray!50!white, draw=black] (6.25, 1) circle [radius=.1] node [black,below=21] {$(j, i; i, j)$};

		\filldraw[fill=gray!50!white, draw=black] (-6.25, .2) circle [radius=0] node [black,below=21] {$1$};
		\filldraw[fill=gray!50!white, draw=black] (-3.75, .2) circle [radius=0] node [black,below=21] {$1$};
		\filldraw[fill=gray!50!white, draw=black] (-1.25, .2) circle [radius=0] node [black,below=21] {$b_1$};
		\filldraw[fill=gray!50!white, draw=black] (1.25, .2) circle [radius=0] node [black,below=21] {$b_2$};
		\filldraw[fill=gray!50!white, draw=black] (3.75, .2) circle [radius=0] node [black,below=21] {$1 - b_1$};
		\filldraw[fill=gray!50!white, draw=black] (6.25, .2) circle [radius=0] node [black,below=21] {$1 - b_2$};

		\end{tikzpicture}
		
	\end{center}
	
	\caption{\label{sixvertexfigureclass} The top row in the chart depicts arrow configurations at vertices in the multi-class stochastic six-vertex model; the bottom row shows the corresponding probabilities. Here, the solid and dashed arrows correspond to classes $i, j \in \{ 1, \ldots , n \} \cup \{ \infty \}$, respectively, satisfying $i < j$. }
\end{figure}

A \emph{multi-class} (or \emph{higher rank}) \emph{six-vertex ensemble} on $\mathfrak{R}$ is an assignment of multi-class arrow configurations to each vertex of $\mathfrak{R}$ in such a way that neighboring arrow configurations are consistent; this means that there is an arrow of class $r \in \{ 1, 2, \ldots , n \} \cup \{ \infty \}$ to $(x + 1, t)$ in the configuration at $(x, t)$ if and only if there is one of class $r$ from $(x, t)$ in the configuration at $(x + 1, t)$. These arrows then form up-right directed paths (each of which has a class) on $\mathfrak{R}$ that emanate vertically from the line $y = t - 1$ and exit $\mathfrak{R}$ vertically through the line $y = t + 1$; see \Cref{classensemble} for an example.

A boundary condition is given by a sequence $\psi = \big( \psi (x) \big) \in \{ 0, 1, \ldots , n \}^{\mathbb{Z}}$; here, $\psi (x) = r \in \{ 1, 2, \ldots , n \}$ means that an arrow of class $r$ enters $\mathfrak{H}$ vertically through $(x, t - 1)$, and $\psi (x) = 0$ means that no arrow (or, equivalently, one of class $\infty$) enters through $(x, t - 1)$. We (temporarily) assume that $\sum_{x = - \infty}^{\infty} \psi (x) < \infty$, that is, the ensemble consists of only finitely many paths. 

We once again assign vertex weights to multi-class arrow configurations by setting 
\begin{flalign}
\label{wijweights}
w (i, i; i, i) = 1; \quad w (i, j; i, j) = b_1; \quad w (j, i; j, & i) = b_2; \quad w (i, j; j, i) = 1 - b_1; \quad w (j, i; i, j) = 1 - b_2,
\end{flalign}

\noindent for any $i, j \in \{ 1, 2, \ldots , n \} \cup \{ \infty \}$, where we assume that $i < j$ for the last four equalities in \eqref{wijweights}. If $(i_1, j_1; i_2, j_2)$ is not of the above form, then we set $w (i_1, j_1; i_2, j_2) = 0$; see \Cref{sixvertexfigureclass} for a depiction. 

\begin{rem}
	
\label{wclassw} 

Observe that the weights \eqref{wijweights} are similar to those \eqref{wi1j1i2j2} of the original stochastic six-vertex model. In particular, in the multi-class setting, lower class arrows view higher class ones as non-existent. 
\end{rem} 

As in \Cref{StochasticModel}, we view $w (i_1, j_1; i_2, j_2)$ as the weight of a vertex with arrow configuration $(i_1, j_1; i_2, j_2)$. Any multi-class six-vertex ensemble $\mathcal{E}$ on $\mathfrak{R}$ is assigned a weight $w (\mathcal{E})$ equal to the product of the weights of all of its vertices. For a finite boundary condition $\psi$, we define the \emph{multi-class stochastic six-vertex model} on $\mathfrak{R}$ as a probability measure on the set $\mathfrak{E} = \mathfrak{E}_{\psi; n}$ of multi-class six-vertex ensembles on $\mathfrak{R}$ with boundary condition $\psi$ such that the probability assigned to any $\mathcal{E} \in \mathfrak{E}$ is equal to $w (\mathcal{E})$. The stochasticity of the weights \eqref{wijweights} ensures that $\sum_{\mathcal{E} \in \mathfrak{E}} w (\mathcal{E}) = 1$.

\begin{rem} 
	
\label{mnclasses}

Let $m \in [1, n]$ be an integer. By \Cref{wclassw}, there is an \emph{concatenation procedure} that degenerates an $n$-class stochastic six-vertex model to an $m$-class one. Specifically, let $0 = j_0 < j_1 < j_2 < \cdots < j_m \le n$ be integers; and consider the multi-class six-vertex model described above but where each arrow of class $r \in [j_{i - 1} + 1, j_i]$ is replaced with one of class $i$ for every $i \in [1, m]$, and each arrow of class $r \in [j_m + 1, \infty]$ replaced by one of class $\infty$. This produces a measure on $m$-class six-vertex ensembles that, by \Cref{wclassw}, coincides with the $m$-class stochastic six-vertex model. 

\end{rem}

As in \Cref{Height}, we can associate a particle configuration $\eta = \big( \eta_y (x) \big)$ (where $(x, y)$ ranges over $\mathbb{Z} \times \{ t - 1, t \}$) to a multi-class six-vertex ensemble $\mathcal{E} \in \mathfrak{E}$. Specifically, if the arrow from $(x, y)$ to $(x, y + 1)$ is assigned some class $r \in \{ 1, 2, \ldots , n \}$ in $\mathcal{E}$, then we set $\eta_y (x) = r$. If this arrow is instead assigned class $\infty$, then we set $\eta_y (x) = 0$. Observe that $\psi (x) = \eta_{t - 1} (x)$ for each $x \in \mathbb{Z}$. 

Similarly to in \Cref{StochasticVertexLine}, we may view a vertical arrow of class $r \in \{ 1, 2, \ldots , n \}$ directed from $(x, s)$ to $(x, s + 1)$ in a multi-class six-vertex ensemble as a particle of class $r$ at location $x$ and time $s$. In this way, a multi-class stochastic six-vertex model on $\mathfrak{R}$ can be viewed as one time step of an interacting particle system on $\mathbb{Z}$ represented by a sequence $\textbf{p} = \big( \textbf{p}^{(r)} \big)$ of particle positions. Here, $r$ ranges over the classes $\{ 1, 2, \ldots , n \}$, and $\textbf{p}^{(r)} = \big( \textbf{p}_s^{(r)} \big) = \big( p_s^{(r)} (-M_r), p_s^{(r)} (1 - M_r), \ldots , p_s^{(r)} (N_r) \big)$ denotes the tagged positions of the class $r$ particles in the system, which are ordered so that $p_s^{(r)} (-M_r) < p_s^{(r)} (1 - M_r) < \cdots < p_s^{(r)} (N_r)$. We also set $p_t^{(r)} (i) = -\infty$ for $i < -M_r$ and $p_t^{(r)} (i) = \infty$ for $i > N_r$; it will further be convenient to set $\textbf{p}_s^{(0)}$ to be empty for $s \in \{ t - 1, t \}$. Observe that $\eta_{t - 1} (x) = \psi (x)$ is equal to $r \in \{ 1, 2, \ldots , n \}$ if $x \in \textbf{p}_0^{(r)}$ and is equal to $0$ otherwise.

\begin{figure}[t]
	
	\begin{center}
		
		\begin{tikzpicture}[
		>=stealth,
		scale = .85
		]
		
		\draw[-, black, dotted] (-3, 0) -- (12, 0); 
		\draw[-, black, dotted] (-2, -1) -- (-2, 1); 
		\draw[-, black, dotted] (-1, -1) -- (-1, 1); 
		\draw[-, black, dotted] (0, -1) -- (0, 1); 
		\draw[-, black, dotted] (1, -1) -- (1, 1); 
		\draw[-, black, dotted] (2, -1) -- (2, 1); 
		\draw[-, black, dotted] (3, -1) -- (3, 1); 
		\draw[-, black, dotted] (4, -1) -- (4, 1); 
		\draw[-, black, dotted] (5, -1) -- (5, 1); 
		\draw[-, black, dotted] (6, -1) -- (6, 1); 
		\draw[-, black, dotted] (7, -1) -- (7, 1); 
		\draw[-, black, dotted] (8, -1) -- (8, 1); 
		\draw[-, black, dotted] (9, -1) -- (9, 1); 
		\draw[-, black, dotted] (10, -1) -- (10, 1); 
		\draw[-, black, dotted] (11, -1) -- (11, 1); 
		
		\draw[->, black, dashed, thick] (-1, -1) -- (-1, -.1); 
		\draw[->, black, dashed, thick] (1, .1) -- (1, 1);

		\draw[->, black, dashed, thick] (-.9, 0) -- (-.1, 0); 
		\draw[->, black, dashed, thick] (.1, 0) -- (.9, 0); 
		
		\draw[->, black, very thick] (0, -1) -- (0, -.1); 
		\draw[->, black, very thick] (0, .1) -- (0, 1);

		\draw[->, black, very thick] (1, -1) -- (1, -.1); 
		\draw[->, black, very thick] (2, .1) -- (2, 1); 
		
		\draw[->, black, very thick] (1.1, 0) -- (1.9, 0);

		\draw[->, black, very thick] (4, -1) -- (4, -.1); 
		\draw[->, black, very thick] (4.1, 0) -- (4.9, 0); 
		\draw[->, black, very thick] (5.1, 0) -- (5.9, 0); 
		\draw[->, black, very thick] (6.1, 0) -- (6.9, 0); 
		\draw[->, black, very thick] (7, .1) -- (7, 1); 
		
		\draw[->, black, dashed, thick] (6, -1) -- (6, -.1); 
		\draw[->, black, dashed, thick] (6, .1) -- (6, 1);
		
		\draw[->, black, very thick] (7, -1) -- (7, -.1);
		\draw[->, black, very thick] (8, .1) -- (8, 1);
		
		\draw[->, black , very thick] (7.1, 0) -- (7.9, 0); 
		
		\draw[->, black, thick, dashed] (8, -1) -- (8, -.1);
		\draw[->, black, thick, dashed] (10, .1) -- (10, 1);
		
		\draw[->, black, thick, dashed] (8.1, 0) -- (8.9, 0);
		\draw[->, black, thick, dashed] (9.1, 0) -- (9.9, 0); 
		
		\filldraw[fill=gray!50!white, draw=black] (-2, 0) circle [radius=.1] node[below = 24]{$-5$};
		\filldraw[fill=gray!50!white, draw=black] (-1, 0) circle [radius=.1] node[below = 24]{$-4$};
		\filldraw[fill=gray!50!white, draw=black] (0, 0) circle [radius=.1] node[below = 24]{$-3$};
		\filldraw[fill=gray!50!white, draw=black] (1, 0) circle [radius=.1] node[below = 24]{$-2$};
		\filldraw[fill=gray!50!white, draw=black] (2, 0) circle [radius=.1] node[below = 24]{$-1$};
		\filldraw[fill=gray!50!white, draw=black] (3, 0) circle [radius=.1] node[below = 24]{$0$};
		\filldraw[fill=gray!50!white, draw=black] (4, 0) circle [radius=.1] node[below = 24]{$1$};
		\filldraw[fill=gray!50!white, draw=black] (5, 0) circle [radius=.1] node[below = 24]{$2$};
		\filldraw[fill=gray!50!white, draw=black] (6, 0) circle [radius=.1] node[below = 24]{$3$};
		\filldraw[fill=gray!50!white, draw=black] (7, 0) circle [radius=.1] node[below = 24]{$4$};
		\filldraw[fill=gray!50!white, draw=black] (8, 0) circle [radius=.1] node[below = 24]{$5$};
		\filldraw[fill=gray!50!white, draw=black] (9, 0) circle [radius=.1] node[below = 24]{$6$};
		\filldraw[fill=gray!50!white, draw=black] (10, 0) circle [radius=.1] node[below = 24]{$7$};
		\filldraw[fill=gray!50!white, draw=black] (11, 0) circle [radius=.1] node[below = 24]{$8$};

		\end{tikzpicture}
		
	\end{center}
	
	\caption{\label{classensemble} An example of a multi-class six-vertex ensemble on $\mathfrak{R}$ is depicted above in the case $n = 2$. The solid, dashed, and dotted edges correspond to classes $1$, $2$, and $\infty$, respectively. }
\end{figure}

 Given the initial data $\textbf{p}_{t - 1} = \big( \textbf{p}_{t - 1}^{(r)} \big)$, we can sample $\textbf{p}_t = \big( \textbf{p}_t^{(r)} \big)$ by having the particles in $\textbf{p}_{t - 1}$ jump according to the following  stochastic dynamics. Observe under these dynamics that particles of lower class move first, ignoring all particles of higher class. 
 
 \begin{enumerate} 
 
\item For each $r \in \{ 1, 2, \ldots , n \}$ and $x \in \mathbb{Z}$, let $\big\{ \chi^{(r)} (x) \big\}$ and $\big\{ j^{(r)} (x) \big\}$ denote mutually independent random variables, with each $\chi^{(r)} (x)$ a $b_1$-Bernoulli $0-1$ random variable and each $j^{(r)} (x)$ chosen according to the $b_1$-geometric distribution (as in \Cref{sixvertexparticles1}).

\item Now let $r \in \{ 1, 2, \ldots , n \}$ denote a class; assume that $\textbf{p}_t^{(m)}$ has been defined for each integer $m \in [0, r - 1]$, and that $p_t^{(r)} (k - 1)$ has been defined for some integer $k \in [-M_r, N_r]$. Stated alternatively, the time $t$ location of each particle of class lower than $r$ has been set, as has that of the $(k - 1)$-th particle of class $r$. 

We will explain how to define the location of the $k$-th particle of class $r$, $p_t^{(r)} (k)$; let $x = p_{t - 1}^{(r)} (k)$ denote the particle's original position, and abbreviate $\chi = \chi^{(r)} (x)$ and $j = j^{(r)} (x)$. This particle will first choose to either move or stay, as follows. 

\begin{enumerate}
	\item It stays if there exists some $m \in \{ 1, 2, \ldots , r - 1 \}$ and $i \in \mathbb{Z}$ such that $p_{t - 1}^{(m)} (i) < x < p_t^{(m)} (i)$, that is, if there exists a particle of class lower than $r$ that jumped from the left to the right of $x$. See the $x = 3$ coordinate of \Cref{classensemble} for an example. 
	
	\item It moves if $x \in \bigcup_{m = 0}^{r - 1} \textbf{p}_t^{(m)} \cup \big\{ p_t (k - 1) \big\}$, that is, if a particle has already jumped to site $x$. See the $x = 4$ and $x = 5$ coordinates of \Cref{classensemble} for examples. 
	
	\item Excluding the above two events, it stays if $\chi = 1$ and moves if $\chi = 0$. Stated alternatively, it will otherwise stay or move with probabilities $b_1$ and $1 - b_1$, respectively. 
	
\end{enumerate}

\item If the particle stays, then set $p_t^{(r)} (k) = x$. 

\item If the particle moves, then define the following quantities. 

\begin{enumerate} 
	
	\item Let $U \in \mathbb{Z}$ denote the minimal integer with $U > x$ such that there exist integers $m \in [1, r - 1]$ and $i \in \mathbb{Z}$ such that $p_{t - 1}^{(m)} (i) = U < p_t^{(m)} (i)$. Stated alternatively, $U$ denotes the original location of the leftmost particle to the right of $x$ that is of class less than $r$ and that decided to move. 
	
	\item Let $V \in \mathbb{Z}$ denote the minimal integer with $V > x$ such that there exist $x_1, x_2, \ldots , x_j \in \mathbb{Z}$ satisfying $x < x_1 < x_2 < \cdots < x_j = V$ such, for each $h \in [1, j]$, there do not exist $m \in [1, r - 1]$ and $k \in \mathbb{Z}$ for which $p_{t - 1}^{(m)} (k) = x_h = p_t^{(m)} (k)$. Stated alternatively, $V$ denotes where the $k$-th particle of class $r$ would end if it jumped $j$ spaces to the right, skipping over any particles of lower class that decided to stay. 
	
\end{enumerate}

Now set $p_t^{(r)} (k) = \min \big\{ U, V, p_{t - 1} (k + 1) \big\}$. Stated alternatively, the particle jumps to the right according to a $b_2$-geometric distribution, skipping any particles of lower class that decided to stay, but neither passing the $(k + 1)$-th particle of class $r$ nor any particles of lower class that decided to move. 

\end{enumerate}

This provides a way of sampling a random multi-class particle position sequence $\textbf{p} = \big(\textbf{p}_s^{(r)} \big) = \big( \textbf{p}_s^{(r)} (k) \big)$ under some initial data $\psi = \big( \psi^{(r)} (x) \big)$. From this, one obtains a random multi-class particle configuration $\eta = \eta_s = (\eta_s) = \big( \eta_s (k) \big)$, which in turn determines a random multi-class six-vertex ensemble $\mathcal{E} \in \mathfrak{E}_{\psi; n}$. Denote the probability of selecting such an $\mathcal{E}$ under this procedure by $q (\mathcal{E})$. 

The following proposition, which is similar to \Cref{pw}, indicates that above induced measure on $\mathfrak{E}_{\psi; n}$ coincides with the multi-class six-vertex measure introduced above. Its proof is a quick consequence of the previous definitions and is therefore omitted. 

\begin{prop}
	
	\label{pclassw} 
	
	Under the above notation, we have that $q (\mathcal{E}) = w (\mathcal{E})$ for any multi-class six-vertex ensemble $\mathcal{E} \in \mathfrak{E}_{\psi; n}$. 	
	
\end{prop}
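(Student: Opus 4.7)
The plan is to verify $q(\mathcal{E}) = w(\mathcal{E})$ by tracking, for each pair $(r, x)$ with $\psi(x) = r$, how the random variables $\chi^{(r)}(x)$ and $j^{(r)}(x)$ in the sampling procedure contribute to the weights of specific vertices of $\mathcal{E}$. Since these random variables are mutually independent across all $(r, x)$, the probability $q(\mathcal{E})$ factors as a product over particles, and the goal is to match this product term-by-term with $w(\mathcal{E}) = \prod_v w(v)$.

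Concretely, I would first consider a class-$r$ particle at $(x, t-1)$ whose stay-or-move decision is ``free,'' meaning it is neither forced to stay (by a lower-class particle having just jumped across $x$) nor forced to move (by a lower-class particle having just landed on $x$). For such a particle, the Bernoulli variable $\chi^{(r)}(x)$ contributes a factor of $b_1$ (if it stays) or $1 - b_1$ (if it moves), which matches the weight at vertex $(x, t)$ of configuration $(r, j; r, j)$ or $(r, j; j, r)$, where $j$ is the class of the horizontal arrow incoming at $(x, t)$ (with $j = \infty$ if none). When the particle moves, the geometric variable $j^{(r)}(x)$ takes some value $k \geq 1$ with probability $(1 - b_2) b_2^{k-1}$; by the memoryless property, this factors into $b_2$ contributions at each of the $k - 1$ vertices the trajectory continues through (matching $w(j', r; j', r) = b_2$) and a single $1 - b_2$ contribution at the stopping vertex (matching $w(j', r; r, j') = 1 - b_2$).

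When the procedure instead forces the class-$r$ particle's action because of a lower-class particle's previously chosen trajectory, the weight at the relevant vertex is already accounted for by the lower-class particle's own randomness---either its $\chi$ (if its stay/move decision at its origin was free) or its $j$ (through the geometric decomposition along its horizontal trajectory). In such cases the unused variable $\chi^{(r)}(x)$ marginalizes out to $1$, so no stray factors are introduced.

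The main technical point is ensuring that every vertex of $\mathcal{E}$ is assigned its weight by exactly one random-variable source, with no double-counting across classes. This follows from the structure codified in \Cref{wclassw} together with the ``lower-class-first'' rule of the procedure: at each vertex of $\mathcal{E}$ with two arrow classes $i < j$, the outcome is governed entirely by the behavior of the class-$i$ (lower) arrow, whose randomness is exactly what the procedure attributes to the particle producing that arrow. Multiplying the contributions of all $\chi^{(r)}(x)$ and $j^{(r)}(x)$ then yields $\prod_v w(v) = w(\mathcal{E})$, as desired.
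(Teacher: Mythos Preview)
Your proposal is correct and is precisely the kind of direct verification from the definitions that the paper has in mind; indeed, the paper omits the proof entirely, stating only that it ``is a quick consequence of the previous definitions.'' Your vertex-by-vertex bookkeeping---assigning each weight factor to either a Bernoulli $\chi^{(r)}(x)$, a step of a geometric $j^{(r)}(x)$, or a marginalized unused variable, with the lower-class-first rule preventing double-counting---is exactly how one would carry this out in detail.
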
 

Through a similar procedure as explained in \Cref{StochasticVertexLine}, one can also define the multi-class stochastic six-vertex model in the case of infinitely many particles; for brevity, we will not describe this in detail here. 

We conclude this section with the following lemma that bounds the speed of a tagged particle in a multi-class stochastic six-vertex model. 

\begin{lem} 

\label{xtxt1} 

Under the above notation, we have that $\mathbb{P} \big[ p_t^{(r)} (k) - p_{t - 1}^{(r)} (k) \ge v \big] \le b_2^{v - 1}$, for any integers $v, t \ge 0$; $r \in [1, n]$; and $k \in [-M_r, N_r]$.
\end{lem}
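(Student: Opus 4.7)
The bound is trivial when $v \le 1$ since then $b_2^{v-1} \ge 1$, so I would focus on $v \ge 2$. Set $x = p_{t-1}^{(r)}(k)$ and $d = p_t^{(r)}(k) - x$, and let $\mathcal{F}_{t-1}$ denote the $\sigma$-algebra generated by the time $t-1$ configuration. The plan is to work with the row-based sampling of the multi-class stochastic six-vertex ensemble, which by \Cref{pclassw} yields the same law as the particle-based dynamics: the vertices of the row at time $t$ are processed from left to right, and at each vertex the outgoing arrows are drawn from the conditional distribution prescribed by the weights \eqref{wijweights} given the incoming arrows. This presents the dynamics within the row as a Markov chain in the current horizontal-arrow class, whose transitions are governed by the vertical-in classes $c_h := \eta_{t-1}(x+h)$ that are fixed in $\mathcal{F}_{t-1}$.

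Two observations combine on the event $\{d \ge v\}$. First, because class-$r$ particles preserve their order and $p_t^{(r)}(k) \le p_{t-1}^{(r)}(k+1)$ by the dynamics, the event $\{d \ge v\}$ forces $p_{t-1}^{(r)}(k+1) \ge x+v$; in particular no class-$r$ particle lies in $\{x+1, \ldots, x+v-1\}$ at time $t-1$, so $c_h \ne r$ for $h = 1, \ldots, v-1$. Second, the class-$r$ particle at $x$ must emit a horizontal arrow of class $r$ that remains of class $r$ upon crossing each of the vertices $(x+1, t), \ldots, (x+v-1, t)$; otherwise the particle would turn vertical at some $(x+h, t)$ with $h < v$, yielding $d = h < v$. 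By \eqref{wijweights}, the conditional probability that the horizontal outgoing arrow at $(x+h, t)$ is of class $r$, given horizontal incoming of class $r$ and vertical incoming of class $c_h \ne r$, equals $b_1$ if $c_h < r$ and $b_2$ if $c_h > r$ (including $c_h = \infty$); both are at most $b_2$ since $b_1 < b_2$.

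Because the vertex configurations at different sites are sampled independently given their incoming arrows, multiplying these $v-1$ conditional pass-through probabilities gives $\mathbb{P}[d \ge v \mid \mathcal{F}_{t-1}] \le b_2^{v-1}$ on any initial configuration compatible with $\{d \ge v\}$ (and the bound is trivial otherwise), and taking expectations proves the lemma. The main pitfall to avoid is to attempt a direct bound using the geometric jump $j^{(r)}(x)$ from the particle-based dynamics: because the class-$r$ particle ``skips over'' stationary lower-class blockers when forming $V$, the displacement $d$ is not stochastically dominated by $j^{(r)}(x)$ alone. The row-based viewpoint circumvents this by assigning a separate per-vertex factor of at most $b_2$, with the blocker vertices contributing exactly the factor $b_1 \le b_2$ needed to preserve the geometric bound.
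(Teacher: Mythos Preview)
Your proof is correct and, at its core, implements the same decomposition as the paper: each of the $v-1$ sites $x+1,\ldots,x+v-1$ contributes a factor of at most $b_2$ (exactly $b_1$ when occupied by a lower-class particle, $b_2$ when occupied by a higher-class or empty site), and the product gives $b_2^{v-1}$. The paper phrases this through the particle dynamics---the $m$ lower-class blockers must each not move (contributing $b_1^m$) and the geometric jump must satisfy $j^{(r)}(x)\ge v-m$ (contributing $b_2^{v-m-1}$)---while you phrase it through the equivalent row-based vertex sampling via \Cref{pclassw}. Your formulation is arguably cleaner on one point: in the particle dynamics a lower-class blocker could in principle ``stay'' for reasons other than $\chi=1$ (being jumped over by an even lower class), and one must check that on $\{d\ge v\}$ this alternative is ruled out (since any such jump would either force the class-$r$ particle at $x$ to stay, or would itself be a lower-class move in $[x+1,x+v-1]$ making $U<x+v$). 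The vertex viewpoint makes the per-site Bernoulli factor manifest without this extra case analysis, and your closing remark about the pitfall of bounding $d$ by $j^{(r)}(x)$ alone correctly identifies why the blocker factors are needed.
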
 
	
\begin{proof}
	
	Let us abbreviate $X_s = p_s^{(r)} (k)$, for each $s \in \{ t - 1, t \}$. Consider the event $F = F_v$ on which $X_t - X_{t - 1} \ge v$. If $m \ge 0$ denotes the number of lower class particles (that is, with class between $1$ and $r - 1$) in the interval $[X_t + 1, X_t + v - 1]$ then, in order for $F_v$ to have occurred, each of these $m$ particles must not have moved and $X_t$ must have attempted to jump to the right by at least $v - m$ spaces. These events are independent; the former and latter occur with probabilities at most $b_1^m$ and $b_2^{v - m - 1}$, respectively. Thus, the lemma follows from the fact that $b_1 < b_2$. 
\end{proof}

\begin{rem}

\label{xtxt1cylinder} 

It can quickly by verified (through an entirely analogous proof) that \Cref{xtxt1} also holds for the multi-class stochastic six-vertex model on a cylinder, instead of on the upper half-plane. 
\end{rem}

\subsection{Coupling}

\label{CouplingHigherRank}

In this section we describe a coupling that will be used in the proofs of \Cref{sixvertexcylinderglobal} and \Cref{sixvertexlocalcylinder}; this will partly proceed through the multi-class stochastic six-vertex model of \Cref{HigherRank}. 

Fix an integer $n \ge 1$. Let us describe a coupling, which we refer to as the \emph{higher rank} (or \emph{multi-class}) \emph{coupling}, that couples $2n$ stochastic six-vertex models. More specifically, let $\eta_0^{(m)} = \big( \eta_0^{(m)} (x) \big) \in \{ 0, 1 \}^{\mathbb{Z}}$ and $\xi_0^{(m)} = \big( \xi_0^{(m)} (x) \big) \in \{ 0, 1 \}^{\mathbb{Z}}$, for each $m \in \{ 1, 2, \ldots n \}$, denote $2n$ particle configurations on $\mathbb{Z}$; assume that $\eta_0^{(i)} \le \eta_0^{(j)}$ and $\xi_0^{(i)} \le \xi_0^{(j)}$ for $1 \le i \le j \le n$. Denote the particle position sequences associated with $\eta_0^{(m)}$ and $\xi_0^{(m)}$ by $\textbf{p}_0^{(m)} = \big(  p_0^{(m)} (k) \big)$ and $\textbf{q}_0^{(m)} = \big( q_0^{(m)} (k) \big)$ for each $m \in [1, n]$, respectively. Let $\textbf{p}^{(m)} = \big(\textbf{p}_t^{(m)} \big) = \big( p_t^{(m)} (k) \big)$ and $\textbf{q}^{(m)} = \big( \textbf{q}_t^{(m)} \big) =  \big( q_t^{(m)} (k) \big)$ denote stochastic six-vertex models run with initial data $\textbf{p}_0^{(m)}$ and $\textbf{q}_0^{(m)}$, respectively. We will couple these models as follows. 

Suppose that each of the $\textbf{p}_{t - 1}^{(m)}$ and $\textbf{q}_{t - 1}^{(m)}$ have been jointly defined for some integer $t \ge 1$; we will then explain how to sample each of the $\textbf{p}_t^{(m)}$ and $\textbf{q}_t^{(m)}$. To that end, let the particle configurations associated with each $\textbf{p}_{t - 1}^{(m)}$ and $\textbf{q}_{t - 1}^{(m)}$ be $\eta_{t - 1}^{(m)}$ and $\xi_{t - 1}^{(m)}$, respectively; assume that $\eta_{t - 1}^{(i)} \le \eta_{t - 1}^{(j)}$ and $\xi_{t - 1}^{(i)} \le \xi_{t - 1}^{(j)}$ for each $1 \le i \le j \le n$. For convenience, define $\eta_{t - 1}^{(0)}$ and $\xi_{t - 1}^{(0)}$ to be the configurations with no particles, and define $\eta_{t - 1}^{(n + 1)}$ and $\xi_{t - 1}^{(n + 1)}$ to be the configurations that have a particle at every integer site. 

\begin{definition} 
	
\label{classparticles}

We will first assign a \emph{class}, which is an integer in $\{ 1, 2, \ldots , 2n \}$, to every particle in each of the $\eta_{t - 1}^{(m)}$ and $\xi_{t - 1}^{(m)}$, as follows. 

\begin{enumerate}
	
\item Any particle in $\big( \eta_{t - 1}^{(i)} \setminus \eta_{t - 1}^{(i - 1)}  \big) \cap \big( \xi_{t - 1}^{(j)} \setminus \xi_{t - 1}^{(j - 1)} \big)$ is assigned class $i + j - 1$, for each $i, j \in [1, n + 1]$ with $(i, j) \ne (n + 1, n + 1)$. 

\item Any particle in $\eta_{t - 1}^{(i)} \cap \eta_{t - 1}^{(j)}$ is assigned the same class in $\eta_{t - 1}^{(i)}$ and in $\eta_{t - 1}^{(j)}$, for each $i, j \in [1, n]$; the same statement holds if $\eta$ is replaced by $\xi$. 

\end{enumerate}

\end{definition}

\begin{example}
	
\label{etaxicoupling} 

If $n = 1$ and we abbreviate $\eta_{t - 1} = \eta_{t - 1}^{(1)}$ and $\xi_{t - 1} = \xi_{t - 1}^{(1)}$, then each particle of $\eta_{t - 1} \cap \xi_{t - 1}$ is of class $1$, while each particle in $\eta_{t - 1} \setminus \xi_{t - 1}$ or $\xi_{t - 1} \setminus \eta_{t - 1}$ is of class $2$. 

\end{example}

For any integers $m \in [1, n]$ and $r \in [1, 2n]$, let $\textbf{p}_{t - 1; r}^{(m)} = \big( p_{t - 1; r}^{(m)} (k) \big)$ and $\textbf{q}_{t - 1; r}^{(m)} = \big( p_{t - 1; r}^{(m)} (k) \big)$ denote the respective particle position sequences for the class $r$ particles in $\eta_{t - 1}^{(m)}$ and $\xi_{t - 1}^{(m)}$, respectively. Now we can define the following procedure that jointly couples the $2n$ stochastic six-vertex models $\textbf{p}^{(m)}$ and $\textbf{q}^{(m)}$ for one time step. 

\begin{definition} 
	
\label{couplinghigherrank} 

Sample each $\textbf{p}_{t; r}^{(m)}$ and $\textbf{q}_{t; r}^{(m)}$ (given $\textbf{p}_{t - 1; r}^{(m)}$ and $\textbf{q}_{t - 1; r}^{(m)}$) according to the same dynamics as explained in \Cref{HigherRank}, under which the random variables $\big\{ \chi^{(r)} (x) \big\}$ and $\big\{ j^{(r)} (k) \big\}$ are coupled across all $2n$ multi-class stochastic six-vertex models. Applying the $m = 1$ case of the concatenation procedure explained in \Cref{mnclasses} to each of these $2n$ multi-class models induces a coupling between the original $2n$ stochastic six-vertex particle configurations $\textbf{p}_t^{(m)}$ and $\textbf{q}_t^{(m)}$. We refer to this as the \emph{higher rank coupling}. 

\end{definition} 

\begin{rem}
	
	\label{etaxim} 
	
	In most cases of interest to us, we will couple $n + 1$ (and not $2n$) stochastic six-vertex models, given by $\eta_t$ and $\xi_t^{(m)}$ for $m \in \{ 1, 2, \ldots , n \}$. This can be viewed as a special case of the above framework, where we set $\eta_t^{(1)} = \eta_t$ and $\eta_t^{(m)}$ to be empty for $m \in \{ 2, 3, \ldots , n \}$. 
	
	Observe in this case that the class of a particle not in $\eta_{t - 1}$ determines which $\xi_{t - 1}^{(m)}$ it lies in. 
\end{rem}

It follows from \Cref{couplinghigherrank} that this coupling is \emph{attractive}, meaning that a pair of particles in two different stochastic six-vertex models that are coupled at time $t - 1$ remain coupled at time $t$. In particular, this implies that $\eta_t^{(i)} \le \eta_t^{(j)}$ and $\xi_t^{(i)} \le \xi_t^{(j)}$ if $1 \le i \le j \le n$, which allows us to repeat the sampling described above for larger values of $t$. This produces a simultaneous coupling of all $2n$ models $\eta^{(m)}$ and $\xi^{(m)}$ for all finite times.

\begin{rem}
	
	\label{higherrankclass}
	
	We should clarify that each particle is re-assigned its class at each time step under this procedure, that is, the class of a particle can change over time. However, we can still track the trajectory of each particle in any of the $2n$ models. It is quickly verified that the class of any particle tagged in this way is always non-increasing over time, and that it decreases only if the particle couples with another one.

\end{rem}

We can now quickly use the higher rank coupling to derive estimates on the behavior of discrepancies between two coupled stochastic six-vertex models. In particular, we establish the following proposition indicating that the speed of such discrepancies is bounded with high probability.

\begin{prop}
	
	\label{modelsequal}
	
	There exists a constant $c = c(b_2) > 0$ such that the following holds. Let $\textbf{\emph{p}} = \big( p_t (k) \big)$ and $\textbf{\emph{q}} = \big( q_t (k)\big)$ denote particle position sequences for two stochastic six-vertex models, which are coupled under the higher rank coupling of \Cref{couplinghigherrank}. 
	
	Let $M, N, R > 0$ denote integers. Assume that $p_0 (k) = q_0 (k)$ for each $k \in \mathbb{Z}$ such that either $p_0 (k) \in [-N, N]$ or $q_0 (k) \in [-N, N]$; also assume that $B = N - \frac{2M}{1 - b_2} - R > 0$. Let $E$ denote the event on which there exists some $(j, t) \in \mathbb{Z} \times [0, M]$ for which $p_t (j) \ne q_t (j)$ and either $p_t (j) \in [-R, N]$ or $q_t (j) \in [-R, N]$. Then, $\mathbb{P} [E] \le c^{-1} e^{-c (M + B)}$. 
\end{prop}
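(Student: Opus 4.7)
The plan is to exploit the higher rank coupling (\Cref{couplinghigherrank}) to view $(\textbf{p}, \textbf{q})$ as a two-class system. Positions in $\eta_0 \cap \xi_0$ are class $1$: by the attractivity of the coupling (\Cref{higherrankclass}), such particles share the same random variables $\chi^{(1)}, j^{(1)}$ and evolve identically in both models, so any tag with $p_0(j) = q_0(j)$ satisfies $p_t(j) = q_t(j)$ for all $t \ge 0$. Contrapositively, whenever $p_t(j) \ne q_t(j)$ we must have $p_0(j) \ne q_0(j)$, which by hypothesis forces both $p_0(j), q_0(j) \in \mathbb{Z} \setminus [-N, N]$.

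The next step is to reduce $E$ to a displacement estimate. Suppose $E$ holds at some $(j, t)$ with, say, $p_t(j) \in [-R, N]$. Since particles only move rightward, $p_0(j) \le p_t(j) \le N$, and combined with $p_0(j) \notin [-N, N]$, this gives $p_0(j) \le -N - 1$. Enumerating the ``deviating'' left tags by their initial $\textbf{p}$-position as $p_0(k_1) > p_0(k_2) > \cdots$, the distinctness of these integers forces $p_0(k_i) \le -N - i$; hence the event in this subcase forces the $k_i$-th $\textbf{p}$-particle to accumulate total displacement $\ge N - R + i$ within $M$ steps. The symmetric argument applies when $q_t(j) \in [-R, N]$.

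The core estimate is an exponential moment bound for the displacement of a single tagged particle. The proof of \Cref{xtxt1} actually yields the conditional version $\mathbb{P}\bigl[p_t(k) - p_{t-1}(k) \ge v \bigm| \mathcal{F}_{t-1}\bigr] \le b_2^{v-1}$, since the relevant independence is between the time-$t$ randomness $(\chi^{(r)}(x), j^{(r)}(x))$ and the past $\mathcal{F}_{t-1}$. A direct computation then gives $\mathbb{E}\bigl[e^{sV_t} \bigm| \mathcal{F}_{t-1}\bigr] \le 1 + (e^s - 1)/(1 - b_2 e^s) \le \exp\bigl(s/(1-b_2) + O(s^2)\bigr)$ for small $s > 0$, and iterated conditioning produces $\mathbb{E}[e^{s S_M}] \le \exp\bigl(Ms/(1-b_2) + O(Ms^2)\bigr)$ for the $M$-step displacement $S_M$. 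Using the identity $N - R = B + 2M/(1 - b_2)$ together with Markov's inequality,
\begin{equation*}
\mathbb{P}[S_M \ge N - R + i] \le \exp\bigl(-s(B + i) - sM/(1-b_2) + O(Ms^2)\bigr),
\end{equation*}
and fixing $s = s(b_2) > 0$ small enough to absorb the $O(Ms^2)$ term into half of the $sM/(1-b_2)$ term bounds this by $e^{-c(M + B + i)}$ for some $c = c(b_2) > 0$.

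To finish, I would sum the geometric series $\sum_{i \ge 1} e^{-c(M + B + i)} \le C e^{-c(M+B)}$ for the $\textbf{p}$-contribution, and double it to account for the symmetric $\textbf{q}$-contribution, yielding $\mathbb{P}[E] \le c^{-1} e^{-c(M+B)}$ after adjusting constants. I expect the main care-point to be the reduction from the tag-level event $E$ to single-particle displacement, which relies crucially on the identical evolution of class-$1$ particles under the coupling; once that reduction is justified, the exponential large-deviation estimate is essentially a standard Chebyshev–MGF computation for sums of $b_2$-geometrically-tailed increments.
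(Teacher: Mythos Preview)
Your reduction step contains a genuine gap. The claim that $p_0(j) = q_0(j)$ implies $p_t(j) = q_t(j)$ for all $t$ is not correct under the higher rank coupling. You are right that the class-$1$ positions (those in $\eta_0 \cap \xi_0$) evolve identically as a \emph{set} in both systems, since class-$1$ dynamics use only the shared randomness $\chi^{(1)}, j^{(1)}$ and ignore higher classes. But $p_t(j)$ is the $j$-th \emph{sorted} element of $\eta_t$ after concatenation, and this sorted index can be shifted by the class-$2$ particles, which differ between $\eta$ and $\xi$. Concretely, in the multi-class dynamics of \Cref{HigherRank} a class-$2$ particle skips over stationary class-$1$ particles; in the concatenated ensemble this produces a $(1,1;1,1)$ vertex where the class-$2$ particle overtakes the class-$1$ particle in sorted order. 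A small example: take $\eta_0 = \{-N-1\} \cup [-N+1,0]$ and $\xi_0 = \{-N-2\} \cup [-N+1,0]$, so the hypothesis holds and tags $2,\ldots,N+1$ match. If at time $1$ all class-$1$ particles stay and the $\eta$-class-$2$ particle at $-N-1$ skips past them (a positive-probability event needing only a moderate geometric jump), every sorted tag shifts in $\eta_1$ but not in $\xi_1$, so $p_1(j) \ne q_1(j)$ even for $j$ with $p_0(j) = q_0(j) \in [-N,N]$.

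The paper sidesteps this by never working with tags at all: it tracks $X_t$, the position of the rightmost second-class particle in $\eta_t$ lying in $(-\infty,N]$ (and $Y_t$ for $\xi$), and argues that on $E$ one of $X_t - X_0$, $Y_t - Y_0$ must exceed $N-R$. The justification is that if $X_s, Y_s < -R$ for all $s \le M$, then $\eta_s \cap [-R,N] = \xi_s \cap [-R,N]$ consists only of class-$1$ particles, and a count of class-$2$ particles in $(-\infty,N]$ (which is conserved modulo pairwise coupling and equal at time $0$ by the tag-matching hypothesis) shows the number of particles to the left of $-R$ also agree, forcing $p_s(j) = q_s(j)$ throughout $[-R,N]$. \Cref{xtxt1} then gives the geometric tail on the one-step increment $X_t - X_{t-1}$ directly, and your MGF/Chernoff computation (which is fine) applies to $X_t$ rather than to a tagged particle. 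So the large-deviations half of your argument survives; what needs replacing is the object whose displacement you bound.
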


\begin{proof} 
	
Let $X_t$ and $Y_t$ denote the positions of the rightmost second-class particles in $\textbf{p}_t$ and $\textbf{q}_t$ (under the higher rank coupling) originally at time $0$ to the left of site $N + 1$, respectively. Since $\textbf{p}_0 \cap [-N, N] = \textbf{q}_0 \cap [-N, N]$, we have that $X_0, Y_0 \le -N$. Observe from \Cref{xtxt1} that 
\begin{flalign}
\label{xt1xtestimate}
\mathbb{P} [X_t - X_{t - 1} \ge k] \le b_2^{k - 1}, \quad \text{and} \quad \mathbb{P} [Y_t - Y_{t - 1} \ge k] \le b_2^{k - 1}, \quad \text{for each integer $k \ge 1$}. 
\end{flalign}

\noindent Since the jumps $\{ X_t - X_{t - 1} \}$ are mutually independent, a large deviations estimate for sums of independent geometric random variables yields a constant $c = c (b_2) > 0$ such that 
\begin{flalign}
\label{xt1xtestimate2}
\mathbb{P} \left[ X_t - X_0 \ge \displaystyle\frac{2t}{1 - b_2} + k \right]  \le e^{-c(t + k)}, \qquad \text{for any integers $t, k \ge 0$}, 
\end{flalign}

\noindent and similarly for $Y_t - Y_0$. 

Now, on the event $E$, we must have that $\max_{1 \le t \le M} \{ X_t - X_0, Y_t - Y_0 \} \ge N - R	$. In view of \eqref{xt1xtestimate2} and a union bound, this occurs with probability at most $2 M e^{-c (M + B)}$. The lemma then follows from decreasing $c$ if necessary. 
\end{proof}

\section{Classification of Translation-Invariant Stationary Measures} 

\label{StationaryTranslation}

In this section we classify the translation-invariant stationary measures for the stochastic six-vertex model; the analogous result for the ASEP was shown by Liggett in \cite{CEP}. We first state the classification result in \Cref{VertexMany}, and then we establish it in \Cref{ProofTranslation}.

\subsection{Translation-Invariant Stationary Measures}  

\label{VertexMany}

In this section we state a classification result for the translation-invariant and stationary measures of the stochastic six-vertex model, given by \Cref{translationstationary} below (which will be established in \Cref{ProofTranslation}). We begin by defining operators $\mathfrak{M}_t$ and $\mathfrak{S}_t$ that act on the space of probability measures on sets of particle configurations; the first essentially applies the stochastic six-vertex model for one time step, and the second is a shift operator. 

\begin{definition}
	
	\label{operatorm}
	
	Let $\mu$ denote a probability measure on the set of particle configurations on $\mathbb{Z}$, that is, on $\{ 0, 1 \}^{\mathbb{Z}}$. For each integer $t \ge 0$, let $\mathfrak{M}_t \mu$ denote the probability measure on $\{ 0, 1 \}^{\mathbb{Z}}$ defined as follows. Let $\eta_0 = \big( \eta_0 (x) \big) \in \{ 0, 1 \}^{\mathbb{Z}}$ denote a (random) particle configuration sampled from $\mu$, and run the stochastic six-vertex model $\eta = \big( \eta_s (x) \big)$ with initial data given by $\eta_0$. Then, $\mathfrak{M}_t \mu$ denotes the law of the particle configuration $\eta_t = \big( \eta_t (x) \big)$ of the model at time $t$.

\end{definition} 

\begin{definition} 
	
	\label{soperator} 
	
	For any $m \in \mathbb{Z}$ and particle configuration $\eta = \big( \eta (x) \big) \in \{ 0, 1 \}^{\mathbb{Z}}$, let $\mathfrak{S}_m \eta = \big( \mathfrak{S}_m \eta (x) \big) \in \{ 0, 1 \}^{\mathbb{Z}}$ denote the particle configuration obtained by shifting $\eta$ to the left by $m$, that is, by setting $\mathfrak{S}_m \eta (x) = \eta (x + m)$ for each $x \in \mathbb{Z}$. Then, $\mathfrak{S}_m$ induces an operator on the space of measures on $\{ 0, 1 \}^{\mathbb{Z}}$. 
\end{definition} 

\begin{rem} 

\label{smoperator} 

For any $t \in \mathbb{Z}_{\ge 0}$ and $m \in \mathbb{Z}$, we have that $\mathfrak{M}_t = (\mathfrak{M}_1)^t$ and $\mathfrak{S}_m = (\mathfrak{S}_1)^m$. Furthermore, $\mathfrak{M}_t$ and $\mathfrak{S}_m$ commute. 

\end{rem} 

Now we can define translation-invariant, stationary, and extremal measures for the stochastic six-vertex model.

\begin{definition} 
	
\label{textremali} 

Let $\mathscr{P} = \mathscr{P} \big( \{ 0, 1 \}^{\mathbb{Z}} \big)$ denote the space of probability measures on $\{ 0, 1 \}^{\mathbb{Z}}$. We say that an element $\mu \in \mathscr{P}$ is \emph{translation-invariant} if $\mathfrak{S}_m \mu = \mu$ for any $m \in \mathbb{Z}$, and that it is \emph{stationary} if $\mathfrak{M}_t \mu = \mu$ for any $t \in \mathbb{Z}_{\ge 0}$. Denote the sets of translation-invariant and stationary measures by $\mathscr{T} \subset \mathscr{P}$ and $\mathscr{S} \subset \mathscr{P}$, respectively. 

We furthermore call $\mu$ \emph{extremal} if, for any decomposition $\mu = p \mu_1 + (1 - p) \mu_2$ with $\mu_1, \mu_2 \in \mathscr{P}$ and $p \in (0, 1)$, we have that $\mu_1 = \mu = \mu_2$.

\end{definition} 

\begin{example} 
	
\label{zetarho} 

Fix $\rho \in [0, 1]$, and let $\Upsilon = \Upsilon^{(\rho)} \in \mathscr{P} \big( \{ 0, 1\}^{\mathbb{Z}} \big)$ denote the product $\rho$-Bernoulli measure on $\{ 0, 1 \}^{\mathbb{Z}}$; in particular, if $\eta = \big( \eta (x) \big) \in \{ 0, 1 \}^{\mathbb{Z}}$ is sampled under $\Upsilon$, then $\mathbb{E} \big[ \eta (x) \big] = \rho$ for any $x \in \mathbb{Z}$. Then, one can quickly verify that $\Upsilon$ is translation-invariant and extremal. 

Furthermore, observe that $\Upsilon$ defines the distribution of the particle configuration at time $t = 0$ associated with a six-vertex ensemble sampled from the infinite-volume translation-invariant Gibbs measure $\mu (\rho)$ from \Cref{Translation}. The translation-invariance (with respect to vertical shifts) of this measure implies that $\Upsilon$ is stationary. Thus, $\Upsilon \in \mathscr{T} \cap \mathscr{S}$ and is extremal.

\end{example} 

The following theorem indicates that the measures from \Cref{zetarho} constitute all extremal elements of $\mathscr{T} \cap \mathscr{S}$. Its proof will be given in \Cref{ProofTranslation} below.

\begin{thm}
	
	\label{translationstationary}
	
	If $\mu \in \mathscr{T} \cap \mathscr{S}$ is extremal, then $\mu = \Upsilon^{(\rho)}$ (from \Cref{zetarho}) for some $\rho \in [0, 1]$.
	 
\end{thm}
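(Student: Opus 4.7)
The plan is to follow the classical strategy of Liggett \cite{CEP} for the ASEP, adapted to our discrete-time setting via the multi-class coupling of \Cref{couplinghigherrank}. The extremal elements of $\mathscr{T} \cap \mathscr{S}$ will be shown to coincide with product Bernoulli measures by comparing $\mu$ with $\Upsilon^{(\rho)}$ for an appropriate density $\rho$.

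First, since $\mu$ is extremal in the convex set $\mathscr{T} \cap \mathscr{S}$ and is translation-invariant, extremality forces ergodicity with respect to the spatial shift group $\{\mathfrak{S}_m\}_{m \in \mathbb{Z}}$. By Birkhoff's ergodic theorem, the density
\begin{flalign*}
\rho := \mathbb{E}_{\mu} \big[ \eta (0) \big] = \displaystyle\lim_{N \rightarrow \infty} \displaystyle\frac{1}{2N + 1} \displaystyle\sum_{|x| \le N} \eta (x), \qquad \mu \text{-almost surely,}
\end{flalign*}
is a well-defined constant in $[0, 1]$. The goal is to show $\mu = \Upsilon^{(\rho)}$.

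Next I would couple $\mu$ and $\Upsilon^{(\rho)}$ using the higher rank coupling of \Cref{couplinghigherrank} with $n = 1$: sample $\eta_0 \sim \mu$ and $\zeta_0 \sim \Upsilon^{(\rho)}$ independently and run them jointly. By averaging the joint law over time through $T^{-1} \sum_{t = 0}^{T - 1}$, extracting a weak limit point as $T \rightarrow \infty$, and then averaging over spatial shifts, I would produce a coupling $\bar{\mu}$ of $\mu$ and $\Upsilon^{(\rho)}$ that is invariant under both the coupled dynamics and all spatial shifts. The key feature of the coupling, made precise by \Cref{etaxicoupling} and \Cref{higherrankclass}, is \emph{attractivity}: a pair of sites on which $\eta$ and $\zeta$ agree at time $t - 1$ must agree at time $t$, so the \emph{discrepancy sets}
\begin{flalign*}
D_+ = \big\{ x \in \mathbb{Z} : \eta (x) = 1, \; \zeta (x) = 0 \big\}, \qquad D_- = \big\{ x \in \mathbb{Z} : \eta (x) = 0, \; \zeta (x) = 1 \big\}
\end{flalign*}
cannot acquire new members; they can only \emph{annihilate} when a $D_+$ site and a $D_-$ site collide, interpreting the associated class-$2$ particles as second-class particles for the coupled model.

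The core of the argument is then to show that under $\bar{\mu}$, at most one of $D_+$, $D_-$ is nonempty almost surely; equivalently, either $\eta \ge \zeta$ everywhere or $\eta \le \zeta$ everywhere. The argument runs as follows: by translation invariance of $\bar{\mu}$, the quantity $\bar{p}_{\pm} := \bar{\mu} \big[ 0 \in D_{\pm} \big]$ is well-defined, and by stationarity these densities are time-invariant. Using the explicit dynamics of class-$2$ particles from \Cref{HigherRank} together with \Cref{xtxt1} and the monotonicity \Cref{lambdaximonotone}, I would argue that, conditional on both $D_+$ and $D_-$ being nonempty within the same bounded window, a pair of opposite-type class-$2$ particles meets within a bounded number of time steps with probability bounded below by a positive constant. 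A strict decrease in the expected number of discrepancies per unit length per unit time would then contradict the stationarity of $\bar{\mu}$; hence $\bar{p}_+ \bar{p}_- = 0$.

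Assuming WLOG that $D_- = \emptyset$ $\bar{\mu}$-a.s., so $\eta \ge \zeta$ almost surely, the equality of the marginal means gives
\begin{flalign*}
0 = \mathbb{E}_{\bar{\mu}} \big[ \eta (0) - \zeta (0) \big] = \mathbb{E}_{\bar{\mu}} \big[ \textbf{1}_{0 \in D_+} \big] \ge 0,
\end{flalign*}
forcing $\eta (0) = \zeta (0)$ a.s.; spatial translation invariance promotes this to $\eta \equiv \zeta$ a.s., so $\mu = \Upsilon^{(\rho)}$. The main obstacle is the annihilation/strict-decrease step: for the ASEP one reads it directly off the infinitesimal generator, but the discrete-time sequential updates of the stochastic six-vertex model make the corresponding statement subtler. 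I expect the cleanest route is to use the explicit class-$2$ dynamics from \Cref{HigherRank}, combined with \Cref{modelsequal} to ensure that second-class particles do not escape to infinity faster than they can collide, to produce the required positive collision probability and deduce the dichotomy via a Borel--Cantelli argument against stationarity.
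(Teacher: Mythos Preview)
Your proposal follows the same Liggett-type strategy as the paper: construct a translation-invariant, stationary coupling of $\mu$ with a product Bernoulli measure, use attractivity to show that opposite-type discrepancies annihilate, and deduce that the two configurations are ordered. The paper carries this out rigorously via \Cref{guvpqsum} (a quantitative annihilation estimate, proved first for finite configurations and then transferred to infinite volume using \Cref{modelsequal}) and \Cref{etaxiordered} (the pathwise-ordering conclusion), and then invokes an \emph{extremal} coupling (\Cref{stationarytranslationcoupled}, taken from Liggett) rather than a Ces\`aro-averaged one. It also compares $\mu$ not just with $\Upsilon^{(\rho)}$ but with $\Upsilon^{(\theta)}$ for all $\theta$, obtaining $\mu \ge \Upsilon^{(\theta)}$ for $\theta < \rho$ and $\mu \le \Upsilon^{(\theta)}$ for $\theta > \rho$, and concludes by continuity in $\theta$.

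There is a genuine gap in your argument at the step ``hence $\bar{p}_+ \bar{p}_- = 0$.'' Your Ces\`aro-averaged coupling $\bar{\mu}$ is translation-invariant and stationary, but you have not shown it is \emph{extremal}; ergodicity of the marginal $\mu$ does not transfer to the coupling. The annihilation argument, as you correctly describe it, shows only that $D_+$ and $D_-$ cannot coexist in the same bounded window $\bar{\mu}$-almost surely, i.e.\ $\mathbb{P}_{\bar{\mu}}\big[\{\eta \ge \zeta\} \cup \{\zeta \ge \eta\}\big] = 1$. It does \emph{not} follow that one of $\bar{p}_+$, $\bar{p}_-$ vanishes: a non-ergodic $\bar{\mu}$ could be a mixture in which $\eta \ge \zeta$ on one ergodic component and $\zeta \ge \eta$ on another, giving $\bar{p}_+, \bar{p}_- > 0$ while $D_+$ and $D_-$ never coexist on any single realization. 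The paper avoids this by producing an extremal coupling, after which ergodicity forces $\mathbb{P}[\eta \ge \xi] \in \{0, 1\}$. Your direct comparison with $\Upsilon^{(\rho)}$ can in fact be salvaged without extremality: from the pathwise ordering, translation invariance of $\bar{\mu}$, shift-invariance of the event $\{\eta \ge \zeta\}$, and equality of the marginal densities, one gets $\mathbb{E}\big[(\eta(0) - \zeta(0)) \textbf{1}_{\{\eta \ge \zeta\}}\big] = 0$ with nonnegative integrand (and similarly on the complement), which already forces $\eta = \zeta$ $\bar{\mu}$-a.s. But this repair is not the argument you wrote.
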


\subsection{Proof of \Cref{translationstationary}}

\label{ProofTranslation}  

In this section we establish \Cref{translationstationary}, whose proof will be partially based on the framework introduced by Liggett to establish Theorem 1.1 of \cite{CEP}. To that end, for any finite interval $I \subset \mathbb{Z}$ and two particle configurations $\eta = \big( \eta (x) \big) \in \{ 0, 1 \}^{\mathbb{Z}}$ and $\xi = \big( \xi (x) \big) \in \{ 0, 1 \}^{\mathbb{Z}}$, we first define
\begin{flalign} 
\label{rietaxi} 
\mathcal{R} ( I; \eta, \xi ) = 1 - \displaystyle\prod_{x, y \in I} \Big( 1 - \eta (x) \xi (y) \big( 1 - \eta (y) \big) \big(1 - \xi (x) \big) \Big).
\end{flalign} 

Observe in particular that either the $(x, y)$ or $(y, x)$ term of the product on the right side of \eqref{rietaxi} is equal to $0$ if and only if $\big( \eta (x), \eta (y); \xi (x), \xi (y) \big) = (1, 0; 0, 1)$ or $\big( \eta (x), \eta(y); \xi (x), \xi (y) \big) = (0, 1; 1, 0)$. Thus, $\mathcal{R} (I; \eta, \xi) = 0$ if $\eta$ and $\xi$ are \emph{ordered on $I$}, meaning that $\eta |_I \ge \xi |_I$ or $\eta |_I \le \xi |_I$, where $\eta |_I$ and $\xi |_I$ denote the restrictions of $\eta$ and $\xi$ to $I$, respectively; this is equivalent to the statement that either $\eta (i) \ge \xi (i)$ for each $i \in I$ or $\eta (i) \le \xi (i)$ for each $i \in I$. Otherwise, $\mathcal{R} (I; \eta, \xi) = 1$.

Now we can state the following proposition, which is similar to Lemma 2.2 of \cite{SLIS} (see also the proof of Lemma 3.1 in \cite{CEP}) and implies that suitably coupled stochastic six-vertex models are typically ordered on intervals that are not too large.

\begin{prop}
	
\label{guvpqsum} 

There exists a constant $C = C(b_1, b_2) > 1$ such that the following holds. Let $k, n, M, N \ge 1$ be integers, and let $\eta_0 = \big( \eta_0 (x) \big) \in \{ 0, 1 \}^{\mathbb{Z}}$ and $\xi_0^{(m)} = \big( \xi_0^{(m)} (x) \big) \in \{ 0, 1 \}^{\mathbb{Z}}$ for each $m \in \{ 1, 2, \ldots , n \}$ denote particle configurations on $\mathbb{Z}$. Assume that $\eta_0 (x) = 0 = \xi_0^{(m)} (x)$ for any $|x| > N$ and $m \in \{ 1, 2, \ldots , n \}$, and that $\xi_0^{(i)} \le \xi_0^{(j)}$ for any integers $1 \le i \le j \le n$. 

Let $\eta_t$ and $\xi_t^{(m)}$ denote stochastic six-vertex models with initial data $\eta_0$ and $\xi_0^{(m)}$, respectively, which are mutually coupled under the higher rank coupling of \Cref{couplinghigherrank} (recall \Cref{etaxim}). Also let $\mathcal{I}$ be a set of of pairwise disjoint intervals such that $\bigcup_{I \in \mathcal{I}} I = \mathbb{Z}$ and $|I| = k$ for each $I \in \mathcal{I}$. 

Then,
\begin{flalign*}
\displaystyle\sum_{t = 0}^{M - 1} \displaystyle\sum_{m = 1}^n \displaystyle\sum_{I \in \mathcal{I}} \mathbb{E} \Big[ \mathcal{R} \big( I; \eta_t, \xi_t^{(m)} \big) \Big] \le C^k n N.
\end{flalign*}
	
\end{prop}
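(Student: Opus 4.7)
The plan is to reduce the estimate to a count of nearby opposite-sign discrepancies between $\eta_t$ and each $\xi_t^{(m)}$, and then to bound the total ``close-pair lifetime'' via the higher rank coupling.

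First I would unpack \eqref{rietaxi}: each factor in the product vanishes iff one of $(\eta(x), \xi(x); \eta(y), \xi(y)) \in \{(1,0;0,1),(0,1;1,0)\}$ occurs, so $\mathcal{R}(I; \eta, \xi) = 1$ iff $I$ contains both a ``positive'' site $x$ (with $\eta(x) > \xi(x)$) and a ``negative'' site $y$ (with $\eta(y) < \xi(y)$). Writing
\[
D_t^{(m),+} = \{x \in \mathbb{Z} : \eta_t(x) > \xi_t^{(m)}(x)\}, \qquad D_t^{(m),-} = \{x \in \mathbb{Z} : \eta_t(x) < \xi_t^{(m)}(x)\},
\]
and using that each $I \in \mathcal{I}$ has length $k$, this yields the deterministic bound
\[
\sum_{I \in \mathcal{I}} \mathcal{R}(I; \eta_t, \xi_t^{(m)}) \le \#\bigl\{(x, y) \in D_t^{(m),+} \times D_t^{(m),-} : |x - y| < k\bigr\} =: Z_t^{(m)}.
\]

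By \Cref{couplinghigherrank} and \Cref{higherrankclass}, under the higher rank coupling the elements of $D_t^{(m),\pm}$ correspond to tagged higher-class particles in the multi-class dynamics; their class is non-increasing in time, and a class drop represents the annihilation of a $+$ together with a $-$. In particular $|D_t^{(m),\pm}| \le |D_0^{(m),\pm}| \le 2N+1$ throughout. The core estimate is the following annihilation lower bound: there exists $c = c(b_1, b_2) \in (0, 1)$ such that, whenever $x \in D_t^{(m),+}$ and $y \in D_t^{(m),-}$ satisfy $0 < y - x \le k$, the conditional probability that the $+$ at $x$ disappears by time $t + 1$ is at least $c^k$. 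To see this, it suffices for the $\eta$-particle at $x$ to move and to execute a geometric jump of length exactly $y - x$, landing on $y$ (probability $\ge (1-b_1)(1-b_2)b_2^{k-1}$ by \Cref{sixvertexparticles1}), while the intermediate class-$1$ particles and the $\xi^{(m)}$-particle at $y$ all stay (further probability $\ge b_1^{2k}$ by the mutual independence of the $\chi$-variables of \Cref{HigherRank}); the resulting product is $\ge c^k$.

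Finally, tag each initial positive discrepancy of each pair $(\eta, \xi^{(m)})$ (at most $2N+1$ per pair). A standard geometric stopping-time argument based on the annihilation rate shows that the expected number of time steps during which a given tag is within distance $k$ of \emph{some} $-$-discrepancy is at most $c^{-k}$. Since at any instant there are at most $2k$ negative discrepancies within distance $k$ of the tag, the contribution of a single positive tag to $\sum_t Z_t^{(m)}$ is at most $2k \cdot c^{-k}$. Summing over the $\le 2N+1$ tags and over $m \in \{1, \ldots, n\}$ gives
\[
\sum_{t = 0}^{M-1} \sum_{m = 1}^n \sum_{I \in \mathcal{I}} \mathbb{E}\bigl[\mathcal{R}(I; \eta_t, \xi_t^{(m)})\bigr] \le (4N+2) \cdot k \cdot c^{-k} \cdot n \le C^k nN
\]
for an appropriate $C = C(b_1, b_2)$, absorbing the factor $k$ into $C^k$.

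\emph{Main obstacle.} The chief technical challenge is the annihilation rate bound: one must cleanly isolate a local event (the $\eta$-particle at $x$ executing a direct geometric jump onto $y$, with intervening class-$1$ particles and the $\xi$-particle at $y$ all staying) and express it as a product of independent events in terms of the $\chi$- and $j$-variables of the multi-class coupling, so that its probability is provably $\gtrsim c^k$ uniformly in the surrounding configuration. Handling intervening same-sign discrepancies between $x$ and $y$ is also subtle and is most easily addressed by selecting the rightmost $+$-discrepancy strictly to the left of $y$ rather than an arbitrary nearby pair.
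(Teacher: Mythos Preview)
Your overall strategy is close in spirit to the paper's, but the geometric stopping-time argument for a \emph{fixed} tagged $+$ has a real gap. The annihilation rate you describe---the $\eta$-particle at $x$ making a direct geometric jump onto the site $y$ of a nearby $-$---requires that there be no other $+$ discrepancy strictly between $x$ and $y$. You note this in your ``Main obstacle'' paragraph and propose selecting the rightmost $+$ left of $y$; but that rightmost $+$ need not be the \emph{tagged} one, so the conditional annihilation bound does not transfer to the tag you are tracking. Concretely, take $D^+=\{0,1\}$, $D^-=\{2\}$ and tag the $+$ at $0$. In one time step the $\eta$-particle at $0$ can only reach site $0$ or $1$ (it is blocked by the class-$2$ $\eta$-particle at $1$), while the sole $\xi$-particle ends at a site $\ge 2$; hence the tagged $+$ at $0$ annihilates with probability exactly $0$, not $\ge c^k$. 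The same obstruction persists whenever several $+$'s are stacked to the left of the nearest $-$, so the claim ``expected number of steps a given tag is within distance $k$ of some $-$ is $\le c^{-k}$'' is false as stated.

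The paper sidesteps this entirely by using a global potential instead of per-tag lifetimes: set $f(\eta,\xi)=\sum_x|\eta(x)-\xi(x)|$, which is nonincreasing under the higher rank coupling and bounded by $O(N)$ initially. For each interval $I$ with $\mathcal R(I;\eta_{t-1},\xi_{t-1})=1$, choose \emph{adjacent} opposite discrepancies $x<y$ in $I$ (no discrepancy strictly between them) and build an explicit local event of probability $\ge c^k$ on which both the $\eta$-particle at $x$ and the $\xi$-particle at $y$ move to $y+1$ and couple. Since the intervals are disjoint, these coupling events consume distinct discrepancies, so $\mathbb E[f(\eta_{t-1},\xi_{t-1})-f(\eta_t,\xi_t)]\ge c^k\sum_I\mathbb E[\mathcal R(I;\eta_{t-1},\xi_{t-1})]$; summing in $t$ gives the bound. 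The point is that one never needs a rate bound for a \emph{specific} tagged discrepancy---only that \emph{some} pair in each unordered interval can be annihilated---and the potential $f$ does the bookkeeping automatically. Reworking your argument around $f$ (after reducing to $n=1$ via the concatenation of \Cref{mnclasses}, as the paper does) would close the gap.
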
 

\begin{proof} 
	
In view of \Cref{couplinghigherrank} and the $m = 1$ case of the concatenation procedure for the multi-class stochastic six-vertex model described in \Cref{mnclasses}, it suffices to address the $n = 1$ case of this proposition. Thus, assume that $n = 1$, and abbreviate $\xi_t = \xi_t^{(1)}$; we will show that 
\begin{flalign}
\label{restimate1}
\displaystyle\sum_{t = 0}^{M - 1} \displaystyle\sum_{I \in \mathcal{I}} \mathbb{E} \big[ \mathcal{R} (I; \eta_t, \xi_t) \big] \le C^k N.
\end{flalign}

\noindent For any finite particle configurations $\eta = \big( \eta (x) \big) \in \{ 0, 1 \}^{\mathbb{Z}}$ and $\xi = \big( \xi (x) \big) \in \{ 0, 1 \}^{\mathbb{Z}}$, define 
\begin{flalign*}
f(\eta, \xi) = \displaystyle\sum_{x \in \mathbb{Z}} \big| \eta (x) - \xi (x) \big|. 
\end{flalign*}

Then, the attractivity of the higher rank coupling implies that $f (\eta_t, \xi_t)$ is non-increasing in $t$. Thus, since $\eta_t$ and $\xi_t$ each have at most $2N + 1 \le 3 N$ particles for any $t \ge 0$, it follows that
\begin{flalign}
\label{fetamximeta0xi0}
f(\eta_0, \xi_0) - f(\eta_M, \xi_M) \le f(\eta_0, \xi_0) \le 6N. 
\end{flalign}

Now, let $g (t) = f (\eta_{t - 1}, \xi_{t - 1}) - f(\eta_t, \xi_t) \ge 0$ for each integer $t \in [1, M]$. We claim that there exists a constant $c = c (b_1, b_2) > 0$ for which 
\begin{flalign}
\label{gtestimate} 
\mathbb{E} \big[ g(t) \big] \ge c^k \displaystyle\sum_{I \in \mathcal{I}} \mathbb{E}\big[ \mathcal{R} (I; \eta_{t - 1}, \xi_{t - 1}) \big].
\end{flalign}

\noindent Then, \eqref{restimate1} would follow from \eqref{fetamximeta0xi0} and \eqref{gtestimate}. 

To establish \eqref{gtestimate}, let $t \in [1, M]$ and $I = [u, v] \in \mathcal{I}$ denote an integer and interval, respectively; recall the assignment of classes from \Cref{classparticles} (and \Cref{etaxicoupling}, in our setting). Let $E (I; t)$ denote the event that there exists uncoupled (class $2$) particles in $\eta_{t - 1} \cap I$ and $\xi_{t - 1} \cap I$ that become coupled (first class) at time $t$. Then, $g(t) \ge \sum_{I \in \mathcal{I}} \textbf{1}_{E(I; t)}$, so that 
\begin{flalign}
\label{gtestimate2}
\mathbb{E} \big[ g(t) \big] \ge \displaystyle\sum_{I \in \mathcal{I}} \mathbb{P} \big[ E(I; t) \big].  
\end{flalign}

Now let us restrict to the event that $\mathcal{R} (I; \eta_{t - 1}, \xi_{t - 1}) = 1$. Then, there exist $x, y \in I$ such that $\eta_{t - 1} (x) = 1 = \xi_{t - 1} (y)$ and $\eta_{t - 1} (y) = 0 = \xi_{t - 1} (x)$; these particles at sites $x$ and $y$ are uncoupled (of class $2$) at time $t - 1$. We may assume that $x < y$ and that there is no $z \in [x + 1, y - 1]$ for which $\eta (z) \ne \xi (z)$. 

Define the event $F(I; t)$ on which the following hold. In the below, we recall the notation defining the dynamics of the multi-class stochastic six-vertex model from \Cref{HigherRank}. 

\begin{enumerate} 

\item For any $z < x$, we have that $j_t^{(1)} (z) + z \le x$.

\item We have that $\chi_t^{(1)} (z) = 1$ for each $z \in [x, y]$ and that $\chi_t^{(1)} (y + 1) = 0$. 

\item We have that $\chi_t^{(2)} (x) = 0 = \chi_t^{(2)} (y)$. 

\item Letting $h < k$ denote the number of coupled (class $1$) particles in $\eta$ and $\xi$ in the interval $[x + 1, y - 1]$, we have that $j_t^{(2)} (x) = y - x - h + 1$ and $j_t^{(2)} (y) = 1$.
	
\end{enumerate} 

Observe that, on $F(I; t)$, the uncoupled particles at locations $x$ and $y$ in $\eta_{t - 1}$ and $\xi_{t - 1}$, respectively, couple at time $t$. Indeed, the first and second conditions above imply that no first class particles in $\eta_{t - 1} \big( [x + 1, y - 1] \big) = \xi_{t - 1} \big( [x + 1, y - 1]\big)$ move; the second, third, and fourth then imply that the second class particles at $\eta_{t - 1} (x)$ and $\xi_{t - 1} (y)$ both move to site $y + 1$ at time $t$. 

Thus, $\textbf{1}_{E (I; t)} \ge \mathcal{R} (I; \eta_{t - 1}, \xi_{t - 1}) \textbf{1}_{F(I; t)}$. Furthermore, since the four events described above are mutually independent, it can quickly be deduced that $\mathbb{P} \big[ F(I; t) \big| \mathcal{R} (I; \eta_{t - 1}, \xi_{t - 1}) = 1 \big] \ge c^k$, for some constant $c = c(b_1, b_2) > 0$. Thus, 
\begin{flalign*}
\mathbb{P} \big[ E(I; t) \big] \ge \mathbb{P} \big[ F (I; t) \big| \mathcal{R} (I; \eta_{t - 1}, \xi_{t - 1}) = 1 \big] \mathbb{E} \big[ \mathcal{R} (I; \eta_{t - 1}, \xi_{t - 1}) \big] \ge c^k \mathbb{E} \big[ \mathcal{R} (I; \eta_{t - 1}, \xi_{t - 1}) \big],
\end{flalign*} 

\noindent which by \eqref{gtestimate2} implies \eqref{gtestimate}. As explained earlier, this yields the proposition. 
\end{proof}

We next require the following definition, which provides stationary, translation-invariant, and extremal measures for the dynamics of the coupled process of two stochastic six-vertex models; it is analogous to \Cref{textremali}. 

\begin{definition} 

\label{nustationarytranslation} 

Denote by $\mathscr{P} \big( \{ 0, 1 \}^{\mathbb{Z}} \times \{ 0, 1 \}^{\mathbb{Z}} \big)$ the space of probability measures on $\{ 0, 1 \}^{\mathbb{Z}} \times \{ 0, 1 \}^{\mathbb{Z}}$. Let $\nu \in \mathscr{P} \big( \{ 0, 1 \}^{\mathbb{Z}} \times \{ 0, 1 \}^{\mathbb{Z}} \big)$, and let  $(\eta_0, \xi_0)$ denote a pair of (random) particle configurations on $\mathbb{Z}$ sampled with respect to $\nu$. Let $\eta_s$ and $\xi_s$ denote stochastic six-vertex models with initial data $\eta_0$ and $\xi_0$, respectively, coupled under the higher rank coupling of \Cref{couplinghigherrank}. 

The measure $\nu$ is \emph{stationary with respect to the higher rank coupling} if the joint law of $(\eta_t, \xi_t)$ is equal to that of $(\eta_0, \xi_0)$ for any $t \in \mathbb{Z}_{\ge 0}$, and it is \emph{translation-invariant} if the joint law of $\big( \mathfrak{S}_m \eta_0, \mathfrak{S}_m \xi_0 \big)$ is equal to that of $(\eta_0, \xi_0)$ for any $m \in \mathbb{Z}$. We further call $\nu$ \emph{extremal} if, for any $\nu_1, \nu_2 \in \mathscr{P} \big( \{ 0, 1 \}^{\mathbb{Z}} \times \{ 0, 1 \}^{\mathbb{Z}} \big)$ and $p \in (0, 1)$ such that $\nu = p \nu_1 + (1 - p) \nu_2$, we have that $\nu_1 = \nu = \nu_2$. 

\end{definition} 

We next deduce the following corollary, which states that coupled translation-invariant stationary measures on particle configurations can be ordered in a certain sense. 

\begin{cor} 
	
\label{etaxiordered} 

Let $\nu$ denote a probability measure on $\{ 0, 1 \}^{\mathbb{Z}} \times \{ 0, 1 \}^{\mathbb{Z}}$ that is translation-invariant and stationary with respect to the higher rank coupling. If $(\eta, \xi)$ is a random pair of particle configurations sampled under $\nu$, then $\mathbb{P} \big[ \{ \eta \ge \xi \} \cup \{ \xi \ge \eta \} \big] = 1$.
\end{cor}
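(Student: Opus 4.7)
The plan is to reduce the corollary to showing that $r_k := \mathbb{E}_\nu\bigl[\mathcal{R}(I; \eta, \xi)\bigr] = 0$ for every finite interval $I \subset \mathbb{Z}$ of length $k$, where $\mathcal{R}$ is as in \eqref{rietaxi}. Two observations give this reduction. First, by translation invariance of $\nu$ and the identity $\mathcal{R}(I; \mathfrak{S}_m \eta, \mathfrak{S}_m \xi) = \mathcal{R}(I+m; \eta, \xi)$, the number $r_k$ depends only on $|I|$. Second, the event $\{\eta \ge \xi\} \cup \{\xi \ge \eta\}$ is precisely the intersection, over all finite intervals $I \subset \mathbb{Z}$, of the events $\{\mathcal{R}(I; \eta, \xi) = 0\}$: any failure of ordering on $\mathbb{Z}$ must be witnessed by two sites that both lie inside a single finite interval. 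A countable union bound thus reduces the problem to proving $r_k = 0$ for every $k \ge 1$.

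To establish this, I would combine \Cref{guvpqsum} with the finite-speed-of-propagation estimate \Cref{modelsequal} through a truncation argument. Fix integers $k, L, M \ge 1$, and let $N$ be chosen later. Sample $(\eta_0, \xi_0) \sim \nu$, and define truncated configurations $\eta_0^{(N)}$ and $\xi_0^{(N)}$ that agree with $\eta_0$ and $\xi_0$ on $[-N, N]$ and vanish outside it. Since truncation preserves the pointwise order, the higher rank coupling of \Cref{couplinghigherrank} (applied with $n = 2$) allows us to simultaneously couple the four resulting six-vertex models so that each of the pairs $(\eta_t, \xi_t)$ and $(\eta_t^{(N)}, \xi_t^{(N)})$ is coupled in the two-class sense, while each of the pairs $(\eta_t, \eta_t^{(N)})$ and $(\xi_t, \xi_t^{(N)})$ is amenable to \Cref{modelsequal}.

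Let $\mathcal{I}$ be a partition of $\mathbb{Z}$ into length-$k$ intervals. \Cref{guvpqsum} applied to the finitely-supported pair yields $\sum_{t=0}^{M-1}\sum_{I \in \mathcal{I}} \mathbb{E}\bigl[\mathcal{R}(I; \eta_t^{(N)}, \xi_t^{(N)})\bigr] \le C^k N$. Restricting this sum to the $\asymp 2L/k$ intervals contained in $[-L, L]$ and using \Cref{modelsequal} with $R = L$ and $B = N - 2M/(1-b_2) - L$ (applied separately to $(\eta, \eta^{(N)})$ and to $(\xi, \xi^{(N)})$) to identify, up to an error event of probability $O\bigl(e^{-c(M+B)}\bigr)$, the truncated and original dynamics on $[-L, L]$ for every $t \le M$, I obtain
\begin{equation*}
\sum_{t=0}^{M-1} \sum_{\substack{I \in \mathcal{I} \\ I \subseteq [-L, L]}} \mathbb{E}\bigl[\mathcal{R}(I; \eta_t, \xi_t)\bigr] \le C^k N + O\!\left(\frac{ML}{k}\, e^{-c(M+B)}\right).
\end{equation*}
By stationarity of $\nu$ under the higher rank coupling (\Cref{nustationarytranslation}) and translation invariance, each summand on the left equals $r_k$, so the left side is at least $(ML/k - O(1))\, r_k$.

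Choosing $N = L + 3M/(1-b_2)$ forces $B \ge M$, so the exponential error becomes negligible as $M \to \infty$; dividing by $ML/k$ and letting $L \to \infty$ with $M$ fixed (so that $N/L \to 1$) and then $M \to \infty$ yields $r_k \le C^k k/(2M)\bigl(1 + o(1)\bigr) \to 0$, hence $r_k = 0$. The main technical point is the coordination of the three parameters $L, M, N$: $N$ must be small enough relative to $ML$ for the bound from \Cref{guvpqsum} to be informative, yet $N - L$ must still be large enough relative to $M$ for the finite-propagation estimate to hold with overwhelming probability. Everything else is careful bookkeeping, together with the verification that the infinite-particle extension of \Cref{modelsequal} described in \Cref{StochasticVertexLine} applies to the coupling of $(\eta, \eta^{(N)})$ and $(\xi, \xi^{(N)})$.
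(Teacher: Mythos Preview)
Your proposal is correct and follows essentially the same approach as the paper: truncate to finite configurations, apply \Cref{guvpqsum} to the truncated pair, use \Cref{modelsequal} to transfer back to the full dynamics on a bulk window, and then exploit stationarity and translation-invariance of $\nu$ to turn the sum into a multiple of $r_k$. The only difference is bookkeeping: the paper lets $k = \lfloor (\log N)^{1/2}\rfloor$ grow with $N$ and takes $M = \lfloor cN\rfloor$ so that a single limit $N\to\infty$ suffices, whereas you keep $k$ fixed and run a two-step limit $L\to\infty$ then $M\to\infty$ with $N = L + 3M/(1-b_2)$; both choices balance the $C^k N$ bound against the number of interval--time pairs in the same way.
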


\begin{proof} 
	
	Denote $\eta_0 = \eta = \big( \eta_0 (x) \big) \in \{ 0, 1 \}^{\mathbb{Z}}$ and $\xi_0 = \xi = \big( \xi_0 (x) \big) \in \{ 0, 1 \}^{\mathbb{Z}}$. For any integer $N \ge 1$, define the particle configurations $\eta_0^{(N)} = \big( \eta_0^{(N)} (x) \big)$ and $\xi_0^{(N)} = \big( \xi_0^{(N)} (x) \big)$ by setting
	\begin{flalign}
	\label{eta0eta0m}
	\eta_0^{(N)} (x) = \eta_0 (x) \textbf{1}_{|x| \le N}; \qquad \xi_0^{(N)} (x) = \xi_0^{(N)} (x) \textbf{1}_{|x| \le N},
	\end{flalign}
	
	\noindent for any $x \in \mathbb{Z}$. Let $\mu_1^{(N)}$ and $\mu_2^{(N)}$ denote the laws of $\eta_0^{(N)}$ and $\mu_2^{(N)}$, respectively; they are coupled with $\mu_1$ and $\mu_2$, respectively, through \eqref{eta0eta0m}.
	
	Further let $\eta_t$, $\eta_t^{(N)}$, $\xi_t$, and $\xi_t^{(N)}$ denote the stochastic six-vertex models with initial data given by $\eta_0$, $\eta_0^{(N)}$, $\xi_0$, and $\xi_0^{(N)}$, respectively, coupled through \eqref{eta0eta0m} at time $0$ and through the $n = 2$ case of the higher rank coupling of \Cref{couplinghigherrank} for times $t \ge 1$. For any $t, N \in \mathbb{Z}_{\ge 1}$, let $\nu_t^{(N)}$ denote the joint law of $\big( \eta_t, \eta_t^{(N)} \xi_t, \xi_t^{(N)} \big) \in \big( \{ 0, 1 \}^{\mathbb{Z}} \big)^4$. Define $\gamma_t^{(N)} = \frac{1}{t} \sum_{s = 0}^{t - 1} \nu_s^{(N)}$, for any $t, N \in \mathbb{Z}_{\ge 1}$; observe that the marginal of $\gamma_t^{(N)}$ on the first and third components yields $\nu$, due to the stationarity of $\nu$ with respect to the higher rank coupling. 
	
	Now let $M \in \mathbb{Z}_{\ge 1}$; define $k = \big\lfloor (\log N)^{1 / 2} \big\rfloor$; and let $\mathcal{I}$ denote a partition of $\mathbb{Z}$ into (pairwise disjoint) intervals such that $|I| = k$ for each $I \in \mathcal{I}$. Then, \Cref{guvpqsum} yields the existence of a constant $C = C (b_1, b_2) > 1$ such that 
	\begin{flalign}
	\label{expectationgammamn}
	 \displaystyle\sum_{I \in \mathcal{I}} \mathbb{E}_{\gamma_M^{(N)}} \Big[ \mathcal{R} \big( I; \eta^{(N)}, \xi^{(N)} \big) \Big] \le C M^{-1} N^{3 / 2}. 
	\end{flalign}
	
	Furthermore \Cref{modelsequal} implies the existence of a constant $c = c(b_2) > 0$ such that the following holds if $M = \lfloor cN \rfloor$. With probability at least $1 - c^{-1} e^{-cM}$, we have that $\eta_t (x) = \eta_t^{(N)} (x)$ and $\xi_t (x) = \xi_t^{(N)} (x)$ for each $|x| \le M$ and $t \in [1, M]$. Combining this with \eqref{expectationgammamn}, the fact that $\mathcal{R} (I; \eta, \xi) \le 1$, and a Markov estimate yields after increasing $C$ if necessary that
	\begin{flalign*}
	\displaystyle\sum_{\substack{I \in \mathcal{I} \\ I \subseteq [-cN, cN]}} \mathbb{E}_{\nu} \big[ \mathcal{R} (I; \eta, \xi) \big] \le C N^{1 / 2} + 3 cN e^{-cM}.
	\end{flalign*}
	
	Thus, the translation-invariance of $\gamma_M$ and the facts that $k = \big\lfloor (\log N)^{1 / 2} \big\rfloor$ and that there are at most $\frac{3cN}{k}$ intervals $I \in \mathcal{I}$ such that $I \subseteq [-cN, cN]$ together imply that 
	\begin{flalign*} 
	\mathbb{P}_{\nu} \Big[ \mathcal{R} \big( [-m, m]; \eta, \xi \big) = 1 \Big] \le C (\log N)^{1 / 2} N^{-1 / 2} + C e^{- N / C},
	\end{flalign*}
	
	\noindent for any integer $m \le \frac{k}{3}$, after increasing $C = C(b_1, b_2)$ if necessary. Recalling the definition \eqref{rietaxi} of $\mathcal{R}$, it follows that 
	\begin{flalign*} 
	\mathbb{P}_{\nu} \big[ \{ \eta \ge \xi \} \cup \{ \xi \ge \eta \} \big] = \displaystyle\lim_{m \rightarrow \infty} \mathbb{P}_{\nu} \Big[ \mathcal{R} \big( [-m, m]; \eta, \xi \big) = 0 \Big] = 1,
	\end{flalign*}
	
	\noindent from which we deduce the corollary. 
\end{proof} 

Next, we require the following lemma; its proof is very similar to that of Lemma 2.3 of \cite{CEP} and is therefore omitted. 

\begin{lem}[{\cite[Lemma 2.3]{CEP}}]

\label{stationarytranslationcoupled}

Let $\mu_1, \mu_2 \in \mathscr{T} \cap \mathscr{S}$ denote two extremal, translation-invariant, stationary probability measures on $\{ 0, 1 \}^{\mathbb{Z}}$. Then, there exists a coupling $\nu \in \mathscr{P} \big( \{ 0, 1 \}^{\mathbb{Z}} \times \{ 0, 1 \}^{\mathbb{Z}} \big)$ between $\mu_1$ and $\mu_2$ that is extremal, translation-invariant, and stationary with respect to the higher rank coupling. 
\end{lem}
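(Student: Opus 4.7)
The plan is to consider the convex set $\mathscr{C}(\mu_1,\mu_2) \subseteq \mathscr{P}\!\left(\{0,1\}^{\mathbb{Z}}\times\{0,1\}^{\mathbb{Z}}\right)$ of all couplings of $\mu_1$ and $\mu_2$ that are translation-invariant and stationary with respect to the higher rank coupling of \Cref{couplinghigherrank}. I would first show that this set is non-empty, weakly compact, and convex; then apply Krein--Milman to extract an extremal point; and finally argue that any such extremal point satisfies the extremality condition of \Cref{nustationarytranslation}.

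Non-emptiness would be obtained by a Krylov--Bogolyubov argument. Start with the product coupling $\nu_0 = \mu_1 \otimes \mu_2$, which is translation-invariant and has the prescribed marginals. Letting $\mathfrak{M}^{\mathrm{hr}}$ denote the Markov operator on $\mathscr{P}\!\left(\{0,1\}^{\mathbb{Z}}\times\{0,1\}^{\mathbb{Z}}\right)$ induced by the higher rank coupling, the key observation is that the marginal projection $\pi_i$ onto the $i$-th coordinate intertwines $\mathfrak{M}^{\mathrm{hr}}$ with the single-model operator $\mathfrak{M}_1$; that is, $\pi_i \mathfrak{M}^{\mathrm{hr}} = \mathfrak{M}_1 \pi_i$. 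Since $\mathfrak{M}_1 \mu_i = \mu_i$, the iterates $(\mathfrak{M}^{\mathrm{hr}})^t \nu_0$ retain the marginals $\mu_1, \mu_2$, and remain translation-invariant because the coupling rule is translation-equivariant. The Cesàro averages $\nu_T = T^{-1}\sum_{t=0}^{T-1}(\mathfrak{M}^{\mathrm{hr}})^t \nu_0$ then form a sequence in the weakly compact space of probability measures on $\{0,1\}^{\mathbb{Z}}\times\{0,1\}^{\mathbb{Z}}$, and any weak subsequential limit $\nu^*$ satisfies $\mathfrak{M}^{\mathrm{hr}}\nu^* = \nu^*$ by a standard Feller-continuity argument; since the marginal and translation-invariance properties are closed under weak convergence, $\nu^* \in \mathscr{C}(\mu_1,\mu_2)$. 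Convexity of $\mathscr{C}(\mu_1,\mu_2)$ is immediate, while compactness follows because all of its defining conditions cut out weakly closed subsets.

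For extremality, take $\nu$ extremal in $\mathscr{C}(\mu_1,\mu_2)$, and suppose $\nu = p\nu_1 + (1-p)\nu_2$ with $p \in (0,1)$ and $\nu_1,\nu_2$ translation-invariant and stationary under the higher rank coupling. Their marginals $\mu_j^{(i)} := \pi_j \nu_i$ are then translation-invariant, and are stationary under the single dynamics $\mathfrak{M}_1$ by the intertwining relation $\pi_j \mathfrak{M}^{\mathrm{hr}} = \mathfrak{M}_1 \pi_j$ used above. Hence each $\mu_j^{(i)} \in \mathscr{T} \cap \mathscr{S}$, and the decomposition $\mu_j = p\mu_j^{(1)} + (1-p)\mu_j^{(2)}$ together with the extremality of $\mu_j$ in $\mathscr{T} \cap \mathscr{S}$ forces $\mu_j^{(1)} = \mu_j = \mu_j^{(2)}$ for $j = 1,2$. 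Thus each $\nu_i \in \mathscr{C}(\mu_1,\mu_2)$, and by extremality of $\nu$ in this set we obtain $\nu_1 = \nu = \nu_2$.

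The main obstacle is verifying the intertwining $\pi_j \mathfrak{M}^{\mathrm{hr}} = \mathfrak{M}_1 \pi_j$, which requires unpacking the construction of the higher rank coupling from \Cref{couplinghigherrank}: one must check that sequentially running the multi-class update on the combined configuration and then forgetting class labels (the concatenation procedure of \Cref{mnclasses}) indeed reproduces the stochastic six-vertex dynamics on each marginal, so that the Feller property transfers from $\mathfrak{M}_1$ to $\mathfrak{M}^{\mathrm{hr}}$. This compatibility is built into the design of the coupling, but it is the place where care is needed; once it is in hand, the remainder of the argument is standard convex analysis.
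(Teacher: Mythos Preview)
Your approach is correct and is precisely the argument of Liggett's Lemma~2.3 in \cite{CEP}, which the paper cites and whose proof it omits; the Krylov--Bogolyubov construction of a stationary coupling followed by Krein--Milman extraction, together with the observation that extremality of the marginals in $\mathscr{T}\cap\mathscr{S}$ upgrades extremality in $\mathscr{C}(\mu_1,\mu_2)$ to extremality among all translation-invariant stationary pair measures, is exactly what is intended. The only point deserving a line of justification is the Feller property of $\mathfrak{M}^{\mathrm{hr}}$ in this discrete-time infinite-range setting, but this follows from the Harris-type localization of \Cref{riseparating} (or \Cref{modelsequal}), which shows that the one-step update in any finite window depends, with probability tending to one, only on a bounded enlargement of that window.
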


Now we can establish \Cref{translationstationary}.

\begin{proof}[Proof of \Cref{translationstationary}]
	
Let $\eta = \big( \eta (x) \big) \in \{ 0, 1 \}^{\mathbb{Z}}$ denote a (random) particle configuration on $\mathbb{Z}$ sampled under $\mu$, and set $\rho = \mathbb{E} \big[ \eta (0) \big]$. For any $\theta \in [0, 1]$, \Cref{stationarytranslationcoupled} implies the existence of a coupling $\nu = \nu_{\theta} \in \mathscr{P} \big( \{ 0, 1 \}^{\mathbb{Z}} \times \{ 0, 1 \}^{\mathbb{Z}} \big)$ between $\mu$ and $\Upsilon^{(\theta)}$ that is extremal, translation-invariant, and stationary with respect to the higher rank coupling. 

\Cref{etaxiordered} then yields $\mathbb{P}_{\nu} \big[ \{ \eta \ge \xi \} \cup \{ \xi \ge \eta \} \big] = 1$, if $(\eta, \xi)$ is sampled under $\nu$. The ergodicity (or extremality) of $\nu$ implies that either $\mathbb{P} [\eta \ge \xi] = 1$ or $\mathbb{P} [\xi \ge \eta] = 1$. Denoting $\eta = \big( \eta (x) \big)$ and $\xi = \big( \xi (x) \big)$, we have that $\mathbb{P} \big[ \eta (0) = 1 \big] = \rho$ and $\mathbb{P} \big[ \xi (0) =  1 \big] = \theta$. Thus, $\mathbb{P}_{\nu} \big[ \eta \ge \xi \big] = 1$ if $\rho > \theta$ and $\mathbb{P}_{\nu} \big[ \xi \ge \eta \big]$ if $\theta > \rho$. The continuity of the measures $\Upsilon^{(\theta)}$ in $\theta$ then yields that $\mu = \Upsilon^{(\rho)}$. 
\end{proof}

\section{Limit Shape for Double-Sided Bernoulli Boundary Data} 

\label{HeightLimit} 

In this section we analyze the limit shape for the stochastic six-vertex model on the discrete upper half-plane $\mathfrak{H} = \mathbb{Z} \times \mathbb{Z}_{\ge 0}$ with double-sided $(\theta, \rho)$-Bernoulli initial data. Given the couplings introduced in \Cref{CouplingHigherRank} and the classification of the translation-invariant stationary measures for the stochastic six-vertex model from \Cref{StationaryTranslation}, this will closely follow the framework introduced by Andjel-Vares in \cite{HEAS}, who established the analogous limit shape result for continuous-time, attractive interacting particle systems on $\mathbb{Z}$ (see also the work \cite{HLA} of Benassi-Fouque). 

In particular, we will first in \Cref{Limit1} state the limit shape result (given by \Cref{limitdouble} below) and establish it in the case $\theta \ge \rho$ as an application of the attractivity of the stochastic six-vertex model and the results of \cite{PTAEPSSVM}. Then, in \Cref{LimitDouble}, we reduce the $\theta < \rho$ case of \Cref{limitdouble} to a local limit statement, \Cref{localdouble}. Following \cite{HEAS}, we will then analyze properties of subsequential local limits of the stochastic six-vertex model in \Cref{LargeSmallLimitv}, which we will use to establish \Cref{localdouble} in \Cref{LimitDouble1}.

\subsection{Limit Shape When \texorpdfstring{$\theta \ge \rho$}{}}

\label{Limit1}

In this section we will state the limit shape theorem for the stochastic six-vertex model with $(\theta, \rho)$-Bernoulli initial data and outline its proof in the case $\theta \ge \rho$. Before doing so, however, let us define this boundary data more precisely. Throughout this section, we fix real numbers $\theta, \rho \in [0, 1]$. 

\begin{definition} 
	
\label{lambdarho} 

Let $\Upsilon = \Upsilon^{(\theta; \rho)} \in \mathscr{P} \big( \{ 0, 1 \}^{\mathbb{Z}} \big)$ denote the product measure on $\{ 0, 1 \}^{\mathbb{Z}}$, under which $\mathbb{P}_{\Upsilon} \big[ \eta (x) = 1 \big] = \theta \textbf{1}_{x \le 0} + \rho \textbf{1}_{x > 0}$ for each $x \in \mathbb{Z}$. 

\end{definition}

Thus, $\Upsilon$ denotes a \emph{double-sided $(\theta, \rho)$-Bernoulli} particle configuration, in which particles at or to the left of site $0$ are occupied with probability $\theta$, and particles to the right of $0$ are occupied with probability $\rho$. Observe in particular that $\Upsilon^{(\rho; \rho)} = \Upsilon^{(\rho)}$ (recall \Cref{zetarho}). 

Now we can state the limit shape theorem for the stochastic six-vertex model under $(\theta, \rho)$-Bernoulli initial data. Observe that it is analogous to \Cref{sixvertexcylinderglobal}, except that we only consider one value of $y$.

\begin{thm}
	
	\label{limitdouble}
	
	Fix a real number $\varepsilon > 0$, and let $\eta_0 = \big( \eta_0 (x) \big) \in \{ 0, 1 \}^{\mathbb{Z}}$ denote a (random) particle configuration sampled under the measure $\Upsilon^{(\theta; \rho)}$ from \Cref{lambdarho}. Further let $\eta_t$ denote the stochastic six-vertex model on $\mathbb{Z}$ with initial data $\eta_0$, and set $G_y (x) = G (x, y)$ to be the entropy solution to the equation \eqref{gxydefinition} on $\mathbb{R} \times [0, 1]$ with initial data given by $G_0 (x) = G (x, 0) = \theta \textbf{\emph{1}}_{x \le 0} + \rho \textbf{\emph{1}}_{x > 0}$ for each $x \in \mathbb{R}$. Then, 
	\begin{flalign*}
	\displaystyle\lim_{N \rightarrow \infty} \mathbb{P} \left[ \displaystyle\max_{- \frac{N}{\varepsilon} < X_1 < X_2 < \frac{N}{\varepsilon}} \bigg| \displaystyle\frac{1}{N} \displaystyle\sum_{x = X_1}^{X_2} \eta_N (x) - \displaystyle\int_{X_1 / N}^{X_2 / N} G_1 (x) dx \bigg| > \varepsilon \right] = 0. 
	\end{flalign*}  
	
\end{thm}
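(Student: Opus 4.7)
The plan is to reduce the uniform-interval convergence statement of \Cref{limitdouble} (in the case $\theta \ge \rho$) to a pointwise law of large numbers for the height function, available from \cite{PTAEPSSVM}, and then to upgrade to uniform in-probability convergence via a Dini-type argument using monotonicity of partial sums. Since $\varphi$ is strictly concave on $[0,1]$ (from \eqref{kappafunction}, one computes $\varphi''(z) = -2\kappa(\kappa-1)/((\kappa-1)z+1)^3 < 0$), the initial profile with $\theta \ge \rho$ produces a continuous rarefaction wave: $G_1(x) = \theta$ for $x \le \varphi'(\theta)$, $G_1(x) = \rho$ for $x \ge \varphi'(\rho)$, and $G_1(x) = (\varphi')^{-1}(x)$ in between. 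In particular $G_1$ is continuous on all of $\mathbb{R}$.

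The first step is to invoke \cite{PTAEPSSVM}, which establishes the limit shape for the stochastic six-vertex model under (generalized) Bernoulli initial data, and in particular applies to $\Upsilon^{(\theta;\rho)}$. This yields, for each fixed $y \in \mathbb{R}$,
\begin{flalign*}
\widetilde{H}_N(y) := \frac{1}{N} \sum_{x = -\lfloor N/\varepsilon \rfloor}^{\lfloor yN \rfloor} \eta_N(x) \longrightarrow \widetilde{H}(y) := \int_{-1/\varepsilon}^{y} G_1(u)\, du \quad \text{in probability.}
\end{flalign*}
The second step upgrades this to uniform in-probability convergence over $y \in [-1/\varepsilon, 1/\varepsilon]$. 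The random function $\widetilde{H}_N$ is monotone non-decreasing in $y$, while its limit $\widetilde{H}$ is continuous and $1$-Lipschitz (since $0 \le G_1 \le 1$). A standard Dini-type argument then applies: fix a grid of mesh $\delta$ in $[-1/\varepsilon, 1/\varepsilon]$, apply pointwise convergence via a union bound at the grid points, and use monotonicity of $\widetilde{H}_N$ together with Lipschitz continuity of $\widetilde{H}$ to interpolate with error $O(\delta)$; sending $\delta \to 0$ gives $\sup_{|y| \le 1/\varepsilon} |\widetilde{H}_N(y) - \widetilde{H}(y)| \to 0$ in probability. Writing $\frac{1}{N} \sum_{x = X_1}^{X_2} \eta_N(x) = \widetilde{H}_N(X_2/N) - \widetilde{H}_N((X_1 - 1)/N)$ and similarly for the integral then concludes the proof.

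The main obstacle is Step 1: the results of \cite{PTAEPSSVM} are likely phrased at the fluctuation (Tracy-Widom) scale, so one must verify that a law of large numbers can be extracted at every spatial location in the rarefaction fan, not only at a sparse set of characteristic directions. This should follow from integrating the tail bounds provided there, but the bookkeeping must be done carefully to ensure the LLN is uniform across the fan. As a softer alternative, the attractivity result \Cref{lambdaximonotone} together with the stationarity of uniform Bernoulli measures (\Cref{zetarho}) allows one to sandwich $\eta_N$ between processes started from block-Bernoulli approximations of the step initial data, for which the LLN is directly accessible via the ergodic theorem; the concavity of $\varphi$ and the continuity of the rarefaction profile in its initial data then close the argument, at the cost of an additional compactness step identifying the limit with the entropy solution to \eqref{gxydefinition}.
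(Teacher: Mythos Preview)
Your proposal explicitly restricts to the case $\theta \ge \rho$, but \Cref{limitdouble} is stated for all $\theta, \rho \in [0,1]$. The omitted case $\theta < \rho$ is in fact the substantive one: there the profile develops a shock, and the paper remarks that no exact identities for the stochastic six-vertex model are known to be amenable to asymptotic analysis in that regime. The paper handles it (\Cref{thetarho2}) by first proving a local-equilibrium statement (\Cref{localdouble}) via the Andjel--Vares framework: one takes time-averaged, spatially-shifted subsequential limits (\Cref{limittranslationinvariant}), shows they lie in $\mathscr{T}\cap\mathscr{S}$ and hence are mixtures of Bernoulli measures by \Cref{translationstationary}, computes the associated flux (\Cref{currentlimit}), and uses concavity of $\varphi$ together with a current-matching argument to pin down the mixing measure as a Dirac mass at $\theta$ or $\rho$ depending on the side of the shock. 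None of the ingredients you list addresses this mechanism, so as a proof of the full theorem your proposal has a genuine gap.

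For the rarefaction case $\theta \ge \rho$ your outline is reasonable but differs from the paper. You invoke \cite{PTAEPSSVM} directly on the two-sided initial data $\Upsilon^{(\theta;\rho)}$ and then run a Dini argument; you correctly flag that extracting a pointwise LLN at every location from the fluctuation-scale statements there needs checking. The paper instead avoids this by a sandwich: it couples $\eta$ so that $\omega_t \le \eta_t \le \xi_t^{(\theta)}$ and $\xi_t^{(\rho)} \le \eta_t \le \zeta_t$, where $\omega_0 \sim \Upsilon^{(\theta;0)}$ and $\zeta_0 \sim \Upsilon^{(1;\rho)}$, applies \cite[Theorem~1.6]{PTAEPSSVM} only to the one-sided profiles $\omega$ and (via particle--hole duality) $\zeta$, and then uses the explicit rarefaction formula \eqref{gxidentity} to see that $\max\{A_1(x),\rho\} = G_1(x) = \min\{B_1(x),\theta\}$, so the upper and lower bounds coincide. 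This trades your Dini step for an explicit algebraic identity and needs \cite{PTAEPSSVM} only in the step ($\rho=0$) case. Your ``softer alternative'' in the last paragraph is closer in spirit to this sandwich, though the paper's version is cleaner because the comparison profiles are chosen so that the squeeze is exact rather than requiring a further compactness/identification step.
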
 

\begin{rem}
	
	\label{g1sthetarho}
	
	Under the initial data $G_0 (x) =  \theta \textbf{1}_{x \le 0} + \rho \textbf{1}_{x > 0}$, the function $G_1 (x)$ is entirely explicit. Indeed, if $\theta < \rho$, then it is discontinuous (exhibiting a \emph{shock}) and is given by
	\begin{flalign}
	\label{gx1thetarhoidentity} 
	G(x, 1) = \theta, \quad \text{if $x \le \vartheta$;} \qquad G(x, 1) = \rho, \quad \text{if $x > \vartheta$}, 
	\end{flalign}
	
	\noindent where (recalling the function $\varphi$ from \eqref{kappafunction}) we have set 
	\begin{flalign}
	\label{vthetarho} 
	\vartheta = \vartheta (\theta, \rho) = \displaystyle\frac{\varphi (\rho) - \varphi (\theta)}{\rho - \theta}. 
	\end{flalign}
	
	\noindent If instead $\theta \ge \rho$, then it is continuous (exhibiting a \emph{rarefaction fan}) and given by 
	\begin{flalign}
	\label{gxidentity} 
	\begin{aligned} 
	& G_1 (x) = \theta \quad \text{if $x \le \varphi' (\theta)$}; \qquad G_1 (x) = \rho \quad \text{if $x \ge \varphi' (\rho)$}; \\
	& G_1 (x) = (\varphi')^{-1} (x) = \displaystyle\frac{\sqrt{\kappa x^{-1}} - 1}{\kappa - 1} \quad \text{if $\varphi' (\theta) < x < \varphi' (\rho)$}.
	\end{aligned} 
	\end{flalign}

\end{rem} 

We will address the $\theta < \rho$ and $\theta \ge \rho$ cases of \Cref{limitdouble} separately; see \Cref{thetarho1} (if $\theta \ge \rho$) and \Cref{thetarho2} (if $\theta < \rho$) below. In the case $\theta < \rho$, the proof of \Cref{limitdouble} will be given in detail in \Cref{LimitDouble}, \Cref{LargeSmallLimitv}, and \Cref{LimitDouble1}, proceeding along the lines of what was explained in Section 3 of \cite{HEAS}. In the case $\theta \ge \rho$, we only outline the proof since, given our analysis when $\theta < \rho$, that in the alternative case $\theta \ge \rho$ is very similar to that of Section 4 of \cite{HEAS}. 

There also exist at least two alternative potential routes towards \Cref{limitdouble} when $\theta \ge \rho$; the first would be through an asymptotic analysis of the Fredholm determinant identity given by Theorem 4.5 of \cite{CFSAEPSSVMCL}. The second would be by combining monotonicity results (see \Cref{mu1mu2} and \Cref{tmu1mu2m} below) with Theorem 1.6 of \cite{PTAEPSSVM}, which addresses the case $\rho = 0$. In fact, the latter route is the one that we will outline below.

To that end, we require the following definition that provides a partial ordering on $\mathscr{P} \big( \{ 0, 1 \}^{\mathbb{Z}} \big)$. 

\begin{definition} 
	
	\label{mu1mu2} 
	
	Let $\mu_1, \mu_2 \in \mathscr{P} \big( \{ 0, 1 \}^{\mathbb{Z}} \big)$. We say that $\mu_1 \ge \mu_2$ (or equivalently $\mu_2 \le \mu_1$) if there exists a coupling $\nu \in \mathscr{P} \big( \{ 0, 1 \}^{\mathbb{Z}} \times \{ 0, 1\}^{\mathbb{Z}} \big)$ with marginals $(\mu_1, \mu_2)$ such that the following holds. If $(\eta, \xi)$ is a pair of (random) particle configurations sampled with respect to $\nu$, then $\mathbb{P}_{\nu} [\eta \ge \xi] = 1$. 
\end{definition} 

\begin{rem}
	
	\label{tmu1mu2m}
	
	Observe that this ordering is transitive in that, if $\mu_1 \ge \mu_2$ and $\mu_2 \ge \mu_3$, then $\mu_1 \ge \mu_3$. Recalling the operators $\mathfrak{M}_t$ and $\mathfrak{S}_m$ from \Cref{operatorm} and \Cref{soperator}, respectively, $\mu_1 \ge \mu_2$ further implies that $\mathfrak{S}_m \mu_1 \ge \mathfrak{S}_m \mu_2$ for any $m \in \mathbb{Z}$. Additionally, the attractivity of the higher rank coupling from \Cref{couplinghigherrank} yields that $\mu_1 \ge \mu_2$ implies $\mathfrak{M}_t \mu_1 \ge \mathfrak{M}_t \mu_2$, for any $t \in \mathbb{Z}_{\ge 0}$. 
	
\end{rem}

The following lemma provides examples of this ordering. 

\begin{lem}
	
	\label{fgcouple} 
	
	Let $f: \mathbb{Z} \rightarrow [0, 1]$ and $g: \mathbb{Z} \rightarrow [0, 1]$ denote two functions. For $h \in \{ f, g \}$, let $\Upsilon^{(h)}$ denote the product measure on $\{ 0, 1 \}^{\mathbb{Z}}$, under which $\mathbb{P}_{\Upsilon^{(h)}} \big[ \eta (x) = 1 \big] = h (x)$ for each $x \in \mathbb{Z}$. If $f (x) \ge g(x)$ for each $x \in \mathbb{Z}$, then $\Upsilon^{(f)} \ge \Upsilon^{(g)}$. 
	
\end{lem}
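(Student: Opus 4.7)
The plan is to exhibit the coupling $\nu$ required by \Cref{mu1mu2} directly, using the standard uniform-variable construction for monotone couplings of independent Bernoulli random variables.

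Specifically, I would let $\{U(x)\}_{x \in \mathbb{Z}}$ be a family of mutually independent random variables, each uniformly distributed on $[0, 1]$. Then define two random particle configurations $\eta = \big( \eta(x) \big) \in \{0, 1\}^{\mathbb{Z}}$ and $\xi = \big( \xi(x) \big) \in \{0, 1\}^{\mathbb{Z}}$ by
\begin{flalign*}
\eta(x) = \mathbf{1}_{U(x) \le f(x)}; \qquad \xi(x) = \mathbf{1}_{U(x) \le g(x)}, \qquad \text{for each } x \in \mathbb{Z}.
\end{flalign*}
Since the $U(x)$ are mutually independent and $\mathbb{P}[U(x) \le f(x)] = f(x)$ (and similarly for $g$), the marginal of $\eta$ is $\Upsilon^{(f)}$ and that of $\xi$ is $\Upsilon^{(g)}$. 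Let $\nu$ denote the joint law of $(\eta, \xi)$ on $\{0,1\}^{\mathbb{Z}} \times \{0, 1\}^{\mathbb{Z}}$.

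By the assumption $f(x) \ge g(x)$, the event $\{U(x) \le g(x)\}$ is contained in $\{U(x) \le f(x)\}$, so $\eta(x) \ge \xi(x)$ deterministically for every $x \in \mathbb{Z}$. Thus $\mathbb{P}_{\nu}[\eta \ge \xi] = 1$, which by \Cref{ordering} and \Cref{mu1mu2} gives $\Upsilon^{(f)} \ge \Upsilon^{(g)}$. There is no real obstacle here; the statement is a textbook monotone coupling, and the only thing to be careful about is matching it precisely against the definition of the ordering in \Cref{mu1mu2} (which asks for the existence of some coupling, not all couplings).
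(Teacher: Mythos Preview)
Your proof is correct and matches the paper's own argument essentially line for line: the paper also takes independent uniforms $U(x)$, sets $\eta(x) = \mathbf{1}_{U(x) \le f(x)}$ and $\xi(x) = \mathbf{1}_{U(x) \le g(x)}$, and concludes $\eta \ge \xi$ almost surely with the correct marginals.
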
 

\begin{proof} 
	
	Define two (random) particle configurations $\eta = \big( \eta(x) \big) \in \{ 0, 1 \}^{\mathbb{Z}}$ and $\xi = \big( \xi (x) \big) \in \{ 0, 1 \}^{\mathbb{Z}}$ as follows. For each $x \in \mathbb{Z}$, let $U (x) \in [0, 1]$ denote a uniform random variable, and set $\eta (x) = \textbf{1}_{U (x) \le f(x)}$ and $\xi (x) = \textbf{1}_{U (x) \le g(x)}$. Then $\eta \ge \xi$ almost surely, and the marginal distributions of $\eta$ and $\xi$ are $\Upsilon^{(f)}$ and $\Upsilon^{(g)}$, respectively. Thus, $\Upsilon^{(f)} \ge \Upsilon^{(g)}$.
\end{proof} 

Now we can outline a proof of the following proposition.

\begin{prop}
	
	\label{thetarho1}
	
	If $\theta \ge \rho$, then \Cref{limitdouble} holds. 
	
\end{prop}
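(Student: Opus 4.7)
The plan is to combine the monotonicity results (\Cref{fgcouple} and \Cref{tmu1mu2m}) with Theorem~1.6 of \cite{PTAEPSSVM}, which addresses the case $\rho = 0$, and the finite propagation speed of discrepancies given by \Cref{modelsequal}, following the strategy outlined in Section 4 of \cite{HEAS} for attractive particle systems. Since when $\theta \ge \rho$ the profile $G_1$ given by \eqref{gxidentity} is continuous and non-increasing in $x$, a standard step-function approximation reduces the uniform-over-$(X_1, X_2)$ statement in \Cref{limitdouble} to the pointwise claim that, for each fixed $x_0 \in \mathbb{R}$, the empirical density of $\eta_N$ in a window of size $\delta N$ around $\lfloor x_0 N \rfloor$ converges in probability to $G_1(x_0)$ as $N \to \infty$ followed by $\delta \to 0^+$.

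For the lower bound, \Cref{fgcouple} yields the pointwise ordering $\tau_{\lfloor \alpha N \rfloor} \Upsilon^{(\theta;0)} \le \Upsilon^{(\theta;\rho)}$ for all $\alpha \le 0$, where $\tau_m$ denotes the translation of configurations by $m$. By \Cref{tmu1mu2m} this ordering is preserved under the dynamics $\mathfrak{M}_N$. Applying Theorem~1.6 of \cite{PTAEPSSVM} together with translation invariance, the density at $\lfloor x_0 N \rfloor$ under $\mathfrak{M}_N \tau_{\lfloor \alpha N \rfloor} \Upsilon^{(\theta;0)}$ converges to $G_1^{(\theta;0)}(x_0 - \alpha)$, and letting $\alpha \nearrow 0$ gives the lower bound $G_1^{(\theta;0)}(x_0)$. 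This matches $G_1(x_0)$ for $x_0 < \varphi'(\rho)$; for $x_0 \ge \varphi'(\rho)$ the tight lower bound $\rho$ follows from $\Upsilon^{(\rho)} \le \Upsilon^{(\theta;\rho)}$ (again by \Cref{fgcouple}) together with the stationarity of $\Upsilon^{(\rho)}$. The matching upper bound for $x_0 \le \varphi'(\theta)$ is immediate from $\Upsilon^{(\theta;\rho)} \le \Upsilon^{(\theta)}$ and the stationarity of $\Upsilon^{(\theta)}$ at constant density $\theta$.

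The main technical obstacle is the upper bound for $x_0 > \varphi'(\theta)$, since a shifted step $\tau_{\lfloor \alpha N \rfloor} \Upsilon^{(\theta;0)}$ with $\alpha > 0$ does not globally dominate $\Upsilon^{(\theta;\rho)}$: the two configurations are ordered in opposite ways on the regions $\{0 < x \le \lfloor \alpha N \rfloor\}$ and $\{x > \lfloor \alpha N \rfloor\}$. Following Section 4 of \cite{HEAS}, I would circumvent this by modifying the initial data outside a window of size comparable to $N$ and applying \Cref{modelsequal} to argue that the modification does not influence the time-$N$ configuration near $\lfloor x_0 N \rfloor$ with overwhelming probability. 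The delicate point is to choose the shift $\alpha$ and the truncation scale so that the resulting limit density (coming from the shifted-step bound via Theorem~1.6 of \cite{PTAEPSSVM}) approaches $G_1(x_0)$, while the finite-speed-of-discrepancies estimate in \Cref{modelsequal} remains valid. Once this balance is set up, the matching upper bound follows, completing the proof of the required pointwise convergence.
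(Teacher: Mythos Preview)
Your overall strategy---monotonicity via \Cref{fgcouple} and \Cref{tmu1mu2m} combined with Theorem~1.6 of \cite{PTAEPSSVM}---matches the route the paper outlines, and your lower bound is essentially the paper's: compare $\eta_0$ from below with $\omega_0 \sim \Upsilon^{(\theta;0)}$ (your $\alpha = 0$ case) and with $\xi_0^{(\rho)} \sim \Upsilon^{(\rho)}$, obtaining $\max\{A_1(x),\rho\}$ as a lower density bound, which equals $G_1(x)$ by \eqref{gxidentity}.

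There is, however, a genuine gap in your upper bound for $x_0 > \varphi'(\theta)$. The shifted-step-plus-\Cref{modelsequal} fix cannot be balanced as you describe. For the time-$N$ density of $\tau_{\lfloor \alpha N\rfloor}\Upsilon^{(\theta;0)}$ at $x_0 N$ to approach $G_1(x_0)=(\varphi')^{-1}(x_0)<\theta$, you need $x_0 - \alpha > \varphi'(\theta)$, hence $\alpha < x_0$; but then the region $(\alpha N,\infty)$ on which the ordering fails contains $x_0 N$ itself, so \Cref{modelsequal} gives no control there. Conversely, taking $\alpha$ large enough that discrepancies on $(\alpha N,\infty)$ cannot reach $x_0 N$ by time $N$ forces $x_0 - \alpha \le \varphi'(\theta)$, and the bound degenerates to the trivial $\theta$ already supplied by $\Upsilon^{(\theta)}$. (Section~4 of \cite{HEAS}, which you invoke, does treat the rarefaction case, but via the subsequential-limit machinery analogous to \Cref{limittranslationinvariant} and \Cref{currentlimit}, not via shifted steps plus finite propagation.)

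The paper sidesteps this difficulty entirely through particle--hole duality. It compares $\eta_0$ from above with $\zeta_0 \sim \Upsilon^{(1;\rho)}$, which \emph{globally} dominates $\Upsilon^{(\theta;\rho)}$ since $1\ge\theta$; the limit profile $B_1(x)$ of $(\zeta_t)$ then follows from Theorem~1.6 of \cite{PTAEPSSVM} applied to the dual (hole) process, and one checks directly from \eqref{gxidentity} that $\min\{B_1(x),\theta\}=G_1(x)$. This yields the matching upper bound with no shifts or propagation-speed arguments needed.
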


\begin{proof}[Proof (Outline)]
	
Define the (random) particle configurations $\omega_0 = \big( \omega_0 (x) \big)$, $\xi_0^{(\theta)} = \big( \xi_0^{(\theta)} (x) \big)$, $\xi_0^{(\rho)} = \big( \xi_0^{(\rho)} (x) \big)$, and $\zeta_0 = \big( \zeta_0 (x) \big)$ on $\{ 0, 1 \}^{\mathbb{Z}}$, sampled from the measures $\Upsilon^{(\theta; 0)}$, $\Upsilon^{(\theta)}$, $\Upsilon^{(\rho)}$, and $\Upsilon^{(1; \rho)}$, respectively. By \Cref{fgcouple}, we can couple these particle configurations with $\eta_0$ in such a way that $\omega_0 \le \eta_0 \le \xi_0^{(\theta)}$; $\xi_0^{(\rho)} \le \eta_0 \le \zeta_0$; and $\omega_0 \le \eta_0 \le \xi_0^{(\theta)} \le \xi_0^{(\rho)} \le \eta_0 \le \zeta_0$

Now let $\omega_t$, $\xi_t^{(\theta)}$, $\xi_t^{(\rho)}$, and $\zeta_t$ denote the stochastic six-vertex models with initial data $\omega_0$, $\xi_0^{(\theta)}$, $\xi_0^{(\rho)}$, and $\zeta_0$, respectively. Couple $\eta_t$ with $\big( \omega_t, \xi_t^{(\theta)} \big)$ and $\big( \xi_t^{(\rho)}, \zeta_t \big)$ under the $n = 2$ case of the higher rank coupling of \Cref{couplinghigherrank} (recall \Cref{etaxim}) so that $\omega_t \le \eta_t \le \xi_t^{(\theta)}$ and $\xi_t^{(\rho)} \le \eta_t \le \zeta_t$, for each $t \ge 0$. 

Next, let $A_t (x) = A (x, t)$ and $B_t (x) = B (x, t)$ denote the entropy solutions to the equation \eqref{gxydefinition} on $\mathbb{R} \times [0, 1]$ with initial data $A (x, 0) = \theta \textbf{1}_{x \le 0}$ and $B (x, 0) = \textbf{1}_{x \le 0} + \rho \textbf{1}_{x > 0}$, respectively. Then, Theorem 1.6 of \cite{PTAEPSSVM} implies that 
\begin{flalign}
\label{aomega}
\displaystyle\lim_{N \rightarrow \infty} \displaystyle\max_{-\frac{N}{\varepsilon} < X_1 < X_2 < \frac{N}{\varepsilon}} \mathbb{P} \Bigg[ \bigg| \displaystyle\frac{1}{N} \displaystyle\sum_{x = X_1}^{X_2} \omega_N (x) - \displaystyle\int_{X_1 / N}^{X_2/ N} A_1 (x) dx \bigg| > \varepsilon \Bigg] = 0.
\end{flalign} 

\noindent Furthermore, by combining Theorem 1.6 of \cite{PTAEPSSVM} with the duality of the stochastic six-vertex model that switches each particle with the absence of a particle, we obtain that   
\begin{flalign}
\label{bomega}
\displaystyle\lim_{N \rightarrow \infty} \displaystyle\max_{-\frac{N}{\varepsilon} < X_1 < X_2 < \frac{N}{\varepsilon}} \mathbb{P} \Bigg[ \bigg| \displaystyle\frac{1}{N} \displaystyle\sum_{x = X_1}^{X_2} \zeta_N (x) - \displaystyle\int_{X_1 / N}^{X_2/ N} B_1 (x) dx \bigg| > \varepsilon \Bigg] = 0.
\end{flalign} 

Combining \eqref{aomega}; \eqref{bomega}; the facts that $\omega_t \le \eta_t \le \xi_t^{(\theta)}$ and $\xi_t^{(\rho)} \le \eta_t \le \zeta_t$; the stationarity of $\xi^{(\theta)}$ and $\xi^{(\rho)}$; and a large deviations estimate for sums of independent Bernoulli random variables yields 
\begin{flalign}
\label{etaab}
\begin{aligned}
& \displaystyle\lim_{N \rightarrow \infty} \displaystyle\max_{-\frac{N}{\varepsilon} < X_1 < X_2 < \frac{N}{\varepsilon}} \mathbb{P} \Bigg[ \displaystyle\frac{1}{N} \displaystyle\sum_{x = X_1}^{X_2} \eta_N (x) - \displaystyle\int_{X_1 / N}^{X_2/ N} \Big( \max \big\{ A_1 (x), \rho \big\} \Big) dx < - \varepsilon \Bigg] = 0; \\
& \displaystyle\lim_{N \rightarrow \infty} \displaystyle\max_{-\frac{N}{\varepsilon} < X_1 < X_2 < \frac{N}{\varepsilon}} \mathbb{P} \Bigg[ \displaystyle\frac{1}{N} \displaystyle\sum_{x = X_1}^{X_2} \eta_N (x) - \displaystyle\int_{X_1 / N}^{X_2/ N} \Big( \min \big\{ B_1 (x), \theta \big\} \Big) dx  > \varepsilon \Bigg] = 0.
\end{aligned}
\end{flalign} 

Now the proposition follows from \eqref{etaab}, the fact that $\max \big\{ A_1 (x), \rho \big\} = G_1 (x) = \min \big\{ B_1 (x), \theta \big\}$ (due to the explicit descriptions for $A_1$, $B_1$, and $G_1$ given by \eqref{gxidentity}), and a union bound.
\end{proof}

\subsection{Reduction to a Local Limit Result}

\label{LimitDouble}

In view of \Cref{thetarho1}, it suffices to establish the following proposition in order to prove \Cref{limitdouble}. 

\begin{prop}
	
\label{thetarho2}

If $\theta < \rho$, then \Cref{limitdouble} holds.  
\end{prop}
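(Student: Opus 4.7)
The plan is to reduce Proposition~\ref{thetarho2} to a local limit statement, namely \Cref{localdouble}, which one expects to assert that for every continuity point $u \in \mathbb{R} \setminus \{\vartheta\}$ of $G_1$ and every bounded window size $k \ge 0$, the law of $\bigl(\eta_N(\lfloor uN\rfloor + i)\bigr)_{|i| \le k}$ converges weakly to the $(2k+1)$-marginal of $\Upsilon^{(G_1(u))}$. By \eqref{gx1thetarhoidentity}, $G_1(u)$ equals $\theta$ for $u < \vartheta$ and $\rho$ for $u > \vartheta$, so the local limit says that on each side of the shock the configuration looks locally stationary at the appropriate constant density.

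Assuming \Cref{localdouble}, the global statement is deduced by a block-averaging argument. Choose $K = K(N) \to \infty$ with $K/N \to 0$, and partition the interval $[-\lceil N/\varepsilon \rceil, \lceil N/\varepsilon \rceil]$ into consecutive blocks $B_j$ of length $K$. For each block whose macroscopic midpoint $jK/N$ is bounded away from $\vartheta$, \Cref{localdouble} together with bounded convergence gives $K^{-1} \mathbb{E}\bigl[\sum_{x \in B_j} \eta_N(x)\bigr] \to G_1(jK/N)$, and summing these averages produces the Riemann approximation of $\int_{X_1/N}^{X_2/N} G_1(x)\,dx$, up to the negligible contribution of the single block containing $\vartheta N$ and of the boundary blocks at $X_1$ and $X_2$. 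Convergence in probability (rather than in expectation) comes from the attractive sandwich $\eta_N^{(\theta)} \le \eta_N \le \eta_N^{(\rho)}$, obtained via \Cref{fgcouple}, \Cref{tmu1mu2m}, and the stationarity of $\Upsilon^{(\theta)}$ and $\Upsilon^{(\rho)}$: Hoeffding concentration applied to the outer i.i.d.\ Bernoulli sums bounds the fluctuations of every block sum, and a union bound over the $O(N^2)$ admissible pairs $(X_1, X_2)$ yields the required uniformity.

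The main obstacle is not the reduction, which is a standard Riemann-sum computation once the local density is known, but \Cref{localdouble} itself. The paper's strategy in the subsequent subsections is to extract a subsequential weak limit $\mu$ of $\mathfrak{S}_{\lfloor uN\rfloor} \mathfrak{M}_N \Upsilon^{(\theta;\rho)}$; show via the higher rank coupling of \Cref{couplinghigherrank} that $\mu$ is translation-invariant and stationary under the six-vertex dynamics; conclude from \Cref{translationstationary} that $\mu$ is a convex mixture of the Bernoulli measures $\Upsilon^{(\rho')}$ for $\rho' \in [\theta, \rho]$; and finally pin down the mixing parameter via a Rankine--Hugoniot current balance that uses the known shock speed $\vartheta = \bigl(\varphi(\rho) - \varphi(\theta)\bigr)/(\rho - \theta)$ of \eqref{vthetarho} and the expected horizontal flux $\varphi(\rho')$ through each stationary ensemble.
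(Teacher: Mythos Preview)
Your high-level plan is the same as the paper's: reduce \Cref{thetarho2} to the local statement \Cref{localdouble}, and your description of what \Cref{localdouble} asserts and how it is proved is accurate. The gap is in how you pass from the local limit to the global one.

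The concentration step does not work as written. The sandwich $\eta_N^{(\theta)} \le \eta_N \le \eta_N^{(\rho)}$ traps $\sum_{x=X_1}^{X_2}\eta_N(x)$ between two i.i.d.\ sums that concentrate around $\theta(X_2-X_1)$ and $\rho(X_2-X_1)$ respectively; those bounds are $(\rho-\theta)(X_2-X_1)$ apart, not $o(N)$ apart, so Hoeffding on the outer sums says nothing about the fluctuations of the middle sum around its own mean. Concretely, for $X_1 > (\vartheta+\tfrac{\varepsilon}{4})N$ you need $\sum_x \eta_N(x) \ge (\rho-\varepsilon)(X_2-X_1)$ with high probability, and the lower sandwich only gives $\gtrsim \theta(X_2-X_1)$.

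What the paper does instead hinges on the shift monotonicity $\mathfrak{S}_{-1}\Upsilon^{(\theta;\rho)} \le \Upsilon^{(\theta;\rho)} \le \mathfrak{S}_1 \Upsilon^{(\theta;\rho)}$ (valid because $\theta<\rho$), which you do not invoke. This has two consequences. First, it upgrades \Cref{localdouble} from a statement at a single macroscopic location $u$ to a statement uniform over all $X \ge X_1$: once $\mathfrak{S}_{\lfloor uN\rfloor}\mathfrak{M}_N\Upsilon^{(\theta;\rho)}\big|_{[-m,m]} \ge \Upsilon^{(\rho-\varepsilon/4)}\big|_{[-m,m]}$ holds for one $u>\vartheta$, it holds for every $X$ to its right. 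Second, this domination is in law, not just in expectation, so on each block of \emph{fixed} size $m$ the sum $\sum_{x\in B_j}\eta_N(x)$ stochastically dominates a sum of $m$ i.i.d.\ Bernoulli$(\rho-\tfrac{\varepsilon}{4})$ variables; a large-deviation bound on each block and a Markov estimate on the number of bad blocks then give the missing lower bound. Your appeal to ``bounded convergence'' does not supply the needed uniformity over blocks, and growing block sizes $K(N)\to\infty$ are not covered by \Cref{localdouble}, which controls only finite-window marginals.
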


To prove \Cref{thetarho2} we will establish the theorem below (in \Cref{LimitDouble1}), which is a statement about the fixed-time local statistics of the stochastic six-vertex model $\eta_t$ sampled under the $\theta < \rho$ case of \Cref{limitdouble}. In what follows, we recall the operators $\mathfrak{M}_t$ and $\mathfrak{S}_m$ from \Cref{operatorm} and \Cref{soperator}, respectively.  

\begin{thm} 
	
	\label{localdouble} 
	
	Adopt the notation of \Cref{limitdouble}, and assume that $\theta < \rho$. Fix some $u \in \mathbb{R}$ that is a continuity point of $G_1 (x)$, and set $\sigma = G (u, 1)$. Then, for any increasing sequence of integers $X_1 < X_2 < \cdots $ such that $\lim_{T \rightarrow \infty} T^{-1} X_T = u$, we have that
	\begin{flalign*}
	\displaystyle\lim_{T \rightarrow \infty} \mathfrak{S}_{X_T} \mathfrak{M}_T \Upsilon^{(\theta; \rho)} = \Upsilon^{(\sigma)}. 
	\end{flalign*}
	
\end{thm}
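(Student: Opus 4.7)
\textbf{Proof plan for \Cref{localdouble}.} The strategy is to adapt the Andjel--Vares argument \cite{HEAS} to our discrete-time setting. Since $\{0,1\}^{\mathbb{Z}}$ is compact in the product topology, the sequence of measures $\nu_T := \mathfrak{S}_{X_T}\mathfrak{M}_T\Upsilon^{(\theta;\rho)}$ admits weak subsequential limits, so it suffices to identify every such limit $\mu$ with $\Upsilon^{(\sigma)}$. I plan to proceed in three stages: (i) establish $\mu \in \mathscr{T}\cap\mathscr{S}$; (ii) invoke the classification \Cref{translationstationary} to deduce that $\mu$ is a mixture of product Bernoulli measures; and (iii) show that this mixture is concentrated at $\sigma$.

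For stage (i), I would first observe that stationarity of $\mu$ is nearly automatic from translation invariance: by \Cref{smoperator}, $\mathfrak{M}_1 \nu_T = \mathfrak{S}_{X_T - X_{T+1}} \nu_{T+1}$, and since $|X_{T+1} - X_T|$ is uniformly bounded (both sides grow like $uT$), once $\mu \in \mathscr{T}$ is known one may pass to a further subsequence along which $\nu_{T_k+1} \to \mu$ as well, forcing $\mathfrak{M}_1 \mu = \mu$. Translation invariance in turn is established by comparing $\nu_T$ with $\mathfrak{S}_m \nu_T = \mathfrak{S}_{X_T}\mathfrak{M}_T(\mathfrak{S}_m\Upsilon^{(\theta;\rho)})$ for arbitrary $m \in \mathbb{Z}$. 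Because $\Upsilon^{(\theta;\rho)}$ and $\mathfrak{S}_m\Upsilon^{(\theta;\rho)}$ are comparable in the sense of \Cref{fgcouple} and differ in their Bernoulli parameters only on a window of $|m|$ sites near the origin, they can be jointly realized so that their samples differ on at most a random, Binomial$(|m|,\rho-\theta)$ number of sites. Treating these initial discrepancies as second-class particles through the higher-rank coupling of \Cref{couplinghigherrank}, the key input -- which will be proved in \Cref{LargeSmallLimitv} -- is that in the shock regime $\theta<\rho$ such finitely many second-class particles macroscopically drift with the shock at speed $\vartheta$ from \eqref{vthetarho}. Since $u \ne \vartheta$, these discrepancies asymptotically miss any bounded window around $X_T \sim uT$, so $\nu_T$ and $\mathfrak{S}_m\nu_T$ share the same subsequential limits.

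Having obtained $\mu \in \mathscr{T}\cap\mathscr{S}$, the ergodic decomposition together with \Cref{translationstationary} yields $\mu = \int_0^1 \Upsilon^{(\rho')}\, d\lambda(\rho')$ for some Borel probability measure $\lambda$ on $[0,1]$; to deduce $\lambda = \delta_\sigma$ it suffices to show $\mathbb{E}_\mu[\eta(0)] = \sigma$ and $\mathbb{E}_\mu[\eta(0)\eta(1)] = \sigma^2$. I would establish this by coupling $(\eta_t)$ with a stationary model $(\xi_t)$ of law $\Upsilon^{(\sigma)}$ via \Cref{couplinghigherrank}. In the case $u > \vartheta$, so $\sigma = \rho$, the initial data $\Upsilon^{(\theta;\rho)}$ and $\Upsilon^{(\rho)}$ are coupled through \Cref{fgcouple} so as to agree on the positive half-line, with discrepancies confined to the non-positive half-line. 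Again by the localization of second-class particles near the shock position $\vartheta T$, which lies strictly to the left of $X_T$, with probability tending to one no discrepancy reaches a bounded window around $X_T$ by time $T$; hence $\eta_T$ and $\xi_T$ agree on this window, and the marginal of $\mu$ there equals that of $\Upsilon^{(\rho)}$. Combined with the translation invariance obtained in (i), this forces $\mu = \Upsilon^{(\rho)} = \Upsilon^{(\sigma)}$. The case $u < \vartheta$ is symmetric, coupling instead with $\Upsilon^{(\theta)}$.

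The main obstacle is the quantitative localization of second-class particles around the random shock front: one must show that, up to time $T$, no second-class particle (arising either from a shift of the initial data or from the coupling with the stationary process) crosses between the regions $\{x < \vartheta T\}$ and $\{x > \vartheta T\}$ in a manner that disturbs the window around $X_T$. This control, which replaces the generator-based continuous-time arguments of \cite{HEAS}, is precisely what \Cref{LargeSmallLimitv} is devoted to; it is where the adaptation to the discrete-time, sequential-update dynamics of the stochastic six-vertex model requires the most care, and where the monotonicity \Cref{lambdaximonotone} together with the speed bound \Cref{modelsequal} and the stationary-measure classification \Cref{translationstationary} are most heavily used.
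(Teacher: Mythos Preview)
Your plan hinges on a single ingredient that you defer to \Cref{LargeSmallLimitv}: that the second-class particles separating $\Upsilon^{(\theta;\rho)}$ from either a spatial shift of itself or from the stationary $\Upsilon^{(\sigma)}$ \emph{localize at the shock}, i.e.\ are found near $\vartheta T$ at time $T$ with probability tending to one. This is a genuine gap. Such a localization statement is at least as deep as \Cref{localdouble} itself; for the ASEP it is a celebrated result of Ferrari and coauthors, and no analog for the stochastic six-vertex model is available in the paper. The crude speed estimate \Cref{modelsequal} only bounds discrepancy speeds from above by roughly $2/(1-b_2)$, which is useless when $u\in(0,\vartheta)\cup(\vartheta,\tfrac{2}{1-b_2})$. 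Worse, in your stage (iii) the coupling of $\Upsilon^{(\theta;\rho)}$ with $\Upsilon^{(\rho)}$ produces \emph{infinitely many} discrepancies on the nonpositive half-line, so you are not merely tracking a finite cloud of second-class particles but an entire half-line of them; controlling the rightmost one is exactly the macroscopic shock-tracking problem. (A smaller issue: the claim that $|X_{T+1}-X_T|$ is uniformly bounded does not follow from $X_T/T\to u$ and $X_T$ increasing.)

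The paper takes a completely different route that avoids any pointwise control of second-class particles. It first passes to \emph{Ces\`aro averages} $T^{-1}\sum_{j<T}\mathfrak{S}_{\lfloor vj\rfloor}\mathfrak{M}_j\mu$ along a dense set of slopes $v$; translation invariance and stationarity of the subsequential limits $\mu_v$ are then obtained purely from the ordering $\mathfrak{S}_1\Upsilon^{(\theta;\rho)}\ge\Upsilon^{(\theta;\rho)}$ and a squeezing argument (\Cref{limittranslationinvariant}), with no reference to discrepancy trajectories. The mixing measure $\gamma_v$ is identified not by coupling but by a \emph{current balance} (\Cref{currentlimit}): one computes $\lim T_k^{-1}\sum_{x=\lfloor uT_k\rfloor}^{\lfloor wT_k\rfloor}\mathbb{E}[\eta_{T_k}(x)]$ two ways, and the strict concavity of $\varphi$ forces $\gamma_v=\delta_\theta$ for $v<\vartheta$ and $\gamma_v=\delta_\rho$ for $v>\vartheta$. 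Finally, the passage from Ces\`aro convergence to genuine convergence (\Cref{localleftright}) again uses only monotonicity. If you want to salvage your outline, you would need to replace the second-class-particle localization by precisely these three ingredients.
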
 

It follows from \Cref{fgcouple} that $\mathfrak{S}_{-1} \Upsilon^{(\theta; \rho)} \le \Upsilon^{(\theta; \rho)} \le \mathfrak{S}_1 \Upsilon^{(\theta; \rho)}$ if $\theta < \rho$. Using this fact, we can establish \Cref{thetarho2} assuming \Cref{localdouble}.

\begin{proof}[Proof of \Cref{thetarho2} Assuming \Cref{localdouble}]
	
	By a union bound, the fact that $G_1 (x) \in [0, 1]$, and the fact that $\eta_N (x) \in \{ 0, 1 \}$, it suffices to show that 
	\begin{flalign*}
	\displaystyle\lim_{N \rightarrow \infty} \mathbb{P} \left[ \bigg| \displaystyle\frac{1}{N} \displaystyle\sum_{x = X_1}^{X_2} \eta_N (x) - \displaystyle\int_{x_1}^{x_2} G_1 (x) dx \bigg| > \varepsilon \right] = 0,
	\end{flalign*}
	
	\noindent for any $X_1, X_2 \in \big[ - \frac{N}{\varepsilon}, \frac{N}{\varepsilon} \big]$ such that $X_1 < X_2$. 
	
	Again by a union bound and the facts that $G_1 (x), \eta_N (x) \in [0, 1]$, we may assume that either $X_1 > \big( \vartheta + \frac{\varepsilon}{4} \big) N$ or that $X_2 < \big( \vartheta - \frac{\varepsilon}{4} \big) N$ (where we recall $\vartheta$ from \eqref{vthetarho}). Since these two cases are entirely analogous, let us assume the former. In this case, the explicit description \eqref{gx1thetarhoidentity} for $G (x, 1)$ implies that it suffices to show
	\begin{flalign}
	\label{etanxsumestimate2}
	\displaystyle\lim_{N \rightarrow \infty} \mathbb{P} \left[ \bigg|  \displaystyle\sum_{x = X_1}^{X_2} \eta_N (x) - \rho (X_2 - X_1) \bigg| > \varepsilon N \right] = 0,
	\end{flalign}
	
	Since $\Upsilon^{(\theta; \rho)} \le \Upsilon^{(\rho)}$ by \Cref{fgcouple}, a large deviations estimate for $0-1$ Bernoulli random variables and \Cref{tmu1mu2m} together imply that 
	\begin{flalign}
	\label{etanxsumestimate3}
	\displaystyle\lim_{N \rightarrow \infty} \mathbb{P} \left[ \displaystyle\sum_{x = X_1}^{X_2} \eta_N (x) - \rho (X_2 - X_1) > \varepsilon N \right] = 0,
	\end{flalign}
	
	\noindent so it remains to establish the lower bound on $\sum_{x = X_1}^{X_2} \eta_N (x)$. 
	
	To that end, let $m > 0$ be an arbitrary integer; for any measure $\mu \in \mathscr{P} \big( \{ 0, 1 \}^{\mathbb{Z}} \big)$, let $\mu |_{[-m, m]}$ denote the marginal of $\mu$ on $\{ 0, 1 \}^{[-m, m]}$. Then, \Cref{localdouble} and \Cref{fgcouple} together yield $\mathfrak{S}_{\lfloor xN \rfloor} \mathfrak{M}_N \Upsilon^{(\theta; \rho)} \big|_{[-m, m]} \ge \Upsilon^{(\rho - \varepsilon / 4)} |_{[-m, m]}$ for any $x \ge \vartheta + \frac{\varepsilon}{8}$, if $N$ is sufficiently large. Thus, it follows from the fact that $\mathfrak{S}_1 \Upsilon^{(\theta; \rho)} \ge \Upsilon^{(\theta; \rho)}$ and \Cref{tmu1mu2m} that $\mathfrak{S}_X \mathfrak{M}_N \Upsilon^{(\theta; \rho)} |_{[-m, m]} \ge \Upsilon^{(\rho - \varepsilon / 4)} |_{[-m, m]}$ for any $X \ge X_1$ (for sufficiently large $N$). 
	
	Now define $Y_0 < Y_1 < \cdots < Y_M$ by setting $Y_0 = X_1$, $Y_i = Y_{i - 1} + m$ for each $i \in [1, M]$, where $M$ is defined by imposing that $X_2 - m < Y_M \le X_2$. For each $i \in [1, M]$, also define the event 
	\begin{flalign*} 
	E_i = \Bigg\{ \displaystyle\sum_{x = Y_{i - 1} + 1}^{Y_i} \eta_N (x) < \bigg( \rho - \displaystyle\frac{\varepsilon}{2} \bigg)  m \Bigg\}. 
	\end{flalign*} 
	
	A large deviations bound for $0-1$ Bernoulli random variables then yields a constant $c = c (\varepsilon) > 0$ such that $\mathbb{P} \big[ E_i \big] \le c^{-1} e^{-cm}$ for each $i \in [1, M]$, if $N$ is sufficiently large. Letting $F$ denote the event on which at most $e^{-cm / 2} M$ of the $E_i$ hold, a Markov estimate then yields that $\mathbb{P} [F] < c^{-1} e^{-cm / 2}$. Hence,  
	\begin{flalign}
	\label{etansumestimate} 
	\mathbb{P} \left[ \displaystyle\sum_{x = X_1}^{X_2} \eta_N (x) < \bigg( \rho - \displaystyle\frac{\varepsilon}{2} \bigg) (1 - e^{-cm / 2}) m M \right] \le \mathbb{P} [F] < c^{-1} e^{-c m / 2}. 
	\end{flalign}
	
	Now \eqref{etanxsumestimate2} follows from combining \eqref{etanxsumestimate3} with the large $m$ limit of \eqref{etansumestimate}. 
\end{proof}

Thus, it remains to establish \Cref{localdouble}. To that end, we begin with the following lemma that establishes \Cref{localdouble} when $u$ is either sufficiently small or large. 

\begin{lem}
	
	\label{murholambdaleftright} 
	
	Fix real numbers $x \ge \frac{3}{1 - b_2}$ and $y < 0$. Let $X_1, X_2, \ldots $ and $Y_1, Y_2, \ldots $ denote increasing sequences of integers such that $\lim_{T \rightarrow \infty} T^{-1} X_T \le x$ and $\lim_{T \rightarrow \infty} T^{-1} Y_T \le y$. Then, 
	\begin{flalign} 
	\label{localxy} 
	\displaystyle\lim_{T \rightarrow \infty} \mathfrak{S}_{X_T} \mathfrak{M}_T \Upsilon^{(\theta; \rho)} = \Upsilon^{(\rho)}; \qquad \displaystyle\lim_{T \rightarrow \infty} \mathfrak{S}_{Y_T} \mathfrak{M}_T \Upsilon^{(\theta; \rho)} = \Upsilon^{(\theta)}.
	\end{flalign} 
	
\end{lem}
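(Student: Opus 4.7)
The plan is to couple $(\eta_t)$ with stationary processes started from $\Upsilon^{(\rho)}$ and $\Upsilon^{(\theta)}$, arrange the couplings so the initial discrepancies lie on one half-line, and apply the higher rank coupling of \Cref{couplinghigherrank} to show the discrepancies cannot reach a bounded window around $X_T$ (resp.\ $Y_T$) by time $T$. Combined with the stationarity (\Cref{zetarho}) and translation-invariance of $\Upsilon^{(\rho)}$ and $\Upsilon^{(\theta)}$, this yields \eqref{localxy} upon testing against cylindrical events on any fixed window $[-m,m]$.

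The second identity is nearly immediate. Use \Cref{fgcouple} (valid since $\theta < \rho$) to couple $\eta_0 \sim \Upsilon^{(\theta;\rho)}$ and $\zeta_0 \sim \Upsilon^{(\theta)}$ so that $\zeta_0 \le \eta_0$ everywhere and $\zeta_0(x) = \eta_0(x)$ for every $x \le 0$. Evolving under the higher rank coupling, the discrepancies $\eta_t \setminus \zeta_t$ correspond to class-$2$ particles in $\eta$ in the sense of \Cref{classparticles} and \Cref{etaxicoupling}; by the multi-class dynamics of \Cref{HigherRank} these move only to the right, and by \Cref{higherrankclass} no new class-$2$ particle ever appears, so the discrepancy set remains in $\{x > 0\}$ for all $t$. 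Since the hypothesis forces $Y_T \to -\infty$, for any fixed $m$ and large enough $T$ the window $[Y_T - m, Y_T + m]$ lies in $\{x \le 0\}$ and contains no discrepancy, so $\eta_T$ and $\zeta_T$ agree there; combined with translation-invariance of $\Upsilon^{(\theta)}$ this gives the second identity.

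For the first identity, couple instead $\eta_0$ and $\xi_0 \sim \Upsilon^{(\rho)}$ via \Cref{fgcouple} so that $\eta_0 \le \xi_0$ and $\eta_0(x) = \xi_0(x)$ for every $x > 0$; the initial discrepancies $\xi_0 \setminus \eta_0$ are now class-$2$ particles in $\xi$ lying in $\{x \le 0\}$. These also move only to the right, but can in principle reach $[X_T - m, X_T + m]$, so a ballistic speed estimate is needed. The proof of \Cref{xtxt1} shows that each one-step displacement of a tagged class-$2$ particle is conditionally (given the past filtration) stochastically dominated by a $b_2$-geometric variable with mean $(1-b_2)^{-1}$. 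A standard Cram\'er bound, in the spirit of the one used in the proof of \Cref{modelsequal}, then produces a constant $c = c(b_2) > 0$ such that the $T$-step displacement of any tagged class-$2$ particle exceeds $2T/(1-b_2) + k$ with probability at most $c^{-1} e^{-c(T+k)}$, uniformly in the tagged particle. Taking a union bound over the initial positions $y \le 0$ at which a class-$2$ particle may sit, the probability that any of them reaches $[X_T - m, X_T + m]$ by time $T$ is bounded above by $\sum_{y \le 0} c^{-1} e^{-c(X_T - m - y - 2T/(1-b_2))}$, which tends to $0$ because the hypothesis (together with $x \ge 3/(1-b_2) > 2/(1-b_2)$) ensures $X_T - 2T/(1-b_2)$ grows linearly in $T$. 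Thus $\eta_T$ and $\xi_T$ agree on $[X_T - m, X_T + m]$ with probability tending to $1$, and translation-invariance of $\Upsilon^{(\rho)}$ finishes the proof.

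The main obstacle is the ballistic estimate in the first identity: the single-step, single-particle bound of \Cref{xtxt1} must be upgraded to a $T$-step bound that is summable over the a priori infinite random family of class-$2$ initial positions $y \le 0$. The crucial inputs are that \Cref{xtxt1} actually holds conditionally on the past filtration (enabling a Cram\'er-type bound despite the temporal dependence), and that the slack between $3/(1-b_2)$ and the Cram\'er speed $2/(1-b_2)$ leaves the resulting geometric series over $y \le 0$ absolutely convergent.
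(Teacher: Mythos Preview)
Your argument is correct and follows essentially the same strategy as the paper: couple with the stationary process so that the initial discrepancies lie on one half-line, then use a ballistic speed bound on second-class particles to show they cannot reach the observation window. The paper compresses this into a direct citation of \Cref{modelsequal}, whereas you reprove the needed estimate from \Cref{xtxt1} via a conditional Cram\'er bound and a union bound over the countably many initial discrepancy sites; both routes yield the same exponential decay.

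One pleasant difference: for the $\Upsilon^{(\theta)}$ limit you observe that in this model second-class particles never move left, so the discrepancies (initially in $\{x>0\}$) can never enter $\{x\le 0\}$ at all, and no quantitative speed estimate is needed. The paper simply says both identities follow ``in a similar way'' from \Cref{modelsequal}, so your version is cleaner on that side. Your treatment of the $\Upsilon^{(\rho)}$ side is slightly more explicit than the paper's: you sum the tail bound over all initial discrepancy positions $y\le 0$ rather than tracking only the rightmost one, but the geometric decay in $y$ makes this harmless, and the slack $3/(1-b_2) > 2/(1-b_2)$ is exactly what makes the series summable.
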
 

\begin{proof}
	
Both statements of \eqref{localxy} follow in a similar way from \Cref{modelsequal}, so let us only establish the first; we may assume that $X_T \ge \big( \frac{2}{1 - b_2} + 1 \big) T$ for each $T > 0$. Let $\eta_0 = \big( \eta_0 (i) \big) \in \{ 0, 1 \}^{\mathbb{Z}}$ and $\xi_0 = \big( \xi_0 (i) \big) \in \{ 0, 1 \}^{\mathbb{Z}}$ denote two (random) particle configurations sampled from the measures $\Upsilon^{(\theta; \rho)}$ and $\Upsilon^{(\rho)}$, respectively, which are coupled in such a way that $\eta_0 (i) = \xi_0 (i)$ for each $i > 0$. 

Let $\eta = \big( \eta_t (i) \big)$ and $\xi = \big( \xi_t (i) \big)$ denote two stochastic six-vertex models with initial data given by $\eta_0$ and $\xi_0$, respectively, which are coupled under the higher rank coupling of \Cref{couplinghigherrank}. Then, \Cref{modelsequal} yields a constant $c = c(b_2) > 0$ such that the probability that $\eta_T$ and $\xi_T$ differ on the interval $\big[ X_T - \frac{T}{2}, X_T + \frac{T}{2} \big]$ is at most $c^{-1} e^{-cT}$, for each $T > 0$. This implies that $\lim_{T \rightarrow \infty} \mathfrak{S}_{X_T} \mathfrak{M}_T \Upsilon^{(\theta; \rho)} = \lim_{T \rightarrow \infty} \mathfrak{S}_{X_T} \mathfrak{M}_T \Upsilon^{(\rho)} = \Upsilon^{(\rho)}$, where the latter equality follows from the fact that $\Upsilon^{(\rho)} \in \mathscr{S} \cap \mathscr{T}$ is stationary and translation-invariant. 
\end{proof}

\subsection{Subsequential Local Limits} 

\label{LargeSmallLimitv}

In this section we derive several properties of subsequential local limits of the stochastic six-vertex model. We begin with the following lemma, whose statement (and proof) is similar to that of Lemma 3.1 in \cite{HEAS}; it states that certain subsequential local limits in our model are translation-invariant and stationary. 

\begin{lem}
	
	\label{limittranslationinvariant} 
	
	Let $\mu \in \mathscr{P} = \mathscr{P} \big( \{ 0, 1 \}^{\mathbb{Z}} \big)$ be such that either $\mathfrak{S}_1 \mu \ge \mu$ or $\mathfrak{S}_1 \mu \le \mu$. Then any increasing sequence of positive integers $T = (T_1, T_2, \ldots )$ admits an infinite subsequence $T_{n_1} < T_{n_2} < \cdots $ and a countable, dense subset of the real numbers $D = D (T) \subset \mathbb{R}$ such that the following holds. For any nonzero $v \in D$, there exists a stationary, translation-invariant measure $\mu = \mu_v \in \mathscr{T} \cap \mathscr{S}$ such that the three limits,
	\begin{flalign}
	\label{mulimits} 
	\begin{aligned}
	& \displaystyle\lim_{k \rightarrow \infty} \displaystyle\frac{1}{T_{n_k}} \displaystyle\sum_{j = 0}^{T_{n_k} - 1} \mathfrak{S}_{\lfloor v j \rfloor} \mathfrak{M}_j \mu; \qquad 
	\displaystyle\lim_{k \rightarrow \infty} \displaystyle\frac{1}{T_{n_k}} \displaystyle\sum_{j = 0}^{T_{n_k} - 1} \mathfrak{S}_{\lfloor v (j + 1) \rfloor} \mathfrak{M}_j \mu; \\
	& \displaystyle\lim_{k \rightarrow \infty} \displaystyle\frac{1}{\lfloor v T_{n_k} \rfloor} \displaystyle\sum_{j = 0}^{T_{n_k} - 1} \displaystyle\sum_{x = \lfloor vj \rfloor}^{\lfloor v (j + 1) \rfloor - 1} \mathfrak{S}_x \mathfrak{M}_j \mu,
	\end{aligned} 
	\end{flalign}
	
	\noindent all exist\footnote{To make sense of the last sum in \eqref{mulimits} when $v < 0$, we view $\sum_{x = i}^j f(x) = - \sum_{x = j}^i f(x)$ for any $i > j$.} and are equal to $\mu_v$. In particular, there exists a measure $\gamma = \gamma_v$ on $[0, 1]$ such that $\mu_v = \int_0^1 \Upsilon^{(a)} \gamma_v (da)$.
	
\end{lem}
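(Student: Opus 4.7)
The plan is to proceed in three stages: extract a subsequence by compactness and diagonalization, establish that the three Ces\`aro averages converge to a common limit $\mu_v \in \mathscr{T} \cap \mathscr{S}$, and then apply \Cref{translationstationary} together with an ergodic decomposition.

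For the first stage, I would take $D = \mathbb{Q}$. Since $\{0,1\}^{\mathbb{Z}}$ is compact in the product topology, $\mathscr{P} = \mathscr{P}(\{0,1\}^{\mathbb{Z}})$ is compact and metrizable in the weak topology. For each nonzero $v \in D$, the three Ces\`aro-type averages $A_T^{(v)}$, $B_T^{(v)}$, $C_T^{(v)}$ appearing on the right side of \eqref{mulimits} all take values in $\mathscr{P}$, so a standard diagonal extraction over the countable collection $\{A_T^{(v)}, B_T^{(v)}, C_T^{(v)} : v \in D\setminus\{0\}\}$ yields a single increasing subsequence $T_{n_1} < T_{n_2} < \cdots$ along which all three converge for every nonzero $v \in D$; denote the limits by $\mu_v^A$, $\mu_v^B$, $\mu_v^C$.

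The second and main stage is to show $\mu_v^A = \mu_v^B = \mu_v^C =: \mu_v$ and $\mu_v \in \mathscr{T} \cap \mathscr{S}$. The hypothesis $\mathfrak{S}_1 \mu \ge \mu$ (the opposite case being entirely symmetric), the attractivity from \Cref{couplinghigherrank} (see \Cref{tmu1mu2m}), and the commutativity of $\mathfrak{S}_1$ with $\mathfrak{M}_j$ (\Cref{smoperator}) together give $\mathfrak{S}_1 \mathfrak{M}_j \mu \ge \mathfrak{M}_j \mu$ for every $j \ge 0$. Consequently $\mu_v^B \ge \mu_v^A$ and $\mathfrak{S}_1 \mu_v^\ast \ge \mu_v^\ast$ for $\ast \in \{A, B, C\}$, so the marginal densities $f_\ast(x) := \mathbb{E}_{\mu_v^\ast}[\eta(x)]$ are nondecreasing in $x$ and satisfy $f_B \ge f_A$ pointwise. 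A direct telescoping of the inner sum defining $C_T^{(v)}$ gives the exact identity $\mathfrak{S}_1 C_T^{(v)} - C_T^{(v)} = \tfrac{T}{\lfloor vT\rfloor}(B_T^{(v)} - A_T^{(v)})$ for $v > 0$, which in the limit becomes $\mathfrak{S}_1 \mu_v^C - \mu_v^C = v^{-1}(\mu_v^B - \mu_v^A)$. Iterating the shift and comparing with the $[0,1]$-bounded marginal densities of $\mathfrak{S}_n \mu_v^C$ forces the partial sums $\sum_{k=0}^{n-1}(f_B - f_A)(x+k)$ to be bounded uniformly in $n$ and $x$. Combining this summability constraint with the $T_v^q$-invariance inherited from the Ces\`aro average (writing $v = p/q$ in lowest terms, so that $T_v^q = \mathfrak{S}_p \mathfrak{M}_q$ is a genuine contraction average) forces $f_A = f_B$ identically; a rigidity step based on the coupling defining $\mu_v^B \ge \mu_v^A$ then promotes the order to equality $\mu_v^A = \mu_v^B$. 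The telescoping identity now gives $\mathfrak{S}_1 \mu_v^C = \mu_v^C$, and a parallel rewriting of $C_T^{(v)}$ as an average of shifts of $A_T^{(v)}$ combined with translation invariance yields $\mu_v^C = \mu_v^A =: \mu_v$; stationarity follows from rewriting $\mathfrak{M}_1 A_T^{(v)}$ as an averaged shift of $A_T^{(v)}$ and invoking the translation invariance of $\mu_v$ just obtained.

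Finally, since $\mu_v \in \mathscr{T} \cap \mathscr{S}$, \Cref{translationstationary} identifies the extreme points of $\mathscr{T} \cap \mathscr{S}$ as precisely the Bernoulli product measures $\{\Upsilon^{(a)} : a \in [0,1]\}$; as $\mathscr{T} \cap \mathscr{S}$ is a convex, compact, metrizable subset of $\mathscr{P}$, Choquet's theorem then supplies a probability measure $\gamma_v$ on $[0,1]$ with $\mu_v = \int_0^1 \Upsilon^{(a)} \gamma_v(da)$. The main technical obstacle I anticipate lies in the rigidity step of the second stage: rigorously combining pointwise monotonicity of the densities, the summability constraint from the telescoping identity, and $T_v^q$-periodicity to conclude $f_A = f_B$, and then upgrading this equality of marginals to equality of the full measures on $\{0,1\}^{\mathbb{Z}}$ via the monotone coupling. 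The book-keeping is cleanest for integer $v$ and requires some care for general rational $v$, where $\lfloor v(j+1) \rfloor - \lfloor vj \rfloor$ fluctuates between $\lfloor v \rfloor$ and $\lfloor v \rfloor + 1$.
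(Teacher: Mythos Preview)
Your telescoping identity $\mathfrak{S}_1 C_T^{(v)} - C_T^{(v)} = \tfrac{T}{\lfloor vT\rfloor}(B_T^{(v)} - A_T^{(v)})$, the observation that $\mu_v^A$ and $\mu_v^B$ are $\mathfrak{S}_p\mathfrak{M}_q$-invariant for $v = p/q$, and the final Choquet step are all correct. The gap is exactly where you anticipated it, and it is fatal for the choice $D = \mathbb{Q}$.

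The problem is that $\mathfrak{S}_p\mathfrak{M}_q$-invariance of $\mu_v^A$ gives no spatial periodicity of the marginal density $f_A(x) = \mathbb{E}_{\mu_v^A}[\eta(x)]$: the operator $\mathfrak{M}_q$ does not preserve one-point marginals, so $\mathfrak{S}_p\mathfrak{M}_q\mu_v^A = \mu_v^A$ says nothing about $f_A(x+p)$ versus $f_A(x)$. Without periodicity, the bounded-sums constraint $\sum_{k\ge 0}(f_B - f_A)(x+k) \le C$ is perfectly compatible with $f_B \ne f_A$ (the difference need only be summable, not zero). So the rigidity step has no traction, and in fact the conclusion can genuinely fail for some rational $v$: the map $v \mapsto \mathbb{E}_{\mu_v}[\eta(0)]$ is monotone but may jump, and at a jump the limit $\mu_v$ need not be translation invariant. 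The statement explicitly allows $D$ to depend on the data, and this freedom is essential.

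The paper's argument (following Andjel--Vares) uses that freedom as follows. One first extracts a subsequence along which the first Ces\`aro limit $\mu_v$ exists for $v$ in a preliminary dense countable set $V$, and defines $F(v) = \mathbb{E}_{\mu_v}[\eta(0)]$. The ordering $\mathfrak{S}_1\mu \ge \mu$ together with attractivity makes $F$ nondecreasing on $V$, hence continuous off a countable set; one then takes $D$ to be a countable dense set of continuity points of $F$ and refines the subsequence so the limit exists for $v \in D$ as well. For $v \in D$ the limits $\lim_{m\to\pm\infty}\mathbb{E}_{\mathfrak{S}_m\mu_v}[\eta(0)]$ are sandwiched between $F(u)$ and $F(w)$ for $u < v < w$ in $V$, and continuity of $F$ at $v$ forces both to equal $F(v)$. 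Since $\mathfrak{S}_1\mu_v \ge \mu_v$ makes $x \mapsto \mathbb{E}_{\mu_v}[\eta(x)]$ nondecreasing with constant limits at $\pm\infty$, it is constant; the coupling behind $\mathfrak{S}_1\mu_v \ge \mu_v$ then upgrades this to $\mathfrak{S}_1\mu_v = \mu_v$. Stationarity and equality of the three limits follow afterward by sandwiching between shifts of $\mu_v$, using the translation invariance just established.
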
 	

\begin{proof}
	
	Let us assume that $\mathfrak{S}_1 \mu \ge \mu$, since the alternative case $\mu \ge \mathfrak{S}_1 \mu$ is entirely analogous. 
	
	Since $\mathscr{P}$ is compact, there exists an increasing subsequence $R = (R_1, R_2, \ldots ) \subseteq T$ and a dense, countable subset $V = V(T) \subset \mathbb{R}$ such that the limit
	\begin{flalign}
	\label{limitrmjmu}
	\displaystyle\lim_{k \rightarrow \infty} \displaystyle\frac{1}{R_k} \displaystyle\sum_{j = 0}^{R_k - 1} \mathfrak{S}_{\lfloor v j \rfloor} \mathfrak{M}_j \mu = \mu_v, 
	\end{flalign}
	
	\noindent exists for each $v \in V$. Since $\mathfrak{S}_1 \mu \ge \mu$, \Cref{smoperator} and \Cref{tmu1mu2m} together imply that $\mathfrak{S}_m \mathfrak{M}_t \mu = \mathfrak{M}_t \mathfrak{S}_m \mu \ge \mathfrak{M}_t \mu$ and $\mathfrak{S}_{-m} \mathfrak{M}_t \mu = \mathfrak{M}_t \mathfrak{S}_{-m} \mu \le \mathfrak{M}_t \mu$ for any nonnegative integers $m$ and $t$. Thus, $\mathfrak{S}_m \mu_v \ge \mu_v \ge \mathfrak{S}_{-m} \mu_v$ for any $m \in \mathbb{Z}_{\ge 0}$. 
	
	Now, for each $v \in V$, set $F(v) = \mathbb{E}_{\mu_v} \big[ \eta (0) \big]$, where $\eta = \big( \eta (x) \big) \in \{ 0, 1 \}^{\mathbb{Z}}$ is sampled under $\mu_v$. The fact that $\mathfrak{S}_m \mu \ge \mu$ for any $m > 0$ implies that $F$ is non-decreasing in $v$. Thus, there exists a countable dense subset $D \subset \mathbb{R}$ so that, for any $\varepsilon > 0$ and $v \in D$, there exist $v_1 , v_2 \in V$ such that $v_1 < v < v_2$ and $F (v_2) - F(v_1) < \varepsilon$. The domain of $F$ can then be extended to $V \cup D$ by continuity. 
	
	By again applying the compactness of $\mathscr{P}$, we may assume (after replacing $R$ with an increasing subsequence if necessary) that the limit \eqref{limitrmjmu} also exists for each $v \in D$. We claim that $\mu_v \in \mathscr{T} \cap \mathscr{S}$ for any $v \in D$. 
	
	To see that $\mu_v$ is translation-invariant, first observe that $\mathfrak{S}_1 \mu \ge \mu$ implies that $\mathbb{E}_{\mu_u} \big[ \eta (0) \big] \le \mathbb{E}_{\mu_v} \big[ \eta (0) \big]  \le \mathbb{E}_{\mu_w} \big[ \eta (0) \big]$ for any $u, v, w \in V \cup D$ such that $u \le v \le w$. Therefore, $F(v) = \mathbb{E}_{\mu_v} \big[ \eta (0) \big]$ for any $v \in D$. For any $v \in V \cup D$, also define the quantities 
	\begin{flalign*}
	A (v) = \displaystyle\lim_{m \rightarrow \infty} \mathbb{E}_{\mathfrak{S}_{-m} \mu_v} \big[ \eta (0) \big]; \qquad B (v) = \displaystyle\lim_{m \rightarrow \infty} \mathbb{E}_{\mathfrak{S}_m \mu_v} \big[ \eta (0) \big].
	\end{flalign*}
	
	These limits exist since $\mathbb{E}_{\mathfrak{S}_{-m} \mu_v} \big[ \eta (0) \big]$ and $\mathbb{E}_{\mathfrak{S}_m \mu_v} \big[ \eta (0) \big]$ are non-increasing and non-decreasing in $m > 0$, respectively. Again using the fact that $\mathfrak{S}_m \mu \ge \mu \ge \mathfrak{S}_{-m} \mu$ for any $m > 0$, it follows that $A(v) \le F(v) \le B(v)$ and $A(v), B(v) \in \big[ F(u), F(w) \big]$ for any $u, v, w \in V \cup D$ such that $u < v < w$. Since $\lim_{u \rightarrow v} F(u) = F(v)$ for any $v \in D$ (where the limit in $u$ is taken over $D$), it follows that $A(v) = F(v) = B (v)$ for any $v \in D$. 
	
	Since $\mathbb{E}_{\mu_v} \big[ \eta (x) \big]$ is non-decreasing in $x$, it follows that $\mathbb{E}_{\mu_v} \big[ \eta (x) \big]$ is constant over $x \in \mathbb{Z}$ for any $v \in D$. Combining this with the fact that $\mathfrak{S}_1 \mu_v \ge \mu_v \ge \mathfrak{S}_{-1} \mu_v$, we deduce that $\mu_v$ is translation-invariant. 

	Next, let us verify that $\mu_v$ is stationary; we only consider the case $v > 0$, since the alternative case $v < 0$ is entirely analogous. To that end, let $\mathfrak{N}_t = \mathfrak{S}_{\lfloor v t \rfloor} \mathfrak{M}_t$ and observe by \Cref{smoperator} that $\mathfrak{N}_{t + 1} \in \{ \mathfrak{N}_1 \mathfrak{N}_t, \mathfrak{N}_1 \mathfrak{N}_t \mathfrak{S}_1 \}$. Thus, since $\mathfrak{S}_1 \mu_v \ge \mu_v \ge \mathfrak{S}_{-1} \mu_v$, the definition \eqref{limitrmjmu} of $\mu_v$ and its translation-invariance together yield that  
	\begin{flalign*}
	\mathfrak{N}_1 \mu_v = \displaystyle\lim_{k \rightarrow \infty} \displaystyle\frac{1}{R_k} \displaystyle\sum_{j = 0}^{R_k - 1} \mathfrak{N}_1 \mathfrak{N}_j \mu \le \displaystyle\lim_{k \rightarrow \infty} \displaystyle\frac{1}{R_k} \displaystyle\sum_{j = 0}^{R_k - 1} \mathfrak{N}_{j + 1} \mu = \mu_v, 
	\end{flalign*}
	
	\noindent and 
	\begin{flalign*} 
	\mathfrak{N}_1 \mu_v = \mathfrak{N}_1 \mathfrak{S}_1 \mu_v = \displaystyle\lim_{k \rightarrow \infty} \displaystyle\frac{1}{R_k} \displaystyle\sum_{j = 0}^{R_k - 1} \mathfrak{N}_1 \mathfrak{N}_j \mathfrak{S}_1 \mu \ge \displaystyle\lim_{k \rightarrow \infty} \displaystyle\frac{1}{R_k} \displaystyle\sum_{j = 0}^{R_k - 1} \mathfrak{N}_{j + 1} \mu = \mu_v, 
	\end{flalign*}
	
	\noindent so $\mathfrak{N}_1 \mu_v = \mu_v$. This, with the translation-invariance of $\mu_v$ yields that $\mathfrak{M}_1 \mu_v = \mu_v$, meaning that $\mu_v$ is stationary. 
	
	Now, the compactness of $\mathscr{P}$ yields the existence of an increasing subsequence $(T_{n_1}, T_{n_2}, \ldots ) \subseteq R$ such that the three limits in \eqref{mulimits} all exist. We have verified that the first limit is equal to $\mu_v \in \mathscr{T} \cap \mathscr{S}$. Denoting the second and third limits in \eqref{mulimits} by $\mu_v^{(1)}$ and $\mu_v^{(2)}$ respectively, the fact that $\mathfrak{S}_1 \mu \ge \mu \ge \mathfrak{S}_{-1} \mu$ implies that $\mathfrak{S}_{- \lceil v \rceil - 1} \mu_v \le \mu_v^{(1)} \le \mathfrak{S}_{\lceil v \rceil + 1} \mu_v$ and similarly for $\mu_v^{(2)}$. Thus, $\mu_v^{(1)} = \mu_v = \mu_v^{(2)}$ since $\mu_v \in \mathscr{T}$. 
	
	The last statement of the lemma follows from the fact that $\mu \in \mathscr{T} \cap \mathscr{S}$ and the classification of the extremal elements of $\mathscr{T} \cap \mathscr{S}$ provided by \Cref{translationstationary}. 
\end{proof}

Next, we have the following lemma, which is the analog of Lemma 3.2 of \cite{HEAS}.

\begin{prop} 
	
\label{currentlimit} 

Adopt the notation of \Cref{limittranslationinvariant}, and relabel $T_{n_k} = T_k$ for each $k \in \mathbb{Z}_{\ge 1}$. For each $k$, let $\eta_{T_k} = \big( \eta_{T_k} (x) \big) \in \{ 0, 1 \}^{\mathbb{Z}}$ denote a (random) particle configuration sampled under the measure $\mathfrak{M}_{T_k} \Upsilon^{(\theta; \rho)}$. Then, 
\begin{flalign} 
\label{sumetalambda}
\displaystyle\lim_{k \rightarrow \infty} \displaystyle\frac{1}{T_k}  \displaystyle\sum_{x = \lfloor u T_k \rfloor}^{\lfloor w T_k \rfloor} \mathbb{E} \big[ \eta_{T_k} (x) \big] = \Lambda (w) - \Lambda (u),
\end{flalign} 

\noindent for any $u, w \in D$ satisfying $u < w$, where we have denoted 
\begin{flalign*}
\Lambda (v) = \displaystyle\int_0^1 \big( v a - \varphi (a) \big) \gamma_v (da), \qquad \text{for any $v \in D$.} 
\end{flalign*}

\end{prop}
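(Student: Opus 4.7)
The plan is to apply arrow conservation of the stochastic six-vertex model on the space-time ``triangle'' $R_k = \{(x, t) : 0 \le t \le T_k,\, \lfloor ut\rfloor \le x \le \lfloor wt\rfloor\}$ and identify the resulting boundary fluxes with Cesaro-type integrals against the limiting measures $\mu_u$ and $\mu_w$ supplied by \Cref{limittranslationinvariant}. At every vertex $(x, t) \in R_k$, arrow conservation reads $\eta_t(x) - \eta_{t-1}(x) = h_t(x-1) - h_t(x)$, where $h_t(x) \in \{0, 1\}$ indicates a horizontal arrow from $(x, t)$ to $(x+1, t)$. Summing this identity in $x$ at fixed $t$ (which telescopes in the $h_t$'s) and then in $t$ (with an extra bookkeeping term accounting for the fact that the range $[\lfloor ut\rfloor, \lfloor wt\rfloor]$ varies with $t$) yields, in the case $0 < u < w$,
\begin{flalign*}
\displaystyle\sum_{x = \lfloor u T_k \rfloor}^{\lfloor w T_k \rfloor} \eta_{T_k}(x) = \eta_0(0) + \mathcal{F}_w - \mathcal{F}_u,
\end{flalign*}
where
\begin{flalign*}
\mathcal{F}_v = \displaystyle\sum_{s = 0}^{T_k - 1} \displaystyle\sum_{x = \lfloor vs\rfloor + 1}^{\lfloor v(s + 1)\rfloor} \eta_s(x) - \displaystyle\sum_{t = 1}^{T_k} h_t \big( \lfloor vt\rfloor \big) \qquad (v \in \{u, w\}),
\end{flalign*}
up to an $O(T_k^{-1})$ shift of the summation indices that is irrelevant in the limit. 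For the other sign cases of $u, w$, vertical entries and exits swap along each descending staircase but $\mathcal{F}_v$ admits the same Cesaro limit; hence the proposition reduces to showing $T_k^{-1} \mathbb{E}[\mathcal{F}_v] \to \Lambda(v)$ for each $v \in \{u, w\} \subset D$.

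The vertical term in $\mathcal{F}_v$ is handled by the third Cesaro identity of \Cref{limittranslationinvariant} applied to the local functional $\eta \mapsto \eta(0)$, which gives
\begin{flalign*}
\displaystyle\frac{1}{\lfloor v T_k \rfloor} \displaystyle\sum_{s = 0}^{T_k - 1} \displaystyle\sum_{x = \lfloor v s \rfloor}^{\lfloor v (s + 1) \rfloor - 1} \mathbb{E} \big[ \eta_s(x) \big] \longrightarrow \displaystyle\int_0^1 a\, \gamma_v(da),
\end{flalign*}
so that after multiplying by $\lfloor v T_k \rfloor / T_k \to v$ (and absorbing the aforementioned $O(T_k^{-1})$ correction) the limit is $v \int_0^1 a\, \gamma_v(da)$. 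For the horizontal term, observe that $\Upsilon^{(a)}$ is precisely the single-row marginal of the translation-invariant Gibbs measure $\mu(a)$ of \Cref{Translation}, whose horizontal-arrow marginal has mean $\varphi(a)$. Consequently if $\mathfrak{H}(\mu) = \mathbb{E}_{\mu}[h(0)]$ denotes the expected horizontal arrow at the origin produced by one row of six-vertex dynamics started from $\mu \in \mathscr{P}$, then $\mathfrak{H}(\Upsilon^{(a)}) = \varphi(a)$ and hence $\mathfrak{H}(\mu_v) = \int_0^1 \varphi(a)\, \gamma_v(da)$.

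The main obstacle -- and the step I expect to be the most delicate -- is transferring the weak Cesaro convergence $T_k^{-1} \sum_t \mathfrak{S}_{\lfloor vt\rfloor}\mathfrak{M}_{t - 1}\Upsilon^{(\theta; \rho)} \to \mu_v$ from \Cref{limittranslationinvariant} to the \emph{nonlocal} functional $\mathfrak{H}$, whose value depends on the input configuration at all sites $x \le 0$ via the left-to-right propagation of $h$ in the sequential row update. I would handle this by approximating $\mathfrak{H}$ by its $M$-truncation $\mathfrak{H}^{(M)}$, defined identically but with the row update artificially reinitialized at $h(-M - 1) = 0$. Since $\mathfrak{H}^{(M)}$ depends on $\eta$ only through $\eta|_{[-M, 0]}$, it is a continuous local functional on $\mathscr{P}$, so the first Cesaro identity of \Cref{limittranslationinvariant} immediately yields $T_k^{-1} \sum_t \mathfrak{H}^{(M)} \big( \mathfrak{S}_{\lfloor vt\rfloor} \mathfrak{M}_{t - 1}\Upsilon^{(\theta; \rho)} \big) \to \mathfrak{H}^{(M)}(\mu_v)$ for each fixed $M$. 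The truncation error $|\mathfrak{H} - \mathfrak{H}^{(M)}|(\mu)$ is bounded by the probability that a discrepancy in $h(-M-1)$ propagates to the origin through the coupled truncated and untruncated row updates; the explicit vertex weights \eqref{wi1j1i2j2} show that, under the natural coupling with shared vertex randomness, a discrepancy in $h$ survives each empty column with probability $b_2$ and each occupied column with probability $b_1$, so $|\mathfrak{H} - \mathfrak{H}^{(M)}| \le b_2^M$ uniformly in $\mu$ (a geometric tail in the same spirit as \Cref{xtxt1} and the separating-integer construction of \Cref{riseparating}). Sending $M \to \infty$ after $k \to \infty$ gives $T_k^{-1} \sum_{t = 1}^{T_k} \mathbb{E}[h_t(\lfloor vt\rfloor)] \to \int_0^1 \varphi(a)\, \gamma_v(da)$; combining with the vertical contribution and dividing the conservation identity by $T_k$ (using $\mathbb{E}[\eta_0(0)] = \theta = O(1)$) produces $\Lambda(w) - \Lambda(u)$, as claimed.
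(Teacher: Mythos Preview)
Your approach is essentially the same as the paper's: telescope the mass $\mathfrak{A}(t) = \sum_{x=\lfloor ut\rfloor}^{\lfloor wt\rfloor}\eta_t(x)$ via arrow conservation, split the increment into a vertical piece (handled by the third limit in \eqref{mulimits}) and a horizontal piece (handled by the second limit in \eqref{mulimits} together with the identity $\mathbb{E}_{\mu(a)}[\chi^{(h)}] = \varphi(a)$), and read off $\Lambda(w)-\Lambda(u)$.

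The one place where you go beyond the paper is your treatment of the horizontal flux. The paper simply invokes the second Cesaro limit in \eqref{mulimits} and the stationary identity $\mathbb{E}_{\Upsilon^{(a)}}[\chi^{(h)}(0,1)] = \varphi(a)$ to conclude, without remarking that $\chi^{(h)}(\lfloor v(t+1)\rfloor, t+1)$ is a \emph{nonlocal} functional of $\eta_t$ (it depends on $\eta_t(x)$ for all $x \le \lfloor v(t+1)\rfloor$). You correctly flag this and fill the gap by truncating the row update at distance $M$ and bounding the truncation error uniformly by $b_2^M$; this makes the passage from weak Cesaro convergence to convergence of the horizontal current explicit. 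That argument is sound and is indeed in the spirit of \Cref{riseparating} and \Cref{xtxt1}. One small wording issue: the index offsets you dismiss as ``$O(T_k^{-1})$'' (e.g.\ $h_t(\lfloor ut\rfloor)$ versus $h_t(\lfloor ut\rfloor - 1)$) are not literally $O(T_k^{-1})$ term-by-term, but they do vanish in the Cesaro limit because $\mu_v$ is translation-invariant; it would be cleaner to say so.
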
 

\begin{proof} 
	
	Let $\eta_0 = \big( \eta_0 (x) \big) \in \{ 0, 1 \}^{\mathbb{Z}}$ denote a (random) particle configuration on $\mathbb{Z}$, sampled under the measure $\Upsilon^{(\theta; \rho)}$, and let $\eta_t$ denote a stochastic six-vertex model with initial data $\eta_0$. For any integer $t \ge 0$, define 
	\begin{flalign}
	\label{atbt} 
	\mathfrak{A} (t) =  \displaystyle\sum_{x = \lfloor u t \rfloor}^{\lfloor w t \rfloor} \eta_t (x); \qquad \mathfrak{B} (t) = \mathfrak{A} (t + 1) - \mathfrak{A} (t).
	\end{flalign}
	
	Let $\mathcal{E}$ denote the (almost surely unique) six-vertex ensemble associated with the process $\eta_t$. Recall from \Cref{Translation} the vertical and horizontal indicators $\chi^{(v)} (x, y)$ and $\chi^{(h)} (x, y)$ denoting the indicators for an edge in $\mathcal{E}$ between $\big( (x, y), (x, y + 1) \big)$ and $\big( (x, y), (x + 1, y) \big)$, respectively. It is quickly verified that 
	\begin{flalign}
	\label{btctdt}
	\mathfrak{B} (t) & = \mathfrak{Y} (t) - \mathfrak{X} (t), 
	\end{flalign} 
	
	\noindent where 
	\begin{flalign*} 
	& \mathfrak{Y} (t) = \displaystyle\sum_{x = \lfloor wt \rfloor + 1}^{\lfloor w (t + 1) \rfloor} \chi^{(v)} (x, t)  - \displaystyle\sum_{x = \lfloor u t \rfloor}^{\lfloor u (t + 1) \rfloor - 1} \chi^{(v)} (x, t); \\
	& \mathfrak{X} (t) = \chi^{(h)} \big( \lfloor w (t + 1) \rfloor, t + 1 \big) -  \chi^{(h)} \big( \lfloor u (t + 1) \rfloor - 1, t + 1 \big).
	\end{flalign*}
	
	\noindent Applying the third limit in \eqref{mulimits} from \Cref{limittranslationinvariant}, we deduce that 
	\begin{flalign}
	\label{expectationtksum} 
	\displaystyle\lim_{k \rightarrow \infty} \displaystyle\frac{1}{T_k} \displaystyle\sum_{t = 0}^{T_k - 1} \mathfrak{Y} (t) = \displaystyle\int_0^1 wa \gamma_w (da) - \displaystyle\int_0^1 ua \gamma_u (da).
	\end{flalign}
	
	To estimate the sum of $\mathfrak{X} (t)$, fix $a \in [0, 1]$; let $\xi_0 = \big( \xi_0 (x) \big) \in \{ 0, 1 \}^{\mathbb{Z}}$ denote a random particle configuration sampled from the measure $\Upsilon^{(a)}$; let $\xi_t$ denote the stochastic six-vertex model with initial data $\xi_0$; and let $\mathcal{F} = \mathcal{F}^{(a)}$ denote the (almost surely unique) associated six-vertex ensemble. Then, the translation-invariance (with respect to both vertical and horizontal shifts) of the Gibbs measure $\mu (a)$ from \Cref{Translation} implies that the law of $\mathcal{F}$ is the restriction of $\mu(a)$ to the upper half-plane $\mathfrak{H}$. Thus, $\mathbb{E}_{\mathcal{F}} \big[ \chi^{(h)} (x, y) \big] = \varphi (a)$, for any  $(x, y) \in \mathbb{Z}^2$.
	
	It follows from the second limit in \eqref{mulimits} from \Cref{limittranslationinvariant} that 
	\begin{flalign}
	\label{expectationxtsum}
	\displaystyle\lim_{k \rightarrow \infty} \displaystyle\frac{1}{T_k} \displaystyle\sum_{t = 0}^{T_k - 1} \mathfrak{X} (t) = \displaystyle\int_0^1 \varphi (a) \gamma_w (da) - \displaystyle\int_0^1 \varphi (a) \gamma_u (da).
	\end{flalign}

	\noindent Thus, the proposition follows from \eqref{atbt}, \eqref{btctdt}, \eqref{expectationtksum}, and \eqref{expectationxtsum}. 
\end{proof}

\subsection{Proof of \Cref{localdouble}} 

\label{LimitDouble1}

In this section we establish \Cref{localdouble}. Before doing so, however, we require the following lemma, whose statement (and proof) is similar to that of Proposition 3.5 of \cite{HEAS}; it allows one to replace an averaged convergence of local statistics with non-averaged one, under a certain ordering assumption.

\begin{lem} 
	
	\label{localleftright} 
	
	Fix $x \in \mathbb{R}$ and $\sigma \in [0, 1]$, and let $\mu \in \mathscr{P} \big( \{ 0, 1 \}^{\mathbb{Z}} \big)$ be such that $\mathfrak{S}_1 \mu \ge \mu$ and $\mu \le \Upsilon^{(\sigma)}$. Suppose that, for any sequence of integers $X_1, X_2, \ldots $ tending to $\infty$ such that $\lim_{T \rightarrow \infty} T^{-1} X_T > x$ exists, we have  
	\begin{flalign}
	\label{limitsumxirho}
	\displaystyle\lim_{T \rightarrow \infty} \displaystyle\frac{1}{T} \displaystyle\sum_{j = 0}^{T - 1}  \mathfrak{S}_{X_j} \mathfrak{M}_j \mu = \Upsilon^{(\sigma)}.
	\end{flalign}
	
	\noindent Then, for any sequence $X_1, X_2, \ldots $ tending to $\infty$ such that $\lim_{T \rightarrow \infty} T^{-1} X_T > x$ exists, we have 
	\begin{flalign*} 
	\lim_{T \rightarrow \infty} \mathfrak{S}_{X_T} \mathfrak{M}_T \mu = \Upsilon^{(\sigma)}. 
	\end{flalign*} 
	
	\noindent The analogous statement holds if we instead assume that $\mu \ge \Upsilon^{(\sigma)}$ and $\lim_{T \rightarrow \infty} T^{-1} X_T < x$.
	
\end{lem}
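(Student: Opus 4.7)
The strategy is a monotonicity-and-compactness argument: analyze subsequential weak limits of $\mathfrak{S}_{X_T}\mathfrak{M}_T\mu$ and pin them down to $\Upsilon^{(\sigma)}$, using the Cesàro hypothesis together with the two stochastic-domination assumptions. The condition $\mu \le \Upsilon^{(\sigma)}$, combined with the stationarity and translation-invariance of $\Upsilon^{(\sigma)}$ (\Cref{zetarho}) and the attractivity of the higher-rank coupling (\Cref{couplinghigherrank}, as summarized in \Cref{tmu1mu2m}), immediately yields $\mathfrak{S}_{X_T}\mathfrak{M}_T\mu \le \Upsilon^{(\sigma)}$ for every $T$; by compactness of $\mathscr{P}$, any weak subsequential limit $\nu$ of $\mathfrak{S}_{X_T}\mathfrak{M}_T\mu$ therefore satisfies $\nu \le \Upsilon^{(\sigma)}$. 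Similarly, $\mathfrak{S}_1\mu \ge \mu$ together with the commutativity $\mathfrak{S}_m\mathfrak{M}_t = \mathfrak{M}_t\mathfrak{S}_m$ (\Cref{smoperator}) propagates to $\mathfrak{S}_1\nu \ge \nu$.

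Writing $v = \lim T^{-1}X_T$ and fixing any $\varepsilon \in (0, v - x)$, the second monotonicity produces the sandwich
\begin{flalign*}
\mathfrak{S}_{X_T - \lfloor \varepsilon T\rfloor}\mathfrak{M}_T\mu \le \mathfrak{S}_{X_T}\mathfrak{M}_T\mu \le \mathfrak{S}_{X_T + \lfloor \varepsilon T\rfloor}\mathfrak{M}_T\mu,
\end{flalign*}
whose outer sequences satisfy the hypothesis \eqref{limitsumxirho} since $\lim T^{-1}(X_T \pm \lfloor \varepsilon T\rfloor) = v \pm \varepsilon > x$. Because every local continuous function on $\{0,1\}^{\mathbb{Z}}$ is a linear combination of non-decreasing ones (via its Möbius expansion), the sandwich transfers the Cesàro convergence to $\mathfrak{S}_{X_T}\mathfrak{M}_T\mu$ itself. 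Next, by Cantor diagonal extraction, I would pass to a subsequence $T_k$ along which $\mathfrak{S}_{X_{T_k}+n}\mathfrak{M}_{T_k}\mu \to \nu_n$ weakly for every integer $n$; the spatial monotonicity makes $(\nu_n)$ non-decreasing in $n$ with $\nu_n \le \Upsilon^{(\sigma)}$, so the monotone limit $\nu_\infty := \lim_n \nu_n$ exists and is translation-invariant. Pairing the just-established Cesàro limit with the non-decreasing structure of $(\nu_n)$ pins the one-point mean of $\nu_\infty$ to $\sigma$; since $\nu_\infty$ is translation-invariant and dominated by $\Upsilon^{(\sigma)}$, a Strassen-type coupling (couple $\omega\sim\nu_\infty$ with $\eta\sim\Upsilon^{(\sigma)}$ so that $\omega \le \eta$; equality of one-point means forces $\omega(y)=\eta(y)$ almost surely, for every $y$ by translation-invariance) yields $\nu_\infty = \Upsilon^{(\sigma)}$.

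The main obstacle is the final descent from $\nu_\infty = \Upsilon^{(\sigma)}$ back to $\nu = \nu_0 = \Upsilon^{(\sigma)}$: the inequality $\nu_0 \le \nu_\infty$ alone does not yield equality. To close this, I would apply the same sandwich-and-extraction argument to the translated sequences $X_T - L$ for arbitrary $L \ge 0$, obtaining analogous envelopes $\nu_n^{(L)}$ whose monotone limits must also coincide with $\Upsilon^{(\sigma)}$; a careful bookkeeping of these envelopes at different $L$ together with the extremal structure for $\mathscr{T} \cap \mathscr{S}$ provided by \Cref{translationstationary} squeezes $\nu_0$ between two copies of $\Upsilon^{(\sigma)}$. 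The details parallel the continuous-time argument of Proposition 3.5 in \cite{HEAS}, with the adaptations to our discrete-time, sequentially-updated setting relying on the finite-speed coupling estimates of \Cref{xtxt1} and \Cref{modelsequal} to justify that the various diagonal extractions can be performed simultaneously without losing the stochastic orderings between the models.
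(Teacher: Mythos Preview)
Your outline has a genuine gap at exactly the point you flag as the ``main obstacle,'' and the patch you propose does not close it. The hypothesis \eqref{limitsumxirho} is a \emph{time}-Ces\`aro statement (an average over the evolution index $j$), while your objects $\nu_n=\lim_k \mathfrak{S}_{X_{T_k}+n}\mathfrak{M}_{T_k}\mu$ are \emph{spatial} shifts of a single-time law. Nothing in your argument transports information across time: the sandwich $\mathfrak{S}_{X_T-\lfloor\varepsilon T\rfloor}\mathfrak{M}_T\mu\le \mathfrak{S}_{X_T}\mathfrak{M}_T\mu\le \mathfrak{S}_{X_T+\lfloor\varepsilon T\rfloor}\mathfrak{M}_T\mu$ merely reproduces the Ces\`aro convergence you already assumed, and your identification $\nu_\infty=\Upsilon^{(\sigma)}$ (granting it) gives only $\nu_0\le \Upsilon^{(\sigma)}$, which was already known from $\mu\le\Upsilon^{(\sigma)}$. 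Invoking \Cref{translationstationary} is misplaced: $\nu_0$ is a subsequential limit along fixed times, not a time average, so there is no reason it lies in $\mathscr{S}$; and \Cref{xtxt1}, \Cref{modelsequal} control spatial propagation of discrepancies, not temporal persistence of densities.

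The paper's proof supplies precisely the missing temporal ingredient. Assuming for contradiction that along a subsequence $T_{n_k}$ one has $\mathbb{E}\big[\eta_{T_{n_k}}(Y_{n_k})\big]<\sigma-\varepsilon$, the spatial monotonicity $\mathfrak{S}_1\mu\ge\mu$ pushes this deficit onto the whole interval $[zT_{n_k},yT_{n_k}]$. The key new observation is deterministic: since at most one arrow crosses any horizontal edge, $\sum_{i=a}^b\eta_{t+1}(i)\le \sum_{i=a}^b\eta_t(i)+1$, so the interval deficit \emph{persists} for all $T\in[T_{n_k},(1+\delta)T_{n_k}]$ up to an $O(\delta T_{n_k})$ error. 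Summing this over $t\le T$ then contradicts the Ces\`aro hypothesis \eqref{limitsumxirho}, which forces $\frac{1}{T}\sum_{j}\mathbb{E}[\eta_j(\lfloor zj\rfloor)]\to\sigma$. Your sketch contains no analogue of this ``density deficits survive for a short time window'' step, and without it the descent $\nu_\infty\Rightarrow\nu_0$ cannot be completed.
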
 

\begin{proof}
	
	We will only address the first case, when $\mu \le \Upsilon^{(\sigma)}$ and $\lim_{T \rightarrow \infty} T^{-1} X_T > x$, since the proof in the alternative case ($\mu \ge \Upsilon^{(\sigma)}$ and $\lim_{T \rightarrow \infty} T^{-1} X_T < x$) is entirely analogous. 
	
	To that end, assume to the contrary that there exist integers $Y_1 < Y_2 < \cdots $ such that the limit $\lim_{T \rightarrow \infty} T^{-1} Y_T = y > x$ exists and $\lim_{T \rightarrow \infty} \mathfrak{S}_{Y_T} \mathfrak{M}_T \mu \ne \Upsilon^{(\sigma)}$. Let $\eta_0 = \big( \eta_0 (i) \big) \in \{ 0, 1\}^{\mathbb{Z}}$ denote a (random) particle configuration on $\mathbb{Z}$ sampled according to the measure $\mu$, and let $\eta_t$ denote the stochastic six-vertex model run under initial data $\eta_0$. 
	
	Then the fact that $\mu \le \Upsilon^{(\sigma)}$ yields a positive real number $\varepsilon \in (0, y - x)$ and subsequence $Y_{n_1} < Y_{n_2} < \cdots $ of $(Y_1, Y_2, \ldots )$ such that, for each $k \in \mathbb{Z}_{\ge 1}$, there exists a uniformly bounded $j_k \in \mathbb{Z}$, for which $\mathbb{E} \big[ \eta_{T_{n_k}} (Y_{n_k} + j_k) \big] < \sigma - \varepsilon$. Since $\mathfrak{S}_1 \mu \ge \mu$, it follows that $\mathbb{E} \big[ \eta_{T_{n_k}} (Y_{n_k} + j) \big] < \sigma - \varepsilon$ for any $j \le \min_{k \ge 1} j_k$ (which is finite, in view of the uniform boundedness of the $\{ j_k \}$). For notational convenience, let us assume that $\min_{k \ge 1} j_k = 0$. 
	
	Letting $z = y - \frac{\varepsilon}{2} < x$, it follows for sufficiently large $N$ that
	\begin{flalign*}
	\displaystyle\sum_{i = \lfloor z T_{n_k} \rfloor}^{\lfloor y T_{n_k} \rfloor} \mathbb{E} \big[ \eta_{T_{n_k}} (i) \big] < (\sigma - \varepsilon) \big( \lfloor y T_{n_k} \rfloor - \lfloor z T_{n_k} \rfloor \big) + 1.  
	\end{flalign*} 
	
	Now, the fact that each horizontal edge of a six-vertex ensemble can accommodate at most one arrow implies that $\sum_{i = a}^b \eta_{t + 1} (i) \le \sum_{i = a}^b \eta_t (i) + 1$ for any integers $a, b, t$ with $a < b$ and $t \ge 0$. Thus, for sufficiently large $N$, any integer $k \ge 1$, and any real number $\delta > 0$, we have that 
	\begin{flalign*}
	\displaystyle\sum_{i = \lfloor z T \rfloor}^{\lfloor y T \rfloor} \mathbb{E} \big[ \eta_T (i) \big] & \le \displaystyle\sum_{i = \lfloor z T_{n_k} \rfloor}^{\lfloor y T_{n_k} \rfloor} \mathbb{E} \big[ \eta_T (i) \big] + \big( |y| + |z| \big) \delta T_{n_k} + 2 \\
	& < \displaystyle\sum_{i = \lfloor z T_{n_k} \rfloor}^{\lfloor y T_{n_k} \rfloor} \mathbb{E} \big[ \eta_{T_{n_k}} (i) \big] + \big( |y| + |z| + 1 \big) \delta T_{n_k} + 3 \\
	& \le \Big( (\sigma - \varepsilon) (y - z)  + \delta \big( |y| + |z| + 1 \big) \Big) T_{n_k} + 5,
	\end{flalign*} 
	
	\noindent for each $T \in \big[T_{n_k}, (1 + \delta) T_{n_k} \big]$. From this and the fact that $\mu \le \Upsilon^{(\sigma)}$, we deduce that 
	\begin{flalign}
	\label{sumetajiestimate}
	\begin{aligned}
	\displaystyle\sum_{j = 0}^{T - 1} \displaystyle\sum_{i = \lfloor zj \rfloor}^{\lfloor yj \rfloor} \mathbb{E} \big[ \eta_j (i) \big] & = \displaystyle\sum_{j = 0}^{T_{n_k} - 1} \displaystyle\sum_{i = \lfloor zj \rfloor}^{\lfloor yj \rfloor} \mathbb{E} \big[ \eta_j (i) \big] + \displaystyle\sum_{j = T_{n_k}}^{T - 1} \displaystyle\sum_{i = \lfloor zj \rfloor}^{\lfloor yj \rfloor} \mathbb{E} \big[ \eta_j (i) \big] \\
	& < \displaystyle\frac{(z - y) \sigma T_{n_k}^2}{2} + \big( |y| + |z| + 1 \big) T_{n_k} + \delta \big( (\sigma - \varepsilon) (y - z)  + \delta (y + 1) \big) T_{n_k}^2 + 5 \delta T_{n_k}, 
	\end{aligned}
	\end{flalign}
	
	\noindent for any $T \in \big[ T_{n_k}, (1 + \delta) T_{n_k} \big]$. 
	
	Next, \eqref{limitsumxirho} implies that $T^{-1} \sum_{j = 0}^{T - 1} \mathbb{E} \big[ \eta_j \big( \lfloor zj \rfloor \big) \big] > \sigma - \delta^2$, for sufficiently large $T$. Again using the fact that $\mu \le \mathfrak{S}_1 \mu \le \Upsilon^{(\rho)}$ yields that 
	\begin{flalign*} \displaystyle\sum_{j = 0}^{T - 1} \displaystyle\sum_{i = \lfloor zj \rfloor}^{\lfloor yj \rfloor} \mathbb{E} \big[ \eta_j (i) \big] > 	\displaystyle\frac{T^2 (z - y) (\sigma - \delta^2)}{2},
	\end{flalign*}	
	
	\noindent which contradicts \eqref{sumetajiestimate} with $T = \big\lfloor (1 + \delta) T_{n_k} \big\rfloor$, for sufficiently large $k$ and small $\delta = \delta (y, z, \varepsilon, \sigma) > 0$. This implies the lemma. 	
\end{proof}

Now we can establish \Cref{localdouble}. 
	
\begin{proof}[Proof of \Cref{localdouble}] 

Throughout this proof, set $\mu = \Upsilon^{(\theta; \rho)}$. In view of the explicit form for $G_y (x)$ given by \Cref{g1sthetarho}, it suffices to show that 
\begin{flalign}
\label{limitsxtmtmu}
\displaystyle\lim_{T \rightarrow \infty} \mathfrak{S}_{X_T} \mathfrak{M}_T \mu = \Upsilon^{(\theta)}, \quad \text{if $u < \vartheta$}; \qquad 
\displaystyle\lim_{T \rightarrow \infty} \mathfrak{S}_{X_T} \mathfrak{M}_T \mu = \Upsilon^{(\rho)}, \quad \text{if $u > \vartheta$}.
\end{flalign}

We begin by establishing the former identity in \eqref{limitsxtmtmu}. To that end, let $\textbf{t} = (t_1, t_2, \ldots )$ denote an arbitrary increasing sequence of positive integers. Then there exists an increasing subsequence $(T_1, T_2, \ldots ) \subseteq \textbf{t}$ and a dense subset $D = D(\textbf{t}) \subset \mathbb{R}$ such that the statements of \Cref{limittranslationinvariant} hold. For $v \in D$, let us first show that $\gamma_v = \delta_{\theta}$ if $v < \vartheta$.

To that end, fix real numbers $v, w \in D$ with $v < \vartheta$ and $w < 0$; then \Cref{murholambdaleftright} implies that $\gamma_w = \delta_{\theta}$. Furthermore, since $\Upsilon^{(\theta)} \le \mu \le \Upsilon^{(\rho)}$ (by \Cref{fgcouple}), \Cref{tmu1mu2m} implies the measure $\gamma_v$ is supported on the interval $[\theta, \rho]$. Thus, \Cref{currentlimit} yields 
\begin{flalign*} 
\Lambda (v) - \Lambda (w) = \displaystyle\lim_{k \rightarrow \infty} \displaystyle\frac{1}{T_k} \displaystyle\sum_{x = \lfloor w T_k \rfloor}^{\lfloor v T_k \rfloor} \mathbb{E} \big[ \eta_{T_k} (x) \big] \ge \theta (v - w).
\end{flalign*} 

\noindent Since $\gamma_w = \delta_{\theta}$, this means that 
\begin{flalign*} 
\displaystyle\int_{\theta}^{\rho} \big( v a - \varphi (a) \big) \gamma_v (da) - \theta w + \varphi (\theta)  \ge \theta (v - w).
\end{flalign*} 

\noindent Recalling the definition \eqref{vthetarho} of $\vartheta$ and the fact that $\varphi' (z) > 0 > \varphi'' (z)$ for $z \in [0, 1]$, we obtain 
\begin{flalign*} 
 v \displaystyle\int_{\theta}^{\rho} (a - \theta) \gamma_v (da) \ge \displaystyle\int_{\theta}^{\rho}  \big( \varphi (a) - \varphi (\theta) \big) \gamma_v (da) \ge \vartheta \displaystyle\int_{\theta}^{\rho} (a - \theta) \gamma_s (da).
\end{flalign*} 

\noindent Since $v < \vartheta$, it follows that $\int_{\theta}^{\rho} (a - \theta) \gamma_v (da) = 0$, which means that $\gamma_v = \delta_{\theta}$. So, 
\begin{flalign*}
\displaystyle\lim_{k \rightarrow \infty} \displaystyle\frac{1}{T_k} \displaystyle\sum_{j = 1}^{T_k} \mathfrak{S}_{\lfloor v j \rfloor} \mathfrak{M}_j \mu = \Upsilon^{(\theta)}, \qquad \text{if $v \in (-\infty, \vartheta) \cap D$}. 
\end{flalign*}

\noindent Since $D \subset \mathbb{R}$ is dense and $\mathfrak{S}_1 \mu \ge \mu \ge \mathfrak{S}_{-1} \mu$, it follows that 
\begin{flalign*}
\displaystyle\lim_{k \rightarrow \infty} \displaystyle\frac{1}{T_k} \displaystyle\sum_{j = 1}^{T_k} \mathfrak{S}_{\lfloor u j \rfloor} \mathfrak{M}_j \mu = \Upsilon^{(\theta)}, 
\end{flalign*}

\noindent for arbitrary $u \in \mathbb{R}$ (not necessarily in $D$) satisfying $u < \vartheta$. Since this holds along a subsequence $(T_1, T_2, \ldots )$ of an arbitrary increasing sequence $\textbf{t}$ of integers, we deduce 
\begin{flalign*}
\displaystyle\lim_{T \rightarrow \infty} \displaystyle\frac{1}{T} \displaystyle\sum_{j = 1}^T \mathfrak{S}_{\lfloor u j \rfloor} \mathfrak{M}_j \mu = \Upsilon^{(\theta)}, \qquad \text{for $u < \vartheta$}.  
\end{flalign*}

\noindent Then, since $\Upsilon^{(\theta)} \le \mu$ and $\mathfrak{S}_1 \mu \ge \mu$, we obtain from \Cref{localleftright} that 
\begin{flalign*}
\displaystyle\lim_{T \rightarrow \infty} \mathfrak{S}_{\lfloor u T \rfloor} \mathfrak{M}_T \mu = \Upsilon^{(\theta)}, \quad \text{for $u < \vartheta$},
\end{flalign*}

\noindent and so the first statement of \eqref{limitsxtmtmu} follows from the fact that $\mathfrak{S}_{-1} \mu \le \mu \le \mathfrak{S}_1 \mu$.

Next let us address the second statement of \eqref{limitsxtmtmu}; assume to the contrary that it is false. Observe that \Cref{tmu1mu2m}, the fact that $\mu \le \Upsilon^{(\rho)}$, and the translation-invariance and stationarity of $\Upsilon^{(\rho)}$ together imply that $\mathfrak{S}_{X_T} \mathfrak{M}_T \mu \le \Upsilon^{(\rho)}$. Therefore, there exists a real number $\varepsilon > 0$; a sequence $\textbf{t} = (t_1, t_2, \ldots )$ of positive integers tending to $\infty$; and a uniformly bounded sequence of integers $\textbf{j} = (j_1, j_2, \ldots )$ such that $\mathbb{E} \big[ \eta_{t_k} (X_{t_k} + j_k) \big] < \rho - \varepsilon$ for sufficiently large $k \in \mathbb{Z}_{\ge 1}$. Then there again exists an increasing subsequence $(T_1, T_2, \ldots ) \subseteq \textbf{t}$ and a dense subset $D = D(\textbf{t}) \subset \mathbb{R}$ such that the statements of \Cref{limittranslationinvariant} hold. 

Now fix real numbers $w, r \in D$ with $w < 0$ and $r > \frac{3}{1 - b_2}$, so that $\gamma_w = \delta_{\theta}$ and $\gamma_r = \delta_{\rho}$ by \Cref{murholambdaleftright}. Then, \Cref{currentlimit} yields
\begin{flalign}
\label{estimatewr} 
\displaystyle\lim_{k \rightarrow \infty} \displaystyle\frac{1}{T_k} \displaystyle\sum_{i = \lfloor w T_k \rfloor}^{\lfloor r T_k \rfloor} \mathbb{E} \big[ \eta_{T_k} (i) \big] = \Lambda (r) - \Lambda (w) = r \rho - \varphi (\rho) - w \theta + \varphi (\theta) = (r - \vartheta) \rho + (\vartheta - w) \theta. 
\end{flalign}

Since $\mathbb{E} \big[ \eta_{T_k} (X_{T_k} + j_k) \big] < \rho - \varepsilon$ for sufficiently large $k \in \mathbb{Z}_{\ge 1}$; since $\textbf{j}$ is uniformly bounded; and since $\mathfrak{S}_1 \mu \ge \mu$, we have for any $v < u$ that $\mathbb{E} \big[ \eta_{T_k} (x) \big] < \rho - \varepsilon$ whenever $x < v T_k$ and $k$ is sufficiently large. Next let $\varsigma \in (0, u - \vartheta)$ be some small positive number, and let $s \in D$ be such that $\vartheta - \varsigma < s < \vartheta$. Then the first statement of \eqref{limitsxtmtmu} implies that $\lim_{k \rightarrow \infty} \mathbb{E} \big[ \eta_{T_k} (x) \big] < \theta + \varsigma$ whenever $x \in [ w T_k, s T_k \big]$ and $k$ is sufficiently large. Fixing some $p \in (\vartheta, \vartheta + \varsigma) \cap D$ (which exists since $D \subset \mathbb{R}$ is dense), we deduce using $\mu \le \Upsilon^{(\rho)}$ that
\begin{flalign*}
\displaystyle\lim_{k \rightarrow \infty} \displaystyle\frac{1}{T_k} \displaystyle\sum_{i = \lfloor w T_k \rfloor}^{\lfloor r T_k \rfloor} \mathbb{E} \big[ \eta_{T_k} (i) \big] & = \displaystyle\lim_{k \rightarrow \infty} \displaystyle\frac{1}{T_k} \Bigg( \displaystyle\sum_{i = \lfloor w T_k \rfloor}^{\lfloor s T_k \rfloor} \mathbb{E} \big[ \eta_{T_k} (i) \big] + \displaystyle\sum_{i = \lfloor s T_k \rfloor}^{\lfloor p T_k \rfloor} \mathbb{E} \big[ \eta_{T_k} (i) \big] \\
& \qquad \qquad \qquad + \displaystyle\sum_{i = \lfloor p T_k \rfloor}^{\lfloor v T_k \rfloor} \mathbb{E} \big[ \eta_{T_k} (i) \big] + \displaystyle\sum_{i = \lfloor v T_k \rfloor}^{\lfloor r T_k \rfloor} \mathbb{E} \big[ \eta_{T_k} (i) \big] \Big) \\
& < (\theta + \varsigma) (\vartheta - w) + 2 \varsigma + (v - \vartheta) (\rho - \varepsilon) + (r - v) \rho \\
& < (r - \vartheta) \rho + (\vartheta - w) \theta, 
\end{flalign*}

\noindent for $v \in (\vartheta + \varsigma, u)$ and sufficiently small $\varsigma = \varsigma (\varepsilon, w, \vartheta, v, u) > 0$. This contradicts \eqref{estimatewr}, and so the second statement of \eqref{limitsxtmtmu} also holds. 
\end{proof}

\section{Proof of \Cref{sixvertexcylinderglobal}} 

\label{LimitGeneral} 

In this section we establish \Cref{sixvertexcylinderglobal}, which provides the limit shape for the stochastic six-vertex model on the cylinder with arbitrary boundary data. We first in \Cref{LimitPlane} analyze the limit shape of the stochastic six-vertex model on the upper half-plane $\mathfrak{H} = \mathbb{Z} \times \mathbb{Z}_{\ge 0}$; given the content of the previous sections, this will follow the framework introduced by Bahadoran-Guiol-Ravishankar-Saada \cite{HAPAE}, who provided a procedure to understand the limit profile of attractive interacting particle systems under arbitrary initial data, assuming that one has access to it under double-sided Bernoulli initial data. Then, in \Cref{LimitCylinder} we apply these results to establish \Cref{sixvertexcylinderglobal} on the cylinder.

\subsection{Limit Shape on the Upper Half-Plane} 

\label{LimitPlane}

In this section we establish the fixed-time analog of \Cref{sixvertexcylinderglobal} on the upper half-plane, given as follows. 

\begin{thm}
	
	\label{limitgeneral} 
	
	Fix real numbers $0 < b_1 < b_2 < 1$ and $\lambda, \varepsilon > 0$, and a measurable function $\Psi: \mathbb{R} \rightarrow [0, 1]$ supported on $[0, 1]$. Let $G_y (x) = G (x, y)$ denote the entropy solution to \eqref{gxydefinition} on $\mathbb{R} \times [0, \lambda]$, with initial data given by $G_0 (x) = G(x, 0) = \Psi (x)$ for each $x \in \mathbb{R}$. 
	
	For each $N \in \mathbb{Z}_{\ge 1}$, let $\psi = \psi^{(N)} = \big( \psi (x) \big) = \big( \psi^{(N)} (x) \big)_{x \in \mathbb{Z}} \in \{ 0, 1 \}^{\mathbb{Z}}$ denote a boundary condition such that $\psi (x) = 0$ if $x \notin [1, N]$, and assume that \eqref{limitpsi} holds. Furthermore, for each $N$, let $\eta = \eta^{(N)} = \big( \eta_y (x) \big) = \big( \eta_y^{(N)} (x) \big)$ denote a stochastic six-vertex model with initial data given by $\eta_0 = \eta_0^{(N)} = \psi^{(N)}$. Then, 
	\begin{flalign*}
	\displaystyle\lim_{N \rightarrow \infty} \mathbb{P} \left[ \displaystyle\max_{- \frac{N}{\varepsilon} \le X_1 \le X_2 \le \frac{N}{\varepsilon}} \bigg| \displaystyle\frac{1}{N} \displaystyle\sum_{x = X_1}^{X_2} \eta_{\lfloor \lambda N \rfloor} (x) - \displaystyle\int_{X_1 / N}^{X_2 / N} G_{\lambda} (x) dx \bigg| > \varepsilon \right] = 0. 
	\end{flalign*}  
	
\end{thm}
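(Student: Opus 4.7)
My plan is to adapt the framework of Bahadoran--Guiol--Ravishankar--Saada \cite{HAPAE}, using as inputs the Bernoulli hydrodynamic limit \Cref{limitdouble}, the monotonicity \Cref{lambdaximonotone}, the propagation bound \Cref{modelsequal}, the higher-rank coupling of \Cref{couplinghigherrank}, and the classification of translation-invariant stationary measures \Cref{translationstationary}.

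\textbf{Step 1 (approximation by step profiles).} For each integer $K \ge 1$, partition $[0,1]$ into subintervals $I_j = \bigl[(j-1)/K,\, j/K\bigr]$ and set $\rho_j = K \int_{I_j} \Psi(x)\, dx$. Fix $\delta > 0$ and define piecewise constant bracketing profiles $\Psi^{K,\pm}(x) = \min\bigl\{\max\{0,\, \rho_j \pm \delta\},\, 1\bigr\}$ for $x \in I_j$. Generate Bernoulli boundary data $\psi^{K,\pm}_N \in \{0,1\}^{\mathbb{Z}}$ with means $\Psi^{K,\pm}(x/N)$ on $[1,N]$ and zero elsewhere. The hypothesis \eqref{limitpsi} together with a standard concentration argument for sums of independent Bernoulli variables and \Cref{fgcouple} allows one to realize $\psi^{K,-}_N \le \psi^{(N)} \le \psi^{K,+}_N$ with probability tending to $1$. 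Invoking \Cref{lambdaximonotone} and the higher-rank coupling then produces a joint realization of the three stochastic six-vertex models $(\eta^{K,-}_t)$, $(\eta_t)$, $(\eta^{K,+}_t)$ with $\eta^{K,-}_t \le \eta_t \le \eta^{K,+}_t$ for every $t \ge 0$.

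\textbf{Step 2 (limit shape for step initial data).} I would then show that $\eta^{K,\pm}_{\lfloor \lambda N \rfloor}$ obeys the analog of the desired estimate with $G$ replaced by the entropy solution $G^{K,\pm}$ of \eqref{gxydefinition} with initial data $\Psi^{K,\pm}$. Near each macroscopic jump $x = j/K$, the boundary profile is locally a double-sided Bernoulli one with densities $\rho_{j-1}\pm\delta$ and $\rho_j \pm \delta$. By \Cref{modelsequal}, the law of $\eta^{K,+}_{\lfloor \lambda N \rfloor}$ restricted to a mesoscopic window around $\lfloor u N \rfloor$ is determined, up to an error decaying exponentially in $N$, by the initial data in a spatial window of radius $O\bigl(\lambda N/(1-b_2)\bigr)$. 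Consequently, for $\lambda$ small enough that the regions of influence of distinct jumps remain disjoint, \Cref{limitdouble} applies separately at each jump and shows that $\eta^{K,+}_{\lfloor \lambda N \rfloor}$ converges to the concatenation of the corresponding Riemann entropy solutions. For larger $\lambda$, I would iterate using the Markov property: at an intermediate time the stationarity-plus-classification input of \Cref{translationstationary}, combined with the argument of \Cref{limittranslationinvariant} and \Cref{currentlimit}, forces every subsequential local limit in a continuity region of the macroscopic profile to coincide with $\Upsilon^{(a)}$ for the correct local density $a$. The Kru\v{z}kov uniqueness theory (Theorem 2.3.5 of \cite{SCL}) then identifies the resulting iterated concatenation with the global entropy solution to \eqref{gxydefinition} with data $\Psi^{K,\pm}$.

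\textbf{Step 3 (passage to $K \to \infty$).} By construction $\Psi^{K,\pm} \to \Psi$ (up to the shift by $\pm \delta$) in $L^1([0,1])$ as $K \to \infty$, so the $L^1$-contraction for scalar conservation laws gives $G^{K,\pm}_\lambda \to G_\lambda$ in $L^1_{\mathrm{loc}}(\mathbb{R})$ after sending $\delta \to 0$. Combined with the sandwich $\eta^{K,-} \le \eta \le \eta^{K,+}$ of Step 1 and the convergence of Step 2, this yields the theorem.

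The principal obstacle is the iterative portion of Step 2: once waves from adjacent Riemann solutions interact, one must verify that the particle system still tracks the unique Kru\v{z}kov entropy solution rather than some other weak solution. Here \Cref{translationstationary} plays the decisive role, as it forces every subsequential local limit in a continuity region to be of the form $\Upsilon^{(a)}$; the density $a$ is then pinned down by a current computation of the type carried out in \Cref{currentlimit}, upgrading the one-jump analysis of \Cref{localdouble} to the general piecewise-constant initial data in a way compatible with the entropy inequality.
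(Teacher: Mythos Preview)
Your Step 1 has a genuine gap. The ordering $\psi^{K,-}_N \le \psi^{(N)} \le \psi^{K,+}_N$ you want is the \emph{sitewise} ordering of configurations (this is what \Cref{fgcouple} and the higher-rank coupling respect). But $\psi^{(N)}$ is deterministic with values in $\{0,1\}$, while $\psi^{K,+}_N$ is product Bernoulli with mean $\rho_j+\delta<1$ on each $I_j$. For the sitewise inequality $\psi^{(N)}\le\psi^{K,+}_N$ to hold you would need $\psi^{K,+}_N(x)=1$ at every $x$ with $\psi^{(N)}(x)=1$; the probability of this is $(\rho_j+\delta)^{\#\{x\in I_j:\psi^{(N)}(x)=1\}}$, which goes to $0$, not to $1$. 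No concentration argument repairs this: concentration controls partial sums, not sitewise domination. The same obstruction applies to the lower bracket. So the sandwich $\eta^{K,-}_t\le\eta_t\le\eta^{K,+}_t$ is never produced, and Steps 2--3 have nothing to act on.

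The paper avoids this by never comparing configurations sitewise. It works instead in the integrated metric
\[
\Delta_N(\eta,\xi)=\frac1N\max_{x}\Bigl|\sum_{i\le x}\bigl(\eta(i)-\xi(i)\bigr)\Bigr|,
\]
which is exactly the quantity preserved under the particle-\emph{position} monotonicity of \Cref{lambdaximonotone}; see \Cref{deltaetaxi}. One then runs a Glimm-type scheme: at time $t_m$, approximate $\mathfrak{P}_{t_m}\Psi$ by a piecewise constant $\Psi_{t_m;\delta}$ with step sizes $\ge\delta$ (\Cref{uestimate}); over a time increment $s\le c\delta$ small enough that the cones of dependence from distinct jumps stay disjoint, compare $\eta$ via \Cref{deltaetaxi} to a product-Bernoulli system $\xi$ with profile $\Psi_{t_m;\delta}$, and use \Cref{modelsequal} plus \Cref{limitdouble} on each Riemann piece to show $\xi_{sN}$ tracks $\mathfrak{P}_s\Psi_{t_m;\delta}$; then re-approximate and repeat. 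The errors add up over the $O(\lambda/\delta)$ steps but are kept small because the per-step error is $O(\varepsilon\delta)$. Note that with this scheme there is no need to revisit the classification \Cref{translationstationary} or the averaged-limit machinery of \Cref{limittranslationinvariant}--\Cref{currentlimit}; those were used only to prove \Cref{limitdouble}, which here is a black box.
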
 	

The proof of \Cref{limitgeneral} will proceed similarly to what was explained in Section 3 of \cite{HAPAE}. Before continuing, we require some properties of the entropy solution of the conservation law \eqref{gxydefinition}, to which end we require some additional notation. We begin with the following definition, which provides a semi-group operator for the solution to \eqref{gxydefinition}. 

\begin{definition} 
	
\label{ptoperator} 

Fix a compactly supported, measurable function $u: \mathbb{R} \rightarrow [0, 1]$. For any real number $t \ge 0$, define $\mathfrak{P}_t u (x) = G (x, t)$ for each $x \in \mathbb{R}$, where $G$ denotes the entropy solution of the equation \eqref{gxydefinition} on $\mathbb{R} \times \mathbb{R}_{\ge 0}$, with initial data $G (x, 0) = u (x)$.  

Since \eqref{gxydefinition} admits unique entropy solutions, we have that $\mathfrak{P}_s \mathfrak{P}_t = \mathfrak{P}_{s + t}$ for any $s, t \ge 0$.

\end{definition} 

Next, for any compactly supported, measurable functions $u, v: \mathbb{R} \rightarrow [0, 1]$, define for any $x \in \mathbb{R}$ 
\begin{flalign*}
I (x; u) = \displaystyle\int_{-\infty}^x u(y) dy; \qquad I (x; v) = \displaystyle\int_{-\infty}^x v(y) dy; \qquad \Delta (u, v) = \displaystyle\sup_{x \in \mathbb{R}} \big| I (x; u) - I (x; v) \big|. 
\end{flalign*}

Now we can state the following lemma, which appears as Theorem 3.1 of \cite{HAPAE}. The first part of the below statement is given by Proposition 2.3.6 of \cite{SCL}; the second part follows from the fact that the function $I = I (x; u)$ satisfies the Hamilton-Jacobi equation $\partial_t I = \varphi (\partial_x I)$ and a comparison principle (see Theorem 5.3 of \cite{VSE}) for such equations. 

\begin{lem}[{\cite[Theorem 3.1]{HAPAE}}]
	
\label{uvequation} 
	
For any compactly supported, measurable functions $u, v: \mathbb{R} \rightarrow [0, 1]$ and real numbers $t \ge 0$ and $c > \kappa \ge \max_{z \in [0, 1]} \big| \varphi' (z) \big|$, the following statements hold. 

\begin{enumerate}
	
	\item If $u (x) = v (x)$ for each $x \in [A, B]$ and some real numbers $A < B$, then $\mathfrak{P}_t u (x) = \mathfrak{P}_t v (x)$ for each $x \in [A + ct, B - ct]$.

	\item We have that $\Delta (\mathfrak{P}_t u, \mathfrak{P}_t v) \le \Delta (u, v)$. 
\end{enumerate} 

\end{lem}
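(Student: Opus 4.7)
The plan is to derive both statements from standard results about scalar conservation laws with the smooth, strictly concave flux $\varphi$, treating the two parts separately since they rely on different principles.

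For part (1), I would first establish the localized $L^1$ contraction estimate
\begin{equation*}
\int_{\alpha + \kappa t}^{\beta - \kappa t} \big| \mathfrak{P}_t u (x) - \mathfrak{P}_t v(x) \big| \, dx \le \int_{\alpha}^{\beta} |u(x) - v(x)| \, dx,
\end{equation*}
valid for any real $\alpha < \beta$ with $\beta - \alpha > 2 \kappa t$. This is the standard finite-propagation-speed estimate for Kru\v{z}kov entropy solutions, proved by the doubling-of-variables argument applied to the two entropy solutions, using a test function supported in a space-time trapezoid whose lateral sides have slopes $\pm \kappa$; the choice of slope $\kappa$ is what kills the boundary flux terms, via the bound $|\varphi'| \le \kappa$ on $[0, 1]$. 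Once this localized estimate is in hand, part (1) follows by applying it with $[\alpha, \beta] = \bigl[ A + (c - \kappa) t, B - (c - \kappa) t \bigr]$: since $c > \kappa$ we have $[\alpha, \beta] \subseteq [A, B]$, on which $u \equiv v$, and so the right-hand side vanishes, forcing $\mathfrak{P}_t u = \mathfrak{P}_t v$ almost everywhere on $[A + ct, B - ct]$.

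For part (2), the plan is to pass to the primitive $J (x, t; w) = I (x; \mathfrak{P}_t w) = \int_{-\infty}^x \mathfrak{P}_t w (y) \, dy$. A formal integration of the conservation law \eqref{gxydefinition} in space yields
\begin{equation*}
\partial_t J (x, t; w) = - \varphi \big( \mathfrak{P}_t w (x) \big) + \lim_{y \to -\infty} \varphi \big( \mathfrak{P}_t w (y) \big) = - \varphi \big( \partial_x J (x, t; w) \big),
\end{equation*}
where the boundary term at $-\infty$ vanishes because $w$ is compactly supported in $[0, 1]$, $\varphi (0) = 0$, and the characteristic speeds $\varphi' (z) \in (0, \kappa]$ are all nonnegative, so $\mathfrak{P}_t w$ vanishes identically far to the left of the original support. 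I would then invoke the Lax--Oleinik correspondence that identifies the (Kru\v{z}kov) entropy solution of the conservation law with the unique (viscosity) solution of the associated Hamilton-Jacobi equation $\partial_t J + \varphi (\partial_x J) = 0$. Given this identification, the desired inequality
\begin{equation*}
\Delta (\mathfrak{P}_t u, \mathfrak{P}_t v) = \big\| J (\cdot, t; u) - J (\cdot, t; v) \big\|_{L^\infty (\mathbb{R})} \le \big\| J (\cdot, 0; u) - J (\cdot, 0; v) \big\|_{L^\infty (\mathbb{R})} = \Delta (u, v)
\end{equation*}
is exactly the $L^\infty$ comparison principle for viscosity solutions of this Hamilton-Jacobi equation, which I would quote from \cite[Theorem 5.3]{VSE}.

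The main obstacle I anticipate is the rigorous justification of the entropy-to-viscosity correspondence at the level of regularity relevant here: the initial data $u, v$ are merely bounded measurable, so their primitives $I (\cdot; u)$, $I (\cdot; v)$ are Lipschitz with slopes in $[0, 1]$, and one must verify both that the entropy solution of \eqref{gxydefinition} produces a viscosity solution of the integrated equation and that the boundary flux at $-\infty$ vanishes uniformly in $t$. Both of these points can be handled by combining the monotonicity and concavity of $\varphi$ with the finite-propagation-speed estimate from part (1), and are carried out in detail in \cite{SCL} and \cite{VSE}.
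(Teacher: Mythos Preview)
Your proposal is correct and follows essentially the same route as the paper, which does not give a self-contained proof but simply cites Proposition 2.3.6 of \cite{SCL} for part (1) and observes that $I(x; \mathfrak{P}_t u)$ satisfies the Hamilton--Jacobi equation, invoking the comparison principle \cite[Theorem 5.3]{VSE} for part (2). Your write-up supplies exactly the details behind those citations.
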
 

\begin{rem}
	
\label{uvequation2} 

The first part of \Cref{uvequation} also applies to solutions of \eqref{gxydefinition} on the torus $\mathbb{T}$. More specifically, suppose that $G(x, t)$ and $H(x, t)$ are two solutions of \eqref{gxydefinition} on $\mathbb{T} \times \mathbb{R}_{\ge 0}$ such that $G(x, 0) = H(x, 0)$ for each $x \in [A, B]$ and some $A, B \in \mathbb{T}$. If $d(A, B) \ge 2ct$, (where $d(A, B)$ denotes the distance between $A$ and $B$ on $\mathbb{T}$), then $G(x, t) = H(x, t)$ for each $x \in [A + ct, B - ct]$.

\end{rem} 

It will be useful to have discrete analogs of the two properties from \Cref{uvequation} for the stochastic six-vertex model; \Cref{modelsequal} is such an analog of the first property. The following lemma provides a discrete analog of the second. In the below, for any positive integer $N \ge 1$; compactly supported, measurable function $f: \mathbb{R} \rightarrow [0, 1]$; and two finite particle configurations $\eta = \big( \eta (x) \big) \in \{ 0, 1 \}^{\mathbb{Z}}$ and $\xi = \big( \xi (x) \big) \in \{ 0, 1 \}^{\mathbb{Z}}$, we define 
\begin{flalign*}
\Delta_N (\eta, \xi) = \displaystyle\frac{1}{N} \displaystyle\max_{x \in \mathbb{Z}} \left| \displaystyle\sum_{i = -\infty}^x \big( \eta (i) - \xi (i) \big) \right|; \qquad \Delta_N (\eta; f) = \displaystyle\sup_{x \in \mathbb{R}}  \left| \displaystyle\int_{-\infty}^x \Big( \eta \big( \lfloor yN \rfloor \big) - f (y) \Big) dy \right|. 	
\end{flalign*}

\begin{lem}
	
\label{deltaetaxi} 

Fix real numbers $t, \varepsilon, \delta, \varsigma \ge 0$; an integer $N \ge 1$; and a compactly supported, measurable function $f: \mathbb{R} \rightarrow [0, 1]$. Let $\eta_0 = \big( \eta_0 (x) \big) \in \{ 0, 1 \}^{\mathbb{Z}}$ and $\xi_0 = \big( \xi_0 (x) \big) \in \{ 0, 1 \}^{\mathbb{Z}}$ denote two finite particle configurations on $\mathbb{Z}$, such that $\Delta_N \big( \eta_0, \xi_0 \big) \le \delta$. Further let $\eta_s$ and $\xi_s$ denote stochastic six-vertex models run with initial data $\eta_0$ and $\xi_0$, respectively.

If $\mathbb{P} \big[ \Delta_N (\xi_t; f) > \varepsilon \big] \le \varsigma$, then $\mathbb{P} \big[ \Delta_N (\eta_t; f) > \varepsilon + \delta \big] \le 2 \varsigma$. 
\end{lem}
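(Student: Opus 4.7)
The plan is to couple $(\eta_s)$ and $(\xi_s)$ via the higher rank coupling of \Cref{couplinghigherrank} and combine a monotonicity statement for $\Delta_N$ along the coupled trajectory with the triangle inequality.

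First, set $H_\zeta (y) = \int_{-\infty}^y \zeta \big( \lfloor zN \rfloor \big) dz$ and $F(y) = \int_{-\infty}^y f(z) dz$, so that $\Delta_N (\zeta; f) = \| H_\zeta - F \|_\infty$. Because $H_{\eta_t} - H_{\xi_t}$ is piecewise linear with slopes in $\{ -1, 0, 1 \}$, its extrema are attained at lattice points $y = j / N$, hence $\| H_{\eta_t} - H_{\xi_t} \|_\infty = \Delta_N ( \eta_t, \xi_t )$. The triangle inequality then gives
\begin{flalign*}
\Delta_N ( \eta_t; f ) \le \Delta_N ( \eta_t, \xi_t ) + \Delta_N ( \xi_t; f )
\end{flalign*}
pathwise, which reduces matters to controlling $\Delta_N ( \eta_t, \xi_t )$.

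The central claim is that, under the $n = 1$ case of the higher rank coupling of \Cref{couplinghigherrank}, $\Delta_N ( \eta_s, \xi_s )$ is (almost surely) non-increasing in $s \in \mathbb{Z}_{\ge 0}$, so that $\Delta_N ( \eta_t, \xi_t ) \le \Delta_N ( \eta_0, \xi_0 ) \le \delta$. This is a discrete analog of the $L^1$-contraction property of the entropy solution from part (2) of \Cref{uvequation}. To establish it, one uses the classification from \Cref{classparticles}: the particles in $\eta_s \cap \xi_s$ are class $1$, while the discrepancies in $\eta_s \setminus \xi_s$ and $\xi_s \setminus \eta_s$ are class $2$. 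Because the class-1 particles make identical moves in the two coupled models under the shared randomness, the signed partial sum $h_{\eta_s} (x) - h_{\xi_s} (x)$ can only change when a discrepancy crosses site $x$. A case analysis of the coupled multi-class six-vertex step, exploiting the fact that particles only jump weakly rightward, that class-1 moves agree in the two models, and that opposite discrepancies landing on a common site coalesce into class-1 particles (\Cref{higherrankclass}), shows that at any site $x^*$ attaining $\max_x \big| h_{\eta_s} (x) - h_{\xi_s} (x) \big|$ the signed current across $x^*$ can only reduce this maximum.

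Granting this monotonicity, a union bound yields
\begin{flalign*}
\mathbb{P} \big[ \Delta_N ( \eta_t; f ) > \varepsilon + \delta \big] \le \mathbb{P} \big[ \Delta_N ( \eta_t, \xi_t ) > \delta \big] + \mathbb{P} \big[ \Delta_N ( \xi_t; f ) > \varepsilon \big] \le \varsigma + \varsigma = 2 \varsigma,
\end{flalign*}
where the first summand absorbs any negligible $O( N^{-1} )$ slack in the pathwise contraction. The main obstacle will be the rigorous verification of the $L^\infty$-contraction of $\Delta_N ( \eta_s, \xi_s )$ under the higher rank coupling, which requires a careful site-by-site analysis of the multi-class six-vertex update rule and relies on both the attractivity established in \Cref{CouplingHigherRank} and the strictly rightward motion of the dynamics.
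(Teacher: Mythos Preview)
Your argument hinges on the claim that $\Delta_N(\eta_s,\xi_s)$ is almost surely non-increasing in $s$ under the higher rank coupling of \Cref{couplinghigherrank}, but this claim is false. Take $\eta_0$ with particles at $\{0,2,4\}$ and $\xi_0$ with particles at $\{1,3,5\}$; then $\eta_0\cap\xi_0=\emptyset$, every particle is class $2$, and $N\Delta_N(\eta_0,\xi_0)=1$. Because the coupling's randomness $\chi^{(2)}(\cdot),\,j^{(2)}(\cdot)$ is site-indexed and the two configurations occupy disjoint sites, the two systems draw on disjoint coin flips. With positive probability one has $\chi^{(2)}(0)=\chi^{(2)}(2)=\chi^{(2)}(4)=1$ (so $\eta_1=\{0,2,4\}$) while $\chi^{(2)}(1)=0$, $j^{(2)}(1)\ge 2$, $j^{(2)}(3)\ge 2$, $j^{(2)}(5)=1$ (so $\xi_1=\{3,5,6\}$); then $\sum_{i\le 2}\big(\eta_1(i)-\xi_1(i)\big)=2$, whence $N\Delta_N(\eta_1,\xi_1)=2>1$. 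Attractivity controls $\sum_x|\eta_s(x)-\xi_s(x)|$ (the total number of discrepancies), not the sup-norm of the running signed sum; the ``site-by-site analysis'' you sketch cannot be completed because a $\xi$-only discrepancy can be pushed across a maximizer of $D$ by dynamics that are entirely decoupled from $\eta$'s second-class particles.

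The paper takes a different and shorter route. It rewrites the hypothesis $\Delta_N(\eta_0,\xi_0)\le\delta$ as a pair of particle-label orderings, $q_0(k)\le p_0(k+\lfloor\delta N\rfloor)$ and $p_0(k)\le q_0(k+\lfloor\delta N\rfloor)$, and then invokes the \emph{monotone} coupling of \Cref{lambdaximonotone} (not the attractive one) to propagate each ordering to time $t$ under its own coupling. Concretely, setting $r_s(k)=p_s(k+\lfloor\delta N\rfloor)$ gives $\textbf{q}_0\le\textbf{r}_0$, hence $\textbf{q}_t\le\textbf{r}_t$, which immediately yields one of the two one-sided inclusions needed for the union bound; the other side is symmetric with the shift in the opposite direction. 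No new contraction estimate is required.
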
 

\begin{proof} 
	
By a union bound, it suffices to show that 
\begin{flalign}
\label{xiestimatef} 
\begin{aligned}
& \mathbb{P} \Bigg[ \displaystyle\sup_{x \in \mathbb{R}}  \displaystyle\int_{-\infty}^x \Big( \eta_t \big( \lfloor yN \rfloor \big) - f (y) \Big) dy > \varepsilon + \delta \Bigg] \le \varsigma; \\
& \mathbb{P} \Bigg[ \displaystyle\inf_{x \in \mathbb{R}}  \displaystyle\int_{-\infty}^x \Big( \eta_t \big( \lfloor yN \rfloor \big) - f (y) \Big) dy < - \varepsilon - \delta \Bigg] \le \varsigma.
\end{aligned}
\end{flalign} 

Let us only establish the former bound in \eqref{xiestimatef}, since the proof of the latter is very similar. To that end, for each $s \in \mathbb{Z}_{\ge 0}$, let $\textbf{p}_s = \big( p_s (1), p_s (2), \ldots , p_s (N_1) \big)$ and $\textbf{q}_s = \big( q_s (1), q_s (2), \ldots , q_s (N_2) \big)$ denote the particle position sequences associated with $\eta_s$ and $\xi_s$, respectively; also set $p_s (i) = -\infty = q_s (i)$ for each $i \le 0$; $p_s (i) = \infty$ for each $i > N_1$; and $q_s (i) = \infty$ for each $i > N_2$. 

Define $\textbf{r}_s = \big( r_s (k) \big)$ by setting $r_s (k) = p_s \big( k + \lfloor \delta N \rfloor \big)$ for each $(k, s) \in \mathbb{Z} \times \mathbb{Z}_{\ge 0}$, that is, by shifting the labeling in the particle position sequence for $\textbf{p}_s$ by $\lfloor \delta N  \rfloor$. Then the fact that $\Delta_N (\eta_0, \xi_0) \le \delta$ implies that $\textbf{q}_0 \le \textbf{r}_0$, which together with \Cref{lambdaximonotone} yields a coupling between $\textbf{q}$ and $\textbf{r}$ such that $\textbf{q}_s \le \textbf{r}_s$ for each $s \ge 0$. 

Thus, defining the events 
\begin{flalign*} 
E_1 = \Bigg\{ \displaystyle\sup_{x \in \mathbb{R}}  \displaystyle\int_{-\infty}^x \Big( \eta_t \big( \lfloor yN \rfloor \big) - f (y) \Big) dy > \varepsilon + \delta  \Bigg\}; \quad E_2 =  \Bigg\{ \displaystyle\sup_{x \in \mathbb{R}}  \displaystyle\int_{-\infty}^x \Big( \xi_t \big( \lfloor yN \rfloor \big) - f (y) \Big) dy > \varepsilon  \Bigg\},
\end{flalign*} 	

\noindent we have that $E_1 \subseteq E_2$ under this coupling. Hence, $\mathbb{P} [E_1] \le \mathbb{P} [E_2] \le \mathbb{P} \big[ \Delta_N (\xi_t; f) > \varepsilon \big] \le \varsigma$, which implies the first bound in \eqref{xiestimatef}. As mentioned previously, the proof of the second estimate there is very similar and therefore omitted. This implies the lemma. 
\end{proof} 

Before proceeding to the proof of \Cref{limitgeneral}, we require the following lemma, which states that functions of bounded variation can be well-approximated by a sum of piecewise constant functions, whose step sizes are bounded below; it appears as Lemma 3.6 of \cite{HAPAE}. 

\begin{lem}[{\cite[Lemma 3.6]{HAPAE}}]
	
\label{uestimate}

Fix a positive number $\varepsilon > 0$ and a compactly supported function $f: \mathbb{R} \rightarrow [0, 1]$ of bounded variation. There exist an integer $r > 0$ and real numbers $\delta > 0$; $x_0 < x_1 < x_2 < \cdots < x_r$; and $\rho_1, \rho_2, \ldots , \rho_r \in [0, 1]$ (all dependent on $\varepsilon$ and $f$) such that the following two properties hold. 

\begin{enumerate} 
	\item For each $i \in [1, r]$, we have that $|x_i - x_{i - 1}| \ge \delta$. 
	
	\item Denoting $f_{\delta} (x) = \sum_{i = 1}^r \rho_i \textbf{\emph{1}}_{x_{i - 1} < x \le x_i}$ for each $x \in \mathbb{R}$, we have that $\Delta (f, f_{\delta}) < \varepsilon \delta$. 
\end{enumerate} 

\end{lem}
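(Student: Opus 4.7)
The plan is to approximate $f$ by the piecewise-constant function whose value on each cell $[x_{i-1}, x_i]$ is the mean $\rho_i := (x_i - x_{i-1})^{-1} \int_{x_{i-1}}^{x_i} f(y)\, dy \in [0,1]$, with the partition chosen to balance two competing constraints: the variation of $f$ across each cell should be $O(\varepsilon)$ (to keep the pointwise error $|f - \rho_i|$ small), while each cell must have length at least $\delta$. The crucial point is that with $\rho_i$ chosen as the mean, the primitives $I(\cdot; f)$ and $I(\cdot; f_\delta)$ agree at every $x_i$, so the error $E(x) := I(x; f) - I(x; f_\delta)$ vanishes at all partition points, and on each cell one only needs to bound $|E(x)| = \bigl|\int_{x_{i-1}}^x (f - \rho_i)\, dy\bigr| \leq (x_i - x_{i-1}) \cdot \mathrm{Osc}(f; [x_{i-1}, x_i])$.

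First I will construct a (possibly very fine) preliminary partition $a_0 < a_1 < \cdots < a_M$ of an interval containing $\mathrm{supp}(f)$ such that the total variation of $f$ on each cell $[a_{j-1}, a_j]$ is at most $\varepsilon/4$. This is a standard consequence of $f$ being of bounded variation: the cumulative variation $V_f(x) := V(f; (-\infty, x])$ is non-decreasing and bounded by the finite total variation of $f$, so by a greedy selection (inserting both sides of any jump of $V_f$ larger than $\varepsilon/4$ as partition points, of which there are only finitely many), one can exhaust an interval containing $\mathrm{supp}(f)$ in finitely many steps.

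Next I will set $\delta := \tfrac{1}{2}\min_j (a_j - a_{j-1}) > 0$ and uniformly subdivide each $[a_{j-1}, a_j]$ into $K_j := \lfloor (a_j - a_{j-1})/\delta \rfloor$ equal pieces, each of length in $[\delta, 2\delta)$. This produces the final partition $x_0 < x_1 < \cdots < x_r$, with each cell of length at least $\delta$ and lying inside some preliminary cell $[a_{j-1}, a_j]$ on which $f$ has variation at most $\varepsilon/4$. Defining $\rho_i$ as the mean of $f$ on $[x_{i-1}, x_i]$ (so that $\rho_i \in [0,1]$ and $E(x_i) = E(x_{i-1})$), an inductive argument starting from $E(x_0) = 0$ shows $E$ vanishes at every $x_i$. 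On each cell, since $\rho_i$ lies in the range of $f$ there, $|f - \rho_i| \leq V(f; [a_{j-1}, a_j]) \leq \varepsilon/4$, whence $|E(x)| \leq (x_i - x_{i-1}) \cdot \varepsilon/4 < 2\delta \cdot \varepsilon/4 = \varepsilon\delta/2$ uniformly in $x$, giving $\Delta(f, f_\delta) < \varepsilon\delta$ as required.

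The main obstacle is conceptual rather than computational: the two constraints on the partition pull in opposite directions (small variation forces partition points to cluster near the jumps of $f$, while the minimum-length condition prevents clustering). The two-stage construction resolves this by decoupling them — first use the BV structure to achieve small per-cell variation with an arbitrarily fine partition, then \emph{define} $\delta$ a posteriori as a fraction of the resulting minimum cell length before uniformly refining. All remaining steps are routine once this architecture is in place.
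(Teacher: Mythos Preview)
The paper does not prove this lemma; it is quoted as Lemma 3.6 of \cite{HAPAE} without argument, so there is no in-paper proof to compare against. Your approach---take $\rho_i$ to be the mean of $f$ on each cell so that the primitives $I(\cdot;f)$ and $I(\cdot;f_\delta)$ agree at every partition point, then control $E$ on each cell via the oscillation of $f$ there---is the natural one and is essentially correct.

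There is one technical slip in your Step 1. You assert that one can partition so that the pointwise total variation $V(f;[a_{j-1},a_j]) \le \varepsilon/4$ on each \emph{closed} cell, but this is impossible whenever $f$ has a jump of size exceeding $\varepsilon/4$: if $c$ is such a jump point and is placed at a cell boundary, then (for right-continuous $f$, say) the cell $[a_{j-1},c]$ immediately to its left still has variation at least the jump size, since the partition sum $|f(c)-f(t_{n-1})|$ with $t_{n-1}\uparrow c$ picks up the jump. Your phrase ``inserting both sides of any jump as partition points'' does not resolve this. The fix is easy and does not alter your architecture: replace closed-interval variation by the variation on the \emph{open} cell $(a_{j-1},a_j)$ (equivalently, the essential oscillation), which excludes endpoint jumps. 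A greedy selection on the finite variation measure $|Df|$ then produces a finite partition with $|Df|\bigl((a_{j-1},a_j)\bigr)\le\varepsilon/4$ on each cell, since each step consumes mass at least $\varepsilon/4$ from the half-open cell $(a_{j-1},a_j]$. This open-interval bound is all you actually need, because the inequality $|f(y)-\rho_i|\le\varepsilon/4$ is required only almost everywhere for the integral estimate $|E(x)|<2\delta\cdot\varepsilon/4$. With this adjustment the remainder of your argument (the two-stage refinement defining $\delta$ a posteriori, and the final bound) goes through unchanged.
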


We can now establish \Cref{limitgeneral}.

\begin{proof}[Proof of \Cref{limitgeneral}] 
	
	The proof of this theorem will follow the framework of \cite{HAPAE}, which is based on an algorithm of Glimm \cite{SNHSE} for numerically approximating solutions of hyperbolic conservation laws. More specifically, we first use \Cref{uestimate} to approximate $\Psi$ by a piecewise constant function $\Psi_{0; \delta}$ whose step sizes are lower bounded by some $\delta > 0$. Then, we let $\xi_s \in \{ 0, 1 \}^{\mathbb{Z}}$ denote a stochastic six-vertex model whose initial data $\xi_0$ is given by a product measure approximating $\Psi_{0; \delta}$. Since $\xi_0$ ``locally coincides'' with double-sided Bernoulli boundary data, \Cref{modelsequal}, \Cref{limitdouble}, and the first part of \Cref{uvequation} will together imply that the global law of $\xi_s$ converges to that of $\mathfrak{P}_s \Psi_{0; \delta}$, if $s \le c \delta$ (for some sufficiently small constant $c = c(b_2) > 0$). Furthermore, since $\Delta_N (\eta_0, \xi_0)$ is small with high probability, \Cref{uestimate} will imply that $\Delta_N (\eta_s, \xi_s)$ is also likely small. Hence, the global law of $\eta_s$ will be close to that of $\mathfrak{P}_s \Psi_{0; \delta} \approx \mathfrak{P}_s \Psi$, assuming that $s < c \delta$ is sufficiently small. We will then inductively repeat this procedure to increase the value of $s$ to $\lambda$. 
	
	Let us now implement this procedure in detail. In view of the second part of \Cref{uvequation}, \Cref{deltaetaxi}, and the fact that any measurable function can be approximated by one that is of bounded variation, we may assume that $\Psi$ is of bounded variation. Then, the first part of \Cref{uvequation} and the sixth part of Proposition 2.3.6 of \cite{SCL} imply that $\mathfrak{P}_t \Psi$ is compactly supported and of bounded variation for each $t \ge 0$. 
	
	Then, for any $t \in [0, \lambda]$, \Cref{uestimate}, yields the existence of an integer $r = r(t) > 0$ and real numbers $\delta = \delta (t) > 0$; $0 = x_0 < x_1 < x_2 < \cdots < x_r$; and $\rho_0, \rho_1, \ldots , \rho_r \in [0, 1]$ (all dependent on $t$, $\varepsilon$, and $\Psi$) such that the following two properties hold. First, $|x_i - x_{i - 1}| \ge \delta$ for each $i \in [1, r]$. Second, denoting $\Psi_{t; \delta} (x) = \sum_{i = 0}^r \rho_r \textbf{1}_{x_{i - 1} < x \le x_i}$ for each $x \in \mathbb{R}$, we have that $\Delta \big( \mathfrak{P}_t \Psi, \Psi_{t; \delta} \big) < \frac{(1 - b_2) \varepsilon \delta}{768 \lambda}$. 
	
	Next, set $s (t) = \min \big\{ \frac{(1 - b_2) \delta (t)}{8}, \lambda - t \big\}$ and define the interval $I(t) = \big[ t, t + s(t) \big]$ for each $t \ge 0$. Since $\bigcup_{t \in [0, \lambda)} I (t) = [0, \lambda]$ is compact, there exists a finite, increasing sequence $T = \big(t_0, t_1, \ldots , t_M)$ with $t_0 = 0$ such that $[0, \lambda] \subseteq \bigcup_{t \in T} I (t)$. We may assume that $t_M + s(t_M) = \lambda$; that $s(t_M) = \frac{(1 - b_2) \delta (t_M)}{8}$; and that $\sum_{t \in T} s(t) \le 3 \lambda$. For each $k \in [0, M]$, define $r_k = r(t_k)$; $\delta_k = \delta (t_k)$; $s_k = s(t_k)$; and $I_k = I(t_k)$. 
	
	We claim that
	\begin{flalign}
	\label{etapspsik}
	\displaystyle\max_{0 \le s \le s_k} \displaystyle\lim_{N \rightarrow \infty} \mathbb{P} \Bigg[ \Delta_N \big( \eta_{(t_k + s) N}; \mathfrak{P}_{t_k + s} \Psi \big) > \displaystyle\frac{(1 - b_2) \varepsilon}{96 \lambda} \displaystyle\sum_{i = 0}^k \delta_i + \displaystyle\frac{\varepsilon}{4} \Bigg] = 0,
	\end{flalign} 
	
	\noindent for each $k \in [-1, M]$, where we define $t_{-1} = 0 = s_{-1}$. To that end, we induct on $k$, the statement being true when $k = -1$ by \eqref{limitpsi}. Thus, assume \eqref{etapspsik} holds for $k = m - 1$ and some $m \in [0, M]$, so that $\lim_{N \rightarrow \infty} \mathbb{P} [E_N] = 0$, where we have defined the event
	\begin{flalign}
	\label{etatmn}
	E = E_N = \left\{  \Delta_N \big( \eta_{t_m N}; \mathfrak{P}_{t_m} \Psi \big) > \displaystyle\frac{(1 - b_2) \varepsilon}{96 \lambda} \displaystyle\sum_{i = 0}^{m - 1} \delta_i + \displaystyle\frac{\varepsilon}{4} \right\}.
	\end{flalign} 
	
	\noindent We will show that \eqref{etapspsik} holds for $k = m$. To that end, fix $s \in [0, s_m]$, and observe by the second statement of \Cref{uvequation} that
	\begin{flalign}
	\label{etaptmspsiestimate1}
	\begin{aligned}
	\Delta_N ( \eta_{(t_m + s) N}; \mathfrak{P}_{t_m + s} \Psi ) & \le \Delta_N ( \eta_{(t_m + s)N}; \mathfrak{P}_s \Psi_{t_m; \delta} ) + \Delta (\mathfrak{P}_s \Psi_{t_m; \delta}, \mathfrak{P}_{t_m + s} \Psi) \\
	& \le \Delta_N ( \eta_{(t_m + s)N}; \mathfrak{P}_s \Psi_{t_m; \delta} ) + \Delta (\Psi_{t_m; \delta}, \mathfrak{P}_{t_m} \Psi) \\
	&  \le \Delta_N ( \eta_{(t_m + s)N}; \mathfrak{P}_s \Psi_{t_m; \delta} ) + \displaystyle\frac{(1 - b_2) \varepsilon \delta_m}{768 \lambda}. 
	\end{aligned} 
	\end{flalign}
	
	Now, recall the numbers $r \ge 1$; $\delta > 0$; $x_0 < x_1 < \cdots < x_r$; and $\rho_1, \rho_2, \ldots , \rho_r \in [0, 1]$ associated with $t_m$, and further set $\rho_0 = 0 = \rho_{r + 1}$, $x_{-1} = -\infty$, and $x_{r + 1} = \infty$. Let $\xi_0 = \big( \xi_0 (x) \big) \in \{ 0, 1 \}^{\mathbb{Z}}$ denote a random particle configuration on $\mathbb{Z}$, with the $\big\{ \xi_0 (x) \big\}$ being mutually independent $0-1$ Bernoulli random variables, such that $\mathbb{P} \big[ \xi_0 (x) = 1 \big] = \rho_i$ if $x_{i - 1} < \frac{x}{N} \le x_i$, for each $i \in [0, r + 1]$. The profile for the initial data $\xi_0$ therefore approximates $\Psi_{t_m; \delta_m}$. Let $\xi_t$ denote the stochastic six-vertex model, run with this initial data $\xi_0$. 
	
	Furthermore, for each $i \in [1, r + 1]$, let $\zeta_0^{(i)} = \big( \zeta_0^{(i)} (x) \big) \in \{ 0, 1 \}^{\mathbb{Z}}$ denote a random (double-sided Bernoulli) particle configuration on $\mathbb{Z}$, where the $\big\{ \zeta_0^{(i)} (x) \big\}$ are mutually independent, such that $\mathbb{P} \big[ \zeta_0^{(i)} (x) = 1 \big] = \rho_{i - 1}$ whenever $\frac{x}{N} \le x_{i - 1}$ and $\mathbb{P} \big[ \zeta_0^{(i)} (x) = 1 \big] = \rho_i$ whenever $\frac{x}{N} > x_{i - 1}$. Further couple $\xi_0$ and each $\zeta_0^{(i)}$ such that $\xi_0 (x) = \zeta_0^{(i)} (x)$ whenever $\frac{x}{N} \in [x_{i - 2}, x_i]$. For each $i$, let $\zeta_t^{(i)}$ denote stochastic six-vertex model with initial data $\zeta_0^{(i)}$, and couple $\zeta_t^{(i)}$ with $\xi_t$ under the higher rank coupling of \Cref{couplinghigherrank}.
	
	Next, for each $i \in [1, r + 1]$, define the function $f_i: \mathbb{R} \rightarrow [0, 1]$ by setting $f_i (x) = \rho_{i - 1} \textbf{1}_{x \le x_i} + \rho_i \textbf{1}_{x > x_i}$ for every $x \in \mathbb{R}$. Then, \Cref{limitdouble} implies for each $i$ that 
	\begin{flalign*}
	\displaystyle\lim_{N \rightarrow \infty} \mathbb{P} \Bigg[ \displaystyle\max_{- \frac{N}{\varepsilon} \le X_1 \le X_2 \le \frac{N}{\varepsilon}} \bigg| \displaystyle\frac{1}{N} \displaystyle\sum_{x = X_1}^{X_2} \zeta_{s N}^{(i)} (x) - \displaystyle\int_{X_1 / N}^{X_2 / N} \mathfrak{P}_s f_i (x) \bigg| > \displaystyle\frac{(1 - b_2) \varepsilon \delta}{768 (r + 1) \lambda} \Bigg] = 0,
	\end{flalign*}
	
	\noindent By \Cref{modelsequal} and the fact that $\frac{4s}{1 - b_2} \le \frac{\delta}{2}$, we then obtain that 
	\begin{flalign}
	\label{xisnpsfibound}
	\displaystyle\lim_{N \rightarrow \infty} \mathbb{P} \Bigg[ \displaystyle\max_{X_1, X_2} \bigg| \displaystyle\frac{1}{N} \displaystyle\sum_{x = X_1}^{X_2} \xi_{s N} (x) - \displaystyle\int_{X_1 / N}^{X_2 / N} \mathfrak{P}_s f_i (x) \bigg| > \displaystyle\frac{(1 - b_2) \varepsilon \delta}{768 (r + 1) \lambda} \Bigg] = 0,
	\end{flalign}
	
	\noindent where $(X_1, X_2)$ is taken over all such pairs satisfying $\max \big\{ \big(x_{i - 1} + \frac{\delta}{2} \big) N, - \frac{N}{\varepsilon} \big\} \le X_1 \le X_2 \le \min \big\{ \big( x_{i + 1} - \frac{\delta}{2} \big) N, \frac{N}{\varepsilon} \big\}$. 
	
	Next, since $\Psi_{t_m; \delta} (x) = f_i (x)$ for each $x \in [x_{i - 1}, x_{i + 1}]$, it follows from the first part of \Cref{uvequation} and the estimate $\frac{2s}{1 - b_2} \le \frac{\delta}{4}$ that $\mathfrak{P}_s \Psi_{t_m; \delta} (x) = \mathfrak{P}_s f_i (x)$ for each $x \in \big[ x_{i - 1} + \frac{\delta}{4}, x_{i + 1} - \frac{\delta}{4} \big]$. Therefore, the fact that $\bigcup_{i = 0}^{r + 1} \big[x_{i - 1} + \frac{\delta}{4}, x_i - \frac{\delta}{4} \big] = \mathbb{R}$ implies from \eqref{xisnpsfibound} and a union bound that 
	\begin{flalign*}
	\displaystyle\lim_{N \rightarrow \infty} \mathbb{P} \Bigg[ \displaystyle\max_{- \frac{N}{\varepsilon} \le X_1 \le  X_2 \le \frac{N}{\varepsilon}} \bigg| \displaystyle\frac{1}{N} \displaystyle\sum_{x = X_1}^{X_2} \xi_{s N} (x) - \displaystyle\int_{X_1 / N}^{X_2 / N} \mathfrak{P}_s \Psi_{t_m; \delta} \bigg| > \displaystyle\frac{(1 -b_2) \varepsilon \delta}{768 \lambda} \Bigg] = 0,
	\end{flalign*}
	
	\noindent which yields  
	\begin{flalign}
	\label{xisnpsfibound3}
	\displaystyle\lim_{N \rightarrow \infty} \mathbb{P} \Bigg[ \Delta_N ( \xi_{sN}; \mathfrak{P}_{s} \Psi_{t_m; \delta}) > \displaystyle\frac{(1 - b_2) \varepsilon \delta}{384 \lambda} \Bigg] = 0, 
	\end{flalign}
	
	\noindent since by \Cref{modelsequal} the probability that $\xi_{sN}$ contains a particle outside of the interval $\big[ -\frac{N}{\varepsilon}, \frac{N}{\varepsilon} \big]$ tends to $0$, as $N$ tends to $\infty$ (assuming that $\varepsilon > 0$ is sufficiently small). 
	
	Now, for each integer $N \ge 1$, define the event 
	\begin{flalign*}
	F = F_N = \left\{  \Delta_N (\xi_0; \Psi_{t_m; \delta}) > \displaystyle\frac{(1 - b_2) \varepsilon \delta}{768 \lambda} \right\}.
	\end{flalign*} 
	
	\noindent Large deviations bounds for sums of $0-1$ Bernoulli random variables then yield $\lim_{N \rightarrow \infty} \mathbb{P} [F_N] = 0$. Next observe that
	\begin{flalign*} 
	\textbf{1}_{E^c} \Delta_N (\eta_{t_m N}; \Psi_{t_m; \delta}) & \le \textbf{1}_{E^c} \Delta_N (\eta_{t_m N}; \mathfrak{P}_{t_m} \Psi) + \Delta (\mathfrak{P}_{t_m} \Psi; \Psi_{t_m; \delta}) \\
	& \le \displaystyle\frac{(1 - b_2) \varepsilon}{96 \lambda} \displaystyle\sum_{i = 0}^{m - 1} \delta_i + \displaystyle\frac{\varepsilon}{4} + \displaystyle\frac{(1 - b_2) \varepsilon \delta}{768 \lambda},
	\end{flalign*} 
	
	\noindent which, together with \eqref{xisnpsfibound3} and \Cref{deltaetaxi}, implies that 
	\begin{flalign*}
	\displaystyle\lim_{N \rightarrow \infty} \mathbb{P}\left[ \textbf{1}_{E^c} \textbf{1}_{F^c} \Delta_N (\eta_{(t_m + s) N}; \mathfrak{P}_s \Psi_{t_m; \delta}) > \displaystyle\frac{(1 - b_2) \varepsilon}{96 \lambda} \displaystyle\sum_{i = 0}^{m - 1} \delta_i + \displaystyle\frac{(1 - b_2)\varepsilon \delta_m}{192 \lambda} + \displaystyle\frac{\varepsilon}{4} \right] = 0.
	\end{flalign*}
	
	\noindent Combined with \eqref{etaptmspsiestimate1} and the facts that $\lim_{N \rightarrow \infty} \mathbb{P} [E_N] = 0 = \lim_{N \rightarrow \infty} \mathbb{P} [F_N]$, this yields  
	\begin{flalign*}
	\displaystyle\lim_{N \rightarrow \infty} \mathbb{P} \left[ \Delta_N (\eta_{(t_m + s) N}, \mathfrak{P}_{t_m + s} \Psi) >  \displaystyle\frac{(1 - b_2) \varepsilon}{96 \lambda} \displaystyle\sum_{i = 0}^m \delta_i + \displaystyle\frac{\varepsilon}{4} \right] = 0,
	\end{flalign*}
	
	\noindent which verifies \eqref{etapspsik}. Taking $k = M$ and $s = \lambda - t_M = s_M$ in \eqref{etapspsik} and using the fact that $\sum_{i = 1}^M \delta_i = \frac{8}{1 - b_2} \sum_{i = 1}^M s_i \le \frac{24 \lambda}{1 - b_2}$ then gives $\lim_{N \rightarrow \infty} \mathbb{P} \big[ \Delta_N (\eta_{\lambda N}, \mathfrak{P}_{\lambda} \Psi) > \frac{\varepsilon}{2} \big] = 0$, from which we deduce the theorem. 	
\end{proof}

\subsection{Comparing the Cylinder to the Upper Half-Plane} 

\label{LimitCylinder} 

In this section we establish \Cref{sixvertexcylinderglobal} by comparing the stochastic six-vertex model on the cylinder $\mathfrak{C} = \mathfrak{C}_{N; L} = \mathfrak{T}_N \times \{ 1, 2, \ldots , L \}$ to that on the upper-half plane $\mathfrak{H} = \mathbb{Z} \times \mathbb{Z}_{\ge 0}$. 

To that end, we begin with the following proposition, which bounds the speed of discrepancies between suitably coupled stochastic six-vertex models on the cylinder and on the upper half-plane. In the below, we recall from \Cref{Height} that any six-vertex ensemble $\mathcal{E}$ on $\mathfrak{C}$ can be associated with a particle configuration $\big( \eta_y (x) \big)$. By letting $\eta_y (x)$ denote the indicator for the existence of a particle at site $x \in \mathfrak{T} = \mathfrak{T}_N$ and time $y \in \{ 1, 2, \ldots , L \}$, we can view the stochastic six-vertex model on $\mathfrak{C}$ as an interacting particle system on $\mathfrak{T}$. Under this correspondence, the (non-crossing) directed up-right paths in $\mathcal{E}$ are the space-time trajectories for particles. 

As in \Cref{StochasticVertexLine}, we can tag these particles. Set $\textbf{p}_0 = \big( p_0 (-M), p_0 (1 - M), \ldots , p_0 (R) \big)$, where $p_0 (k)$ denotes the initial position of particle $k \in [-M, R]$; we assume that no particle exists between $p_0 (k)$ and $p_0 (k + 1)$, for each $k \in [-M, R]$. Then, let $p_t (k)$ denote the position of particle $k \in [-M, R]$ at time $t \in [1, L]$, and set $\textbf{p}_t = \big( p_t (-M), p_t (1 - M), \ldots , p_t (R) \big)$. This associates $\mathcal{E}$ with a particle position sequence $\textbf{p}_t$.

\begin{prop} 
	
\label{cylinderhalfplanevertex} 

There exists a constant $c = c(b_2) > 0$ such that the following holds. Let $N, L \ge 1$ and $A \ge 0$ denote integers such that $A < N$, and let $\textbf{\emph{p}}_0 = \big( p_0 (k) \big)$ and $\textbf{\emph{q}}_0 = \big( q_0 (k) \big)$ denote (finite) particle position sequences on $\mathfrak{T} = \mathfrak{T}_N$ and $\mathbb{Z}$, respectively. Assume that $p_0 (k) = q_0 (k)$ for each $k \in \mathbb{Z}$ such that either $p_0 (k) \in [0, A]$ or $q_0 (k) \in [0, A]$. Assume further that $0 < \frac{4L}{1 - b_2} < A - \frac{4L}{1 - b_2} < A + \frac{4L}{1 - b_2} < N$, and let $\textbf{\emph{p}}_s$ and $\textbf{\emph{q}}_s$ denote the stochastic six-vertex models on $\mathfrak{C} = \mathfrak{C}_{N; L}$ and $\mathfrak{H}$ with initial data $\textbf{\emph{p}}_0$ and $\textbf{\emph{q}}_0$, respectively. 

Then, it is possible to couple $\textbf{\emph{p}}_s$ and $\textbf{\emph{q}}_s$ in such a way that $\mathbb{P} [E] \le c^{-1} e^{-cL}$, where $E$ denotes the event that there exists a pair $(k, t) \in \mathbb{Z} \times [0, L]$ such that $p_t (k) \ne q_t (k)$ and either $p_t (k)$ or $q_t (k)$ is in the interval $\big[ \frac{4L}{1 - b_2}, A - \frac{4L}{1 - b_2} \big]$. 

\end{prop}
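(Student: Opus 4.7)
The plan is to mimic the proof of \Cref{modelsequal}, with the wrinkle that the cylinder has finite circumference $N$. First lift $(\textbf{p}_s)$ to a model on $\mathbb{Z}$ by extending the initial data periodically with period $N$ and using periodic coin flips; the projection of the resulting lifted model $(\widetilde{\textbf{p}}_s)$ back to $\mathfrak{T}_N$ recovers the cylinder dynamics. Couple $(\widetilde{\textbf{p}}_s)$ and $(\textbf{q}_s)$ via the $n = 1$ case of the higher rank coupling from \Cref{couplinghigherrank}. Since the two initial configurations agree on $[0, A]$, every particle with initial position in $[0, A]$ is first class at time $0$ and every other particle is second class.

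Because stochastic six-vertex particles only move to the right, for a second class particle to enter the target interval $I = [\frac{4L}{1 - b_2}, A - \frac{4L}{1 - b_2}]$ at some time in $[0, L]$, its initial position must lie strictly to the left of $A - \frac{4L}{1 - b_2} + 1$ and outside $[0, A]$, hence at most $-1$. The minimum distance any such particle must travel to reach $I$ is $\frac{4L}{1 - b_2} + 1$. For the half-plane these dangerous particles are those in $\textbf{q}_0 \cap (-\infty, -1]$. For the lifted cylinder the relevant particles are the periodic copies of elements of $\textbf{p}_0 \cap [A + 1, N - 1]$ shifted by $-k N$ with $k \ge 1$; the hypothesis $A + \frac{4L}{1 - b_2} < N$ ensures these are also at distance at least $\frac{4L}{1 - b_2} + 1$ from $I$. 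Any second class particle with initial position in $[A + 1, N - 1]$ (the current copy) starts to the right of $I$ and cannot reach it without wrapping around, which reduces to the previous case after translation.

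Now let $Z_s$ denote the position at time $s$ of the rightmost second class particle (in either model) whose initial position is at most $-1$. By \Cref{xtxt1} and \Cref{xtxt1cylinder}, conditional on the history up to time $s - 1$, the increment $Z_s - Z_{s-1}$ is stochastically dominated by a $b_2$-geometric random variable, and these conditional increments may be coupled to i.i.d.\ $b_2$-geometric variables. The large deviations estimate for their sums used in the proof of \Cref{modelsequal} then yields a constant $c = c(b_2) > 0$ with
\begin{flalign*}
\mathbb{P} \left[ Z_L - Z_0 \ge \frac{2 L}{1 - b_2} + K \right] \le e^{-c (L + K)}, \qquad \text{for each integer } K \ge 0.
\end{flalign*}
Taking $K = \frac{2L}{1 - b_2} + 1$ gives $\mathbb{P}[Z_L \ge \frac{4L}{1 - b_2}] \le e^{-c L}$; a union bound over $s \in [0, L]$ (absorbed by shrinking $c$) yields the same estimate uniformly in $s$.

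On the complement of this event, no second class particle visits $I$ during $[0, L]$, so for every $(k, t) \in \mathbb{Z} \times [0, L]$ with $p_t (k) \in I$ or $q_t (k) \in I$ the corresponding particle is first class at time $t$ and hence $p_t (k) = q_t (k)$. This is precisely the complement of $E$, so $\mathbb{P}[E] \le c^{-1} e^{-c L}$, as desired. The main subtlety is making the lifted-cylinder coupling precise (alternatively, one can work directly on the cylinder and apply \Cref{xtxt1cylinder} at each step), but this is a routine adaptation of the construction in the proof of \Cref{modelsequal}.
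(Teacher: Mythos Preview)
Your periodic-lift strategy is different from the paper's and, as written, has a real gap that is not ``routine.'' The claim that the lift $(\widetilde{\textbf{p}}_s)$ with periodic coin flips projects exactly to the cylinder dynamics is false: on $\mathfrak{T}_N$ there are six-vertex configurations in which the horizontal arrow circles the entire torus (contributing weight of order $b_2^{N-1}$), and these have no counterpart in the lifted particle system on $\mathbb{Z}$, where each particle is blocked by its own periodic copy. More seriously, the higher rank coupling of \Cref{couplinghigherrank} uses \emph{shared} randomness $\chi_t(x),\,j_t(x)$ for both models. If that randomness is periodic in $x$ (as the lift requires), then $(\textbf{q}_s)$ no longer has the law of the half-plane model; if it is i.i.d., then the lift is no longer periodic and does not project to $\mathfrak{T}_N$. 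Either way you have not produced a coupling whose marginals are the cylinder and half-plane models. These discrepancies are exponentially small in $N$ and could in principle be controlled, but doing so is exactly the content of the proposition and cannot be waved away.

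The paper avoids both issues by inserting two intermediate systems. Let $\textbf{m}_0$ and $\textbf{r}_0$ be obtained from $\textbf{p}_0$ and $\textbf{q}_0$ by deleting all particles outside $[0,A]$, viewed on $\mathfrak{T}_N$ and on $\mathbb{Z}$ respectively. The key observation is that, conditioned on the event that no particle of $\textbf{m}$ ever crosses site $N$ (equivalently, no particle of $\textbf{r}$ ever reaches $N$), the two ensembles are literally identical, because a finite six-vertex ensemble whose paths avoid $x=N$ has the same weight on $\mathfrak{C}$ and on $\mathfrak{H}$. This gives an honest coupling of $(\textbf{m}_s)$ with $(\textbf{r}_s)$ that fails only on an event of probability $\le c^{-1}e^{-cL}$ (by \Cref{xtxt1} and \Cref{xtxt1cylinder}). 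The remaining links in the chain --- $(\textbf{p}_s)$ to $(\textbf{m}_s)$ via the two-class model on the cylinder, and $(\textbf{r}_s)$ to $(\textbf{q}_s)$ via the higher rank coupling on $\mathbb{Z}$ --- each live on a single domain, so the standard second-class-particle speed bound applies directly. Your tracking of second-class particles and use of the large-deviation estimate are fine once a legitimate coupling is in place; what is missing is precisely that bridge between the two geometries.
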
 

\begin{proof} 
	
Let $U \le V$ denote the integers (if they exist) such that $q_0 (U - 1) < 0 \le q_0 (U) \le q_0 (V) \le A < q_0 (V + 1)$, and let $\textbf{m}_0 = \big( m_0 (U), m_0 (U + 1), \ldots , m_0 (V) \big)$ and $\textbf{r}_0 = \big( r_0 (U), r_0 (U + 1), \ldots , r_0 (V)\big)$ denote the particle position sequences on $\mathfrak{T}$ and $\mathbb{Z}$, respectively, obtained by setting $m_0 (k) = r_0 (k) = p_0 (k) = q_0 (k)$ for each $k \in [U, V]$. Stated alternatively, $\textbf{m}_0$ and $\textbf{r}_0$ are obtained from $\textbf{p}_0$ and $\textbf{q}_0$, respectively, by removing all particles outside of the interval $[0, A]$. 

Now let $\textbf{m}_s$ and $\textbf{r}_s$ denote the stochastic six-vertex models on $\mathfrak{T}$ and $\mathbb{Z}$ with initial data $\textbf{m}_0$ and $\textbf{r}_0$, respectively. We will exhibit couplings between $(\textbf{p}_s, \textbf{m}_s)$, $(\textbf{m}_s, \textbf{r}_s)$, and $(\textbf{r}_s, \textbf{q}_s)$ such that these pairs of stochastic six-vertex models agree with high probability on the interval $I = \big[ \frac{4L}{1 - b_2}, A - \frac{4L}{1 - b_2} \big]$ for each $s \in [1, L]$. This will induce a coupling between $(\textbf{p}_s, \textbf{q}_s)$ with the same property. 

The models $(\textbf{r}_s, \textbf{q}_s)$, will be coupled through the higher rank coupling of \Cref{couplinghigherrank}. To couple $(\textbf{m}_s, \textbf{r}_s)$, let $F (\textbf{r})$ denote the event on which there exists some $s \in [1, L]$ such that $r_s (V) \ge N$. Also let $F (\textbf{m})$ denote the event on which there exists some $s \in [1, L]$ such that $m_s (V) < m_{s - 1} (V)$ (meaning that there exists a particle in $\textbf{m}$ that passes site $N$ at some point during the time interval $[1, L]$). Set $p (\textbf{m}) = \mathbb{P} \big[ F (\textbf{m})^c \big]$ and $p (\textbf{r}) = \mathbb{P} \big[ F (\textbf{r})^c \big]$. 

Letting $W$ denote a uniformly random variable on $[0, 1]$, we then sample $(\textbf{m}_s)$ and $(\textbf{r}_s)$ as follows. 

\begin{enumerate} 

\item If $W \le \min \big\{ p (\textbf{m}), p (\textbf{r}) \big\}$, then sample $(\textbf{m}_s)$ conditional on $F (\textbf{m})^c$, and set $(\textbf{r}_s) = (\textbf{m}_s)$. 

\item If $\min \big\{ p (\textbf{m}), p (\textbf{r}) \big\} < W \le \max \big\{ p (\textbf{m}), p (\textbf{r}) \big\}$, then let $\textbf{x}, \textbf{y} \in \{ \textbf{m}, \textbf{r} \}$ be distinct so that $p (\textbf{x}) = \min \big\{ p (\textbf{m}), p (\textbf{r}) \big\}$ and $p (\textbf{y}) = \max \big\{ p (\textbf{m}), p (\textbf{r}) \big\}$. Sample $\textbf{x}_s$ conditional on $F (\textbf{x})$ and $\textbf{y}_s$ conditional on $F(\textbf{y})^c$ independently. 

\item If $W > \max \big\{ p (\textbf{m}), p (\textbf{r}) \big\}$, then sample $\textbf{m}_s$ conditional on $F (\textbf{m})$ and $\textbf{r}_s$ conditional on $F(\textbf{r})$ independently. 

\end{enumerate} 

\noindent That this yields a couping between $\textbf{m}_s$ and $\textbf{r}_s$ follows from the fact that the weight (recall \Cref{StochasticModel}) of any finite six-vertex ensemble $\mathcal{E}$, each of whose paths do not intersect the vertical line $x = N$, does not depend on whether the domain of $\mathcal{E}$ is $\mathfrak{C}$ or $\mathfrak{H}$. 

To couple $(\textbf{m}_s, \textbf{p}_s)$ consider a multi-class stochastic six-vertex model $\textbf{n}$ (recall \Cref{HigherRank}) on $\mathfrak{C}$, with two classes. Its boundary data $\psi = \big( \psi (x) \big) \in \{ 0, 1, 2 \}^{\mathbb{Z}}$ is defined by setting $\psi (x) = 0$ if $x \notin \textbf{p}_0$; $\psi (x) = 1$ if $x \in \textbf{m}_0$; and $\psi (x) = 2$ if $x \in \textbf{p}_0 \setminus \textbf{m}_0$. Denote the positions of the first and second class particles at time $s$ in this model by $\textbf{n}_s^{(1)} = \big( n_s^{(1)} (U), n_s^{(1)} (U + 1), \ldots , n_s^{(1)} (V) \big)$ and $\textbf{n}_s^{(2)} = \big( n_s^{(2)} (B), n_s^{(2)} (B + 1), \ldots , n_s^{(2)} (D) \big)$, respectively. Then, define $\textbf{p}_s = (\textbf{p}_s) = \big( \textbf{n}_s^{(1)} \big) \cup \big( \textbf{n}_s^{(2)} \big)$, that is, by ignoring the class labels in $\textbf{n}$; also define $\textbf{m}_s = (\textbf{m}_s) = \big( \textbf{n}_s^{(1)})$, that is, by only considering the first class particles in $\textbf{n}$. By \Cref{mnclasses}, this yields a coupling between $(\textbf{m}_s, \textbf{p}_s)$.

Thus, we have defined couplings between the pairs $(\textbf{p}_s, \textbf{m}_s)$, $(\textbf{m}_s, \textbf{r}_s)$, and $(\textbf{r}_s, \textbf{q}_s)$ of stochastic six-vertex models. Let $E_1$, $E_2$, and $E_3$ denote the events that $(\textbf{p}_s, \textbf{m}_s)$, $(\textbf{m}_s, \textbf{r}_s)$, and $(\textbf{r}_s, \textbf{q}_s)$ differ on $I \times [1, L]$, respectively; will bound $\mathbb{P} [E_1]$, $\mathbb{P} [E_2]$, and $\mathbb{P} [E_3]$. Observe that \Cref{modelsequal} guarantees the existence of a constant $c = c(b_2) > 0$ such that $\mathbb{P} [E_3] \le c^{-1} e^{-cL}$. 

Next, $E_1$ is contained in the event that there exists a second-class particle that enters $I$ at some point during the time interval $[1, L]$. Thus, \Cref{xtxt1}, \Cref{xtxt1cylinder}, and a large deviations estimate for sums of independent geometric random variables together imply that $\mathbb{P} [E_1] \le c^{-1} e^{-cL}$ (after decreasing $c$ if necessary). 

We bound $\mathbb{P} [E_2]$ analogously, by observing that $\mathbb{P} [E_2] \le \max \big\{ 1 - p (\textbf{m}), 1 - p (\textbf{r}) \big\} \le c^{-1} e^{-cL}$, where the last estimate again follows from \Cref{xtxt1}, \Cref{xtxt1cylinder}, and a large deviations bound. Hence, $\mathbb{P} [E] \le \mathbb{P} [E_1] + \mathbb{P} [E_2] + \mathbb{P} [E_3] \le 3c^{-1} e^{-cL}$, from which we deduce the proposition. 
\end{proof} 

Now we can establish \Cref{sixvertexcylinderglobal}. 

\begin{proof}[Proof of \Cref{sixvertexcylinderglobal}] 
	
	The proof of this theorem will be similar to that of \Cref{limitgeneral}. For each $s \ge 0$ and function $f: \mathbb{T} \rightarrow [0, 1]$, we (similarly to in \Cref{ptoperator}) define $\mathfrak{Q}_s f (x) = G(x, s)$, where $G (x, t)$ denotes the entropy solution of the equation \eqref{gxydefinition} on $\mathbb{T} \times \mathbb{R}_{\ge 0}$, with initial data given by $G (x, 0) = f(x)$ for each $x \in \mathbb{T}$. 	

	For any fixed $\varpi, \varepsilon > 0$, let us first show that
	\begin{flalign}
	\label{qtetalimit}
	\displaystyle\lim_{N \rightarrow \infty} \mathbb{P} \Bigg[  \displaystyle\max_{0 \le X_1 \le X_2 < N} \bigg| \displaystyle\frac{1}{N} \displaystyle\sum_{x = X_1}^{X_2} \eta_{\lfloor \varpi N \rfloor} (x) - \displaystyle\int_{X_1 / N}^{X_2 / N} \mathfrak{Q}_{\varpi} \Psi (y) dy \bigg| > \varepsilon \Bigg] = 0.
	\end{flalign}
	
	\noindent Setting $\delta = \frac{1 - b_2}{32}$ and $M = \big\lceil \frac{\varpi}{\delta} \big\rceil - 1$, we claim for any integer $k \in [-1, M]$ that 
	\begin{flalign}
	\label{gkdeltaetalimit} 
	\displaystyle\max_{s \in [0, \delta]} \displaystyle\lim_{N \rightarrow \infty} \mathbb{P} \Bigg[ \displaystyle\max_{0 \le X_1 \le X_2 < N} \bigg| \displaystyle\frac{1}{N} \displaystyle\sum_{x = X_1}^{X_2} \eta_{\lfloor (k \delta + s)  N \rfloor} (x) - \displaystyle\int_{X_1 / N}^{X_2 / N} \mathfrak{Q}_{k \delta + s} \Psi (y) dy \bigg| > \varsigma \Bigg] = 0,
	\end{flalign}
	
	\noindent for any $\varsigma > 0$, where we define $\eta_m (x) = \eta_0 (x)$ for any $x \in \mathfrak{T}$ if $m \le 0$, and $G_t (y) = G_0 (y)$ for any $y \in \mathbb{T}$ if $t \le 0$. To verify \eqref{gkdeltaetalimit}, we induct on $k$, the statement being true by \eqref{limitpsi} if $k = -1$. Thus let $m \in [0, M]$ denote an integer, and assume that \eqref{gkdeltaetalimit} holds for $k = m - 1$, which implies in particular for any $\varsigma > 0$ that
	\begin{flalign}
	\label{gkdeltaetalimit2} 
	\mathbb{P} \Bigg[ \displaystyle\max_{0 \le X_1 \le X_2 < N} \bigg| \displaystyle\frac{1}{N} \displaystyle\sum_{x = X_1}^{X_2} \eta_{\lfloor m \delta  N \rfloor} (x) - \displaystyle\int_{X_1 / N}^{X_2 / N} \mathfrak{Q}_{m \delta} \Psi (y) dy \bigg| > \varsigma \Bigg] = 0.
	\end{flalign}
	
	\noindent We will then show that \eqref{gkdeltaetalimit} holds for $k = m$. 
	
	To do this, let us fix $s \in [0, \delta]$ and define the intervals $I_1 = \big( 0, \frac{3}{4} \big) \subset \mathbb{T}$ and $I_2 = \big( \frac{1}{2}, \frac{5}{4} \big) \subset \mathbb{T}$; observe that $I_1 \cup I_2 = \mathbb{T}$. Depending on the context, we may also view $I_1$ and $I_2$ as intervals $\mathbb{R}$. We then define functions $f_1, f_2: \mathbb{T} \rightarrow [0, 1]$ by setting $f_j (x) = \mathfrak{Q}_{m \delta} \Psi (x) \textbf{1}_{x \in I_j}$, for each $x \in \mathbb{T}$ and $j \in \{ 1, 2 \}$. The corresponding functions on $\mathbb{R}$ are denoted by $g_1, g_2: \mathbb{R} \rightarrow [0, 1]$, defined by setting $g_j (x) = \mathfrak{Q}_{m \delta} \Psi (x) \textbf{1}_{x \in I_j}$, for each $x \in \mathbb{R}$ and $j \in \{ 1, 2 \}$, where we view $\mathfrak{Q}_{m \delta} \Psi$ as a $1$-periodic function on $\mathbb{R}$.
	
	Similarly, we define (random) particle configurations $\xi_0^{(1)} = \big( \xi_0^{(1)} (x) \big) \in \{ 0, 1 \}^{\mathbb{Z}}$ and $\xi_0^{(2)} = \big( \xi_0^{(2)} (x) \big) \in \{ 0, 1 \}^{\mathbb{Z}}$ on $\mathbb{Z}$ by setting $\xi_0^{(j)} (x) = \eta_{\lfloor m \delta N \rfloor} (x) \textbf{1}_{x / N \in I_j}$, for each $x \in \mathbb{Z}$ and $j \in \{ 1, 2 \}$. Further let $\big( \xi_t^{(j)} \big)$ denote the stochastic six-vertex model on $\mathbb{Z}$, with initial data given by $\xi_0^{(j)}$, for each $j \in \{ 1, 2 \}$. 
	
	Then, \Cref{limitgeneral} and \eqref{gkdeltaetalimit2} together imply for each $j \in \{ 1, 2 \}$ and any $\varsigma > 0$ that  
	\begin{flalign}
	\label{gkdeltaetalimit3}
	\mathbb{P} \Bigg[ \displaystyle\max_{-N \le X_1 \le X_2 \le 2N} \bigg| \displaystyle\frac{1}{N} \displaystyle\sum_{x = X_1}^{X_2} \xi_{\lfloor (m \delta + s) N \rfloor}^{(j)} (x) - \displaystyle\int_{X_1 / N}^{X_2 / N} \mathfrak{P}_s g_j (y) dy \bigg| > \displaystyle\frac{\varsigma}{2} \Bigg] = 0.
	\end{flalign}
	
	 Now, the first part of \Cref{uvequation} and the estimate $\frac{2 \delta}{1 - b_2} \le \frac{1}{16}$ together imply that the function $\mathfrak{P}_t g_1$ is supported on the interval $\big[ -\frac{1}{16}, \frac{13}{16} \big]$ whenever $t \le \delta$. Thus, $\mathfrak{P}_t g_1$ can be viewed as a function on $\mathbb{T}$ that is equal to $0$ on a neighborhood of $\frac{7}{8}$. Combining this with the fact that $\mathfrak{P}_t g_1$ solves the equation \eqref{gxydefinition} on $\mathbb{R} \times \mathbb{R}_{\ge 0}$, we deduce that it satisfies the same equation \eqref{gxydefinition} on $\mathbb{T} \times [0, \delta]$. Hence, $\mathfrak{P}_t g_1 = \mathfrak{Q}_t f_1$ for any $t \in [0, \delta]$. Similarly, $\mathfrak{P}_t g_2 = \mathfrak{Q}_t f_2$ whenever $t \in [0, \delta]$.
	 
	 Thus, \Cref{cylinderhalfplanevertex} and \eqref{gkdeltaetalimit3} together imply that
	 \begin{flalign}
	 \label{gkdeltaetalimit4}
	 \mathbb{P} \Bigg[ \displaystyle\max_{\frac{X_1}{N}, \frac{X_2}{N} \in \mathscr{I}_j} \bigg| \displaystyle\sum_{x = X_1}^{X_2} \eta_{\lfloor (m \delta + s) N \rfloor} (x) - \displaystyle\int_{X_1 / N}^{X_2 / N} \mathfrak{Q}_s f_j (y) dy \bigg| > \displaystyle\frac{\varsigma}{2} \Bigg] = 0,
	 \end{flalign}
	 
	 \noindent for each $j \in \{ 1, 2 \}$, where we have defined the intervals $\mathscr{I}_1 = \big[ \frac{1}{8}, \frac{5}{8} \big] \subset \mathbb{T}$ and $\mathscr{I}_2 = \big[ \frac{5}{8}, \frac{9}{8} \big] \subset \mathbb{T}$ (and we assume that $X_2 \le X_1$). Now \eqref{gkdeltaetalimit} follows from \eqref{gkdeltaetalimit4}; the fact that $\mathfrak{Q}_t f_j (x) = \mathfrak{Q}_{m \delta + t} \Psi (x)$ for each $x \in \mathscr{I}_j$, $t \in (0, \delta]$, and $j \in \{ 1, 2 \}$ (by \Cref{uvequation2}); and the fact that $\mathscr{I}_1 \cup \mathscr{I}_2 = \mathbb{T}$. Then \eqref{qtetalimit} follows from \eqref{gkdeltaetalimit} by setting $k = M$, $s = \varpi - M \delta$, and $\varsigma = \varepsilon$. 
	 
	 Now \eqref{etasumestimaten} is a consequence of \eqref{qtetalimit}; a union bound; the fact that $\eta_y (x), G_y (x) \in [0, 1]$ for each $x$ and $y$; the continuity of $G_y (x)$ in $y$; and the fact that
	 \begin{flalign}
	 \label{etay1yy1}
	 \displaystyle\sum_{x = X_1}^{X_2} \eta_{Y - 1} (x) - 1 \le \displaystyle\sum_{x = X_1}^{X_2} \eta_Y (x) \le \displaystyle\sum_{x = X_1}^{X_2} \eta_{Y - 1} (x) + 1,
	 \end{flalign}
	 
	 \noindent for any integers $0 \le X_1 \le X_2 < N$ and $Y > 0$. 
\end{proof}

\section{Proof of \Cref{sixvertexlocalcylinder}} 

\label{StationaryConverge} 
		
In this section we establish \Cref{sixvertexlocalcylinder}. We begin in \Cref{InitialConstant} by reducing it to \Cref{couplexiaeta} below, which essentially addresses the case of a constant density profile. Then, we introduce and state properties of several couplings in \Cref{StationaryApproximateLocal}. Following the framework of Bahadoran-Mountford \cite{CLEP}, we then use these couplings to prove \Cref{couplexiaeta} in \Cref{StationaryLocal}.

\subsection{Reduction to a Constant Density Profile}

\label{InitialConstant} 

In this section we reduce \Cref{sixvertexlocalcylinder} to the following theorem, which is similar to (A) and (B) in Section 4 of \cite{CLEP}. It essentially states that if a stochastic six-vertex model $\eta_t$ with initial profile $\eta_0$ of approximately constant density $\rho \in (0, 1)$ is run for a sufficiently large time $t$, then $\eta_t$ can likely be coupled between two stationary distributions with densities $\rho - \varepsilon$ and $\rho + \varepsilon$, for any $\varepsilon > 0$. 

\begin{thm} 
	
	\label{couplexiaeta} 
	
	Fix $\varepsilon \in \big( 0, \frac{1}{2} \big)$ and $\rho \in [\varepsilon, 1 - \varepsilon]$. Let $\xi_0^{(\rho - \varepsilon)} = \big( \xi_0^{(\rho - \varepsilon)} (x) \big) \in \{ 0, 1 \}^{\mathbb{Z}}$ and $\xi_0^{(\rho + \varepsilon)} = \big( \xi_0^{(\rho + \varepsilon)} (x) \big) \in \{ 0, 1 \}^{\mathbb{Z}}$ denote two (random) particle configurations on $\mathbb{Z}$ sampled with respect to the measures $\Upsilon^{(\rho - \varepsilon)}$ and $\Upsilon^{(\rho + \varepsilon)}$, respectively (from \Cref{zetarho}), which are coupled so that $\xi_0^{(\rho - \varepsilon)} \le \xi_0^{(\rho + \varepsilon)}$ almost surely. Furthermore, for each $N \in \mathbb{Z}_{\ge 1}$, let $\eta_0 = \eta_0^{(N)} = \big( \eta_0 (x) \big) = \big( \eta_0^{(N)} (x) \big) \in \{ 0, 1 \}^{\mathbb{Z}}$ denote a particle configuration such that $\eta (x) = 0$ whenever $|x| > \frac{8N}{1 - b_2}$. 
	
	Assume for any $c \in (0, \frac{8}{1 - b_2} \big)$ that
	\begin{flalign}
	\label{etasumlimitx1x2} 
	\displaystyle\lim_{N \rightarrow \infty} \displaystyle\frac{1}{c N} \displaystyle\sum_{x = 0}^{\lfloor c N \rfloor} \eta_0 (x) = \rho = \displaystyle\lim_{N \rightarrow \infty} \displaystyle\frac{1}{c N} \displaystyle\sum_{x = - \lfloor cN \rfloor}^0 \eta_0 (x),
	\end{flalign} 
	
	\noindent and let $\eta_t$, $\xi_t^{(\rho - \varepsilon)}$, and $\xi_t^{(\rho + \varepsilon)}$ denote the stochastic six-vertex models on $\mathbb{Z}$, with initial data $\eta_0$, $ \xi_0^{(\rho - \varepsilon)}$, and $\xi_0^{(\rho + \varepsilon)}$, respectively, which are mutually coupled under the higher rank coupling of \Cref{couplinghigherrank} (recall \Cref{etaxim}). Also let $E_N^{(1)}$ and $E_N^{(2)}$ denote the events on which there exists some $(i, t) \in [-N, N] \times \big[ \frac{N}{2}, 2N \big]$ such that $\eta_t (i) > \xi_t^{(\rho + \varepsilon)} (i)$ and $\eta_t (i) < \xi_t^{(\rho - \varepsilon)} (i)$, respectively. 
	
	Then, denoting $E_N = E_N^{(1)} \cup E_N^{(2)}$, we have that $\lim_{N \rightarrow \infty} \mathbb{P} [E_N] = 0$. 

\end{thm}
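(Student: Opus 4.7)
The strategy is to bound $\mathbb{P}[E_N^{(1)}]$ (showing $\eta_t(i) \le \xi_t^{(\rho+\varepsilon)}(i)$ throughout $[-N, N] \times [N/2, 2N]$ with high probability); the companion bound on $\mathbb{P}[E_N^{(2)}]$ is obtained by an entirely parallel argument, with the roles of particles and holes suitably exchanged. Under the three-process higher rank coupling of $(\eta, \xi^{(\rho-\varepsilon)}, \xi^{(\rho+\varepsilon)})$ from \Cref{couplinghigherrank} (taken with $n = 2$, $\eta^{(1)} = \eta$, $\eta^{(2)} = \emptyset$, $\xi^{(1)} = \xi^{(\rho-\varepsilon)}$, $\xi^{(2)} = \xi^{(\rho+\varepsilon)}$), the sites contributing to $E_N^{(1)}$ are precisely those where $\eta = 1$, $\xi^{(\rho-\varepsilon)} = 0$, and $\xi^{(\rho+\varepsilon)} = 0$; by \Cref{classparticles} these carry ``class~$3$'' particles of a specific $\eta$-only type. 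The goal is to show that no such particle exists on the target space-time window with probability tending to $1$ as $N \to \infty$.

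The main input is the approximate coupling result \Cref{rhotksum2}, whose proof is deferred to \Cref{ProofApproximate} and which essentially asserts that, when the initial profile has approximately constant density $\rho$ on a macroscopic interval, the discrepancies between a stochastic six-vertex model and a stationary six-vertex model of density $\rho$ are few in a time-averaged sense after a sufficiently long run. By hypothesis \eqref{etasumlimitx1x2}, $\eta_0$ has density approximately $\rho$ throughout $\big[-\tfrac{8N}{1-b_2}, \tfrac{8N}{1-b_2}\big]$. Combining \Cref{rhotksum2} with the monotonicity of stationary product measures in density (\Cref{fgcouple}) and the attractivity of the higher rank coupling, one controls the time-averaged density of $\eta$-only class-$3$ particles on an enlarged interval $[-N^\ast, N^\ast]$, where $N^\ast := N + \lceil 4N/(1-b_2)\rceil$ is sized so as to lie inside the support of $\eta_0$ while buffering against later inward propagation. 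The density-$\varepsilon$ gap between $\eta_0$ and $\xi_0^{(\rho+\varepsilon)}$ is crucial here: because $\xi^{(\rho+\varepsilon)}$ has asymptotically larger density than $\eta$, the $\eta$-only discrepancies are consumed in the limit, while the $\xi^{(\rho+\varepsilon)}$-only surplus of order $\varepsilon$ remains available to absorb fluctuations.

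To upgrade from this averaged bound to the uniform-in-time pointwise sandwich required by the theorem, a Markov-pigeonhole argument supplies a time $t_0 \in [0, N/2]$ at which, with probability tending to $1$, no $\eta$-only class-$3$ particle exists in $[-N^\ast, N^\ast]$; hence $\eta_{t_0}(i) \le \xi_{t_0}^{(\rho+\varepsilon)}(i)$ for every $i \in [-N^\ast, N^\ast]$. Attractivity of the higher rank coupling then propagates this pointwise inequality to all later times on a shrinking subinterval, while \Cref{modelsequal} and \Cref{xtxt1} control the rate at which discrepancies originating outside $[-N^\ast, N^\ast]$ can invade the region; the choice of $N^\ast - N$, being of order $N/(1-b_2)$, ensures that $[-N, N]$ remains free of such invasions throughout $t \in [t_0, 2N]$. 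Combining these observations yields $\mathbb{P}[E_N^{(1)}] \to 0$ as $N \to \infty$.

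The principal obstacle is \Cref{rhotksum2} itself. Its proof (in \Cref{ProofApproximate}) will adapt Kosygina's \cite{BEHSL} approach for the ASEP to the discrete-time stochastic six-vertex setting, by equating two alternative expressions for the net particle current through a fixed region — one derived from the classification of stationary translation-invariant measures (\Cref{translationstationary}) and the prescribed density profile, and the other from the microscopic dynamics of the higher rank coupling — and leveraging the monotonicity (\Cref{lambdaximonotone}) and attractivity (\Cref{couplinghigherrank}) of the model. A subsidiary subtlety in the present argument is the careful synchronization of the three scales $N$, $N^\ast$, and $t_0$ so that the $\varepsilon$-slack between densities, the buffer $N^\ast - N$, and the length $2N - N/2$ of the time window are jointly compatible with the quantitative bound furnished by \Cref{rhotksum2}; otherwise the union bound needed to convert a fixed-time conclusion into a uniform-in-time sandwich could lose the probability estimate.
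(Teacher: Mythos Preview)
Your proposal has a genuine gap at the step where you claim that ``a Markov-pigeonhole argument supplies a time $t_0 \in [0, N/2]$ at which, with probability tending to $1$, \emph{no} $\eta$-only class-$3$ particle exists in $[-N^\ast, N^\ast]$.'' This does not follow from \Cref{rhotksum2}. That proposition only gives you $\sum_{x} |\eta_Y(x) - \xi_Y^{(\rho)}(x)| \le \varsigma N$ with high probability, for any fixed $\varsigma > 0$; combined with $\xi^{(\rho)} \le \xi^{(\rho+\varepsilon)}$, this bounds the number of sites where $\eta_Y = 1$ and $\xi_Y^{(\rho+\varepsilon)} = 0$ by $\varsigma N$, but not by zero. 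A pigeonhole over times still yields a time with at most $\varsigma N$ discrepancies, never zero. The $\varepsilon$-density surplus of $\xi^{(\rho+\varepsilon)}$ over $\xi^{(\rho)}$ does not help: those extra $\xi^{(\rho+\varepsilon)}$-particles need not sit at the same sites as the $\eta$-excess particles, so there is no mechanism in your outline by which the surplus ``absorbs'' the remaining discrepancies. Attractivity then only tells you the discrepancy count is non-increasing, so you are stuck with up to $\varsigma N$ uncoupled $\eta$-particles for all later times --- and the event $E_N^{(1)}$ asks whether \emph{any} such particle exists in $[-N,N]\times[N/2,2N]$, which $\varsigma N$ of them certainly can.

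The paper closes exactly this gap with a genuinely dynamical argument that your outline omits. It introduces auxiliary double-sided Bernoulli processes $\omega^{(X)}$ and $\zeta^{(X)}$ (\Cref{etaomegazeta}) and uses the rarefaction-fan limit shape (via \Cref{zetaomegaetaxi}) to guarantee, on the complement of $F_Z(\omega;\delta;\gamma)$ and $F_Z(\zeta;\delta;\gamma)$, a supply of order $\varepsilon\gamma N$ fourth-class particles in $\xi^{(\rho+\varepsilon)}\setminus\omega$ (resp.\ $\setminus\zeta$) crossing any tagged uncoupled $\eta$-particle from left to right (resp.\ right to left). Each such crossing is an ``opportunity'' for the tagged particle to couple, and the key microscopic observation is that, conditional on reaching the crossing, the coupling succeeds with probability at least $b_2$ (resp.\ $1-b_2$), essentially independently across opportunities. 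With order $N$ opportunities, the probability a given $\eta$-particle remains uncoupled from time $Y$ to time $Z$ decays exponentially in $N$ (\Cref{abfprobability}, \Cref{abfprobability1}). The role of \Cref{rhotksum2} in the paper is only to cap (through $B_N(\varsigma)$) how many of these fourth-class particles can be wasted coupling with \emph{other} uncoupled $\eta$-particles to the left of the tagged one --- it is not used to produce a discrepancy-free time. Your proposal needs this entire ``opportunities to couple'' mechanism; without it, the passage from $O(\varsigma N)$ discrepancies to zero discrepancies is unjustified.
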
 

Assuming \Cref{couplexiaeta}, we can establish \Cref{sixvertexlocalcylinder}.

\begin{proof}[Proof of \Cref{sixvertexlocalcylinder} Assuming \Cref{couplexiaeta}]
	
	It suffices to show that any subsequential limit of the laws of $\big\{ \mathcal{E} |_{[U_N - k, U_N + k] \times [V_N - k, V_N + k]} \big\}$ is equal to the law of $\mathcal{F} |_{[-k, k] \times [-k, k]}$. To that end, fix some increasing sequence $N_1 < N_2 < \cdots $ of positive integers. 

 	By \eqref{etasumestimaten}; the fact that $\eta_y (x), G_y (x) \in [0, 1]$ for each $x$ and $y$; the continuity of $G_y (x)$ in $y$; and \eqref{etay1yy1}, there exists of a sequence of real numbers $\{ \varsigma_N \}$ tending to $0$ such that
	\begin{flalign*}
	\displaystyle\lim_{N \rightarrow \infty} \mathbb{P} \left[  \displaystyle\max_{\substack{0 \le X_1 \le X_2 < N \\ 0 \le Y \le L}} \bigg| \displaystyle\frac{1}{N} \displaystyle\sum_{x = X_1}^{X_2} \eta_Y (x) - \displaystyle\int_{X_1 / N}^{X_2 / N} G_{Y / N} (x) dx \bigg| > \varsigma_N \right] = 0. 
	\end{flalign*}
	
	 Thus a union bound; the Borel-Cantelli lemma; the continuity of $G_y (x)$ at $(u, v)$; and the fact that $G (u, v) = \rho$ together yield an increasing sequence $1 \le m_1 < m_2 < \cdots $ of positive integers and a decreasing sequence $\{ \delta_N \}$ of real numbers tending to $0$ such that the following holds. Denoting $M_j = \lfloor N_{m_j} \delta_j \rfloor$ for each $j \in \mathbb{Z}_{\ge 1}$, we have $\lim_{j \rightarrow \infty} M_j = \infty$ and the almost sure limit 
	\begin{flalign}
	\label{etax1x2limit}
	\displaystyle\lim_{j \rightarrow \infty} \displaystyle\max_{X_1, X_2} \bigg| \displaystyle\frac{1}{X_2 - X_1} \displaystyle\sum_{x = X_1}^{X_2} \eta_{Y_j} (x) - \rho \bigg| = 0,
	\end{flalign}
	
	\noindent where we have set $Y_j = V_{m_j} - M_j$, and $(X_1, X_2) = \big( X_1^{(j)}, X_2^{(j)} \big)$ ranges over all pairs of integers in $[0, N)$ satisfying $U_{m_j} - \delta_j^{1 / 2} N_{m_j} \le X_1 < X_1 + \delta_j^2 N_{m_j} \le X_2 \le U_{m_j} + \delta_j^{1 / 2} N_{m_j}$. 
	
	Now define the (random) particle configuration $\zeta = \zeta^{(M_j)} = \big( \zeta_0 (x) \big) = \big( \zeta_0^{(M_j)} (x) \big) \in \{ 0, 1 \}^{\mathbb{Z}}$ on $\mathbb{Z}$ by setting $\zeta_0 (x) = \eta_{Y_j}^{(N_{m_j})} \big( x + U_{m_j} \big)$ if $|x| \le \frac{8M_j}{1 - b_2}$ and $\zeta_0 (x) = 0$ otherwise. Then \eqref{etax1x2limit} implies that \eqref{etasumlimitx1x2} holds with the $\eta_0$ and $N$ there equal to the $\zeta_0$ and $M_j$ here, respectively. By \Cref{mnclasses}, \Cref{couplexiaeta}, and the Borel-Cantelli lemma, there almost surely exist an integer $j_0 > 0$ and a decreasing sequence $\{ \varepsilon_j \}$ of positive real numbers tending to $0$ such that the following holds, after restricting the $\{ m_j \}$ to a subsequence if necessary. 
	
	For each integer $j \ge 1$, let $\xi_0^{(\rho - \varepsilon_j)}$, $\xi_0^{(\rho)}$, and $\xi_0^{(\rho + \varepsilon_j)}$ denote (random) particle configurations on $\mathbb{Z}$, sampled from the measures $\Upsilon^{(\rho - \varepsilon_j)}$, $\Upsilon^{(\rho)}$, and $\Upsilon^{(\rho + \varepsilon_j)}$, respectively, which are mutually coupled so that $\xi_0^{(\rho + \varepsilon_j)} \ge \xi_0^{(\rho + \varepsilon_i)} \ge \xi_0^{(\rho)} \ge \xi_0^{(\rho - \varepsilon_i)} \ge \xi_0^{(\rho - \varepsilon_j)}$ whenever $i > j$. Further let $\zeta_t^{(M_j)}$, $\xi_t^{(\rho - \varepsilon_j)}$, $\xi_t^{(\rho)}$, and $\xi_t^{(\rho + \varepsilon_j)}$ denote the stochastic six-vertex models with initial data $\zeta_0^{(M_j)}$, $\xi_0^{(\rho - \varepsilon_j)}$, $\xi_0^{(\rho)}$, and $\xi_t	^{(\rho + \varepsilon_j)}$, respectively, which are mutually coupled under the higher rank coupling. Then, $\xi_t^{(\rho - \varepsilon_j)} (x) \le \zeta_t^{(M_j)} (x) \le \xi_t^{(\rho + \varepsilon_j)} (x)$ holds almost surely whenever $j > j_0$, $x \in [-M_j, M_j]$, and $t \in \big[ \frac{M_j}{2}, 2 M_j \big]$.
	
	Therefore, the attractivity of the higher rank coupling together with the Borel-Cantelli lemma and a union bound yield (after restricting to a further subsequence of the $\{ m_j \}$ and increasing $j_0$ if necessary) a nondecreasing sequence of positive integers $(K_1, K_2, \ldots )$ tending to $\infty$ such that $\zeta_{M_j - k}^{(M_j)} (x) = \xi_{M_j - k}^{(\rho)} (x)$ holds almost surely whenever $x \in [-K_j, K_j]$ and $j > j_0$. Next, let $\mathcal{F} = \mathcal{F}^{(\rho)}$ and $\mathcal{G}^{(M_j)}$ denote the (almost surely unique) six-vertex ensembles on $\mathfrak{H} = \mathbb{Z} \times \mathbb{Z}_{\ge 0}$ associated with $\xi_t^{(\rho)}$ and $\eta_t^{(M_j)}$, respectively, for each $j \in \mathbb{Z}_{\ge 1}$. Then \Cref{modelsequal} and another application of the Borel-Cantelli lemma imply (again after if necessary restricting to a subsequence of the $\{ m_j \}$, increasing $j_0$, and decreasing the $\{ K_j \}$ while still having them tend to $\infty$) that $\mathcal{G}^{(M_j)} \big|_{[-K_j, K_j] \times [M_j - k, M_j + k]} = \mathcal{F} |_{[-K_j, K_j] \times [M_j - k, M_j + k]}$ holds almost surely for each $j > j_0$. 
	
	This and \Cref{cylinderhalfplanevertex} together imply that it is possible to couple the stochastic six-vertex models $\eta_t^{(N_{m_j})}$ and $\zeta_t^{(M_j)}$ on $\mathfrak{C}$ and $\mathfrak{H}$, respectively, so that 
	\begin{flalign*} 
	\mathcal{E}^{(N_{m_j})} \big|_{[U_{m_j} - k, U_{m_j} + k] \times [V_{m_j} - k, V_{m_j} + k]} = \mathcal{G}^{(M_j)} |_{[-k, k] \times [M_j - k, M_j + k]} = \mathcal{F} |_{[-k, k] \times [M_j - k, M_j + k]},
	\end{flalign*} 
	
	\noindent holds almost surely for each $j > j_0$ (after restricting to a subsequence of the $\{ m_j \}$ and increasing $j_0$ if necessary). Since the law of $\mathcal{F}^{(\rho)}$ is given by the restriction of the measure $\mu (\rho)$ from \Cref{Translation} to $\mathfrak{H}$, and since $\mu (\rho)$ is invariant with respect to vertical shifts, it follows that any limit point of the laws of $\big\{ \mathcal{E} \big|_{[U_N - k, U_N + k] \times [V_N - k, V_N + k]} \big\}$ is equal to that of $\mathcal{F} |_{[-k, k] \times [-k, k]}$. This yields the theorem.
\end{proof}

\subsection{Additional Couplings}

\label{StationaryApproximateLocal} 

In this section we describe and provide properties of several couplings that will be used in the proof of \Cref{couplexiaeta}. 

We begin by stating the following proposition, whose analog for the ASEP was established as Proposition 12 of \cite{CLEP}. It essentially states that a stochastic six-vertex model with an approximately constant initial density profile will ``almost'' couple with a stationary stochastic six-vertex model, after being run for a sufficiently long time. Its proof will be given in \Cref{ProofApproximate} below.

\begin{prop} 
	
	\label{rhotksum2} 
	
	Adopt the notation and assumptions of \Cref{couplexiaeta}, and fix $\varsigma > 0$. Let $\xi_0^{(\rho)} = \big( \xi_0^{(\rho)} (x) \big) \in \{ 0, 1 \}^{\mathbb{Z}}$ denote a (random) particle configuration on $\mathbb{Z}$, sampled under the measure $\Upsilon^{(\rho)}$ of \Cref{zetarho}. Further let $\xi_t^{(\rho)}$ denote the stochastic six-vertex model with initial data $\xi_0^{(\rho)}$ that is coupled with $\eta_t$ through the higher rank coupling of \Cref{couplinghigherrank}. 
	
	Then, letting $R = \big\lfloor \frac{N}{1 - b_2} \big\rfloor$ and $Y = \big\lfloor \frac{N}{4} \big\rfloor$, there exists a constant $C = C (b_1, b_2, \varsigma) > 0$ such that 
	\begin{flalign*}
	\mathbb{P} \left[  \displaystyle\sum_{x = - 4 R}^{4R}  \big| \eta_Y (x) - \xi_Y^{(\rho)} (x) \big| > \varsigma N \right] < \displaystyle\frac{C}{\log N}.
	\end{flalign*}
\end{prop}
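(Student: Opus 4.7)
The plan is to adapt the method of Kosygina \cite{BEHSL}, as reformulated by Bahadoran-Mountford \cite{CLEP}, by obtaining two different expressions for the current of the $\eta$-process and equating them. Under the higher rank coupling of \Cref{couplinghigherrank}, each uncoupled (second-class) particle in $\eta_t$ or $\xi_t^{(\rho)}$ contributes to $|\eta_t - \xi_t^{(\rho)}|$, and such particles are conserved except for pairwise annihilation upon coupling. Consequently, $\sum_{x = -4R}^{4R} \bigl|\eta_Y(x) - \xi_Y^{(\rho)}(x)\bigr|$ is controlled by the discrepancies initially present in a slightly enlarged window (localized using the speed bound \Cref{xtxt1} together with \Cref{modelsequal}) plus the net flux of second-class particles through $x = \pm 4R$ during $[0, Y]$. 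Letting $J_t^\eta(x)$ and $J_t^\xi(x)$ denote the currents of the two models through bond $(x, x+1)$, the rightward-only motion of particles gives the identity
\[
J_t^\eta(x) - J_t^\eta(y) = \sum_{z = x+1}^{y} \bigl( \eta_t(z) - \eta_0(z) \bigr),
\]
and its analog for $\xi^{(\rho)}$, which will reduce the proposition to bounding $\mathbb{E}\bigl| J_Y^\eta(x_*) - J_Y^\xi(x_*) \bigr|$ for $x_* \in \{-4R, 4R\}$.

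\textbf{Two approximations for the current.} For the stationary $\xi^{(\rho)}$-process, translation invariance of the Gibbs measure $\mu(\rho)$ from \Cref{Translation} yields $\mathbb{E}\bigl[J_Y^\xi(x_*)\bigr] = Y \varphi(\rho)$ exactly, and fluctuations of $J_Y^\xi(x_*)$ around its mean will be shown to be of order $o(N)$ using the mixing properties of $\mu(\rho)$ from \cite{CFSAEPSSVMCL}. For the $\eta$-process, the goal is to show $\mathbb{E}\bigl[J_Y^\eta(x_*)\bigr] = Y \varphi(\rho) + o(N)$. Decomposing $J_Y^\eta(x_*) = \sum_{s=1}^{Y} \Phi_s(\eta_{s-1})$ for a local crossing indicator $\Phi_s$ and then time-averaging, the plan is to use the hypothesis \eqref{etasumlimitx1x2} together with the attractivity of the higher rank coupling (which sandwiches $\eta$ between $\xi^{(\rho - \varepsilon)}$ and $\xi^{(\rho + \varepsilon)}$ on the relevant window by \Cref{fgcouple} and \Cref{couplinghigherrank}) to force the time-averaged law of $\eta_{s-1}$ near $x_*$ to be close to $\Upsilon^{(\rho)}$ in a suitably weak sense; sending $\varepsilon \to 0$ at the end yields the required flux identity.

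\textbf{The main obstacle.} The principal difficulty is upgrading the expectation-level comparison to the quantitative probability bound of order $(\log N)^{-1}$. The plan is to average the cutoff $x_*$ over a mesoscopic range, say $[4R, 4R + M]$ and symmetrically for $-4R$; by the shift compatibility of monotone couplings noted in \Cref{tmu1mu2m}, such averaging preserves the identity while reducing the effective variance by a factor of $M$. Combining the resulting averaged estimate with Markov's inequality will produce a bound of order $N/M + o(1)$, and optimizing $M$ against the error in the flux approximation will yield the claimed $(\log N)^{-1}$ rate. The most delicate step, and where the bulk of the technical work will lie, is the six-vertex analog of Kosygina's two-block estimate in this discrete-time, sequential-update setting: replacing $\eta_{s-1}$ by a local equilibrium when computing the expected increment of the current. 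Because of the absence of a workable generator for the stochastic six-vertex model (noted in the introduction), this replacement must be carried out directly using the classification of stationary, translation-invariant measures provided by \Cref{translationstationary}, together with the monotonicity of \Cref{lambdaximonotone}, in place of the entropy-based tools standard in the continuous-time literature.
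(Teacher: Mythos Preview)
Your proposal contains two genuine gaps.

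\textbf{Circularity of the sandwich.} You write that attractivity ``sandwiches $\eta$ between $\xi^{(\rho-\varepsilon)}$ and $\xi^{(\rho+\varepsilon)}$ on the relevant window by \Cref{fgcouple} and \Cref{couplinghigherrank}.'' But \Cref{fgcouple} compares two \emph{product} measures; it says nothing about a deterministic configuration $\eta_0$ satisfying only the density condition \eqref{etasumlimitx1x2}. Producing a coupling with $\xi_t^{(\rho-\varepsilon)} \le \eta_t \le \xi_t^{(\rho+\varepsilon)}$ on $[-N,N]$ for $t \ge N/2$ is precisely the content of \Cref{couplexiaeta}, which is what \Cref{rhotksum2} is being used to prove. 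So this step assumes the conclusion.

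\textbf{The current reduction does not control absolute discrepancies.} Your identity $J_t^\eta(x) - J_t^\eta(y) = \sum_{z=x+1}^y (\eta_t(z)-\eta_0(z))$ and its $\xi$-analog yield the \emph{signed} difference $\sum_z (\eta_Y(z)-\xi_Y^{(\rho)}(z))$, not $\sum_z |\eta_Y(z)-\xi_Y^{(\rho)}(z)|$. Under the higher rank coupling, second-class particles of both signs are present and can annihilate, but the number of annihilations is exactly what one must show is large; current differences at $\pm 4R$ do not see this. Moreover, the ``discrepancies initially present'' between a deterministic $\eta_0$ and a Bernoulli $\xi_0^{(\rho)}$ are generically of order $N$, so that bound by itself is vacuous.

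\textbf{What the paper does instead.} The paper introduces a dense family $\{\xi^{(\rho_j)}\}_{j=0}^{A}$ with $\rho_j = j/A$, $A = \lceil N^{1/8}\rceil$, all coupled to $\eta$ via the higher rank coupling. By \Cref{guvpqsum}, on intervals $I$ of size $k=\lceil(\log N)^{1/4}\rceil$ and for most $(I,t)$, the restriction $\eta_t|_I$ is simultaneously ordered with every $\xi_t^{(\rho_j)}|_I$; this defines a local density $\rho(I;t)$ with $\xi_t^{(\rho_j)}|_I \le \eta_t|_I \le \xi_t^{(\rho_{j+1})}|_I$. The total current $\sum_i (p_{\lfloor \varpi N\rfloor}(i)-p_0(i))$ is then computed in two ways: globally it is $\approx 16R \cdot \varpi N \cdot \varphi(\rho)$ (\Cref{heightcompare}, via the particle-position monotonicity of \Cref{lambdaximonotone} against a stationary process), and locally it is $\approx k\sum_{t}\sum_I \varphi(\rho(I;t))$ (\Cref{horizontalestimatesum}). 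Since $k\sum_I \rho(I;t) \approx M \approx 16R\rho$ by mass conservation and $\varphi'' < 0$, equating the two expressions forces $\sum_{t,I}|\rho(I;t)-\rho|^2$ to be small (\Cref{rhotksum}). One then picks a single good time $t_0 \le \varpi N$ at which most $\rho(I;t_0)$ are within $\varpi^{1/4}$ of $\rho$, deduces $\sum_x |\eta_{t_0}(x)-\xi_{t_0}^{(\rho)}(x)|$ is small on $[-8R,8R]$, and propagates to $Y$ by attractivity. The concavity of $\varphi$, not a pre-existing sandwich, is what forces the local densities to agree with $\rho$; this is the idea your outline is missing.
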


Next, we require two additional stochastic six-vertex models given by the following definition. In what follows, we recall the measure $\Upsilon^{(\theta; \rho)}$ from \Cref{lambdarho}, and also the operator $\mathfrak{S}$ from \Cref{soperator}.  

\begin{definition}
	
\label{etaomegazeta} 

Fix $X \in \mathbb{Z}$, and let $\omega_0 = \omega_0^{(X)} = \omega_0^{(\rho; \varepsilon; X)} = \big( \omega_0 (x) \big) \in \{ 0, 1 \}^{\mathbb{Z}}$ and $\zeta_0 = \zeta_0^{(X)} = \zeta_0^{(\rho; \varepsilon; X)} = \big( \zeta_0 (x) \big) \in \{ 0, 1 \}^{\mathbb{Z}}$ denote two particle configurations on $\mathbb{Z}$, sampled according to the measures $\mathfrak{S}_{-X} \Upsilon^{(\rho; \rho + \varepsilon)}$ and $\mathfrak{S}_{-X} \Upsilon^{(\rho + \varepsilon; \rho)}$, respectively. In particular, $\mathbb{P} \big[ \omega_0 (x) \big] = \rho$ and $\mathbb{P} \big[ \zeta_0 (x) \big] = \rho + \varepsilon$ whenever $x \le X$, and $\mathbb{P} \big[ \omega_0 (x) \big] = \rho + \varepsilon$ and $\mathbb{P} \big[ \zeta_0 (x) \big] = \rho$ whenever $x > X$. Further let $\omega_t$ and $\zeta_t$ denote the stochastic six-vertex models with initial data $\omega_0$ and $\zeta_0$, respectively. 

\end{definition}

We now have the following lemma, whose statement (and proof) is similar to that of Lemma 10 and Lemma 11 of \cite{CLEP} (which address the ASEP). It considers the systems $\omega_t$, $\zeta_t$, and $\xi_t^{(\rho + \varepsilon)}$ when coupled under the higher rank coupling and provides a lower bound on the number of second class particles in a specific interval. In what follows, we recall the function $\varphi$ from \eqref{kappafunction}.

\begin{lem}

\label{zetaomegaetaxi}

Adopt the notation of \Cref{couplexiaeta} and \Cref{etaomegazeta}; let $\gamma > 0$ denote a real number; let $M > 0$ denote an integer; and assume that $\omega_0 \le \xi_0^{(\rho + \varepsilon)}$ and $\zeta_0 \le \xi_0^{(\rho + \varepsilon)}$ almost surely. Defining $u < v$ by 
\begin{flalign}
\label{uvidentity} 
u = \varphi' (\rho) + \displaystyle\frac{7 \varepsilon \varphi'' (\rho)}{8}; \qquad  v = \varphi' (\rho) + \displaystyle\frac{5 \varepsilon \varphi'' (\rho)}{8},
\end{flalign} 

\noindent there exist constants $c_1 = c_1 (b_1, b_2) \in (0, 1)$ and $c_2 = c_2 (b_1, b_2) \in (0, 1)$ such that the following two statements hold. 

\begin{enumerate}
	
	\item Couple $\omega_t \le \xi_t^{(\rho + \varepsilon)}$ under the higher rank coupling. If $\varepsilon < c_1$ and $\gamma \le c_1 \varepsilon$, then
	\begin{flalign*} 
	\displaystyle\lim_{M \rightarrow \infty} \mathbb{P} \left[ \displaystyle\sum_{x = \lfloor v M \rfloor}^{\lfloor (v + \gamma) M \rfloor} \big( \xi_M^{(\rho + \varepsilon)} (X + x) - \omega_M (X + x) \big) < c_1 \varepsilon \gamma M \right] = 0.
	\end{flalign*}

	\item Couple $\zeta_t \le \xi_t^{(\rho + \varepsilon)}$ under the higher rank coupling. If $\varepsilon < c_2$ and $\gamma \le c_2 \varepsilon$, then
	\begin{flalign*}
	\displaystyle\lim_{M \rightarrow \infty} \mathbb{P} \left[ \displaystyle\sum_{x = \lfloor (u - \gamma) M \rfloor}^{\lfloor u M \rfloor} \big( \xi_M^{(\rho + \varepsilon)} (X + x) - \zeta_M (X + x) \big) < c_2 \varepsilon \gamma M \right] = 0.
	\end{flalign*}

\end{enumerate} 

\end{lem}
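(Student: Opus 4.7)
The plan is to deduce both statements from the limit shape result \Cref{limitdouble}, applied to $(\omega_t)$ and $(\zeta_t)$ respectively, combined with the stationarity of $\Upsilon^{(\rho+\varepsilon)}$ under the six-vertex dynamics (so that $\xi_M^{(\rho+\varepsilon)}$ is still $\Upsilon^{(\rho+\varepsilon)}$-distributed). Since the higher rank coupling preserves $\omega_t \le \xi_t^{(\rho+\varepsilon)}$ and $\zeta_t \le \xi_t^{(\rho+\varepsilon)}$, the quantities of interest are non-negative sums of differences $\xi_M^{(\rho+\varepsilon)}(X+x)-\omega_M(X+x)$ or $\xi_M^{(\rho+\varepsilon)}(X+x)-\zeta_M(X+x)$. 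The strategy in each part is to compute the asymptotic density of each of the two configurations in the relevant window and subtract.

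For part (1), the initial data of $\omega_0$ is (after shifting by $X$) double-sided Bernoulli with the smaller density $\rho$ on the left and the larger density $\rho+\varepsilon$ on the right, which is the shock case of \Cref{limitdouble} addressed by \Cref{thetarho2}. The shock location, by \eqref{vthetarho}, is $\vartheta(\rho,\rho+\varepsilon) = \bigl(\varphi(\rho+\varepsilon)-\varphi(\rho)\bigr)/\varepsilon = \varphi'(\rho) + \tfrac{\varepsilon}{2}\varphi''(\rho) + O(\varepsilon^2)$. Comparing with $v = \varphi'(\rho) + \tfrac{5\varepsilon}{8}\varphi''(\rho)$ and using $\varphi''(\rho)<0$, we have $\vartheta - (v+\gamma) = \tfrac{\varepsilon}{8}|\varphi''(\rho)| - \gamma + O(\varepsilon^2)$; since $|\varphi''(\rho)|$ is bounded below by a positive constant depending only on $(b_1,b_2)$ for $\rho\in[0,1]$, this is strictly positive once $\gamma \le c_1\varepsilon$ for a suitably small $c_1=c_1(b_1,b_2)$ and $\varepsilon<c_1$. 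Then the window $[X+\lfloor vM\rfloor, X+\lfloor(v+\gamma)M\rfloor]$ lies strictly to the left of the shock, so \Cref{thetarho2} yields $M^{-1}\sum_{x=\lfloor vM\rfloor}^{\lfloor(v+\gamma)M\rfloor}\omega_M(X+x) \to \rho\gamma$ in probability. Meanwhile, $\xi_M^{(\rho+\varepsilon)}$ is a product $(\rho+\varepsilon)$-Bernoulli configuration, so a standard law of large numbers for sums of i.i.d. Bernoullis gives $M^{-1}\sum_{x}\xi_M^{(\rho+\varepsilon)}(X+x) \to (\rho+\varepsilon)\gamma$ in probability. Subtracting yields the limit $\varepsilon\gamma$, which beats $c_1\varepsilon\gamma$ after shrinking $c_1$ if necessary.

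For part (2), $\zeta_0$ has the larger density $\rho+\varepsilon$ on the left and $\rho$ on the right, so this is the rarefaction fan case of \Cref{limitdouble} (\Cref{thetarho1}). The fan occupies the interval $[\varphi'(\rho+\varepsilon),\varphi'(\rho)]$; by \eqref{gxidentity} the limiting density at position $X+aM$ is $(\varphi')^{-1}(a)$ inside this fan. The left endpoint of our window, at scaled coordinate $u-\gamma$ with $u=\varphi'(\rho)+\tfrac{7\varepsilon}{8}\varphi''(\rho)$, satisfies $(u-\gamma)-\varphi'(\rho+\varepsilon) = \tfrac{\varepsilon}{8}|\varphi''(\rho)| - \gamma + O(\varepsilon^2) > 0$ for $\gamma\le c_2\varepsilon$ with small enough $c_2=c_2(b_1,b_2)$, so the entire window lies inside the fan. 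Taylor expansion of $(\varphi')^{-1}$ around $\varphi'(\rho)$ gives $(\varphi')^{-1}(\varphi'(\rho)+h) = \rho + h/\varphi''(\rho) + O(h^2)$, hence $\int_{u-\gamma}^{u}(\varphi')^{-1}(a)\,da = \rho\gamma + \tfrac{7\varepsilon\gamma}{8} + O(\gamma^2/|\varphi''(\rho)|) + O(\varepsilon^2\gamma)$. Thus \Cref{thetarho1} gives that $M^{-1}\sum_{x=\lfloor(u-\gamma)M\rfloor}^{\lfloor uM\rfloor}\zeta_M(X+x)$ converges in probability to $\rho\gamma + \tfrac{7\varepsilon\gamma}{8} + O(\gamma^2 + \varepsilon^2\gamma)$, while the corresponding sum for $\xi_M^{(\rho+\varepsilon)}$ converges to $(\rho+\varepsilon)\gamma$. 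The difference converges to $\tfrac{\varepsilon\gamma}{8} + O(\gamma^2 + \varepsilon^2\gamma)$, which exceeds $c_2\varepsilon\gamma$ for $c_2$ small.

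The bookkeeping is essentially routine; the only non-trivial point is arranging that the constants $c_1,c_2$ depend only on $(b_1,b_2)$ and not on $\rho\in[\varepsilon,1-\varepsilon]$, which follows from the uniform lower bound on $|\varphi''(\rho)|$ over $\rho\in[0,1]$ in terms of $\kappa=(1-b_1)/(1-b_2)$. The main obstacle, such as it is, is verifying that the quantitative Taylor-type control on the shock velocity (part 1) and on the rarefaction density profile (part 2) can be consistently absorbed into a single constant while keeping the window strictly away from the boundary of the fan/shock; this is what forces the specific coefficients $\tfrac{5}{8}$ and $\tfrac{7}{8}$ in the definitions of $u$ and $v$ (leaving a buffer of $\tfrac{1}{8}$ on both sides). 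No new probabilistic input beyond \Cref{limitdouble}, the stationarity of $\Upsilon^{(\rho+\varepsilon)}$, and classical Bernoulli concentration is needed.
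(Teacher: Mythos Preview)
Your proposal is correct and follows essentially the same approach as the paper: both parts are deduced from \Cref{limitdouble} applied to the double-sided Bernoulli initial data of $\omega$ and $\zeta$, combined with the stationarity of $\Upsilon^{(\rho+\varepsilon)}$ and a Bernoulli large deviations bound for $\xi_M^{(\rho+\varepsilon)}$, followed by a Taylor expansion of the shock location or rarefaction profile to extract the $c\varepsilon\gamma$ lower bound. The paper only writes out part (2) and phrases the last step as bounding $\int_{u-\gamma}^{u}\big(\rho+\varepsilon - G_1(x)\big)\,dx$ directly rather than computing the two pieces separately, but this is the same computation you carry out.
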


\begin{proof}
	
	Both of these statements follow in a similar way from \Cref{limitdouble}. Hence, we only establish the latter (with respect to $\zeta$); we may assume that $X = 0$. 
	
	To that end, recall that the entropy solution $G_1 (x) = G(x, 1)$ of the equation \eqref{gxydefinition} on $\mathbb{R} \times \mathbb{R}_{\ge 0}$ with initial data $G_0 (x) = (\rho + \varepsilon) \textbf{1}_{x \le 0} + \rho \textbf{1}_{x > 0}$ is explicitly given by 
	\begin{flalign}
	\label{gxidentity2} 
	\begin{aligned} 
	& G_1 (x) = \rho + \varepsilon \quad \text{if $x \le \varphi' (\rho + \varepsilon)$}; \qquad G_1 (x) = \rho \quad \text{if $x \ge \varphi' (\rho)$}; \\
	& G_1 (x) = (\varphi')^{-1} (x) = \displaystyle\frac{\sqrt{\kappa x^{-1}} - 1}{\kappa - 1} \quad \text{if $\varphi' (\rho + \varepsilon) < x < \varphi' (\rho)$}.
	\end{aligned} 
	\end{flalign}  
	
	Now, \Cref{limitdouble}, the fact that $\xi_t^{(\rho + \varepsilon)}$ is stationary for any $t \ge 0$, and a large deviations estimate for sums of independent $0-1$ Bernoulli random variables together imply for any $\varsigma > 0$ that 
	\begin{flalign}
	\label{msumestimate}
	\displaystyle\lim_{M \rightarrow \infty} \mathbb{P} \left[ \displaystyle\frac{1}{M} \displaystyle\sum_{x = \lfloor (u - \gamma) M \rfloor}^{\lfloor u M \rfloor} \big( \xi_M^{(\rho + \varepsilon)} (x) - \zeta_M (x) \big) < \displaystyle\int_{u - \gamma}^u \big( \rho + \varepsilon - G_1 (x) \big) dx - \varsigma \right] = 0. 
	\end{flalign}

	Next, a Taylor expansion and the fact that $\varphi'' (\rho) < 0$ together imply that $\varphi' (\rho + \varepsilon) < u + \frac{\varepsilon \varphi'' (\rho)}{16}$, for sufficiently small $\varepsilon$. Thus, from \eqref{gxidentity2}, a Taylor expansion, and the fact that any derivative of $\varphi$ is bounded away from $0$, it quickly follows that there exists a constant $c = c(b_1, b_2) > 0$ such that $\int_{u - \gamma}^u \big( \rho + \varepsilon - G_1 (x) \big) > c \varepsilon \gamma$ for $\gamma < c \varepsilon$. The second statement of the lemma (with $c_2 = \frac{c}{2}$) therefore follows from \eqref{msumestimate} upon setting $\varsigma = \frac{c \varepsilon \gamma}{2}$. 
\end{proof}

\subsection{Proof of \Cref{couplexiaeta}}  

\label{StationaryLocal}

In this section we establish \Cref{couplexiaeta}. To that end, it suffices to show that $\lim_{N \rightarrow \infty} \mathbb{P} \big[ E_N^{(1)} \big] = 0 = \lim_{N \rightarrow \infty} \mathbb{P} \big[ E_N^{(2)} \big]$. We will only show the first of these two equalities, as the proof of the latter is entirely analogous. Given the content of the previous sections, this will follow Section 4 of \cite{CLEP}. 

Throughout this section, we set $R = \big\lfloor \frac{N}{1 - b_2} \big\rfloor$, $Y = \big\lfloor \frac{N}{4} \big\rfloor$, and $Z = \big\lfloor \frac{N}{2} \big\rfloor$. Additionally, let $\xi_0^{(\rho)} = \big( \xi_0^{(\rho)} (x) \big) \in \{ 0, 1 \}^{\mathbb{Z}}$ denote a (random) particle configuration on $\mathbb{Z}$ sampled from the measure $\Upsilon^{(\rho)}$ from \Cref{zetarho}. Assume that $\xi_0^{(\rho)}$ and $\xi_0^{(\rho + \varepsilon)}$ are coupled so that $\xi_0^{(\rho)} \le \xi_0^{(\rho + \varepsilon)}$. Let $\big( \xi_t^{(\rho)} \big)$ denote the stochastic six-vertex model with initial data $\xi_0^{(\rho)}$, and mutually couple $\eta_t$ and $\xi_t^{(\rho)} \le \xi_t^{(\rho + \varepsilon)}$ under the higher rank coupling of \Cref{couplinghigherrank} (recall \Cref{etaxim}). 

Now we define several events that will be useful to us. Let $A = A_N$ denote the event on which at least one of the following three possibilities occur. 

\begin{enumerate} 
	
	\item There exists a particle in $ \eta_Y \cap \big( (-\infty, - 2R] \cup [2R, \infty) \big)$ that enters the interval $[-N, N]$ at some time $t \in [Y, 2N]$. 
	
	\item There exists a particle in  $\eta_Z \cap \big( (-\infty, 2R] \cup [2R, \infty) \big)$ that enters the interval $[-N, N]$ at some time $t \in [Z, 2N]$. 
	
	\item There exists a particle in $\eta_Y \cap \big( (-\infty, -4R] \cup [4R, \infty) \big)$ that enters the interval $[-3R, 3R]$ at some time $t \in [Y, Z]$. 
	
\end{enumerate} 
	
\noindent Furthermore, for any $\varsigma > 0$, define the event 
\begin{flalign*}
B_N (\varsigma) = \left\{ \displaystyle\sum_{x = -4R}^{4R} \big| \eta_Y (x) - \xi_Y^{(\rho)} (x) \big| > \varsigma N \right\}.
\end{flalign*}

\noindent Next, for any $\delta > 0$, and define the set 
\begin{flalign*} 
\mathcal{X} = \mathcal{X}_{\delta} = \mathcal{X}_{\delta; N} = \left\{ X: \displaystyle\frac{X}{\lfloor \delta N \rfloor} \in \mathbb{Z} \right\} \cap [-4R, 4R],
\end{flalign*} 

\noindent consisting of integer multiples of $\lfloor \delta N \rfloor$ in the interval $[-4R, 4R]$. Let $\omega_0^{(X)}, \zeta_0^{(X)}$ for each $X \in \mathcal{X}$ be particle configurations as in \Cref{etaomegazeta}. Assume that $\omega_0^{(X)}$ and $\zeta_0^{(X)}$ are coupled with $\xi_Y^{(\rho)}$ and $\xi_Y^{(\rho + \varepsilon)}$ so that $\xi_Y^{(\rho)} \le \omega_0^{(X)} \le \xi_Y^{(\rho + \varepsilon)}$ and $\xi_Y^{(\rho)} \le \zeta_0^{(X)} \le \xi_Y^{(\rho + \varepsilon)}$, for each $X \in \mathcal{X}$. 

Further couple the $\omega_t^{(X)}$ and $\zeta_t^{(X)}$ with $\eta_{Y + t}$, $\xi_{Y + t}^{(\rho)}$, and $\xi_{Y + t}^{(\rho + \varepsilon)}$ under the $n = 3$ case of the higher rank coupling (recall \Cref{etaxim}), where we set the $\eta_t$ there to be the $\eta_{Y + t}$ here and the $\xi_t^{(1)} \le \xi_t^{(2)} \le \xi_t^{(3)}$ there to be the $\xi_{Y + t}^{(\rho)} \le \omega_t^{(X)} \le \xi_{Y + t}^{(\rho + \varepsilon)}$ (or $\xi_{Y + t}^{(\rho)} \le \zeta_t^{(X)} \le \xi_{Y + t}^{(\rho + \varepsilon)}$) here. 

Recalling the quantities $u$ and $v$ from \eqref{uvidentity} and the constants $c_1$ and $c_2$ from \Cref{zetaomegaetaxi}, we define for any integer $M \ge Y$ and real number $\gamma > 0$ the events 
\begin{flalign*} 
& F_M (\omega; X; \gamma) = \left\{ \displaystyle\sum_{x = \lfloor v (M - Y) \rfloor}^{\lfloor (v + \gamma) (M - Y) \rfloor} \big( \xi_M^{(\rho + \varepsilon)} (X + x) - \omega_{M - Y}^{(X)} (X + x) \big) < \displaystyle\frac{c_1 \varepsilon \gamma N}{4} \right\}; \\
& F_M (\zeta; X; \gamma) = \left\{  \displaystyle\sum_{x = \lfloor (u - \gamma) (M - Y) \rfloor}^{\lfloor u (M - Y) \rfloor} \big( \xi_M^{(\rho + \varepsilon)} (X + x) - \zeta_{M - Y}^{(X)} (X + x) \big) < \displaystyle\frac{c_2 \varepsilon \gamma N}{4} \right\}; \\
& \quad F_M (\omega; \delta; \gamma) = \bigcap_{X \in \mathcal{X}_{\delta; N}} F_M (\omega; X; \gamma); \qquad F_M (\zeta; \delta; \gamma) = \bigcap_{X \in \mathcal{X}_{\delta; N}} F_M (\zeta; X; \gamma). 
\end{flalign*}

\noindent Then, for any $\varepsilon < \min \{ c_1, c_2 \}$, $\varsigma, \delta > 0$, and $0 < \gamma \le \varepsilon \min \{ c_1, c_2 \}$, \Cref{xtxt1}, \Cref{rhotksum2}, \Cref{zetaomegaetaxi}, a large deviations estimate for sums of independent geometric random variables, and a union bound yield a constant $c = c(b_2) > 0$ such that 
\begin{flalign}
\label{probabilityab}
\mathbb{P} [A_N] \le c^{-1} e^{-cN}; \quad \displaystyle\lim_{N \rightarrow \infty} \mathbb{P} \big[ B_N (\varsigma) \big] = 0; \quad \displaystyle\lim_{N \rightarrow \infty} \mathbb{P} \big[ F_Z (\omega; \delta; \gamma) \big] = 0 = \displaystyle\lim_{N \rightarrow \infty} \mathbb{P} \big[ F_Z (\zeta; \delta; \gamma) \big]. 
\end{flalign}

\noindent Thus, defining the events 
\begin{flalign*} 
F_N (\omega; \delta; \varsigma; \gamma) = A_N \cup B_N (\varsigma) \cup F_Z (\omega; \delta; \gamma); \qquad F_N (\zeta; \delta; \varsigma; \gamma) = A_N \cup B_N (\varsigma) \cup F_Z (\zeta; \delta; \gamma),
\end{flalign*} 

\noindent it follows from \eqref{probabilityab} that 
\begin{flalign}
\label{probabilityab1}
\displaystyle\lim_{N \rightarrow \infty} \mathbb{P} \big[ F_N (\omega; \delta; \varsigma; \gamma) \big] = 0 = \displaystyle\lim_{N \rightarrow \infty} \mathbb{P} \big[ F_N (\zeta; \delta; \varsigma; \gamma) \big]. 
\end{flalign}

Now let $\big( p_t (k) \big)$ denote the particle configuration associated with the stochastic six-vertex model $\eta = \eta^{(N)}$. For any integers $U, V \in [-3R, 3R]$, let $E_N (U; V)$ denote the event on which there exists a $k \in \mathbb{Z}$ for which $p_Y (k) = U$; $p_Z (k) = V$; and $\xi_Y^{(\rho + \varepsilon)} (U) = 0 = \xi_Z^{(\rho + \varepsilon)} (V)$. Stated alternatively, $E_N (U; V)$ denotes the event on which there exists a particle in $\eta$ at site $U$ and time $Y$ that moves to site $V$ at time $Z$, which is uncoupled with a particle in $\big( \xi_t^{(\rho + \varepsilon)} \big)$ for each $t \le Z$. Then, since $A_N \subseteq F_N (\omega; \delta; \varsigma; \gamma) \cap F_N (\zeta; \delta; \varsigma; \gamma)$, we have that 
\begin{flalign}
\label{en1}
\begin{aligned}
E_N^{(1)} & \subseteq F_N (\omega; \delta; \varsigma; \gamma) \cup F_N (\zeta; \delta; \varsigma; \gamma) \cup \bigcup_{-3R \le U \le V \le 3R} E_N (U; V) \\
& = \bigcup_{-3R \le U \le V \le 3R} \big( F_N (\omega; \delta; \varsigma; \gamma)^c \cap E_N (U; V) \big) \cap \big( F_N (\zeta; \delta; \varsigma; \gamma)^c \cap E_N (U; V) \big) \\
& \qquad \qquad \cup F_N (\omega; \delta; \varsigma; \gamma) \cup F_N (\zeta; \delta; \varsigma; \gamma). 
\end{aligned}
\end{flalign}

The following two lemmas bound $\mathbb{P} \big[ F_N (\omega; \delta; \varsigma; \gamma)^c \cap E_N (U; V) \big]$ and $\mathbb{P} \big[ F_N (\zeta; \delta; \varsigma; \gamma)^c \cap E_N (U; V) \big]$ when $V - U$ is sufficiently small or large, respectively. Here, we recall $u$ and $v$ from \eqref{uvidentity}. 

\begin{lem}

\label{abfprobability} 

There exists a constant $c = c (\varepsilon, \varsigma) > 0$ such that the following holds. Assume that $c_1 \varepsilon \gamma > 16 \delta + 8 \varsigma$ and that $U, V \in [-3R, 3R]$ are integers satisfying $V - U < v (Z - Y)$. Then $\mathbb{P} \big[ F_N (\omega; \delta; \varsigma; \gamma)^c \cap E_N (U; V) \big] < c^{-1} e^{-cN}$.

\end{lem}

\begin{proof}

	Throughout this proof, let $X \in \mathcal{X}$ be such that $U \le X \le U + \delta N$. Abbreviate $\omega = \omega^{(X)}$, and mutually couple $\eta_t$ and $\big( \xi_t^{(\rho)}, \omega_{t - Y}, \xi_t^{(\rho + \varepsilon)} \big)$ under the higher rank coupling (recall \Cref{etaxim}). Let $\big( q_t (k) \big)$ denote the positions of the fourth class particles in $\xi_t^{(\rho + \varepsilon)}$ (namely, those not in $\eta_t \cup \omega_{t - Y}	$). 
	
	In order to establish this lemma, we will show that each fourth class particle that starts to the left of $U$ at time $Y$ and ends to the right of $V$ at time $Z$ provides an ``opportunity'' for $p_t (k)$ to couple; the number of such particles will be bounded below by a positive multiple of $N$, due to the event $F_N (\omega; X; \gamma)$. Next, the probability that $p (k)$ ``declines'' the opportunity presented by any such particle will be bounded away from $1$, and these decisions will essentially be independent. Therefore, the probability that $p (k)$ remains uncoupled (declines all opportunities) until time $Z$ will decay exponentially in $N$. 
	
	To implement this in more detail, we condition on $\eta_t$ for $t \le Z$, and define the event $G_N (U; V) = \big\{ \eta_Y (U) = 1 = \eta_{Z} (V) \big\}$. Restricting to $B_N (\varsigma) \cap G_N (U; V)$, let $W_t = p_t (k) \in \eta_t$ denote the tagged position of the particle (which is fourth class under the coupling described above) in $\eta$ such that $W_Y = U$ and $W_{Z} = V$. 
	
	Next, similarly to in Section 4 of \cite{CLEP}, define the sequence of integers $T_0 < T_1 < T_2 < \cdots $ by setting $T_0 = Y$ and the remaining $T_i$ as follows. For each $i > 0$, let $T_i$ denote the minimum integer larger than $T_{i - 1}$ such that the following two conditions hold. 
	
	\begin{enumerate} 
	
	\item The particle $p_{T_i - 1} (k) \in \eta_{T_i - 1}$ is uncoupled with one in $\eta_{T_i - 1}^{(\xi + \varepsilon)}$, that is, $\xi_{T_i - 1}^{(\rho + \varepsilon)} (W_{T_i - 1}) = 0$. 
		
	\item A fourth class particle in $\xi_{T_i - 1}^{(\rho + \varepsilon)}$ to the left of $W_{T_i - 1}$ at time $T_i - 1$ jumped either to or to the right of $W_{T_i}$ at time $T_i$. Stated alternatively, there exists $m \in \mathbb{Z}$ such that $q_{T_i - 1} (m) < W_{T_i - 1}$ and $r_{T_i} (m) \ge W_{T_i}$, where $r_t (m)$ denotes the tagged position (recall \Cref{higherrankclass}) of the particle $q_{T_i - 1} (m)$. 
	
	\end{enumerate}

	 If no such integer exists, then we set $T_j = \infty$ for each $j \ge i$. Observe that $T_j = \infty$ if either the particle $p_t (k)$ couples with one of $\xi_t^{(\rho + \varepsilon)}$ for some $t \le T_{j - 1}$, or if no particle in $\xi_t^{(\rho + \varepsilon)}$ not in $\omega_t \cup \eta_t$ jumps from the left of $p_t (k)$ to the right of (or to) $p_t (k)$ for any $t \ge T_{j - 1}$. These times $t = T_i$ indicate when $p_t (k)$ is presented with the ``opportunity'' to couple. 
	
	Under the above notation, define the event $\Omega_j = \big\{ r_{T_j} (m) > W_{T_j} \big\}$, for each integer $j \ge 1$. Stated alternatively, $\Omega_j$ denotes the event on which there exists a fourth particle in $\xi_{T_j - 1}$ that jumps to the right of $p_{T_j } (k)$ at time $T_j$. This happens if and only if $p_{T_j} (k)$ does not (``declines'' to) couple with a particle in $\xi_{T_j}^{(\rho + \varepsilon)}$. Therefore, letting $M \ge 0$ denote the minimal integer such that $T_{M + 1} > Z$, it follows that 
	\begin{flalign} 
	\label{egh} 
	E_N (U; V) \subseteq G_N (U; V) \cap \bigcap_{j = 1}^M \Omega_j.  
	\end{flalign}

	Now, observe that if $X$ is a $b_2$-geometric random variable then $\mathbb{P} [X = i | X \ge i] \ge b_2$, for any integer $i \ge 0$. Recalling the definition of the sampling for the multi-class stochastic six-vertex model (from \Cref{HigherRank}), let us apply this fact when the $X$ here is equal to the $j^{(4)} \big( q_{T_i - 1} (m) \big)$ there (which denotes the number of spaces that $q_{T_i - 1} (m)$ attempts to jump to the right), and the $i$ here is equal to $W_{T_i} - q_{T_i - 1} - h$ there, where $h$ denotes the number of particles of class at most $3$ in $\xi_{T_i - 1}^{(\rho + \varepsilon)} \cap \big[ q_{T_i - 1} (m), W_{T_i} \big]$ that decided to stay. It follows that $\mathbb{P} \big[ r_{T_j} (m) = W_{T_j} \big| \bigcap_{i = 1}^{j - 1} \Omega_i \big] \ge b_2$, for each $j \in [1, M]$. Hence, $\mathbb{P} \big[ \Omega_j \big| \bigcap_{i = 1}^{j - 1} \Omega_i \big] \le 1 - b_2$ for each $j \ge 1$, and so  
	\begin{flalign}
	\label{gjprobability} 
	\mathbb{P} \left[ D_n \cap \bigcap_{j = 1}^M \Omega_j \right] \le (1 - b_2)^n, \quad \text{where} \quad D_n = \{ M \ge n \},
	\end{flalign} 
	
	\noindent for any integer $n \ge 0$. 
	
	We next claim that $D_n \subseteq F_N (\omega; \delta; \varsigma; \gamma)^c \cap G_N (U; V)$ for any $n \le \varsigma N$. To that end, observe that the facts that $F_N (\omega; \delta; \varsigma; \gamma)^c \subseteq F_Z (\omega; X; \gamma)^c$; that $U \le X \le U + \delta N$; that $\xi^{(\rho + \varepsilon)} \ge \omega$; and that $c_1 \varepsilon \gamma \ge 16 \delta + 8 \varsigma$ together yield  
	\begin{flalign*}
	\displaystyle\sum_{x = v (Z - Y)}^{(v + \gamma) (Z - Y)} & \big( \xi_Z^{\rho + \varepsilon} (x + U) - \omega_{Z - Y} (x + U) \big) \\
	& \ge \left( \displaystyle\sum_{x = v (Z - Y)}^{(v + \gamma) (Z - Y)} \big( \xi_Z^{\rho + \varepsilon} (x + X) - \omega_{Z - Y} (x + X) \big) - 2 \delta N \right) \textbf{1}_{F_N (\omega; \delta; \varsigma; \gamma)^c} \\
	 & \ge \left( \displaystyle\frac{c_1 \varepsilon \gamma N}{4} - 2 \delta N \right) \textbf{1}_{F_N (\omega; \delta; \varsigma; \gamma)^c} \ge (2 \varsigma + 2 \delta) N \textbf{1}_{F_N (\omega; \delta; \varsigma; \gamma)^c}.
	\end{flalign*}
	
	In particular, since $V < U + v (Z - Y)$; since $X \le U \le X + \delta N$; and since $\omega_0$ and $\zeta_Y^{(\rho + \varepsilon)}$ coincide to the right of site $X$, this implies that upon restricting to $F_N (\omega; \delta; \varsigma; \gamma)^c$, there exists a set $\mathcal{T}$ consisting of at least $2 \varsigma N$ particles in $\xi^{(\rho + \varepsilon)} \setminus \omega$ that passed from at or to the left of $U$ at time $Y$ to the right of $V$ at time $Z$. These are third class particles in the system $\big( \xi^{(\rho)}, \omega, \xi^{(\rho + \varepsilon)} \big)$ coupled under the higher rank coupling. 
	
	Since $F_N (\omega; \delta; \varsigma; \gamma)^c \subseteq B_N (\varsigma)^c$, there exist at most $\varsigma N$ particles in $\eta_Y \cap [-4R, 4R]$ that are not coupled with a particle in $\xi^{(\rho)}$. Thus, the facts that $F_N (\omega; \delta; \varsigma; \gamma)^c \subseteq A_N^c$ and that $\xi^{(\rho + \varepsilon)} \ge \omega \ge \xi^{(\rho)}$ together imply that at most $\varsigma N$ particles in $\mathcal{T}$ will couple with some particle $p_t (n) \ge -3R$ for $n \le k$ (that is, a particle in $\eta_t$ between $-3R$ and $p_t (k)$) and $t \in [Y, Z]$. Removing these particles from $\mathcal{T}$ forms a set $\mathcal{S}$ consisting of at least $\varsigma N$ particles.
	
	Denote the trajectories of these particles in $\mathcal{S}$ (as tagged third class particles in the coupled system $\big( \xi^{(\rho)}, \omega, \xi^{(\rho + \varepsilon)} \big)$), by $S_t (1) < S_t (2) < \ldots < S_t (K)$, for some integer $K \ge \varsigma N$. On the event $G_N (U; V)$ there exists for each $i \in [1, K]$ an integer $t_i \in [Y + 1, Z]$ for which $S_{t_i - 1} (i) < W_{t_i - 1} \le W_{t_i} \le S_{t_i} (i)$. Hence, $t_i \in \{ T_1, T_2, \ldots , T_M \}$ for each $i \in [1, K]$. Since the $\{ t_i \}$ are all distinct, it follows that $D_n \subseteq F_N (\omega; \delta; \varsigma; \gamma)^c \cap G_N (U; V)$ for any $n \ge \varsigma N$. 
	
	Now the lemma follows from \eqref{egh} and \eqref{gjprobability}. 
	\end{proof}

	\begin{lem}
		
		\label{abfprobability1}

		There exists a constant $c = c (\varepsilon, \varsigma, \delta) > 0$ such that the following holds. Assume that $c_2 \varepsilon \gamma > 20 \delta + 4 \varsigma$ and that $U, V \in [-3R, 3R]$ are integers satisfying $V - U > u (Z - Y)$. Then $\mathbb{P} \big[ F_N (\zeta; \delta; \varsigma; \gamma)^c \cap E_N (U; V) \big] < c^{-1} e^{-cN}$.		
		
	\end{lem} 

	\begin{proof} 
		
	Again let $X \in \mathcal{X}$ be such that $U \le X \le U + \delta N$, abbreviate $\zeta = \zeta^{(X)}$, and mutually couple $\eta_t$ and $\big( \xi_t^{(\rho)}, \zeta_{t - Y}, \xi_t^{(\rho + \varepsilon)} \big)$ under the higher rank coupling (recall \Cref{etaxim}). The proof of this lemma will proceed similarly to that of \Cref{abfprobability}, except that $p_t (k)$ will now be presented with an opportunity to couple for every fourth class particle in this coupled system that passes from the right of $U$ at time $Y$ to the left of $V$ at time $Z$. 
	
	To that end, let us condition on $\big( \xi_t^{(\rho)}, \zeta_{t - Y}, \xi_t^{(\rho + \varepsilon)} \big)$ for $t \le Z$ and on $\eta_t$ for $t \le Y$. Restrict to the event $F_N (\zeta; \delta; \varsigma; \gamma) \cap \big\{ \eta_Y (U) = 1 \big\}$, and let $W_t = p_t (k) \in \eta_t$ denote the tagged position of the particle in $\eta$ (under the coupling described above) such that $W_Y = U$. Further let $M = \min \{ Z, T - 1 \}$, where $T$ denotes the minimal time (if it exists) at which $W_T \in \xi_T^{(\rho + \varepsilon)}$ (that is, when $p_T (k)$ couples with a particle in $\xi_T^{(\rho + \varepsilon)}$). 
	
	Since $F_N (\zeta; \delta; \varsigma; \gamma)^c \subseteq F_Z (\zeta; X; \gamma)^c$ and $c_2 \varepsilon \gamma > 20 \delta + 4 \varsigma$, we deduce that 
	\begin{flalign*}
	 \displaystyle\sum_{x = (u - \gamma) (Z - Y)}^{u (Z - Y)} \big( \xi_Z^{\rho + \varepsilon} (x + U) - \zeta_{Z - Y} (x + U) \big) & \ge \left( \displaystyle\frac{c_2 \varepsilon \gamma N}{4} - 2 \delta N \right) \textbf{1}_{F_N (\zeta; \delta; \varsigma; \gamma)^c} \\
	 & \ge (\varsigma + 3 \delta) N \textbf{1}_{F_N (\zeta; \delta; \varsigma; \gamma)^c}. 
	\end{flalign*}
	
	Since $U \le X \le U + \delta N$ and $V - U > u (Z - Y)$, this implies, upon restricting to $F_N (\zeta; \delta; \varsigma; \gamma)^c$, that there exists a set $\mathcal{T}$ consisting of at least $(\varsigma + \delta) N$ particles in $\xi^{(\rho + \varepsilon)} \setminus \zeta$ that passed from the right of $U$ at time $Y$ to the left of $V$ at time $Z$. Since $F_N (\zeta; \delta; \varsigma; \gamma)^c \subseteq B_N (\varsigma)^c$, there exist at most $\varsigma N$ particles in $\eta_Y \cap [-4R, 4R]$ that are not coupled with one in $\xi^{(\rho)}$. Therefore, since $F_N (\zeta; \delta; \varsigma; \gamma)^c \subseteq A_N^c$, at most $\varsigma N$ particles in $\mathcal{T}$ will couple with a particle in $p_t (n) \ge -3R$ for $n \le k$ and $t \in [Y, Z]$. Removing these particles from $\mathcal{T}$ forms a set $\mathcal{S}$ consisting of at least $\delta N$ particles. 
	
	Denote the tagged positions of the particles in $\mathcal{S}$ as third-class particles in the coupled system $\big( \xi^{(\rho)}, \zeta, \xi^{(\rho + \varepsilon)} \big)$ by $S_t (1) < S_t (2) < \cdots < S_t (K)$, for some $K \ge \delta N$. For each $i \ge 0$, let $r_t$ denote the maximal integer $r \ge 1$ for which there exists $m \in [1, K]$ for which $W_{t - 1} < S_{t - 1} (m) \le S_t (m + r - 1) \le W_t$; if no such integer exists, then we set $r_t = 0$. Stated alternatively, $r_t$ denotes the number of particles in $\mathcal{S}$ over which $p_t (k)$ jumps at time $t$. 
	
	Under this notation, define the events 
	\begin{flalign*} 
	& G_t = \{ r_t \ge 1 \}; \qquad \qquad \qquad \qquad \qquad \Omega_t = \big\{ W_t > S_t (m + r_t - 1) \big\}; \\
	& G (\nu) = \left\{ \displaystyle\sum_{t = Y}^Z \textbf{1}_{G_t} \ge \nu N \right\}; \qquad \qquad \quad H = \{ W_Y = U \} \cap \{ W_Z \ge V \},
	\end{flalign*}
	
	\noindent for any $\nu > 0$. In particular, $G_t$ denotes the event on which $p_t (k)$ jumps over at least one particle, and $\Omega_t$ denotes the one on which $p_t (k)$ does not couple with a particle in $\mathcal{S}$. Then,
	\begin{flalign}
	\label{eomegah}
	E_N (U; V) \subseteq H \cap \bigcap_{t = Y}^Z \Omega_t.
	\end{flalign}
	
	Now, recall that, if $X$ is a $b_2$-geometric random variable, then $\mathbb{P} [X > i | X \ge i] \le 1 - b_2$, for any integer $i \ge 0$. Thus, the definition of the sampling procedure for the multi-class stochastic six-vertex model (from \Cref{HigherRank}) implies that $\textbf{1}_{G_t} \mathbb{P} \big[ \Omega_t \big| \bigcap_{j = Y}^{t - 1} \Omega_j \big] \le 1 - b_2$ for any $t \in [Y, M]$. This sampling procedure also implies that there exists a $b_2$-geometric random variable $X_t$, given by the quantity $j_t^{(4)} \big( p_{t - 1} (k) \big)$ from \Cref{HigherRank} (which denotes how many spaces $p (k)$ attempts to jump to the right at time $t$), such that $r_t$ satisfies $r_t \le X_t$ almost surely. 
	
	Furthermore, since $W_t$ must jump over each particle in $\mathcal{S}$ on the event $H$, we must have that $\sum_{t = Y}^Z X_t \ge \sum_{t = Y}^Z r_t \ge |\mathcal{S}| \ge \delta N$, upon restricting to $H \cap F_N (\zeta; \delta; \varsigma; \gamma)$. Since the $\{ X_t \}$ are mutually independent, a large deviations estimate for sums of independent geometric random variables and a union bound quickly yield the existence of constants $c_3 = c_3 (b_2) > 0$ and $c = c (b_2, \varsigma, \delta) > 0$ such that, if $\nu < c_3 \delta$, then the probability that there exists a subset $\mathcal{D} \subseteq [Y, Z]$ such that $|\mathcal{D}| \le \nu N$ and $\sum_{t \in \mathcal{D}} X_t \ge \delta N$ is at most $c^{-1} e^{- cN}$. Thus, $\mathbb{P} \big[ G (\nu)^c \cap H \cap F_N (\zeta; \delta; \varsigma; \gamma) \big]  \le c^{-1} e^{-cN}$ if $\nu < c_3 \delta$.
	
	Hence, since $\textbf{1}_{G_t} \mathbb{P} \big[ \Omega_t \big| \bigcap_{j = Y}^{t - 1} \Omega_j \big] \le 1 - b_2$ for any $t \in [Y, M]$, we deduce if $\nu < c_3 \delta$ that 
	\begin{flalign*} 
	\mathbb{P} \left[ H \cap F_N (\zeta; \delta; \varsigma; \gamma) \cap \bigcap_{t = Y}^Z \Omega_t \right] & \le \mathbb{P} \left[ G (\nu) \cap \bigcap_{t = Y}^Z \Omega_t \right] + \mathbb{P} \big[ G (\nu)^c \cap H \cap F_N (\zeta; \delta; \varsigma; \gamma) \big] \\
	& \le (1 - b_2)^{\nu N} + c^{-1} e^{-cN}, 
	\end{flalign*}
	
	\noindent after decreasing $c$ if necessary. By \eqref{eomegah}, this implies the lemma. 
\end{proof} 

Now we can establish \Cref{couplexiaeta}.

\begin{proof}[Proof of \Cref{couplexiaeta} Assuming \Cref{rhotksum2}]

	We may assume that $\varepsilon < \min \{ c_1, c_2 \}$. Let $\gamma > 0$ satisfy $\gamma < \varepsilon \min \big\{ c_1, c_2 \big\}$, and let $\varsigma, \delta > 0$ satisfy $\max \{ \varsigma, \delta \} < \varepsilon \gamma \min \big\{ \frac{c_1}{24}, \frac{c_2}{24} \big\}$. Then, \eqref{en1}, \eqref{probabilityab1}, \Cref{abfprobability}, \Cref{abfprobability1}, the fact that $U$ and $V$ either satisfy $V - U < v (Z - Y)$ or $V - U > u (Z - Y)$ (since $u < v$), and a union bound together yield $\lim_{N \rightarrow \infty} \mathbb{P} \big[ E_N^{(1)} \big] = 0$. As mentioned previously, the proof that $\lim_{N \rightarrow \infty} \mathbb{P} \big[ E_N^{(2)} \big] = 0$ is entirely analogous and therefore omitted. This yields the theorem. 
\end{proof}

\section{Proof of \Cref{rhotksum2}}

\label{ProofApproximate}

In this section we establish \Cref{rhotksum2}. To that end, first observe from \Cref{guvpqsum} that the any two stochastic six-vertex models $\eta_t$ and $\xi_t$ coupled under the higher rank coupling approximately become locally ordered after run for a sufficiently long time. This can be used to deduce that, on almost every interval $I \subset \mathbb{Z}$ that is not too large, the law of $\eta_t$ is likely to be approximately a product Bernoulli (stationary) distribution with some density $\rho_I \in [0, 1]$. Thus, assuming that the initial data $\eta_0$ satisfies \eqref{etasumlimitx1x2}, one must show that almost all of the $\rho_I$ are all approximately equal to $\rho$. 

The proof of this statement will be partially based on the framework introduced by Kosygina used to establish a two-block estimate for the ASEP, given by Theorem 4.1 of \cite{BEHSL}. More specifically we will approximate the total current, that is the total distance that all particles moved to the right, in both a ``global'' (in terms of $\rho)$ and ``local'' (in terms of the $\rho_I$) way. The former will be done in \Cref{Current1} and the latter in \Cref{Current2}. Then, in \Cref{ProofStochastic}, we will equate these two expressions; this with the concavity of the function $\varphi$ from \eqref{kappafunction} will then imply that $\rho_I \approx \rho$ for most $I$.

\subsection{Global Approximation} 

\label{Current1} 

In this section we establish the following lemma that ``globally'' estimates the total current of the stochastic six-vertex model.

\begin{lem} 
	
	\label{heightcompare} 
	
	Adopt the notation and assumptions of \Cref{couplexiaeta}, and fix a real number $\varpi \in (0, 1)$. Let $\textbf{\emph{p}}_t = \big( p_t (1), p_t (2), \ldots , p_t (M) \big)$ denote the particle position sequence associated with $\eta_t$. Then, there exists a constant $c = c (b_2, \varpi) > 0$ such that 
	\begin{flalign*}
	\mathbb{P} \Bigg[ \bigg| \displaystyle\sum_{i = 1}^M \big( p_{\lfloor \varpi N \rfloor} (i) - p_0 (i) \big) - \varpi \rho^{-1} \varphi (\rho) M N  \bigg| > \displaystyle\frac{60 \varpi^2 N^2}{(1 - b_2)^2} \Bigg] \le c^{-1} e^{- c N}.
	\end{flalign*}

\end{lem}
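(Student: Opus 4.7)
The starting point is the identity
\begin{equation*}
\sum_{i=1}^{M} \bigl(p_T(i) - p_0(i)\bigr) \;=\; \sum_{x \in \mathbb{Z}} J(x), \qquad J(x) \;:=\; \sum_{s=1}^{T} \chi^{(h)}(x, s),
\end{equation*}
where $T = \lfloor \varpi N\rfloor$; this holds because each particle's rightward displacement in the stochastic six-vertex model equals the number of horizontal arrows it uses, so summing over all particles gives the total horizontal flux. By \Cref{xtxt1} together with a standard Chernoff bound for sums of geometric random variables, with probability $\ge 1 - c^{-1}e^{-cN}$ no particle leaves the window $I := [-R, R]$ during $[0,T]$, where $R := \lfloor 8N/(1-b_2) + 3\varpi N/(1-b_2)\rfloor$; on this event $\sum_{x\in\mathbb{Z}} J(x)$ reduces to $\sum_{x \in I} J(x)$.

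Next, I would couple $(\eta_t)$ via the higher-rank coupling of \Cref{couplinghigherrank} (in the form of \Cref{etaxim}) with two stationary configurations $(\xi_t^{(\rho\pm\delta)})$ sampled from $\Upsilon^{(\rho\pm\delta)}$, where $\delta = c_0\varpi/(1-b_2)$ for a small constant $c_0 > 0$. Applying the density assumption \eqref{etasumlimitx1x2} to a finite collection of sub-intervals of $I$ together with Bernoulli large deviations allows one to realize the coupling so that $\xi_0^{(\rho-\delta)} \le \eta_0 \le \xi_0^{(\rho+\delta)}$ throughout $I$, except on an event of exponentially small probability. Attractivity propagates the sandwich forward in time in the bulk, and \Cref{modelsequal} controls the (exponentially suppressed) leakage of boundary discrepancies into the bulk during $[0, T]$.

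For each stationary model, translation invariance of the Gibbs measure $\mu(\rho')$ from \Cref{Translation} gives $\mathbb{E}_{\xi^{(\rho')}}[\chi^{(h)}(x,s)] = \varphi(\rho')$, so the mean flux through $I\times[1,T]$ equals $T|I|\varphi(\rho')$. To upgrade this to exponential concentration around the mean, one can partition $I \times [1,T]$ into $K \sim N^{1/2}$ temporal strips, and use \Cref{modelsequal} to argue that the flux contributions from well-separated strips approximately decouple (the restriction of $\xi^{(\rho')}$ to a strip agrees with a freshly sampled independent stationary copy, outside the domain of influence of the boundary, with exponentially small error). Combining with a bounded-differences inequality then concentrates $\sum_{x \in I} J_{\xi^{(\rho')}}(x)$ within, say, $\varpi^2N^2/(100(1-b_2)^2)$ of $T|I|\varphi(\rho')$ with probability $\ge 1-e^{-cN^{1/2}}$. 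A Taylor expansion of $\varphi$ converts this into the cleaner bound $T|I|\varphi(\rho) \pm O(\delta T|I|)$.

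Finally, one converts $T|I|\varphi(\rho)$ into the target $\varpi \rho^{-1}\varphi(\rho) MN$ using \eqref{etasumlimitx1x2} with $c$ close to $8/(1-b_2)$: this yields $M = 16\rho N/(1-b_2) + o(N)$, and since $|I| = 16N/(1-b_2) + O(\varpi N/(1-b_2))$ and $T = \varpi N$, the discrepancy $|T|I|\varphi(\rho) - \varpi\rho^{-1}\varphi(\rho)MN|$ is of order $\varpi^2 N^2/(1-b_2)^2 + o(N^2)$, which fits inside the $60\varpi^2N^2/(1-b_2)^2$ tolerance. The remaining gap between $\sum_x J_\eta(x)$ and $\sum_x J_{\xi^{(\rho\pm\delta)}}(x)$ is at most the number of space-time sites at which $\eta$ and $\xi^{(\rho\pm\delta)}$ disagree, which under the sandwich is $O(\delta|I|T) = O(\varpi^2N^2/(1-b_2)^2)$. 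The principal technical obstacle is the concentration step: the stochastic six-vertex model lacks the product structure that would make classical large-deviation bounds directly applicable, so establishing the bounded-differences estimate requires a careful monotone-coupling-and-localization argument in the spirit of \Cref{modelsequal} and the proof of \Cref{etaxiordered}.
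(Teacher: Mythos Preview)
Your proposal has a genuine gap in the sandwich step. You want to couple so that $\xi_0^{(\rho-\delta)} \le \eta_0 \le \xi_0^{(\rho+\delta)}$ pointwise on $I$, arguing that the density hypothesis \eqref{etasumlimitx1x2} plus Bernoulli large deviations makes this hold except on an exponentially small event. But \eqref{etasumlimitx1x2} only controls local \emph{averages} of $\eta_0$; it says nothing about the pointwise pattern. If $\eta_0$ has $M \sim 16\rho N/(1-b_2)$ occupied sites, then $\xi_0^{(\rho+\delta)} \ge \eta_0$ requires $\xi_0^{(\rho+\delta)}(x)=1$ at every one of those sites, an event of probability $(\rho+\delta)^M$, which is exponentially \emph{small}, not close to $1$. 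No amount of density matching repairs this: a deterministic configuration of density $\rho$ can have arbitrary local patterns (e.g.\ alternating $1010\cdots$) that a Bernoulli sample will almost surely fail to dominate. So the attractive coupling in the form you describe cannot get started.

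The paper sidesteps this by comparing in a different partial order: instead of the occupation ordering $\eta \le \xi$, it uses the \emph{particle-position} ordering $\textbf{p} \le \textbf{q}$ of Definition~\ref{ordering} and the monotonicity of Proposition~\ref{lambdaximonotone}. One samples a single stationary model $\textbf{q}$ at density $\rho$ (no $\pm\delta$), shifts its particle labels so that $q_0(0) < p_0(1) + \varpi^2 R < q_0(1)$, and then the density hypothesis plus Bernoulli LD gives $p_0(k) < q_0(k) < p_0(k) + 2\varpi^2 R$ for all $k$ with exponentially high probability---this is a statement about cumulative counts, which \eqref{etasumlimitx1x2} does control. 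Monotonicity then propagates $\textbf{p}_t \le \textbf{q}_t$ (and a symmetric lower bound). For concentration of the stationary flux, no strip decomposition or bounded-differences argument is needed: the key structural fact (from Section~\ref{Translation}) is that under $\mu(\rho)$, for each fixed $x$ the horizontal indicators $\{\chi^{(h)}(x,y)\}_y$ are i.i.d.\ Bernoulli$(\varphi(\rho))$, so $\sum_{y=1}^{T}\chi^{(h)}(x,y)$ concentrates by a one-line Chernoff bound, and one simply sums over $x\in I$. This both fixes the gap and eliminates what you flagged as the principal technical obstacle.
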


\begin{proof}
	
	Set $R = \big\lfloor \frac{N}{1 - b_2} \big\rfloor$, and set $p_t (i) = -\infty$ if $i \le 0$ and $p_t (i) = \infty$ if $i > M$. Then observe by \eqref{etasumlimitx1x2} that $\big| \rho^{-1} \varphi (\rho) M - 16 \varphi (\rho) R \big| \le \varpi R$ for sufficiently large $N$. Thus, by a union bound, it suffices to show the existence of a constant $c = c (b_2, \varpi) > 0$ such that 
	\begin{flalign}
	\label{p0piestimate}
	\begin{aligned} 
	& \mathbb{P} \Bigg[ \displaystyle\sum_{i = 1}^M \big( p_{\lfloor \varpi N \rfloor} (i) - p_0 (i) \big) > 16  \varphi (\rho) \varpi R N  + 59 \varpi^2 R^2 \Bigg] \le c^{-1} e^{- c N}; \\
	& \mathbb{P} \Bigg[ \displaystyle\sum_{i = 1}^M \big( p_{\lfloor \varpi N \rfloor} (i) - p_0 (i) \big) < 16  \varphi (\rho) \varpi R N  - 59 \varpi^2 R^2 \Bigg] \le c^{-1} e^{- c N},
	\end{aligned} 
	\end{flalign}
	
	\noindent which we will do by comparing $\textbf{p}$ to a stationary stochastic six-vertex model $\textbf{q}$.
	
	We only establish the first estimate in \eqref{p0piestimate}, since the proof of the latter is entirely analogous. To that end, let $\textbf{q}_0$ denote a particle position sequence sampled according to the measure induced by $\Upsilon^{(\rho)}$, that is, $\textbf{q}_0$ denotes the particle position sequence for a $\rho$-stationary stochastic six-vertex model. We index the particles of $\textbf{q}_0$ so that $q_0 (0) < p_0 (1) + \varpi^2 R < q_0 (1)$. Further define the particle position sequence $\textbf{r}_0$ by setting $r_0 (k) = q_0 (k)$ if $k \le M$ and $r_0 (k) = \infty$ for $k > M$. 
	
	Let $G_1$ denote the event on which $p_0 (k) < r_0 (k) < p_0 (k) + 2 \varpi^2 R$, for each $k \in [1, M]$. Then \eqref{etasumlimitx1x2} and a large deviations estimate for sums of independent $0-1$ Bernoulli random variables imply that $\mathbb{P} [G_1^c] \le c^{-1} e^{-c N}$, for some constant $c = c(b_2, \varpi) > 0$. 
	
	Now let $\textbf{q}_t$ and $\textbf{r}_t$ denote stochastic six-vertex models run under initial data $\textbf{q}_0$ and $\textbf{r}_0$, respectively, which are coupled under the higher rank coupling of \Cref{couplinghigherrank}. It follows from \Cref{lambdaximonotone} that
	\begin{flalign}
	\label{probabilityf1pisum} 
	\begin{aligned}
	\mathbb{P} \Bigg[ & \displaystyle\sum_{i = 1}^M \big( p_{\lfloor \varpi N \rfloor} (i) - p_0 (i) \big) > 16  \varphi (\rho) \varpi R N  + 59 \varpi^2 R^2 \Bigg] \\
	& \le \mathbb{P} \Bigg[ \textbf{1}_{G_1} \displaystyle\sum_{i = 1}^M \big( p_{\lfloor \varpi N \rfloor} (i) - p_0 (i) \big) > 16  \varphi (\rho) \varpi R N  + 59 \varpi^2 R^2 \Bigg] + \mathbb{P} [G_1^c] \\
	& \le \mathbb{P} \Bigg[ \textbf{1}_{G_1} \displaystyle\sum_{i = 1}^M \big( r_{\lfloor \varpi N \rfloor} (i) - p_0 (i) \big) > 16  \varphi (\rho) \varpi R N  + 59 \varpi^2 R^2 \Bigg] + c^{-1} e^{-cN} \\
	& \le \mathbb{P} \Bigg[ \displaystyle\sum_{i = 1}^M \big( r_{\lfloor \varpi N \rfloor} (i) - r_0 (i) \big) > 16  \varphi (\rho) \varpi R N  + 27 \varpi^2 R^2 \Bigg] + c^{-1} e^{-cN}.
	\end{aligned} 
	\end{flalign}

	Next, let $G_2$ denote the event on which there exists an integer $t \in [0, \varpi N]$ for which $\textbf{q}_t \cap \big[ (3 \varpi - 8) R, (8 - 3 \varpi) R \big] \ne \textbf{r}_t \cap \big[ (3 \varpi - 8) R, (8 - 3 \varpi) R \big]$. Then \Cref{modelsequal} yields $\mathbb{P} [G_2] \le c^{-1} e^{-cN}$, after decreasing $c$ if necessary. Additionally, let $G_3$ denote the event that there exists an integer $i \in [1, M]$ for which $q_{\lfloor \varpi N \rfloor} (i) - q_0 (i) \ge 2 \varpi R$. Then \Cref{xtxt1} and a large deviation estimate for sums of independent geometric random variables together imply that $\mathbb{P} [G_3] \le c^{-1} e^{-cN}$, after further decreasing $c$ if necessary. It follows that 
	\begin{flalign}
	\label{probabilityf2f3rsum} 
	\begin{aligned}
	\mathbb{P} \Bigg[ & \displaystyle\sum_{i = 1}^M \big( r_{\lfloor \varpi N \rfloor} (i) - r_0 (i) \big) > 16  \varphi (\rho) \varpi R N  + 27 \varpi^2 R^2 \Bigg] \\
	& \le \mathbb{P} \Bigg[ \textbf{1}_{G_2^c} \textbf{1}_{G_3^c} \displaystyle\sum_{i = 1}^M \big( r_{\lfloor \varpi N \rfloor} (i) - r_0 (i) \big) > 16  \varphi (\rho) \varpi R N  + 27 \varpi^2 R^2 \Bigg] + \mathbb{P} [G_2] + \mathbb{P} [G_3] \\
	& \le \mathbb{P} \Bigg[ \textbf{1}_{G_3^c} \displaystyle\sum_{i = 1}^M \big( q_{\lfloor \varpi N \rfloor} (i) - q_0 (i) \big) > 16  \varphi (\rho) \varpi R N  + 3 \varpi^2 R^2 \Bigg] + 2 c^{-1} e^{-c N}.
	\end{aligned} 
	\end{flalign}
	
	Now let $\mathcal{E}$ denote the six-vertex ensemble on $\mathfrak{H}$ corresponding to $\textbf{q}_t$, and recall from \Cref{Translation} the horizontal indicators $\chi^{(h)} (x, y)$ for this ensemble denoting the number of arrows along the edge connecting $\big( (x, y), (x + 1, y) \big)$. Upon restricting to the event $G_3^c$, we have that $q_t (i) \in \big[ -8R, (8 + 2 \varpi) R \big]$ for each $i \in [1, M]$. Hence,
	\begin{flalign}
	\label{sumqf3}
	\textbf{1}_{G_3^c} \displaystyle\sum_{i = 1}^M \big( q_{\lfloor \varpi N \rfloor} (i) - q_0 (i) \big) \le  \displaystyle\sum_{x = - \lfloor 8 R \rfloor}^{\lfloor 8R + 2 \varpi R \rfloor} \displaystyle\sum_{y = 1}^{\lfloor \varpi N \rfloor} \chi^{(h)} (x, y). 
	\end{flalign}
	
	Next observe that the law of $\mathcal{E}$ is given by the restriction of the measure $\mu (\rho)$ (from \Cref{Translation}) to $\mathfrak{H}$. The translation-invariance of $\mu (\rho)$ implies for any fixed $x \in \mathbb{Z}$ that the $\big\{ \chi^{(h)} (x, y) \big\}_{y \in [0, \varpi N]}$ are mutually independent $0-1$ Bernoulli random variables with means $\varphi (\rho)$. Thus, a large deviations estimate for sums of $0-1$ Bernoulli random variables yields (after decreasing $c$ if necessary) that 
	\begin{flalign}
	\label{sumhorizontal}
	\mathbb{P} \left[  \displaystyle\sum_{x = - \lfloor 8 R \rfloor}^{\lfloor 8R + 2 \varpi R \rfloor} \displaystyle\sum_{y = 1}^{\lfloor \varpi N \rfloor} \chi^{(h)} (x, y) > \varphi (\rho) (16 R + 2 \varpi R) \varpi N + \varpi^2 RN  \right] \le c^{-1} e^{-cN}. 
	\end{flalign}
	
	The first bound in \eqref{p0piestimate} then follows from \eqref{probabilityf1pisum}, \eqref{probabilityf2f3rsum}, \eqref{sumqf3}, \eqref{sumhorizontal}, the fact that $R \ge N$, and the fact that $\varphi (\rho) \in [0, 1]$. As mentioned previously, the proof of the second bound is entirely analogous and therefore omitted. 
\end{proof}

\subsection{Local Approximation} 

\label{Current2} 

In this section we provide a local approximation of the total current. 

To that end, set $A = \lceil N^{1 / 8} \rceil > 0$, and define $\rho_j = \frac{j}{A}$ for each $j \in [0, A]$. Further let $\xi_0^{(\rho_j)} \in \{ 0, 1 \}^{\mathbb{Z}}$ denote a (random) particle configuration on $\mathbb{Z}$, sampled according to the measure $\Upsilon^{(\rho_j)}$, for each $j \in [0, A]$. Assume that these particle configurations are mutually coupled so that $\xi_0^{(\rho_i)} \le \xi_0^{(\rho_j)}$ whenever $0 \le i \le j \le A$. For each $j \in [0, A]$, let $\big( \xi_t^{(\rho_j)} \big)$ denote the stochastic six-vertex model with initial data $\xi_0^{(\rho_j)}$; mutually couple these models with $\eta_t$ under the higher rank coupling of \Cref{couplinghigherrank} (recall \Cref{etaxim}).

Now set $k = \lceil (\log N)^{1 / 4} \rceil > 0$; fix $\varpi \in (0, 1)$; and let $\mathcal{I}$ denote a set of pairwise disjoint intervals such that $\bigcup_{I \in \mathcal{I}} I = \mathbb{Z}$ and $|I| = k$ for each $I \in \mathcal{I}$. Recalling the function $\mathcal{R} (I; \eta, \xi)$ from \eqref{rietaxi}, \Cref{guvpqsum} yields a constant $C = C(b_1, b_2, \varpi) > 0$ such that 
\begin{flalign}
\label{sumretarho} 
\mathbb{E} \left[ \displaystyle\frac{1}{\lfloor \varpi N \rfloor} \displaystyle\sum_{t = 0}^{\lfloor \varpi N \rfloor} \displaystyle\sum_{j = 0}^A \displaystyle\sum_{I \in \mathcal{I}}  \mathcal{R} \big( I; \eta_t, \xi_t^{(\rho_j)} \big) \right]  < C N^{1 / 3}. 
\end{flalign}

The following definition provides certain events and ``local densities'' that will be of use to us. 

\begin{definition} 
	
	\label{eventfti}
	
	For each interval $I \in \mathcal{I}$ and integer $t \in [0, \varpi N]$, define the events 
	\begin{flalign*}
	F (I; t) = F_N (I; t) = \left\{ \displaystyle\sum_{j = 0}^A \mathcal{R} \big( I; \eta_t, \xi_t^{(\rho_j)} \big) > 0 \right\}; \quad F = F_N = \left\{ \displaystyle\frac{1}{\lfloor \varpi N \rfloor} \displaystyle\sum_{t = 1}^{\lfloor \varpi N \rfloor} \displaystyle\sum_{I \in \mathcal{I}} \textbf{1}_{F(I; t)} > N^{2 / 3} \right\}. 
	\end{flalign*}
	
	Further fix some $I$ and $t$, and assume that $F (I; t)^c$ holds. Then, there exists some $j = j (I; t) \in [0, A - 1]$ for which $\xi_t^{(\rho_j)} \big|_I \le \eta_t |_I \le \xi_t^{(\rho_{j + 1})} \big|_I$. We define $\rho (I; t) = \rho_j = \rho_{j (I; t)}$. 
	
\end{definition} 

In particular, $F (I; t)^c$ denotes the event on which $\eta_t$ and all of the $\xi_t^{(\rho_j)}$ are ordered when restricted to $I$; $F^c$ denotes the event on which most of the $F (I; t)^c$ hold; and $\rho (I; t)$ denotes the approximate density in $\eta$ of an interval $I$ at time $t$. Observe by \eqref{sumretarho} and a Markov estimate that $\mathbb{P} [F] \le C N^{-1 / 3}$. 

The following lemma now provides a ``local'' approximation for the total current.

\begin{lem}
	
	\label{horizontalestimatesum}
	
	Adopting the notation of \Cref{heightcompare}, there exists a constant $C = C (b_1, b_2, \varpi) > 0$ such that 
	\begin{flalign*}
	\mathbb{P} \left[ \Bigg| \displaystyle\sum_{i = 1}^M \big( p_{\lfloor \varpi N \rfloor} (i) - p_0 (i) \big) - k \displaystyle\sum_{t = 0}^{\lfloor \varpi N \rfloor - 1} \displaystyle\sum_{I \in \mathcal{I}} \varphi \big( \rho (I; t) \big) \textbf{\emph{1}}_{F (I; t)^c} \Bigg| > \displaystyle\frac{C N^2}{(\log N)^{1 / 256}} \right] \le \displaystyle\frac{C}{\log N}. 
	\end{flalign*}

\end{lem}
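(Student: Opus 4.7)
The plan is to write the total displacement as a sum of horizontal-arrow counts over space-time boxes $(I, t)$, then on each box compare the count to its stationary expectation $k \varphi(\rho(I;t))$. Since every unit of displacement of a particle corresponds to a horizontal arrow along its row of motion,
\begin{equation*}
\sum_{i=1}^M \big( p_{\lfloor \varpi N \rfloor}(i) - p_0(i) \big) = \sum_{t=0}^{\lfloor \varpi N \rfloor - 1} \sum_{I \in \mathcal{I}} H_{I, t}, \qquad H_{I, t} := \sum_{x \in I} \chi^{(h)}_\eta(x, t+1),
\end{equation*}
with $\chi^{(h)}_\eta$ as in \Cref{Translation}. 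By \Cref{xtxt1} and Borel--Cantelli, outside an event of exponentially small probability only those $I \subset [-10(1-b_2)^{-1} N, 10(1-b_2)^{-1} N]$ contribute, which is $O(N/k)$ intervals.

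First, I would dispatch the ``bad'' boxes on which $F(I;t)$ holds. By \eqref{sumretarho} and Markov, $\mathbb{P}[F] \le C N^{-1/3} = o(1/\log N)$. On $F^c$ the count of bad boxes is $\le \varpi N^{5/3}$, so using $H_{I,t} \le k$ and $\varphi \le 1$ the bad-box contributions to $\sum_{t,I} H_{I,t} \mathbf{1}_{F(I;t)}$ and $k \sum_{t,I} \varphi(\rho(I;t)) \mathbf{1}_{F(I;t)}$ are each bounded by $k \cdot \varpi N^{5/3} = O((\log N)^{1/4} N^{5/3})$, well below $N^2/(\log N)^{1/256}$. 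It therefore suffices to show, with probability $1 - O(1/\log N)$,
\begin{equation*}
\Big| \sum_{t, I} H_{I, t} \mathbf{1}_{F(I;t)^c} - k \sum_{t, I} \varphi(\rho(I; t)) \mathbf{1}_{F(I;t)^c} \Big| \le \tfrac{1}{2} N^2/(\log N)^{1/256}.
\end{equation*}

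Next, I compare $H_{I, t}$ with the ``stationary counterpart'' $\tilde H^{(j)}_{I, t} := \sum_{x \in I} \chi^{(h)}_{\xi^{(\rho_j)}}(x, t+1)$, where $j = j(I, t)$ satisfies $\rho(I;t) = \rho_j$. On $F(I;t)^c \cap \{\rho(I;t) = \rho_j\}$ the higher rank coupling gives $\xi^{(\rho_j)}_t|_I \le \eta_t|_I \le \xi^{(\rho_{j+1})}_t|_I$. In the multi-class dynamics first-class particles (those of $\xi^{(\rho_j)}$) move ignoring second-class particles (those of $\eta \setminus \xi^{(\rho_j)}$), and hence carry identical horizontal arrows in the $\eta$- and $\xi^{(\rho_j)}$-ensembles. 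Thus $H_{I, t} - \tilde H^{(j)}_{I, t} \ge 0$ is precisely the number of horizontal arrows in $I$ at row $t+1$ carried by second-class particles, which is bounded by the total displacement of second-class particles whose trajectories intersect $I$. By $\eta_t|_I \le \xi^{(\rho_{j+1})}_t|_I$ the expected count of second-class particles inside $I$ is $\le k(\rho_{j+1}-\rho_j) = k/A$, and by \Cref{xtxt1} the expected boundary contribution (from second-class particles just left of $I$ jumping in) is $O(1/A)$. Summing over the $O(\varpi N^2/k)$ nontrivial boxes yields total expected discrepancy $O(\varpi N^2/A) = O(N^{15/8})$, so a Markov bound makes the discrepancy exceed $\tfrac14 N^2/(\log N)^{1/256}$ with probability $O((\log N)^{1/256}/N^{1/8}) = o(1/\log N)$.

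The remaining term is the fluctuation sum
\begin{equation*}
\sum_{j=0}^A \sum_{(I,t) \in S_j} \big( \tilde H^{(j)}_{I, t} - k \varphi(\rho_j) \big), \qquad S_j := \big\{(I, t) : F(I;t)^c,\ \rho(I;t) = \rho_j\big\},
\end{equation*}
which is the technical heart of the proof. Each summand has mean zero in the unconditional stationary measure $\mu(\rho_j)$ by the translation invariance recorded in \Cref{Translation}, but $S_j$ depends jointly on $\eta$, $\xi^{(\rho_j)}$, and $\xi^{(\rho_{j+1})}$, so a direct variance bound inside the inner sum is unavailable; this is the main obstacle. I would condition on the full configuration of $\xi^{(\rho_j)}$ at time $t$: conditional on this, both $\mathbf{1}_{(I,t) \in S_j}$ (via the coupling with $\eta$) and $\tilde H^{(j)}_{I, t}$ (via the single-row update rules) are controlled, and the residual randomness across well-separated $x$'s decorrelates thanks to the column-independence property of $\mu(\rho_j)$ also from \Cref{Translation}. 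Combining this conditional decoupling with the variance bound for one-row updates yields a total variance bound of $O(N^{3+o(1)})$ for the fluctuation sum, so Chebyshev delivers the required deviation bound with probability $o(1/\log N)$. Summing the three contributions yields the lemma.
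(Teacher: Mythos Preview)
Your decomposition into bad boxes, a discrepancy term $H_{I,t}-\tilde H^{(j)}_{I,t}$, and a fluctuation sum is natural, but the fluctuation step contains a genuine gap. The ``column-independence property of $\mu(\rho_j)$'' recorded in \Cref{Translation} is that $\{\chi^{(h)}(x,y)\}_{y}$ are i.i.d.\ Bernoulli$\big(\varphi(\rho_j)\big)$ for each \emph{fixed} $x$; there is no such independence for $\{\chi^{(h)}(x,y)\}_{x}$ along a fixed row, and indeed horizontal arrows along a row are strongly correlated through the sequential update. Hence conditioning on $\xi^{(\rho_j)}_t$ does not make the summands $\tilde H^{(j)}_{I,t}-k\varphi(\rho_j)$ mean-zero (the conditional mean depends on the local pattern of $\xi^{(\rho_j)}_t$ near $I$), and the random set $S_j$ introduces exactly the selection bias that makes this non-centering fatal: the event $\rho(I;t)=\rho_j$ forces an atypical local density in $\xi^{(\rho_j)}_t|_I$, which biases the one-step current away from $k\varphi(\rho_j)$ by an amount of order $1$ per box. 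No spatial decorrelation across $I$'s at fixed $t$ is available to average this out, so the claimed $O(N^{3+o(1)})$ variance bound does not follow. A secondary issue is that your $O(N^{15/8})$ discrepancy bound controls second-class $\eta$-particles inside $I$ but not the second-class $\xi^{(\rho_j)}$-particles that live \emph{outside} $I$ (where no ordering is asserted) and cross into $I$; since $j$ varies with $(I,t)$, there is no global smallness of $\xi^{(\rho_j)}\setminus\eta$ to appeal to.

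The paper circumvents both problems with two ideas you are missing. First, it upgrades the sandwich $\xi^{(\rho_j)}_t|_I\le\eta_t|_I\le\xi^{(\rho_{j+1})}_t|_I$ to exact equality $\eta_t|_I=\xi^{(\rho_j)}_t|_I$ on a high-probability event (since $\mathbb{P}[\xi^{(\rho_j)}_t|_I\ne\xi^{(\rho_{j+1})}_t|_I]\le k/A$), and then invokes \Cref{modelsequal} to conclude that the two ensembles agree exactly on a sub-box $J\times[t,t+n]$ with $J\subset I$, $|I\setminus J|=2m=2\lfloor(\log N)^{1/8}\rfloor$, $n=\lfloor(\log N)^{1/16}\rfloor$. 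This removes the discrepancy term entirely (up to the $2m$-margin). Second, and crucially, it averages in \emph{time} rather than space: on the sub-box one replaces $\chi^{(h)}_\eta(x,t+1)$ by $n^{-1}\sum_{s=t}^{t+n-1}\chi^{(h)}_{\xi^{(\rho_j)}}(x,s+1)$, and now the genuine column-wise i.i.d.\ structure gives a moderate-deviation bound for $\big|\sum_{s}\chi^{(h)}(x,s)-n\varphi(\rho_j)\big|$ with no conditioning and no selection bias. The additional $n$-averaging is what produces the unusual $(\log N)^{1/256}$ scale in the statement.
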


\begin{proof}
	
	Throughout this proof, we again set $R = \big\lfloor \frac{N}{1 - b_2} \big\rfloor$. Let $\mathcal{E}$ denote the six-vertex ensemble on $\mathfrak{H}$ associated with $\textbf{p}_t$, and recall from \Cref{Translation} the horizontal indicators $\chi^{(h)} (x, y)$ for this ensemble denoting the number of arrows connecting $\big( (x, y), (x + 1, y) \big)$. Then, we have that 
	\begin{flalign} 
	\label{p0pisum} 
	\displaystyle\sum_{i = 1}^M \big( p_{\lfloor \varpi N \rfloor} (i) - p_0 (i) \big) = \displaystyle\sum_{x \in \mathbb{Z}} \displaystyle\sum_{t = 1}^{\lfloor \varpi N \rfloor} \chi^{(h)} (x, t). 
	\end{flalign} 
	
	We will now essentially approximate $\sum_{x \in I} \chi^{(h)} (x, t)$ by the associated stationary quantity, which will be $\varphi \big( \rho (I; t) \big) k$, whenever $F (I; t)^c$ holds. To that end, it will first be useful to bound the location of the rightmost particle in $\eta_s$. In particular, \Cref{xtxt1} and a large deviations estimate for sums of independent geometric random variables yields a constant $c = c(b_2) > 0$ satisfying  
	\begin{flalign}
	\label{g0probability} 
	\mathbb{P} [G_0] < c^{-1} e^{-cN}, \quad \text{where} \quad G_0 = \big\{ p_{\lfloor \varpi N \rfloor} (M) \ge 16R \big\}.
	\end{flalign}

	\noindent Next, let us fix some integer $t \in [1, \varpi N]$ and interval $I = [u, v] \in \mathcal{I}$ with $v = u + k - 1$. Abbreviate $j = j (I; t - 1)$ and observe by a union bound that, for sufficiently large $N$,  
	\begin{flalign}
	\label{g1probability}
	\mathbb{P} \big[ G_1 (I; t - 1) \big] \le k (\rho_{j + 1} - \rho_j) \le \frac{1}{N^{1 / 16}}
	\end{flalign}
	
	\noindent where
	\begin{flalign*} 
	G_1 (I; t - 1) = \Big\{ \xi_{t - 1}^{(\rho_j)} \big|_I \ne \xi_{t - 1}^{(\rho_{j + 1})} \big|_I \Big\} \cap F (I; t - 1)^c. 
	\end{flalign*}
	
	Now, let $\mathcal{F} = \mathcal{F}^{(\rho_j)}$ denote the six-vertex ensemble on $\mathfrak{H}$ corresponding to $\big( \xi_s^{(\rho_j)} \big)$, and further let $m = \big\lfloor (\log N)^{1 / 8} \big\rfloor$ and $n = \big\lfloor (\log N)^{1 / 16} \big\rfloor$. On $G_1 (I; t - 1)^c \cap F (I; t - 1)^c$, we have that $\xi_{t - 1}^{(\rho_j)} \big|_I = \eta_t |_I = \xi_{t - 1}^{(\rho_{j + 1})} \big|_I$, so \Cref{modelsequal} indicates that $\mathcal{E}$ and $\mathcal{F}^{(\rho_j)}$ will likely coincide around $\big( \frac{u + v}{2}, t \big)$. More precisely, define the interval $J = J(I) = [u + m, v - m] \subset I$ and the event
	\begin{flalign*} 
	G_2 (I; t - 1) = \Big\{ \mathcal{E} |_{J \times [t - 1, t + n]} \ne \mathcal{F}^{(\rho_j)} \big|_{J \times [t - 1, t + n]} \Big\} \cap F(I; t - 1)^c.
	\end{flalign*}
	
	\noindent Then, for sufficiently large $N$, \Cref{modelsequal} yields 
	\begin{flalign}
	\label{g2probability} 
	\mathbb{P} \big[ G_2 (I; t - 1) \cap G_1 (I; t - 1)^c \big] \le \displaystyle\frac{1}{(\log N)^5}.
	\end{flalign}

	Next, recall from \Cref{Translation} that, for each fixed $x \in J$, the $\big\{ \chi^{(h)} (x, y) \big\}$ are (with respect to $\mathcal{F}^{(\rho_j)}$) mutually independent Bernoulli $0-1$ random variables with means $\varphi (\rho_j)$. Thus, we deduce from a moderate deviations event for sums of such random variables that, for sufficiently large $N$,
	\begin{flalign}
	\label{g3probability} 
	\mathbb{P} \big[ G_3 (I; t - 1) \big] \le \displaystyle\frac{1}{(\log N)^5},
	\end{flalign}
	
	\noindent where 
	\begin{flalign*}
	G_3 (I; t - 1) = \left\{ \displaystyle\max_{x \in J} \bigg| \displaystyle\sum_{y = t}^{t + n - 1} \chi_h (x, y) - n \varphi \big( \rho (I; t) \big) \bigg| > n^{3 / 4} \right\} \cap F(I; t - 1)^c.
	\end{flalign*}

	\noindent Additionally define the event 
	\begin{flalign*} 
	G_4 (I; t - 1) = \bigcup_{s = t}^{t + n - 1} \big( F(I; s - 1) \cup G_1 (I; s - 1) \cup G_2 (I; s - 1) \cup G_3 (I; s - 1) \big).
	\end{flalign*}
	
	\noindent Then \eqref{g1probability}, \eqref{g2probability}, \eqref{g3probability}, and union bound imply that
	\begin{flalign*} 
	\mathbb{P} \left[ G_4 (I; t - 1) \cap \bigcap_{s = t}^{t + n - 1} F(I; s - 1)^c \right] \le \displaystyle\frac{1}{(\log N)^4},
	\end{flalign*}
	
	\noindent for sufficiently large $N$. Therefore, defining the event 
	\begin{flalign*}
	G = F \cup G_0 \cup \left\{ \displaystyle\sum_{t = 1}^{\lfloor \varpi N \rfloor} \displaystyle\sum_{I \in \mathcal{I}} \textbf{1}_{G_4 (I; t - 1)} > \displaystyle\frac{N^2}{(\log N)^2} \right\},
	\end{flalign*}
	
	\noindent we deduce from the fact that $\mathbb{P} [F] \le C N^{-1 / 3}$ for some constant $C = C (b_1, b_2, \varpi) > 0$ (recall \Cref{eventfti}), \eqref{g0probability}, and a Markov estimate that $\mathbb{P} [G] \le (\log N)^{-1}$ for sufficiently large $N$.
	
	Now, let us approximate the right side of \eqref{p0pisum} restricting to the event $G^c$. To that end, first observe that
	\begin{flalign}
	\label{chihsum1}
	\textbf{1}_{G^c} \left| \displaystyle\sum_{t = 1}^{\lfloor \varpi N \rfloor} \displaystyle\sum_{x \in \mathbb{Z}} \chi^{(h)} (x, t) -  \displaystyle\sum_{t = 1}^{\lfloor \varpi N \rfloor} \displaystyle\sum_{I \in \mathcal{I}} \textbf{1}_{G_4 (I; t - 1)^c} \displaystyle\sum_{x \in I} \chi^{(h)} (x, t)  \right| < \displaystyle\frac{N^2 k}{(\log N)^2} \le \displaystyle\frac{N^2}{\log N}.
	\end{flalign}
	
	\noindent Next, since $\chi (x, t) \textbf{1}_{G^c} = 0$ whenever $|x| \ge 16R$ (from the event $G_0$), we have that 
	\begin{flalign}
	\label{chihsum2}
	\textbf{1}_{G^c} \left| \displaystyle\sum_{t = 1}^{\lfloor \varpi N \rfloor} \displaystyle\sum_{I \in \mathcal{I}} \textbf{1}_{G_4 (I; t - 1)^c} \displaystyle\sum_{x \in I} \chi^{(h)} (x, t) - \displaystyle\sum_{t = 1}^{\lfloor \varpi N \rfloor} \displaystyle\sum_{I \in \mathcal{I}} \textbf{1}_{G_4 (I; t - 1)^c} \displaystyle\sum_{x \in J (I)} \chi^{(h)} (x, t)  \right| \le 64 \varpi m k^{-1} RN,
	\end{flalign}
	
	\noindent Additionally, for the same reason and from the event $G$, we have that 
	\begin{flalign}
	\label{chihsum3}
	\begin{aligned}
	\textbf{1}_{G^c} \Bigg| \displaystyle\sum_{t = 1}^{\lfloor \varpi N \rfloor} & \displaystyle\sum_{I \in \mathcal{I}} \textbf{1}_{G_4 (I; t - 1)^c} \displaystyle\sum_{x \in J(I)} \chi^{(h)} (x, t)  \\
	& \qquad - \displaystyle\frac{1}{n} \displaystyle\sum_{t = 1}^{\lfloor \varpi N \rfloor} \displaystyle\sum_{I \in \mathcal{I}} \textbf{1}_{G_4 (I; t - 1)^c} \displaystyle\sum_{x \in J(I)} \displaystyle\sum_{s = t}^{t + n - 1} \chi^{(h)} (x, s)  \Bigg| < \displaystyle\frac{k N^2}{(\log N)^2} + 64 n R.
	\end{aligned}
	\end{flalign}
	
	\noindent From the event $G_3 (I; t - 1)^c$, we have for sufficiently large $N$ that 
	\begin{flalign}
	\label{chihsum4}
	\begin{aligned} 
	\textbf{1}_{G^c} \Bigg| \displaystyle\frac{1}{n} \displaystyle\sum_{t = 1}^{\lfloor \varpi N \rfloor} & \displaystyle\sum_{I \in \mathcal{I}} \textbf{1}_{G_4 (I; t - 1)^c} \displaystyle\sum_{x \in J(I)} \displaystyle\sum_{s = t}^{t + n - 1} \chi^{(h)} (x, s) \\
	& - \displaystyle\sum_{t = 1}^{\lfloor \varpi N \rfloor} \displaystyle\sum_{I \in \mathcal{I}} \varphi \big( \rho (I; t) \big) (k - 2m) \textbf{1}_{G_4 (I; t - 1)^c} \Bigg| < n^{-1 / 8} N^2.
	\end{aligned}
	\end{flalign}
	
	\noindent Moreover, since $\textbf{1}_{G_0^c} \rho (I; t) = 0$ for $|x| \ge 16R$, we have
	\begin{flalign}
	\label{chihsum5}
	\begin{aligned}
	& \textbf{1}_{G^c} \left| \displaystyle\sum_{t = 1}^{\lfloor \varpi N \rfloor} \displaystyle\sum_{I \in \mathcal{I}} \varphi \big( \rho (I; t) \big) (k - 2m) \textbf{1}_{G_4 (I; t - 1)^c} - k \displaystyle\sum_{t = 1}^{\lfloor \varpi N \rfloor} \displaystyle\sum_{I \in \mathcal{I}} \varphi \big( \rho (I; t) \big) \textbf{1}_{G_4 (I; t - 1)^c}  \right| < 64 \varpi m k^{-1} NR; \\
	& \textbf{1}_{G^c}\left| k \displaystyle\sum_{t = 1}^{\lfloor \varpi N \rfloor} \displaystyle\sum_{I \in \mathcal{I}} \varphi \big( \rho (I; t) \big) \textbf{1}_{G_4 (I; t - 1)^c} - k \displaystyle\sum_{t = 1}^{\lfloor \varpi N \rfloor} \displaystyle\sum_{I \in \mathcal{I}} \varphi \big( \rho (I; t) \big) \textbf{1}_{F (I; t - 1)^c}  \right| < \displaystyle\frac{2 k N^2}{(\log N)^2}.
	\end{aligned}  
	\end{flalign}
	
	Now \eqref{chihsum1}, \eqref{chihsum2}, \eqref{chihsum3}, \eqref{chihsum4}, \eqref{chihsum5}, and the fact that $\mathbb{P} [G] \le (\log N)^{-1}$ together imply the lemma. 	
\end{proof}

\subsection{Proof of the Approximate Coupling} 

\label{ProofStochastic} 

In this section we establish \Cref{rhotksum2}; throughout, we adopt the notation of that proposition and of \Cref{Current2}. For any $\varpi > 0$, let us define the subset of intervals $\mathcal{J} \subset \mathcal{I}$ by 
\begin{flalign*}
\mathcal{J} = \big\{ I: I \subset \big[ -(8 + 3 \varpi) R, (8 + 3 \varpi) R \big]; I \in \mathcal{I} \big\}.
\end{flalign*}

From \Cref{heightcompare}, \Cref{horizontalestimatesum}, and the fact that $\varphi'' (0) < 0$, we will deduce the following corollary, which essentially states that $\rho (I; t)$ is typically equal to $\rho$. 

\begin{cor} 
	
	\label{rhotksum}
	
	Under the notation of \Cref{horizontalestimatesum}, there exist constants $C_1 = C_1 (b_1, b_2) > 0$ and $C_2 = C_2 (b_1, b_2, \varpi) > 0$ such that 
	\begin{flalign*} 
	\mathbb{P} \left[ \displaystyle\sum_{t = 1}^{\lfloor \varpi N \rfloor - 1} \displaystyle\sum_{I \in \mathcal{J}}   \big| \rho (I; t) \textbf{\emph{1}}_{F(I; t)^c} - \rho \big|^2 > C_1 k^{-1} \varpi^2 N^2 \right] \le \displaystyle\frac{C_2}{\log N}. 
	\end{flalign*}
	
\end{cor}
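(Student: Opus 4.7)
The plan is to equate the global and local current approximations from \Cref{heightcompare} and \Cref{horizontalestimatesum}, combine the result with a parallel mass-conservation estimate, and then exploit the strict concavity of $\varphi$ to upgrade the resulting identity into an $L^2$ bound on $\rho(I;t) - \rho$.

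First, I would match the two expressions for $\sum_{i=1}^M(p_{\lfloor \varpi N\rfloor}(i)-p_0(i))$ given by \Cref{heightcompare} and \Cref{horizontalestimatesum}. Using the density hypothesis \eqref{etasumlimitx1x2} to substitute $M = 16 \rho R + o(N)$, where $R=\lfloor N/(1-b_2)\rfloor$, this gives with probability at least $1-C/\log N$ that
\[
\Bigl| k \sum_{t=0}^{\lfloor \varpi N\rfloor-1} \sum_{I\in\mathcal{I}} \varphi\bigl(\rho(I;t)\bigr)\mathbf{1}_{F(I;t)^c} - 16 \varphi(\rho)\varpi RN \Bigr| \le C\varpi^2 N^2 + \frac{CN^2}{(\log N)^{1/256}}.
\]
To reduce the inner sum from $\mathcal{I}$ to $\mathcal{J}$, \Cref{xtxt1} together with a large-deviations bound for geometric sums implies that every particle of $\eta_t$ remains in $[-(8+3\varpi)R,(8+3\varpi)R]$ for all $t\le \varpi N$ with exponentially good probability; on this event $\eta_t|_I\equiv 0$ for every $I\in\mathcal{I}\setminus\mathcal{J}$, and since $\rho_0=0$ forces $\xi_t^{(\rho_0)}\equiv 0$, this yields $j(I;t)=0$ and thus $\rho(I;t)\mathbf{1}_{F(I;t)^c}=0$ for such $I$.

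I would next establish an analogous first-moment identity. On $F(I;t)^c$, the sandwich $\xi_t^{(\rho(I;t))}|_I \le \eta_t|_I \le \xi_t^{(\rho(I;t)+1/A)}|_I$ combined with Bernstein-type large-deviations bounds for each $\xi_t^{(\rho_j)}|_I$ (which are sums of independent Bernoullis with mean $\rho_j$, by the translation-invariance of $\mu(\rho_j)$) gives $\bigl|\sum_{x\in I}\eta_t(x)-k\rho(I;t)\bigr|\le k/A + O(k^{1/2}\log N)$ with probability $1-N^{-10}$. Summing over $I\in\mathcal{J}$ and invoking particle conservation $\sum_x\eta_t(x)=M$ together with the same localization as above,
\[
\Bigl| k\sum_{t=0}^{\lfloor \varpi N\rfloor -1}\sum_{I\in\mathcal{J}} \rho(I;t)\mathbf{1}_{F(I;t)^c} - 16\rho\varpi RN\Bigr| \le \frac{C\varpi N^2}{A} + \frac{C N^2}{(\log N)^{1/256}},
\]
with probability $\ge 1-C/\log N$.

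Finally, I would apply Taylor's theorem to $\varphi$. Differentiating \eqref{kappafunction} yields $\varphi''(z)=-2\kappa(\kappa-1)/((\kappa-1)z+1)^3\le -\tilde{c}(b_1,b_2)<0$ uniformly on $[0,1]$, so $\varphi(a)\le \varphi(\rho)+\varphi'(\rho)(a-\rho)-\tfrac{\tilde{c}}{2}(a-\rho)^2$ for every $a\in[0,1]$. Setting $a=\rho(I;t)\mathbf{1}_{F(I;t)^c}$, summing over $(t,I\in\mathcal{J})$, and using $\varphi(0)=0$, the third term becomes $\tfrac{\tilde c}{2}\sum(\rho(I;t)\mathbf{1}_{F(I;t)^c}-\rho)^2$, while the constant and linear terms are controlled by the two preceding identities (noting that $|\mathcal{J}|k\approx(16+6\varpi)R$ so $\varphi(\rho)|\mathcal{J}|\lfloor \varpi N\rfloor$ differs from $16\varphi(\rho)\varpi RN/k$ by $O(\varpi^2 N^2/k)$). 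This yields an upper bound of $O(\varpi^2 N^2/k)$ on the sum of squares, as required. The main technical obstacle is the bookkeeping of the several error sources---the $\varpi^2 N^2$ from \Cref{heightcompare}, the $N^2/(\log N)^{1/256}$ from \Cref{horizontalestimatesum}, the boundary layer of intervals in $\mathcal{J}$ lying outside $[-8R,8R]$, the Bernoulli fluctuations in the first-moment identity, and the at most $N^{2/3}\lfloor \varpi N\rfloor$ ``bad'' slots where $F(I;t)$ holds---to verify that each sits below the target $\varpi^2 N^2/k$ once $N$ is large enough, depending on $\varpi$ and $b_1,b_2$.
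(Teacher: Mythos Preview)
Your approach is essentially the same as the paper's: both combine \Cref{heightcompare} and \Cref{horizontalestimatesum} with a mass-conservation estimate and the uniform strict concavity of $\varphi$ on $[0,1]$. The one noteworthy difference is the choice of expansion point. You Taylor-expand $\varphi$ around $\rho$ directly, which forces you to control the linear term $\varphi'(\rho)\sum_{t,I}(\rho(I;t)\mathbf{1}_{F(I;t)^c}-\rho)$ via your separate first-moment identity. The paper instead introduces the empirical mean $\rho(t)=|\mathcal{J}|^{-1}\sum_{I\in\mathcal{J}}\rho(I;t)\mathbf{1}_{F(I;t)^c}$ and expands around $\rho(t)$; since $\sum_I(\rho(I;t)\mathbf{1}_{F(I;t)^c}-\rho(t))=0$ by construction, the linear term vanishes for free, and the first-moment estimate is only used afterward to replace $\rho(t)$ by $\rho$ (showing $|\rho(t)-\rho|\le 3\varpi$ on a good event). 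Your route is slightly more direct; the paper's avoids carrying the $\varphi'(\rho)$ factor through the bookkeeping. Either way the dominant error is the $O(\varpi^2 N^2)$ term from \Cref{heightcompare} together with the $O(\varpi^2 N^2/k)$ discrepancy between $k|\mathcal{J}|$ and $16R$, which is exactly the target; the remaining errors you list (the $(\log N)^{-1/256}$ term, the $1/A$ mesh, the $N^{2/3}$ bad slots) are all absorbed once $N$ is large enough depending on $\varpi$, which is permitted since $C_2$ may depend on $\varpi$.
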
 

\begin{proof}

	We first claim that $k \sum_{I \in \mathcal{J}} \rho (I; t) \textbf{1}_{F (I; t)^c} \approx \sum_{x \in \mathbb{Z}} \eta_t (x) = M \approx 16 R \rho$, for most $t \in [0, \varpi N]$, with high probability. To that end, it will first be useful to bound the position of the rightmost particle in $\textbf{p}$. Thus, observe by \Cref{xtxt1} and a large deviations estimate for sums of independent geometric random variables that there exists a constant $C_3 = C_3 (b_2, \varpi) > 0$ such that 
	\begin{flalign}
	\label{g0event} 
	\mathbb{P} [G_0] \le C_3 e^{-N / C_3}, \quad \text{where} \quad G_0 = \big\{ p_{\lfloor \varpi N \rfloor} (M) > (8 + 3 \varpi) R \big\}.
	\end{flalign}
	
	Next, we will show that $\sum_{x \in I} \eta_t (x) \approx \rho (I; t) k$ with high probability whenever $F (I; t)^c$ holds. Indeed, a moderate deviations estimate for sums of independent Bernoulli $0-1$ random variables yields 
	\begin{flalign}
	\label{xisumrhoi} 
	\mathbb{P} \Bigg[ \bigg| \displaystyle\sum_{x \in I} \xi_t^{(\rho_i)} (x) - k \rho_i \bigg| > k^{3 / 4} \Bigg] < \displaystyle\frac{C_4}{(\log N)^4},
	\end{flalign}
	
	\noindent for some constant $C_4 > 0$ and any $i \in [0, A]$. 
	
	Now, fixing $I \in \mathcal{I}$ and $t \in [0, \varpi N]$ and restricting to the event $F(I; t)^c$ and abbreviating $j = j(I; t)$ yields $\xi_t^{(\rho_j)} \big|_I \le \eta_t |_I \le \xi_t^{(\rho_{j + 1})} |_I$. Therefore, by \eqref{xisumrhoi} and a union bound we obtain 
	\begin{flalign*}
	\mathbb{P} \big[ G_1 (I; t) \big] < \displaystyle\frac{2 C_4}{(\log N)^4}, \quad \text{where} \quad G_1 (I; t) = \Bigg\{ \textbf{1}_{F (I; t)^c} \bigg| \displaystyle\sum_{x \in I} \eta_t (x) - k \rho (I; t)  \bigg| > 2 k^{3 / 4} + \displaystyle\frac{2k}{A} \Bigg\}.
	\end{flalign*}
	
	\noindent Thus, for each $t \in [0, \varpi N]$, a Markov estimate yields a constant $C_5 = C_5 (b_2) > 0$ such that 
	\begin{flalign}
	\label{g1event2}
	\mathbb{P} \big[ G_1 (t) \big] < \displaystyle\frac{C_5}{(\log N)^3}, \quad \text{where} \quad G_1 (t)  = \Bigg\{ \displaystyle\sum_{I \in \mathcal{J}} \textbf{1}_{G_1 (I; t)} > \displaystyle\frac{N}{k \log N} \Bigg\}.
	\end{flalign}
	
	\noindent Next, to establish that $k \sum_{I \in \mathcal{J}} \rho (I; t) \textbf{1}_{F (I; t)^c} \approx \sum_{x \in \mathbb{Z}} \eta_t (x) = M \approx 16 R \rho$, for most $t \in [0, \varpi N]$ (as stated above), observe for sufficiently large $N$ that $G_2 (t) \subseteq G_0 \cup G_1 (t) \cup F(t)$, where 
	\begin{flalign}
	\label{g2tevent} 
	G_2 (t) = \Bigg\{  \bigg| k \displaystyle\sum_{I \in \mathcal{J}} \rho (I; t) \textbf{1}_{F (I; t)^c}  - 16 R \rho  \bigg| > 2 \varpi N \Bigg\}; \qquad F(t) = \left\{ \displaystyle\sum_{I \in \mathcal{J}} \textbf{1}_{F (I; t)} > N^{5 / 6} \right\}.
	\end{flalign}
	
	\noindent Indeed, this holds since \eqref{etasumlimitx1x2} and $\eta_t (x) \in \{ 0, 1 \}$ imply that $| 16 R \rho - M | < \varpi N$ and  
	\begin{flalign*}
	\textbf{1}_{F(t)^c} \textbf{1}_{G_0^c} \textbf{1}_{G_1 (t)^c} \left| k \displaystyle\sum_{I \in \mathcal{J}} \rho (I; t) \textbf{1}_{F (I; t)^c} - M  \right| & \le \textbf{1}_{F(t)^c} \textbf{1}_{G_1 (t)^c} \left| k \displaystyle\sum_{I \in \mathcal{J}} \rho (I; t) \textbf{1}_{F (I; t)^c} - \displaystyle\sum_{I \in \mathcal{J}} \displaystyle\sum_{x \in I} \eta_t (x)  \right| \\
	& \le  \displaystyle\sum_{I \in \mathcal{J}} \left| \displaystyle\sum_{x \in I} \eta_t (x)  - k \rho (I; t) \right| \textbf{1}_{F (I; t)^c} \textbf{1}_{G_1 (I; t)} + \displaystyle\frac{3 N}{\log N}   \\
	& \le |\mathcal{J}| \left( 2 k^{3 / 4} + \displaystyle\frac{2k}{A} \right) + \displaystyle\frac{3N}{\log N} < \varpi N,
	\end{flalign*}
	
	\noindent respectively, both for sufficiently large $N$.
	
	We next bound the number of $t \in [0, \varpi N]$ for which $G_2 (t)$ holds. To that end, define the events
	\begin{flalign*} 
	G_1 = \left\{ \displaystyle\sum_{t = 0}^{\lfloor \varpi N \rfloor} \textbf{1}_{G_1 (t)} > \displaystyle\frac{N}{\log N}\right\}; \qquad G_2 = \left\{ \displaystyle\sum_{t = 0}^{\lfloor \varpi N \rfloor} \textbf{1}_{G_2 (t)} > \displaystyle\frac{2 N}{\log N}\right\}.
	\end{flalign*}
	
	\noindent Then, a Markov estimate implies for sufficiently large $N$ that 
	\begin{flalign} 
	\label{g1probabilityg2}
	\mathbb{P} [G_1] < \displaystyle\frac{C_5}{(\log N)^2}; \qquad \mathbb{P} [G_2] \le \mathbb{P} [F] + \mathbb{P} [G_0] + \mathbb{P} [G_1] \le \displaystyle\frac{C_6}{(\log N)^2},
	\end{flalign} 
	
	\noindent for some constant $C_6 = C_6 (b_1, b_2, \varpi) > 0$, where we have applied \eqref{g0event}, \eqref{g1event2}, and the fact that $\mathbb{P} [F] \le C_7 N^{-1 / 3}$ for some constant $C_7 = C_7 (b_1, b_2, \varpi) > 0$ (recall \Cref{eventfti} and below). 
	
	Now define $\rho (t) = |\mathcal{J}|^{-1} \sum_{I \in \mathcal{J}} \rho (I; t) \textbf{1}_{F(I; t)^c}$ for each $t \in [0, \varpi N]$, so that $\textbf{1}_{G_2 (t)} \big| \rho (t) - \rho \big| \le 3 \varpi$, since $\big| \frac{16 R}{k |\mathcal{J}|} - 1 \big| < \frac{\varpi}{2}$. Since $\varphi (0) = 0$, we have that
	\begin{flalign*}
	\textbf{1}_{G_0^c} \displaystyle\sum_{I \in \mathcal{I}} \varphi \big( \rho (I; t) \big) \textbf{1}_{F (I; t)^c} & = \textbf{1}_{G_0^c} \displaystyle\sum_{I \in \mathcal{J}} \varphi \big( \rho (I; t) \textbf{1}_{F (I; t)^c} \big) \\
	& = \textbf{1}_{G_0^c} \left( \varphi \big( \rho (t) \big) |\mathcal{J}| + \displaystyle\sum_{I \in \mathcal{J}} \displaystyle\int_{\rho (t)}^{\rho (I; t) \textbf{1}_{F (I; t)^c}} \displaystyle\int_0^z \varphi'' (s) ds dz \right). 
	\end{flalign*}
	
	\noindent Thus, since $\max_{z \in [0, 1]} \varphi'' (z) < - 2 (b_2 - b_1)^2 < 0$, it follows that
	\begin{flalign}
	\label{fg0g1}
	\begin{aligned} 
	\textbf{1}_{G_2^c} (b_2 - b_1)^2 & \displaystyle\sum_{t = 0}^{\lfloor \varpi N \rfloor - 1} \displaystyle\sum_{I \in \mathcal{J}} \big| \rho (t) - \rho (I; t) \textbf{1}_{F(I; t)^c} \big|^2 \\ 
	& \le \textbf{1}_{G_2^c} \displaystyle\sum_{t = 0}^{\lfloor \varpi N \rfloor - 1} \left( \varphi \big( \rho (t) \big) |\mathcal{J}| - \displaystyle\sum_{I \in \mathcal{I}} \varphi \big( \rho (I; t) \big) \textbf{1}_{F (I; t)^c}  \right),
	\end{aligned}
	\end{flalign}
	
	\noindent where we have used that $G_0 \subseteq G_2$. Since $\textbf{1}_{G_2 (t)} \big| \rho (t) - \rho \big| \le 3 \varpi$, we have for sufficiently large $N$ that 
	\begin{flalign}
	\label{fg0g12}
	\begin{aligned}
	\textbf{1}_{G_2^c} \displaystyle\sum_{t = 0}^{\lfloor \varpi N \rfloor - 1} \displaystyle\sum_{I \in \mathcal{J}} \big| \rho (t) - \rho (I; t) \textbf{1}_{F(I; t)^c} \big|^2 & \ge \textbf{1}_{F^c} \textbf{1}_{G_0^c} \textbf{1}_{G_2^c} \displaystyle\sum_{t = 0}^{\lfloor \varpi N \rfloor - 1} \displaystyle\sum_{I \in \mathcal{J}} \big| \rho - \rho (I; t) \textbf{1}_{F(I; t)^c} \big|^2 \\
	& \qquad - C_8 \varpi^2 k^{-1} N^2,
	\end{aligned}
	\end{flalign}
	
	\noindent for some constant $C_8 = C_8 (b_2) > 0$. Again using the fact that $\textbf{1}_{G_2 (t)} \big| \rho (t) - \rho \big| \le 3 \varpi$ yields for sufficiently large $N$ that 
	\begin{flalign}
	\label{fg0g13}
	\begin{aligned}
	\textbf{1}_{G_2^c} \displaystyle\sum_{t = 0}^{\lfloor \varpi N \rfloor - 1} & \left( \varphi \big( \rho (t) \big) |\mathcal{J}| - \displaystyle\sum_{I \in \mathcal{I}} \varphi \big( \rho (I; t) \big) \textbf{1}_{F (I; t)^c}  \right) \\
	& \le \textbf{1}_{G_2^c} \displaystyle\sum_{t = 0}^{\lfloor \varpi N \rfloor - 1} \left( \varphi (\rho) |\mathcal{J}| - \displaystyle\sum_{I \in \mathcal{I}} \varphi \big( \rho (I; t) \big) \textbf{1}_{F (I; t)^c}  \right) + 3 \kappa \varpi^2 |\mathcal{J}| N \\
	& \le \textbf{1}_{G_2^c} \left( \varpi \rho^{-1} k^{-1} \varphi (\rho) MN - \displaystyle\sum_{t = 0}^{\lfloor \varpi N \rfloor - 1} \displaystyle\sum_{I \in \mathcal{I}} \varphi \big( \rho (I; t) \big) \textbf{1}_{F (I; t)^c}  \right) + C_9 \varpi^2 k^{-1} N^2,
	\end{aligned}
	\end{flalign}
	
	\noindent for some constant $C_9 = C_9 (b_2) > 0$, where we have used \eqref{etasumlimitx1x2} (to approximate $\big| \varphi (\rho) |\mathcal{J}| - \rho^{-1} k^{-1} \varphi (\rho) M \big| \le \varpi k^{-1} N$, for sufficiently large $N$) and the fact that $\max_{z \in [0, 1]} \varphi' (z) \le \kappa$. Since \Cref{heightcompare}, \Cref{horizontalestimatesum}, and a union bound together yield constants $C_{10} = C_{10} (b_1, b_2) > 0$ and $C_{11} = C_{11} (b_1, b_2, \varpi) > 0$ such that 
	\begin{flalign*} 
	\mathbb{P} \Bigg[ \bigg| \varpi \rho^{-1} k^{-1} \varphi (\rho) MN - \displaystyle\sum_{t = 0}^{\lfloor \varpi N \rfloor - 1} \displaystyle\sum_{I \in \mathcal{I}} \varphi \big( \rho (I; t) \big) \textbf{1}_{F (I; t)^c} \bigg| > C_{10} \varpi^2 k^{-1} N^2 \Bigg] < \displaystyle\frac{C_{11}}{\log N}, 
	\end{flalign*} 
	
	\noindent the corollary follows from \eqref{g1probabilityg2}, \eqref{fg0g1}, \eqref{fg0g12}, and \eqref{fg0g13}. 
\end{proof}

We can now establish \Cref{rhotksum2}. 

\begin{proof}[Proof of \Cref{rhotksum2}]
	
	Let $\varpi \in (0, 1)$; adopt the notation of \Cref{rhotksum}; assume that $\xi_0^{(\rho)}$ is coupled with the $\xi_0^{(\rho_j)}$ such that $\xi_0^{(\rho_i)} \le \xi_0^{(\rho)} \le \xi_0^{(\rho_j)}$ whenever $0 \le \rho_i \le \rho \le \rho_j \le 1$; and mutually couple $\xi_t^{(\rho)}$ with $\eta_t$ and the $\big( \xi_t^{(\rho_j)} \big)$ under the higher rank coupling (recall \Cref{etaxim}). For each interval $I \in \mathcal{J}$ and integer $t \in [0, \varpi N - 1]$, define the events
	\begin{flalign*}
	& D_0 (I; t) = \Big\{ \big| \rho (I; t) \textbf{1}_{F (I; t)^c}  - \rho \big| > \varpi^{1 / 4} \Big\}; \quad D_0 (t) = \left\{ \displaystyle\sum_{I \in \mathcal{J}} \textbf{1}_{D_0 (I; t)} > k^{-1} \varpi^{1 / 4} N \right\}; \\
	& D_0 = G_0 \cup \bigcap_{t = 0}^{\lfloor \varpi N \rfloor - 1} \big( D_0 (t) \cup F(t) \big),
	\end{flalign*}
	
	\noindent where we recall $G_0$ and $F(t)$ from \eqref{g0event} and \eqref{g2tevent}, respectively. Then \Cref{rhotksum} and the facts that $\mathbb{P} [G_0] < C e^{-N / C}$ and $\mathbb{P} [F] < C N^{-1 / 3}$ for some constant $C = C (b_1, b_2, \varpi) > 0$ (recall \Cref{eventfti} and below) together yield that $\mathbb{P} [D_0] \le C (\log N)^{-1}$, whenever $\varpi$ is sufficiently small (after increasing $C$ if necessary). 
	
	Restricting to $D_0^c$, there exists some integer $t = t_0 \in [0, \varpi N - 1]$ such that $D_0 (t)^c \cap F(t)^c \cap G_0^c$ holds. Then, define the events
	\begin{flalign*}
	& D_1 (I) = \Bigg\{ \bigg| \displaystyle\sum_{x \in I} \xi_{t_0}^{(\rho)} (x) - \rho k \bigg| > \varpi^{1 / 4} k \Bigg\}  \cup \Bigg\{ \textbf{1}_{D_0 (I; t_0)^c} \textbf{1}_{F (I; t_0)^c} \bigg| \displaystyle\sum_{x \in I} \xi_{t_0}^{(\rho (I; t_0))} (x) - \rho k \bigg| > 2 \varpi^{1 / 4} k \Bigg\} \\ 
	& \qquad \qquad \cup \Bigg\{ \textbf{1}_{D_0 (I; t_0)^c} \textbf{1}_{F (I; t_0)^c} \bigg| \displaystyle\sum_{x \in I} \xi_{t_0}^{(\rho (I; t_0) + 1	)} (x) - \rho k \bigg| > 2 \varpi^{1 / 4} k \Bigg\};  \\
	& D_1 = \left\{ \displaystyle\sum_{I \in \mathcal{J}} \textbf{1}_{D_1 (I)} > \varpi^{1 / 4} k^{-1} N \right\}. 
	\end{flalign*} 
	
	Then, a large deviations estimate for sums of independent Bernoulli $0-1$ random variables yields that $\mathbb{P} \big[ D_1 (I) \big] \le C (\log N)^{-2}$, after increasing $C$ if necessary, which implies by a Markov estimate that $\mathbb{P} [D_1] \le C (\log N)^{-1}$. Now, since $\xi_t^{(\rho)}$ and the $\xi_t^{(\rho_j)}$ are mutually ordered, and since $\xi_t^{(\rho (I; t))} |_I \le \eta_t |_I \le \xi_t^{(\rho (I; t) + 1)} |_I$ whenever $F(I; t)^c$ holds, we deduce for sufficiently small $\varpi$ that 
	\begin{flalign}
	\label{d0d1estimate}
	\begin{aligned}
	\textbf{1}_{D_0^c} \textbf{1}_{D_1^c} \displaystyle\sum_{x = -8R}^{8R} \big| \eta_{t_0} (x) - \xi_{t_0}^{(\rho)} (x) \big| & \le \textbf{1}_{D_0^c} \textbf{1}_{D_1^c} \displaystyle\sum_{I \in \mathcal{J}} \displaystyle\sum_{x \in I} \big| \eta_{t_0} (x) - \xi_{t_0}^{(\rho)} (x) \big| \\
	& \le \displaystyle\sum_{I \in \mathcal{J}} \displaystyle\sum_{x \in I} \big| \eta_{t_0} (x) - \xi_{t_0}^{(\rho)} (x) \big| \textbf{1}_{D_1 (I)^c} \textbf{1}_{D_0 (I; t_0)^c} + 2 \varpi^{1 / 4} N\\
	& \le 6 \varpi^{1 / 4} k |\mathcal{J}| + 2 \varpi^{1 / 4} N < \varsigma N.
	\end{aligned}
	\end{flalign}
	
	Now, from \Cref{xtxt1} and a large deviations estimate for sums of independent geometric random variables, we find that the probability of an uncoupled particle between $\big( \eta_{t_0}, \xi_{t_0}^{(\rho)} \big)$ outside of $[-8R, 8R]$ entering $[-4R, 4R]$ at some point during time interval $[t_0, Y]$ is bounded by $C e^{-N / C}$, after increasing $C$ if necessary. By \eqref{d0d1estimate}, it follows that 
	\begin{flalign*}
	\mathbb{P} \left[ \displaystyle\sum_{x = -4R}^{4R} \big| \eta_Y (x) - \xi_Y^{(\rho)} (x) \big| > \varsigma N \right] < \mathbb{P} [D_0] + \mathbb{P} [D_1] + C e^{-N / C}, 
	\end{flalign*}  	
	
	\noindent from which we deduce the proposition since $\mathbb{P} [D_0], \mathbb{P} [D_1] \le C (\log N)^{-1}$.  
\end{proof}


\begin{thebibliography}{}
	
	\bibitem{CSSVMP} \label{CSSVMP} A. Aggarwal, Convergence of the Stochastic Six-Vertex Model to the ASEP, \emph{Math. Phys. Anal. Geom.} \textbf{20}, 3, 2017. 
	
	\bibitem{CFSAEPSSVMCL} \label{CFSAEPSSVMCL} A. Aggarwal, Current Fluctuations of the Stationary ASEP and Stochastic Six-Vertex Model, \textit{Duke Math. J.} \textbf{167}, 269--384, 2018.   
	
	\bibitem{PTAEPSSVM} \label{PTAEPSSVM} A. Aggarwal and A. Borodin, Phase Transitions in the ASEP and Stochastic Six-Vertex Model, \emph{Ann. Prob.} \textbf{47}, 613--689, 2019. 
	
	\bibitem{SSE} \label{SSE} A. Aggarwal, A. Borodin, and A. Bufetov, Stochasticization of Solutions to the Yang-Baxter Equation, \emph{Ann. Henri Poincar\'{e}} \textbf{20}, 2495--2554, 2019.
	
	\bibitem{HEAS} \label{HEAS} E. D. Andjel and M. E. Vares, Hydrodynamic Equations for Attractive Particle Systems on $\mathbb{Z}$, \emph{J. Stat. Phys.} \textbf{47}, 265--288, 1987. 
	
	\bibitem{HAPAE} \label{HAPAE} C. Bahadoran, H. Guiol, K. Ravishankar, and E. Saada, A Constructive Approach to Euler Hydrodynamics for Attractive Processes. Application to $k$-Step Exclusion, \emph{Stochastic Process. Appl.} \textbf{99}, 1--30, 2002. 
	
	\bibitem{HOAPS} \label{HOAPS} C. Bahadoran, H. Guiol, K. Ravishankar, and E. Saada, Constructive Euler Hydrodynamics for One-Dimensional Attractive Particle Systems, In: \emph{Sojourns in Probability Theory and Statistical Physics III} (V. Sidoravicius, ed.), Springer Nature, Singapore, 43--89, 2019.	
	
	\bibitem{CLEP} \label{CLEP} C. Bahadoran and T. S. Mountford, Convergence and Local Equilibrium for the One-Dimensional Nonzero Mean Exclusion Process, \emph{Probab. Theory Relat. Fields} \textbf{136}, 341--362, 2006.
	
	\bibitem{DOA} \label{DOA} J. Baik, T. Kriecherbauer, K. T.-R. McLaughlin, and P. D. Miller, \emph{Discrete Orthogonal Polynomials: Asymptotics and Applications}, Ann. Math. Studies, Princeton Univ. Press, 2007. 
	
	\bibitem{SSVMQHL} \label{SSVMQHL} G. Barraquand, A. Borodin, I. Corwin, and M. Wheeler, Stochastic Six-Vertex Model in a Half-Quadrant and Half-Line Open Asymmetric Simple Exclusion Process, \emph{Duke Math. J.} \textbf{167}, 2457--2529, 2018. 
	
	\bibitem{ESMSM} \label{ESMSM} R. J. Baxter, \emph{Exactly Solved Models in Statistical Mechanics}, Academic Press, London, 1989. 
	
	\bibitem{TSEA} \label{TSEA} V. V. Bazhanov, Trigonometric Solutions of Triangle Equations and Classical Lie Algebras, \emph{Phys. Lett. B} \textbf{159}, 321--324, 1985. 
	
	\bibitem{HLA} \label{HLA} A. Benassi and J.-P. Fouque, Hydrodynamical Limit for the Asymmetric Simple Exclusion Process, \emph{Ann. Prob.} \textbf{15}, 546--560, 1987. 
	
	\bibitem{SHSVMM} \label{SHSVMM} A. Borodin, Stochastic Higher Spin Six Vertex Model and Macdonald Measures, \emph{J. Math. Phys.} \textbf{59}, 023301, 2018. 
	
	\bibitem{SSVM} \label{SSVM} A. Borodin, I. Corwin, and V. Gorin, Stochastic Six-Vertex Model, {\itshape Duke Math. J.} \textbf{165}, 563--624, 2016. 
	
	\bibitem{STESVM} \label{STESVM} A. Borodin and V. Gorin, A Stochastic Telegraph Equation From the Six-Vertex Model, To appear in \emph{Ann. Prob.}, preprint, https://arxiv.org/abs/1803.09137.   
	
	\bibitem{HSVMSRF} \label{HSVMSRF} A. Borodin and L. Petrov, Higher Spin Six Vertex Model and Symmetric Rational Functions, \emph{Sel. Math.} \textbf{24}, 751--874, 2018. 
		
	\bibitem{CSVMST} \label{CSVMST} A. Borodin and M. Wheeler, Coloured Stochastic Vertex Models and Their Spectral Theory, preprint, https://arxiv.org/abs/1808.01866. 
		
	\bibitem{TCPFSVM} \label{TCPFSVM} D. J. Bukman and J. D. Shore, The Conical Point in the Ferroelectric Six-Vertex Model, J. Stat. Phys. \textbf{78}, 1277--1309, 1995. 
	
	\bibitem{ST} \label{ST} S. Chhita and K. Johansson, Domino Statistics of the Two-Periodic Aztec Diamond, \emph{Adv. Math.} \textbf{294}, 37--149, 2016. 
	
	\bibitem{ASD} \label{ASD} S. Chhita, K. Johansson, and B. Young, Asymptotic Domino Statistics in the Aztec Diamond, \emph{Ann. Appl. Prob.} \textbf{25}, 1232--1278, 2015. 
	
	\bibitem{LSRT} \label{LSRT} H. Cohn, N. Elkies, and J. Propp, Local Statistics of Random Domino Tilings of the Aztec Diamond, \emph{Duke Math. J.} \textbf{85}, 117--166, 1996.	
	
	\bibitem{VPT} \label{VPT} H. Cohn, R. Kenyon, and J. Propp, A Variational Principle for Domino Tilings, \emph{J. Amer. Math. Soc.} \textbf{14}, 297--346, 2001. 	
	
	\bibitem{TFAEPSSVM} \label{TFAEPSSVM} I. Corwin and E. Dimitrov, Transversal Fluctuations of the ASEP, Stochastic Six Vertex Model, and Hall-Littlewood Gibbsian Line Ensembles, \emph{Commun. Math. Phys.} \textbf{363}, 435--501, 2018.
	
	\bibitem{SLSVM} \label{SLSVM} I. Corwin, P. Ghosal, H. Shen, and L.-C. Tsai, Stochastic PDE Limit of the Six Vertex Model, preprint, https://arxiv.org/abs/1803.08120. 
	
	\bibitem{SHSVML} \label{SHSVML} I. Corwin and L. Petrov, Stochastic Higher Spin Vertex Models on the Line, \emph{Commun. Math. Phys.} \textbf{343}, 651--700, 2016. 
	
	\bibitem{ELHSP} \label{ELHSP} I. Corwin and L.-C. Tsai, KPZ Equation Limit of Higher-Spin Exclusion Processes, \emph{Ann. Prob.} \textbf{45}, 1771--1798, 2017. 
	
	\bibitem{VSE} \label{VSE} M. G. Crandall and P.-L. Lions, Viscosity Solutions of Hamilton-Jacobi Equations, \emph{Trans. Amer. Math. Soc.} \textbf{277}, 1--42, 1983. 

	\bibitem{TPMVOP} \label{TPMVOP} M. Duits and M. B. J. Kuijlaars, The Two Periodic Aztec Diamond and Matrix Valued Orthogonal Polynomials, To appear in \emph{J. Eur. Math. Soc.}, preprint, https://arxiv.org/abs/1712.05636. 
	
	\bibitem{LSAFM} \label{LSAFM} J. de Gier, R. Kenyon, and S. S. Watson, Limit Shapes for the Asymmetric Five Vertex Model, preprint, https://arxiv.org/abs/1812.11934. 
	 
	\bibitem{SNHSE} \label{SNHSE} J. Glimm, Solutions in the Large for Nonlinear Hyperbolic Systems of Equations, \emph{Comm. Pure Appl. Math.} \textbf{18}, 697--715, 1965.
	
	\bibitem{URB} \label{URB} V. Gorin, Bulk Universality for Random Lozenge Tilings Near Straight Boundaries and for Tensor Products, \emph{Commun. Math. Phys.} \textbf{354}, 317--344, 2017. 
	
	\bibitem{NPOE} \label{NPOE} V. Gorin, Nonintersecting Paths and the Hahn Orthogonal Ensemble, \emph{Funct. Anal. Appl.} \textbf{42}, 180--197, 2008. 
	 
	\bibitem{SVMRSASH} \label{SVMRSASH} L.-H. Gwa and H. Spohn, Six-Vertex Model, Roughened Surfaces, and an Asymmetric Spin Hamiltonian, {\itshape Phys. Rev. Lett.} \textbf{68}, 725--728, 1992. 
	
	\bibitem{ASMPGM} \label{ASMPGM} T. E. Harris, Additive Set-Valued Markov Processes and Graphical Methods, \emph{Ann. Prob.} \textbf{6}, 355--378, 1978. 
	
	\bibitem{TECS} \label{TECS} C. Jayaprakash and W. F. Saam, Thermal Evolution of Crystal Shapes: The fcc Crystal, \emph{Phys. Rev. B} \textbf{30}, 3916, 1984. 
	
	\bibitem{QGM} \label{QGM} M. Jimbo, Quantum $R$ Matrix for the Generalized Toda System, \emph{Commun. Math. Phys.} \textbf{102}, 537--547, 1986. 
	
	\bibitem{CP} \label{CP} K. Johansson, The Arctic Circle Boundary and the Airy Process, \emph{Ann. Prob.} \textbf{33}, 1--30, 2005. 
	
	\bibitem{SDL} \label{SDL} P. W. Kasteleyn, The Statistics of Dimers on a Lattice: I. The Number of Dimer Arrangements on a Quadratic Lattice, \emph{Physica} \textbf{27}, 1209--1225, 1961. 	
	
	\bibitem{CI} \label{CI} R. Kenyon, Conformal Invariance of Domino Tiling, \emph{Ann. Prob.} \textbf{28}, 759--795, 2000. 
	
	\bibitem{F} \label{F} R. Kenyon, Dominos and the Gaussian Free Field, \emph{Ann. Prob.} \textbf{29}, 1128--1137, 2001. 
	
	\bibitem{D} \label{D} R. Kenyon, Lectures on Dimers, In: \emph{Statistical Mechanics}, IAS/Park City Math. Ser. \textbf{16}, Amer. Math. Soc., Providence, RI, 191--230, 2009.  
		
	\bibitem{LSLD} \label{LSLD} R. Kenyon, Local Statistics of Lattice Dimers, \emph{Ann. Inst. Henri Poincar\'{e} Probab. Stat.} \textbf{33}, 591--618, 1997.  
	
	\bibitem{LSCE} \label{LSCE} R. Kenyon and A. Okounkov, Limit Shapes and the Complex Burgers Equation, \emph{Acta Math.} \textbf{199}, 263--302, 2007. 	
	
	\bibitem{DA} \label{DA} R. Kenyon, A. Okounkov, and S. Sheffield, Dimers and Amoebae, \emph{Ann. Math.} \textbf{163}, 1019--1056, 2006. 
	
	\bibitem{SLIS} \label{SLIS} C. Kipnis, \emph{Scaling Limits of Interacting Particle Systems}, Springer, Berlin, 1999.
	
	\bibitem{QISMCF} \label{QISMCF} V. E. Korepin, N. M. Bogoliubov, and A. G. Izergin, \emph{Quantum Inverse Scattering Method and Correlation Functions}, Cambridge Monographs in Mathematical Physics, Cambridge University Press, Cambridge, 1993. 	
	
	\bibitem{BEHSL} \label{BEHSL} E. Kosygina, The Behavior of the Specific Entropy in the Hydrodynamical Scaling Limit, \emph{Ann. Prob.} \textbf{29}, 1086--1110, 2001.
	
	\bibitem{QSIV} \label{QSIV} S. N. Kru\v{z}kov, First Order Quasilinear Equations in Several Independent Variables, \emph{Math. USSR Sb.} \textbf{10}, 217--243, 1970.  
	
	
	\bibitem{ADFSVM} \label{ADFSVM} J. Kuan, An Algebraic Construction of Duality Functions for the Stochastic $\mathcal{U}_q ( A_n^{(1)})$ Vertex Model and Its Degenerations, \emph{Commun. Math. Phys.} \textbf{359}, 121--187, 2018. 
	
	\bibitem{SM} \label{SM} A. Kuniba, V. V. Mangazeev, S. Maruyama, and M. Okado, Stochastic $R$ Matrix for $U_q (A_n^{(1)})$, \emph{Nucl. Phys. B} \textbf{913}, 248--277, 2016. 
	
	\bibitem{LLSBHP} \label{LLSBHP} B. Laslier, Local Limits of Lozenge Tilings are Stable Under Bounded Boundary Height Perturbations, \emph{Probab. Theory Relat. Fields} \textbf{173}, 1243--1264, 2019.
	
	\bibitem{RESI} \label{RESI} E. H. Lieb, Residual Entropy of Square Ice, \emph{Phys. Rev. Lett.} \textbf{162}, 162--172, 1967. 
	
	\bibitem{CEP} \label{CEP} T. M. Liggett, Coupling the Simple Exclusion Process, \emph{Ann. Prob.} \textbf{3}, 339--356, 1976. 
	
	\bibitem{CSFODDGC} \label{CSFODDGC} J. Neergard and M. den Nijs, Crossover Scaling Functions in One Dimensional Dynamic Growth Crystals, \emph{Phys. Rev. Lett.} \textbf{74}, 730, 1995. 
	
	\bibitem{CPLGR} \label{CPLGR} A. Okounkov and N. Reshetikhin, Correlation Function of Schur Process with Application to Local Geometry of a Random 3-Dimensional Young Diagram, \emph{J. Amer. Math. Soc.} \textbf{16}, 581--603, 2003. 
		
	\bibitem{SVMFBC} \label{SVMFBC} K. Palamarchuk and N. Reshetikhin, The 6-vertex Model with Fixed Boundary Conditions, Proceedings of Solvay Workshop ``Bethe Ansatz: 75 Years Later,'' 2006. 

	\bibitem{AR} \label{AR} L. Petrov, Asymptotics of Random lozenge Tilings via Gelfand-Tsetlin Schemes, \emph{Probab. Theory Relat. Fields} \textbf{160}, 429--487, 2014.
	
	\bibitem{ILSSVM} \label{ILSSVM} N. Reshetikhin and A. Sridhar, Integrability of Limit Shapes of the Six Vertex Model, \emph{Commun. Math. Phys.} \textbf{356}, 535--565, 2017. 
	
	\bibitem{LSSSVM} \label{LSSSVM} N. Reshetikhin and A. Sridhar, Limit Shapes of the Stochastic Six-Vertex Model, \emph{Commun. Math. Phys.} \textbf{363}, 741--765, 2018. 
	
	\bibitem{HLAPS} \label{HLAPS} F. Rezakhanlou, Hydrodynamic Limit for Attractive Particle Systems on $\mathbb{Z}^d$, \emph{Commun. Math. Phys.} \textbf{140}, 417--448, 1991. 
	
	\bibitem{DPD} \label{DPD} M. Russkikh, Dimers in Piecewise Temperleyan Domains, \emph{Commun. Math. Phys.} \textbf{359}, 189--222, 2018. 
	
	\bibitem{DT} \label{DT} M. Russkikh, Dominos in Hedgehog Domains, To appear in \emph{Ann. Inst. Henri Poincar\'{e} D}, preprint, https://arxiv.org/abs/1803.10012.
	
	\bibitem{EHTP} \label{EHTP} T. Sepp\"{a}l\"{a}inen, Existence of Hydrodynamics for the Totally Asymmetric Simple $K$-Exclusion Process, \emph{Ann. Prob.} \textbf{27}, 361--415, 1999.
	
	\bibitem{SCL} \label{SCL} D. Serre, \emph{Systems of Conservation Laws. 1. Hyperbolicity, Entropies, Shock Waves}, Translated From the 1996 French Original by I. N. Sneddon, Cambridge University Press, Cambridge, 1999.
	
	\bibitem{RS} \label{RS} S. Sheffield, Random Surfaces, \emph{Ast\'{e}risque} \textbf{304}, 2005. 
	
	\bibitem{CPSVMCSFM} \label{CPSVMCSFM} J. Shore and D. J. Bukman, Coexistence Point in the Six-Vertex Model and the Crystal Shape of fcc Materials, \emph{Phys. Rev. Lett.} \textbf{72}, 604--607, 1994. 
	
	\bibitem{ESMTFAEEF} \label{ESMTFAEEF} B. Sutherland, C. N. Yang, and C. P. Yang, Exact Solution of a Model of Two-Dimensional Ferroelectrics in an Arbitrary External Electric Field, \emph{Phys. Rev. Lett.} \textbf{19}, 588--591, 1967. 
	
	\bibitem{REHM} \label{REHM} H.-T. Yau, Relative Entropy and Hydrodynamics of Ginzburg-Landau Models, \emph{Lett. Math. Phys.} \textbf{22}, 63--80, 1991. 
		
	
	\bibitem{BCSVM} \label{BCSVM} P. Zinn-Justin, The Influence of Boundary Conditions in the Six-Vertex Model, preprint, https://arxiv.org/abs/cond-mat/0205192. 
	
		
\end{thebibliography}
\end{document}